\mathchardef\mhyphen="2D
\def\on{\operatorname}
\providecommand{\leftsquigarrow}{%
  \mathrel{\mathpalette\reflect@squig\relax}%
}
\newcommand{\reflect@squig}[2]{%
  \reflectbox{$\m@th#1\rightsquigarrow$}%
}
\newcommand\noloc{%
  \nobreak
  \mspace{6mu plus 1mu}
  {:}
  \nonscript\mkern-\thinmuskip
  \mathpunct{}
  \mspace{2mu}
}
\newcommand{\trivalent}{{\bf G}}
\newcommand{\idtr}{trivalent spanning graph}
\newcommand{\I}{I}
\newcommand{\A}{\mathcal{A}}
\newcommand{\C}{\mathcal{C}}
\newcommand{\D}{\mathcal{D}}
\definecolor{ao}{rgb}{0.0, 0.5, 0.0}
\newtheorem{theorem}{Theorem}[section]
\newtheorem{lemma}[theorem]{Lemma}
\newtheorem{proposition}[theorem]{Proposition}
\newtheorem{corollary}[theorem]{Corollary}
\theoremstyle{definition}
\newtheorem{construction}[theorem]{Construction}
\newtheorem{definition}[theorem]{Definition}
\newtheorem{remark}[theorem]{Remark}
\newtheorem{example}[theorem]{Example}
\title{Cluster theory of topological Fukaya categories}
\author{Merlin Christ}
\date{\today}
\begin{document}
\maketitle

\abstract{We establish a novel relation between the cluster categories associated with marked surfaces and the topological Fukaya categories of the surfaces. We consider a generalization of the triangulated cluster category of the surface by a $2$-Calabi--Yau extriangulated/exact $\infty$-category, which arises via Amiot's construction from the relative Ginzburg algebra of the triangulated surface. This category is shown to be equivalent to the $1$-periodic version of the topological Fukaya category of the marked surface, as well as to Wu's Higgs category. We classify the cluster tilting objects in this extriangulated cluster category and describe a cluster character to the upper cluster algebra of the marked surface with coefficients in the boundary arcs.

We furthermore give a general construction of $2$-Calabi--Yau Frobenius extriangulated structures/exact $\infty$-structures on stable $\infty$-categories equipped with a relative right $2$-Calabi--Yau structure in the sense of Brav--Dyckerhoff, that may be of independent interest.}

\tableofcontents

\section{Introduction}

Cluster algebras are a class of commutative algebras equipped with generators exhibiting a remarkable combinatorial structure introduced by Fomin--Zelevinski \cite{FZ02}. These generators are referred to as clusters and can be subjected to a process called mutation, which produces other clusters. It was soon observed that cluster algebras admit a rich theory of categorification and this has lead to many fruitful interactions with the representation theory of algebras. A particular class of (additive) categorifications of cluster algebras are formed by so called cluster categories, which are triangulated $2$-Calabi-Yau categories. Cluster categories of acyclic quivers were introduced in \cite{BMRRT06} as certain orbit categories. A generalization to arbitrary quivers with potential was achieved by Amiot \cite{Ami09}. To apply her construction, one can start with a quiver with potential $(Q,W)$ and consider the perfect derived category of the $3$-Calabi-Yau Ginzburg dg-algebra $\mathscr{G}(Q,W)$. Amiot's generalized cluster category $\mathcal{C}_{Q,W}$ assigned to $\mathscr{G}(Q,W)$ is then defined as the Verdier quotient 
\[ \mathcal{C}_{Q,W}\coloneqq \mathcal{D}^{\on{perf}}(\mathscr{G}(Q,W))/\mathcal{D}^{\on{fin}}(\mathscr{G}(Q,W))\]
of the perfect derived category by the finite derived category. 

Recently, generalizations of Ginzburg algebras have emerged \cite{Wu21,Chr22}, called relative Ginzburg algebras, which can also be seen as examples of deformed relative Calabi-Yau completions \cite{Yeu16}. An interesting class of relative Ginzburg algebras, studied by the author in \cite{Chr22,Chr21b}, arises from marked surfaces equipped with an ideal triangulation. In this paper, we suppose that the marked surfaces has no marked points in the interior, i.e.~no punctures. We denote by $\mathscr{G}_\mathcal{T}$ the relative Ginzburg algebra associated with a marked surface ${\bf S}$ equipped with an ideal triangulation $\mathcal{T}$. 

In this article, we study the category 
\begin{equation}\label{eq:cqout}
 \mathcal{C}_{\bf S}\coloneqq \mathcal{D}^{\on{perf}}(\mathscr{G}_\mathcal{T})/\mathcal{D}^{\on{fin}}(\mathscr{G}_\mathcal{T})
 \end{equation}
arising from applying Amiot's quotient formula for the generalized cluster category to the 
relative Ginzburg algebra $\mathscr{G}_\mathcal{T}$. Our results on the category $\C_{\bf S}$ belong to two subjects. Firstly, we show that $\C_{\bf S}$ yields an additive categorification of the cluster algebra of the marked surface with coefficients in the boundary arcs, and that as an exact $\infty$-category, $\C_{\bf S}$ is equivalent to the Higgs category of \cite{Wu21}. This implies that the standard triangulated cluster category associated with the marked surface is the stable category, i.e.~a certain localization, of the homotopy category of $\C_{\bf S}$. The class of cluster algebras categorified describe certain coordinates on the decorated Teichm\"uller space of the surface called lambda lengths. These cluster algebras were studied in detail in Fomin--Thurston's beautiful paper \cite{FT12} and correspond to the $\on{SL}_2$-case in the work of Fock--Goncharov \cite{FG06}. The coefficients of the cluster algebras are given by the so-called boundary arcs.

The second perspective from which we study $\C_{\bf S}$ is that of topological Fukaya categories, or more generally from the perspective of perverse schobers, meaning categorified perverse sheaves in the sense of Kapranov-Schechtman \cite{KS14}. The new result is that the category $\C_{\bf S}$ arises as the global sections of such a perverse schober on ${\bf S}$, and furthermore that $\C_{\bf S}$ is equivalent to the topological Fukaya category of the marked surface ${\bf S}$ valued in the derived category of $1$-periodic chain complexes, or the $1$-periodic topological Fukaya category for short. In total, our results thus establish a new link between the two subjects of perverse schobers or topological Fukaya categories as well as cluster algebra categorification. 

To describe perverse schobers on marked surfaces, we use the framework of perverse schobers parametrized by a ribbon graph of \cite{Chr22}. The derived category $\mathcal{D}(\mathscr{G}_\mathcal{T})$ of the relative Ginzburg algebra was shown in \cite{Chr22} to arise as the global sections of such a perverse schober $\mathcal{F}_\mathcal{T}$. We show that the passage to the quotient category $\C_{\bf S}$ can be realized on the level of perverse schobers in terms of the passage to a quotient $\mathcal{F}_\mathcal{T}^{\on{clst}}$ of $\mathcal{F}_\mathcal{T}$. We note that this is a non-trivial property of the perverse schober $\mathcal{F}_\mathcal{T}$, which fails for variants of $\mathcal{F}_\mathcal{T}$, such as the perverse schober whose global sections describe the derived category of the non-relative Ginzburg algebra associated with the triangulated surface. A local computation shows that $\mathcal{F}_\mathcal{T}^{\on{clst}}$ is a perverse schober of a particularly simple type, referred to as locally constant, which is the type of (co)sheaf considered in \cite{DK18,DK15,HKK17} for the construction of the topological Fukaya category.

The perverse schober description allows us to study $\mathcal{C}_{\bf S}$ using powerful and flexible local-to-global arguments. Results include a classification of all indecomposable objects in $\mathcal{C}_{\bf S}$ in terms of suitable curves in ${\bf S}$ and a description of the Homs in terms of intersections of the curves. This classification can be considered as a $1$-periodic version of the classification of indecomposable objects in the derived category of a gentle algebra given for instance in \cite{HKK17,OPS18}.

Before describing the results of this article in more detail, we finally comment on how $\C_{\bf S}$ categorifies the surface cluster algebra with coefficients. We equip the stable $\infty$-category $\C_{\bf S}$ with the structure of a Frobenius exact $\infty$-category. This structure gives rise to a Frobenius $2$-Calabi--Yau extriangulated structure on the homotopy $1$-category $\on{ho}\mathcal{C}_{\bf S}$. The exact $\infty$-structure is obtained from pulling back a split-exact structure along a functor carrying a right $2$-Calabi--Yau structure in the sense of Brav--Dyckerhoff \cite{BD19}. The exact extensions are described by a certain subfunctor $\on{Ext}^{1,\on{CY}}_{\mathcal{C}_{\bf S}}(\mhyphen,\mhyphen)\subset \on{Ext}^1_{\mathcal{C}_{\bf S}}(\mhyphen,\mhyphen)$ equipped with a $2$-Calabi--Yau duality 
\[ \on{Ext}^{1,\on{CY}}_{\mathcal{C}_{\bf S}}(X,Y)\simeq  \on{Ext}^{1,\on{CY}}_{\mathcal{C}_{\bf S}}(Y,X)^*\,,\]
which becomes the $2$-Calabi--Yau duality in the extriangulated homotopy category. We give a bijective correspondence between the clusters of the cluster algebra and the cluster tilting objects in the extriangulated category $\on{ho}\mathcal{C}_{\bf S}$. We further describe a cluster character to the commutative Skein algebra of the surface, which embeds into the upper cluster algebra of the surface by work of Muller \cite{Mul16}.

The remainder of this section is structured as follows. In the preparatory \Cref{sec1.1}, we review the additive categorification of cluster algebras (with and without coefficients) in terms of cluster categories. In \Cref{sec1.2}, we introduce the $1$-periodic topological Fukaya category. In \Cref{sec1.3}, we explain how the passage to the generalized cluster category $\mathcal{C}_{\bf S}$ can be understood in the framework of perverse schobers. We proceed in \Cref{sec1.4} by discussing our results about ${\C}_{\bf S}$ concerning the categorification of a cluster algebra with coefficients arising from ${\bf S}$. In the final \Cref{introsubsec:otherclustercats}, we discuss the relation between the generalized cluster category $\C_{\bf S}$, the standard triangulated cluster categories associated with marked surfaces, as well as the Higgs category.

\subsection{Background on cluster categories}\label{sec1.1}

To (additively) categorify a cluster algebra $A$ without coefficients, one asks for a triangulated category $\C$ with the following two properties: firstly, $C$ is required to be triangulated $2$-Calabi--Yau, meaning that there exists a bifunctorial isomorphism
\[ \on{Ext}^i_{\C}(X,Y)\simeq \on{Ext}^{2-i}_{\C}(Y,X)^\ast\,.\]
Secondly, one asks that $C$ is equipped with cluster tilting objects, meaning objects $T\in C$ satisfying that
\begin{itemize}
\item $T$ is rigid, i.e.~$\on{Ext}^1_\C(T,T)\simeq 0$, and
\item every object $X\in \C$ is part of a distinguished triangle $T_1\to T_0\to X$, with $T_0,T_1$ given by finite direct sums of direct summands of $T$.
\end{itemize} 
Cluster tilting objects in triangulated categories admit a well behaved theory of mutations, see \cite{IY08}, that categorifies the mutations of clusters in the cluster algebra. For an additive categorification of the cluster algebra $A$, one typically asks that the cluster tilting objects in $\C$ are in bijection with the clusters of the cluster algebra. 

Given any $2$-Calabi--Yau triangulated category $\C$ with a cluster tilting object, there is an associated cluster character, see \cite{Pal08}. A cluster character on $\C$ with values in a commutative ring $A$ is a function $\chi\colon\on{ob}(\C)\rightarrow A$ on the set of isomorphism classes of objects in $\C$, satisfying the two properties we now describe. Note that a typical choice of $A$ would be a cluster algebra. Firstly, the cluster character is a kind of exponential map, in the sense that $\chi(X\oplus Y)= X\cdot Y$ for any two $X,Y\in \C$. We note that this is the main difference between an additive categorification and another approach called monoidal categorification of cluster algebras; in the latter the multiplication of the cluster algebra is realized in terms of a symmetric monoidal product (instead of the direct sum). To connect to the cluster algebra, the cluster character has to satisfy a further relation, called the cluster multiplication formula. It states that if $\on{dim}_k\on{Ext}^1_\C(Y,X)= \on{dim}_k\on{Ext}^1_\C(X,Y)=1$, and $X\rightarrow B\rightarrow Y$ and $Y\rightarrow B'\rightarrow X$ are the corresponding non-split extensions, then 
\[ \chi(X)\cdot \chi(Y)= \chi(B)+\chi(B')\,.\]
This allows to recover the cluster exchange relations in the cluster algebra in terms of the mutation of cluster tilting objects in $\C$ and the cluster character. 

For the categorification of cluster algebras \textit{with coefficients}, meaning that a subset of the cluster variables is frozen, one can consider Frobenius exact categories, or more generally Frobenius extriangulated categories \cite{NP19}. The definition of cluster tilting object and cluster character translate to this setting by replacing all extensions groups by the groups of exact extensions in the extriangulated category. The indecomposable injective-projective objects in such an extriangulated category play the role of the frozen cluster variables. They appear as direct summands of every cluster tilting object and cannot be mutated at.   

Triangulated and exact cluster categories can be constructed in various ways. As proved in \cite{KL23}, any $2$-Calabi--Yau triangulated category with a cluster tilting object satisfying typical properties arises via Amiot's construction \cite{Ami09}, this is known as Amiot's conjecture. We briefly describe Amiot's construction in the next paragraph. A generalization of this construction to the setting of cluster algebras with coefficients is the $2$-Calabi--Yau extriangulated Higgs category \cite{Wu21}, which we briefly describe in \Cref{introsubsec:otherclustercats} and consider in more detail in \Cref{subsec:Higgs}.

As input for Amiot's construction serves the Ginzburg algebra $\mathscr{G}(Q,W)$ associated with a quiver with potential $(Q,W)$, satisfying that the Jacobian algebra $\on{H}_0(\mathscr{G}(Q,W))$ is finite dimensional. The fact that $\D(\mathscr{G}(Q,W))$ is smooth and admits a left $3$-Calabi--Yau structure imply that the derived category of finite dimensional modules $\mathcal{D}^{\on{fin}}(\mathscr{G}(Q,W))$ is contained in the perfect derived category $\mathcal{D}^{\on{perf}}(\mathscr{G}(Q,W))$. The generalized cluster category of $\mathscr{G}(Q,W)$ can thus be defined as the Verdier quotient 
\begin{equation}\label{clcateq}
\mathcal{C}(Q,W)=\mathcal{D}^{\on{perf}}(\mathscr{G}(Q,W))/\mathcal{D}^{\on{fin}}(\mathscr{G}(Q,W))\,.
\end{equation}
The triangulated category $\C(Q,W)$ is indeed triangulated $2$-Calabi--Yau and the image of $\mathscr{G}(Q,W)$ in $\C(Q,W)$ defines a cluster tilting object, see \cite{Ami09}.

\subsection{1-periodic topological Fukaya categories}\label{sec1.2}

We fix an oriented compact surface ${\bf S}$ with non-empty boundary and a set $M\subset \partial{\bf S}$ of marked points on the boundary, such that each boundary component contains at least one marked point. We call ${\bf S}$ a marked surface. For example, the simple marked surfaces are $n$-gons and marked annuli. We further choose an ideal triangulation of ${\bf S}$. As will be explained in \Cref{sec1.3}, the generalized cluster category associated to the relative Ginzburg algebra arising from the ideal triangulation turns out to be equivalent to the $1$-periodic topological Fukaya category of ${\bf S}$. In this section, we describe the construction and properties of this topological Fukaya category. 

The $2$-periodic topological Fukaya category of the oriented surface ${\bf S}$ can be seen as describing the $2$-periodic partially wrapped Fukaya category of the symplectic manifold ${\bf S}$ with stops at the marked points. It is however arises via a purely topological and combinatorial construction as the global sections of a (co)sheaf of stable $\infty$-categories on a ribbon graph \cite{DK18,DK15,HKK17}, hence its name. As carefully explained in \cite{DK18}, such a cosheaf can be obtained from any $2$-periodic stable $\infty$-category $\D$, by evaluating a cyclic $2$-Segal object obtained from the Waldhausen $S_\bullet$-construction of $\D$, and the resulting $\D$-valued topological Fukaya category is an invariant of the oriented marked surface ${\bf S}$. More generally, if one is given an arbitrary stable $\infty$-category, one must further equip ${\bf S}$ with a framing (or more generally a line field) to define the $\D$-valued topological Fukaya category. 

In this paper, we mainly consider the choice of $2$-periodic stable $\infty$-category $\mathcal{D}=\mathcal{D}(k[t_1^\pm])$ given by the derived $\infty$-category of the graded Laurent algebra with generator in degree $|t_1|=1$ over an algebraically closed field $k$. This $\infty$-category can also be described as the derived $\infty$-category of $1$-periodic chain complexes, meaning chain complexes which are isomorphic to their shift. We call the topological Fukaya categories arsing from $\mathcal{D}(k[t_1^\pm])$ the $1$-periodic topological Fukaya categories. We warn the reader that contrary to the name, neither the $1$-periodic topological Fukaya categories, nor $\mathcal{D}(k[t_1^\pm])$, are $1$-periodic $\infty$-categories in the sense of admitting a trivialization of the suspension functor $[1]\simeq \on{id}$, unless $\on{char}(k)=2$. In \Cref{subsec:nclustercats}, we also consider $n$-periodic topological Fukaya categories, which describe the generalized $(n+1)$-cluster categories of marked surfaces. 

The $\mathcal{D}(k)$-valued topological Fukaya category is by \cite{HKK17} equivalent to the derived category of a graded gentle algebra and its representation type is tame. Its indecomposable compact object have been classified in \cite{HKK17} in terms of suitable curves in the underlying surface. Note that these curves are Lagrangians in the surface. The dimensions of the Homs between two objects corresponding to two curves can be described in terms of counts of intersections of the curves. In representation theory, these and further results relating the category to the underlying surface are commonly referred to as a geometric model for the topological Fukaya category, see \cite{OPS18}. In \Cref{sec5}, we give a similar geometric model for the $1$-periodic topological Fukaya category. The construction of objects from suitable curves, called matching curves, and the description of the Homs was given in the prequel paper \cite{Chr21b} for the derived $\infty$-category of a relative Ginzburg algebra $\mathcal{D}(\mathscr{G}_\mathcal{T})$, into which the $1$-periodic topological Fukaya category embeds fully faithfully (the embedding does not preserve compact objects). We prove the following geometrization Theorem.

\begin{theorem}[\Cref{thm:geom}]\label{thm:introgeom}
Let ${\bf S}$ be a marked surface and $\mathcal{C}_{\bf S}$ the associated $1$-periodic topological Fukaya category. Every compact object $X\in \mathcal{C}_{\bf S}$ splits uniquely into the direct sum of indecomposable objects associated to matching curves.
\end{theorem}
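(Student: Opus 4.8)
The plan is to reduce the statement to the classification of indecomposable objects in the perfect derived category of a gentle algebra, carried out over the graded Laurent ring $k[t_1^\pm]$ rather than over $k$, and then to match the resulting list of string and band objects with matching curves by means of the geometric model of the prequel \cite{Chr21b}. The statement naturally splits into an \emph{existence} part --- that every compact object of $\mathcal{C}_{\bf S}$ decomposes as a finite direct sum of indecomposables --- and an \emph{identification} part --- that the indecomposable compact objects are exactly the matching-curve objects. Uniqueness then follows formally from the Krull--Schmidt property established in the first part, together with the fact, recorded via the intersection description of the graded Homs in \cite{Chr21b}, that matching curves in distinct isotopy classes give non-isomorphic objects with local endomorphism algebras.

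For the existence/Krull--Schmidt part, I would first observe that $\mathcal{C}_{\bf S}$, being the global sections of a (co)sheaf of presentable $\mathcal{D}(k[t_1^\pm])$-linear $\infty$-categories on a finite ribbon graph, is presentable and $k[t_1^\pm]$-linear, so its subcategory of compact objects is idempotent complete. The key point is then that $\mathcal{C}_{\bf S}$ is proper over $k[t_1^\pm]$ --- the $1$-periodic analogue of the Hom-finiteness of Amiot's cluster categories --- so that for compact $X,Y$ the graded mapping complex $\on{RHom}_{\mathcal{C}_{\bf S}}(X,Y)$ is a perfect $k[t_1^\pm]$-module; since $k[t_1^\pm]$ with $|t_1|=1$ is a graded field, such a module is finitely generated graded-free, and hence its degree-zero part --- the morphism space in the homotopy category --- is finite-dimensional over $k$. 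Thus the endomorphism algebra of any compact object of $\on{ho}\mathcal{C}_{\bf S}$ is a finite-dimensional, in particular semiperfect, $k$-algebra, so the homotopy category of compact objects is Krull--Schmidt and every compact object decomposes uniquely into indecomposables.

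For the identification part, I would use the locally constant perverse schober $\mathcal{F}_\mathcal{T}^{\on{clst}}$ and a choice of ideal triangulation to present $\mathcal{C}_{\bf S}$ as the perfect derived category of an explicit ($A_\infty$-)algebra over $k[t_1^\pm]$ of gentle type --- a ``$1$-periodic gentle algebra'' --- which in good cases may be taken to be an honest graded gentle $k[t_1^\pm]$-algebra $A$, so that $\mathcal{C}_{\bf S}\simeq\on{Perf}(A)$; this is the $1$-periodic analogue of the description of the $\mathcal{D}(k)$-valued topological Fukaya category in \cite{HKK17}. The classification of indecomposable objects in the perfect derived category of a gentle algebra --- strings and bands, after Bekkert--Merklen and \cite{HKK17,OPS18} --- should then carry over with the bookkeeping of the $\mathbb{Z}$-grading, the roles played there by the field now being played by $k[t_1^\pm]$: every indecomposable compact object of $\mathcal{C}_{\bf S}$ is a string or a band object. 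Finally, under the fully faithful embedding $\mathcal{C}_{\bf S}\hookrightarrow\mathcal{D}(\mathscr{G}_\mathcal{T})$ these string and band objects are precisely the matching-curve objects of \cite{Chr21b}, with the graded Homs matching the graded intersection counts; in particular each is indecomposable, and the theorem follows by combining this with the Krull--Schmidt property above.

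The main obstacle is exactly the feature stressed in the introduction: one works not over the field $k$ but over $k[t_1^\pm]$, and the suspension is not trivialized. Concretely, the curves must be decorated with grading/orientation data, the scalar parameters labelling band objects now range over the graded units of $k[t_1^\pm]$, and one must propagate signs through the string combinatorics that would be invisible when $\on{char}(k)=2$; carrying the functorial-filtration arguments of Bekkert--Merklen through in this graded setting is the technical heart of the proof. A secondary, structural subtlety is that the embedding $\mathcal{C}_{\bf S}\hookrightarrow\mathcal{D}(\mathscr{G}_\mathcal{T})$ does not preserve compactness, so before importing the geometric model of \cite{Chr21b} one must pin down, by a local computation on $\mathcal{F}_\mathcal{T}$, precisely which objects of $\mathcal{D}(\mathscr{G}_\mathcal{T})$ arise as images of compact objects of $\mathcal{C}_{\bf S}$.
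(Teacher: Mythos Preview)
Your approach is plausible but genuinely different from the paper's, and the paper deliberately avoids the route you sketch.

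The paper proves the theorem by a direct local-to-global argument, using the model of $\mathcal{C}_{\bf S}$ as coCartesian sections of the Grothendieck construction of $\mathcal{F}_\mathcal{T}^{\on{clst}}$. Given a nonzero compact $X$, one picks an edge $e_0$ where $\on{ev}_{e_0}(X)\neq 0$, splits off a rank-one summand $A_0\simeq k[t_1^\pm]$ there, and then propagates this splitting segment by segment along the ribbon graph: at each vertex one produces an explicit ``clockwise'' or ``counterclockwise'' three-term splitting of $\on{res}_v(X)$ (Construction~5.14), and lower-triangular autoequivalences (Lemma~5.15) are used to retroactively adjust earlier choices so that all the local splittings become compatible. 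The compatible local pieces then glue to a global direct summand $M_\gamma\subset X$ for some matching curve $\gamma$; induction on $\sum_e \dim_k\on{ev}_e(X)$ finishes existence. Uniqueness is deduced from Krull--Schmidt (via the local endomorphism rings of the $M_\gamma$, Proposition~5.5) together with the intersection formulas. The introduction explicitly flags this as a ``novel approach'' in contrast to reducing to the gentle-algebra classification.

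Your route---present $\mathcal{C}_{\bf S}$ as $\on{Perf}$ of a gentle algebra in the $1$-periodic world and import the string/band classification---is in principle sound, and the paper does record the gentle-algebra description (Proposition~4.10). But the ``$1$-periodic Bekkert--Merklen'' you need is not in the literature; you correctly identify it as the technical heart, yet it is left as a black box. One technical correction: $k[t_1^\pm]$ is not graded-commutative (since $|t_1|=1$ and $t_1^2\neq 0$), so $\mathcal{C}_{\bf S}$ is not $k[t_1^\pm]$-linear in the symmetric monoidal sense; the paper works $k[t_2^\pm]$-linearly, and properness there (Proposition~4.8) is what gives Hom-finiteness. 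The paper's gluing argument buys self-containment within the perverse-schober framework and avoids any appeal to external classification theorems; your approach would be shorter \emph{if} the periodic gentle classification were available, but establishing it is comparable in effort to the paper's direct proof.
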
  

Note that the restriction to compact objects in \Cref{thm:introgeom} arises from the fact that $\mathcal{C}_{\bf S}$ is an $\on{Ind}$-complete $\infty$-category, the 'perfect' version is the full subcategory of compact objects.

A similar classification result for the objects in the generalized cluster categories of non-relative Ginzburg algebras arising from triangulated surfaces without punctures appears in \cite{BZ11}, the case with punctures was treated in \cite{AP21,QZ17}. We purse a novel approach to the proof of \Cref{thm:geom} in that we employ general $\infty$-categorical techniques on the description of objects in limits of diagrams of $\infty$-categories in terms of sections of the Grothendieck construction and use the gluing properties of the objects arising from matching curves. This formalizes the idea that global sections arise by gluing together compatible families of local sections.

Brav and Dyckerhoff \cite{BD19} define a notion of a Calabi--Yau structure associated with a functor between dg-categories, also referred to as a relative Calabi--Yau structure. They further show that these relative Calabi--Yau structures have nice gluing properties and use this to construct relative $1$-Calabi--Yau structures on $\mathcal{D}(k)$-valued topological Fukaya categories. In \cite{Chr23}, the author extends this construction to topological Fukaya categories valued in an arbitrary Calabi--Yau $\infty$-category which is linear over a commutative base dg-algebra or more generally base $\mathbb{E}_\infty$-ring spectrum. The derived $\infty$-category $\mathcal{D}(k[t_1^\pm])$ of $1$-periodic chain complexes is $2$-periodic and thus linear over the commutative dg-algebra $k[t_2^\pm]$ of graded Laurent polynomials with generator in degree $|t_2|=2$. As a $k[t_2^\pm]$-linear $\infty$-category, $\mathcal{D}(k[t_1^\pm])$ is smooth and proper and admits a right $1$-Calabi--Yau structure. Applying the gluing results yields a relative right $2$-Calabi--Yau structure on the $\mathcal{D}(k[t_1^\pm])$-valued topological Fukaya category, considered as a $k[t_2^\pm]$-linear smooth and proper $\infty$-category. More precisely, the right $2$-Calabi--Yau structure is defined on the functor 
\begin{equation}\label{eq:introg} G=\prod_{e\in \mathcal{T}_1^\partial}\on{ev}_e\colon\mathcal{C}_{\bf S}\longrightarrow \prod_{e\in \mathcal{T}_1^\partial} \mathcal{D}(k[t_1^\pm])\,,\end{equation}
which takes a global section and evaluates it at all external edges of the ribbon graph $\mathcal{T}$ (the set of which is denoted by $\mathcal{T}_1^\partial$).

Finally, we note that the results of \cite{DK18} show that the $1$-periodic topological Fukaya category $\mathcal{C}_{\bf S}$ admits an action of the mapping class group of the surface, see \Cref{thm:mcgact}. This mapping class group action localizes to an action on the triangulated cluster category of the marked surface, see \Cref{prop:MCGonclustercat}. This action has not been previously constructed.

\subsection{The generalized cluster categories of marked surfaces and perverse schobers}\label{sec1.3}

We again fix a marked surface ${\bf S}$ with marked points $M$ and choose an ideal triangulation with dual trivalent ribbon graph $\mathcal{T}$. This triangulation gives rise to a quiver with potential \cite{Lab09}. We consider its ice quiver version, where the frozen vertices corresponding to the boundary components of ${\bf S}\backslash M$. Associated with this is a smooth $3$-Calabi--Yau dg-algebra, denoted $\mathscr{G}_{\mathcal{T}}$ and known as the relative Ginzburg algebra  \cite{Chr22,Wu21}. As shown in \cite{Chr22}, its derived $\infty$-category $\mathcal{D}(\mathscr{G}_\mathcal{T})$ arises as the global sections of another constructible sheaf of stable $\infty$-categories on the ribbon graph $\mathcal{T}$, denoted $\mathcal{F}_{\mathcal{T}}$. This sheaf is an example of a perverse schober, meaning a categorified perverse sheaf in the sense of \cite{KS14}. At each vertex of the ribbon graph, a perverse schober is described by a spherical adjunction $\mathcal{V}\leftrightarrow \mathcal{N}$, meaning an adjunction where the left adjoint (or equivalently the right adjoint) is a spherical functor in the sense of \cite{AL17}. The constructible sheaves giving rise to $\mathcal{D}$-valued topological Fukaya categories are also examples of perverse schobers, where the spherical adjunctions at all vertices are trivial, i.e.~given by $0\leftrightarrow \mathcal{D}$. We refer to \cite{Chr22} for more background on perverse schobers on surfaces and how to describe them as constructible sheaves on ribbon graphs. The spherical adjunction describing the perverse schober $\mathcal{F}_\mathcal{T}$ at any vertex of $\mathcal{T}$ is given by 
\[ \phi^*\colon\mathcal{D}(k)\leftrightarrow \mathcal{D}(k[t_1])\noloc \phi_*\,,\]
where $k[t_1]$ denotes the graded polynomial algebra with generator in degree $1$ and $\phi$ is the morphism of dg-algebras $k[t_1]\xrightarrow{t_1\mapsto 0} k$.

We may formally apply Amiot's quotient construction to the relative Ginzburg algebra $\mathscr{G}_\mathcal{T}$ and define the $\on{Ind}$-complete version of the generalized cluster category as 
\[ \mathcal{C}_{\bf S}\coloneqq \mathcal{D}(\mathscr{G}_{\mathcal{T}})/\on{Ind}\mathcal{D}^{\on{fin}}(\mathscr{G}_{\mathcal{T}})\simeq \on{Ind}\left(\mathcal{D}^{\on{perf}}(\mathscr{G}_\mathcal{T})/\mathcal{D}^{\on{fin}}(\mathscr{G}_\mathcal{T})\right)\,.\]
We consider the $\on{Ind}$-complete version of the generalized cluster category because of the superior formal properties of presentable $\infty$-categories.

\begin{theorem}[\Cref{thm:clstcat}, \Cref{prop:indfin}]\label{introthm:schobers}
The stable $\infty$-categories $\mathcal{C}_{\bf S}$ and $\on{Ind}\mathcal{D}^{\on{fin}}(\mathscr{G}_{\mathcal{T}})$ arise as the global sections of two perverse subschobers $\mathcal{F}_{\mathcal{T}}^{\on{clst}}$ and $\mathcal{F}_{\mathcal{T}}^{\on{mnd}}$ of $\mathcal{F}_\mathcal{T}$, respectively. These fit into a fiber and cofiber sequence of perverse schobers:
\begin{equation}\label{eq:scofs}
\begin{tikzcd}
\mathcal{F}_{\mathcal{T}}^{\on{Ind-fin}}\arrow[r] \arrow[rd, phantom, "\square"]\arrow[d]& \mathcal{F}_{\mathcal{T}} \arrow[d] \\ 0\arrow[r] & \mathcal{F}_{\mathcal{T}}^{\on{clst}}
\end{tikzcd}
\end{equation}
\end{theorem}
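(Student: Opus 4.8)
The plan is to carry out Amiot's Verdier localization entirely on the level of perverse schobers parametrized by the ribbon graph $\mathcal{T}$, and only then to pass to global sections. Recall from \cite{Chr22} that such a perverse schober is a constructible (co)sheaf of stable presentable $\infty$-categories on $\mathcal{T}$ whose local model at each vertex is a spherical adjunction, that these assemble into a category with a well-behaved notion of fiber and cofiber sequences, and that $\mathcal{D}(\mathscr{G}_\mathcal{T})\simeq\Gamma(\mathcal{T},\mathcal{F}_\mathcal{T})$, where $\mathcal{F}_\mathcal{T}$ has local model $\phi^*\colon\mathcal{D}(k)\leftrightarrow\mathcal{D}(k[t_1])\noloc\phi_*$ at every vertex. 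The first step is the \emph{local} computation of Amiot's quotient for this adjunction: the compact finite-dimensional objects of $\mathcal{D}(k[t_1])$ form the localizing subcategory generated by $k=k[t_1]/(t_1)$, inverting $t_1$ identifies the Verdier quotient
\[
\mathcal{D}(k[t_1])/\on{Ind}\mathcal{D}^{\on{fin}}(k[t_1])\simeq\mathcal{D}(k[t_1^\pm])\,,
\]
with fully faithful right adjoint the inclusion of the full subcategory of objects on which $t_1$ acts invertibly, and of course $\mathcal{D}(k)/\on{Ind}\mathcal{D}^{\on{fin}}(k)\simeq 0$.

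Using this, I would define $\mathcal{F}_\mathcal{T}^{\on{Ind-fin}}$ (the monodromy subschober $\mathcal{F}_\mathcal{T}^{\on{mnd}}$) as the perverse subschober of $\mathcal{F}_\mathcal{T}$ spanned levelwise by the finite-dimensional objects — on a generic edge, the localizing subcategory generated by the images of the finite objects at the two adjacent vertices — and $\mathcal{F}_\mathcal{T}^{\on{clst}}$ as the levelwise Verdier quotient, which by the above recollement also embeds back into $\mathcal{F}_\mathcal{T}$ as the subschober of $t_1$-local objects. Its local model at every vertex is then the trivial spherical adjunction $0\leftrightarrow\mathcal{D}(k[t_1^\pm])$, so $\mathcal{F}_\mathcal{T}^{\on{clst}}$ is locally constant — precisely the type of cosheaf from which topological Fukaya categories are built. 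Two compatibilities must be verified: (i) the structure maps of $\mathcal{F}_\mathcal{T}$ — the spherical functors $\phi^*,\phi_*$ together with the monodromy equivalences rotating the halfedges at a vertex — preserve the finite subcategories and the $t_1$-local subcategories, so that $\mathcal{F}_\mathcal{T}^{\on{mnd}}$ and $\mathcal{F}_\mathcal{T}^{\on{clst}}$ are genuine subschobers; and (ii) they fit into a levelwise bicartesian square \eqref{eq:scofs} in the category of perverse schobers, i.e.\ taking levelwise cofibers preserves the spherical gluing datum at every vertex.

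Point (ii) is the main obstacle, and is exactly the non-trivial property of $\mathcal{F}_\mathcal{T}$ flagged in the introduction: one must show that the cofiber of the map of local models $(\text{local model of }\mathcal{F}_\mathcal{T}^{\on{mnd}})\to(\phi^*\dashv\phi_*)$ is again the local model of a perverse schober, namely the trivial one $0\leftrightarrow\mathcal{D}(k[t_1^\pm])$. This reduces to a direct computation with the adjunction $\phi^*\colon\mathcal{D}(k)\leftrightarrow\mathcal{D}(k[t_1])$ and its induced twist/monodromy, using crucially that $\phi$ is the quotient $k[t_1]\to k[t_1]/(t_1)$; it is here that one sees the failure for variants such as the perverse schober governing the non-relative Ginzburg algebra, whose vertex adjunction does not have this form. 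Granting (i) and (ii), I would apply $\Gamma(\mathcal{T},-)$: global sections is a finite limit indexed by $\mathcal{T}$, limits commute with limits, and fibers are limits, so a levelwise bicartesian square of stable $\infty$-categories is sent to a bicartesian square; thus \eqref{eq:scofs} yields a fiber and cofiber square with $\mathcal{D}(\mathscr{G}_\mathcal{T})$ in the top right and $0$ in the bottom left.

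It remains to identify the two diagonal corners. The cofiber of $\Gamma(\mathcal{T},\mathcal{F}_\mathcal{T}^{\on{mnd}})\to\mathcal{D}(\mathscr{G}_\mathcal{T})$ is by construction a Verdier quotient, so it suffices to identify the kernel $\Gamma(\mathcal{T},\mathcal{F}_\mathcal{T}^{\on{mnd}})$ with $\on{Ind}\mathcal{D}^{\on{fin}}(\mathscr{G}_\mathcal{T})$; this is \Cref{prop:indfin}, which I would prove by observing that the simple $\mathscr{G}_\mathcal{T}$-modules — which generate $\mathcal{D}^{\on{fin}}(\mathscr{G}_\mathcal{T})$ as a localizing subcategory — are exactly the global sections of $\mathcal{F}_\mathcal{T}$ supported at single vertices of $\mathcal{T}$, hence lie in $\mathcal{F}_\mathcal{T}^{\on{mnd}}$, and conversely that $\mathcal{F}_\mathcal{T}^{\on{mnd}}$ carries no gluing beyond the edges adjacent to a vertex, so that its global sections are precisely the localizing subcategory generated by these simples. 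Combining the two identifications gives $\Gamma(\mathcal{T},\mathcal{F}_\mathcal{T}^{\on{clst}})\simeq\mathcal{D}(\mathscr{G}_\mathcal{T})/\on{Ind}\mathcal{D}^{\on{fin}}(\mathscr{G}_\mathcal{T})=\mathcal{C}_{\bf S}$; since all categories involved are compactly generated and the relevant functors preserve compact objects, this is compatible with the description $\mathcal{C}_{\bf S}\simeq\on{Ind}(\mathcal{D}^{\on{perf}}(\mathscr{G}_\mathcal{T})/\mathcal{D}^{\on{fin}}(\mathscr{G}_\mathcal{T}))$. I expect the verification of (ii) — stability of the class of perverse schobers under this particular Verdier localization — to be the heart of the argument; the passage to global sections and the dévissage identifying $\mathcal{D}^{\on{fin}}$ are comparatively routine.
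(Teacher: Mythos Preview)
Your overall architecture---construct the semiorthogonal decomposition of perverse schobers levelwise, verify right admissibility of the inclusion $\mathcal{F}_\mathcal{T}^{\on{mnd}}\hookrightarrow\mathcal{F}_\mathcal{T}$ via a Beck--Chevalley condition, take the cofiber to get the locally constant schober with stalk $\mathcal{D}(k[t_1^\pm])$, then pass to global sections using that limits commute with limits---is exactly the paper's route (\Cref{lem:incadm}, \Cref{lem:locadj}, \Cref{prop:sodG}, \Cref{rem:sod1}). Your local computation of the quotient and your points (i)--(ii) match the paper's treatment.

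The divergence, and the gap, is in the identification $\mathcal{H}(\mathcal{T},\mathcal{F}_\mathcal{T}^{\on{mnd}})\simeq\on{Ind}\mathcal{D}^{\on{fin}}(\mathscr{G}_\mathcal{T})$. Your proposed argument does not work as stated. First, the simple $\mathscr{G}_\mathcal{T}$-modules are indexed by vertices of the quiver $Q_\mathcal{T}$, which correspond to \emph{edges} of $\mathcal{T}$ (the arcs of the triangulation), not vertices of $\mathcal{T}$; so the description ``supported at single vertices of $\mathcal{T}$'' is off. Second, and more seriously, the schober $\mathcal{F}_\mathcal{T}^{\on{mnd}}$ has nonzero generic stalk $\on{Ind}\mathcal{D}^{\on{fin}}(k[t_1])$, so its global sections involve genuine gluing across the entire ribbon graph; the claim that it ``carries no gluing beyond the edges adjacent to a vertex'' is false, and there is no immediate reason the simples generate $\mathcal{H}(\mathcal{T},\mathcal{F}_\mathcal{T}^{\on{mnd}})$. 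One inclusion, $\on{Ind}\mathcal{D}^{\on{fin}}(\mathscr{G}_\mathcal{T})\subset\mathcal{H}(\mathcal{T},\mathcal{F}_\mathcal{T})^{\on{Ind-fin}}$, is easy from the compact generator $\bigoplus_e\on{ev}_e^L(k[t_1])\simeq\mathscr{G}_\mathcal{T}$ (\Cref{lem:cptgen}); the converse requires producing compact generators of $\mathcal{H}(\mathcal{T},\mathcal{F}_\mathcal{T})^{\on{Ind-fin}}$ that are visibly finite over $\mathscr{G}_\mathcal{T}$.

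The paper does this not by a d\'evissage on simples but by invoking the geometric model: \Cref{lem:curvgen} imports from \cite{Chr21b} a set of compact generators given by objects $M_\gamma^L$ associated to \emph{finite pure matching curves} with local value $f^*(k)$, and these manifestly lie in $\mathcal{D}^{\on{fin}}(\mathscr{G}_\mathcal{T})$. The paper explicitly flags that the finiteness of these generating curves depends on every boundary component of ${\bf S}$ carrying a marked point---a hypothesis your simples-based argument would not see, which is a sign that something geometric is genuinely needed here.
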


At the vertices of $\mathcal{T}$, the perverse schober $\mathcal{F}_{\mathcal{T}}^{\on{Ind-fin}}$ is described by the spherical adjunction 
\begin{equation}\label{eq:intro1} \mathcal{D}(k)\longleftrightarrow \on{Ind}\mathcal{D}^{\on{fin}}(k[t_1])\end{equation}
arising from restricting $\phi^*\dashv \phi_*$ and $\mathcal{F}_{\mathcal{T}}^{\on{clst}}$ is described by the 'quotient' spherical adjunction 
\[ 0\longleftrightarrow \mathcal{D}(k[t_1])/\on{Ind}\mathcal{D}^{\on{fin}}(k[t_1])\simeq \mathcal{D}(k[t_1^\pm])\,,\]
which is the trivial spherical adjunction from above with $\mathcal{D}=\mathcal{D}(k[t_1^\pm])$. The derived $\infty$-category of $1$-periodic chain complexes $\mathcal{D}(k[t_1^\pm])\simeq \mathcal{D}(k[t_1])/\on{Ind}\mathcal{D}^{\on{fin}}(k[t_1])$ arises here naturally as the quotient of the derived category of the graded polynomial algebra $k[t_1]$ by it's $\on{Ind}$-complete finite derived category and may also be seen as the $1$-Calabi--Yau cluster category of type $A_1$. In this way, we arrive at the observation that the generalized cluster category $\mathcal{C}_{\bf S}$ associated to the relative Ginzburg algebra $\mathscr{G}_{\mathcal{T}}$ is the $1$-periodic topological Fukaya category. 

Replacing the $1$-periodic derived category by the $1$-Calabi--Yau cluster category of type $A_n$ and forming the corresponding topological Fukaya category, we expect to obtain a categorification of the higher rank cluster algebras associated with the marked surfaces in the sense of Fock--Goncharov \cite{FG06}. We will return to this question in future work. 

We further refine the statement of \Cref{introthm:schobers} by introducing a notion of semiorthogonal decomposition of perverse schobers, see \Cref{def:SODschobers}, and showing that the perverse schober $\mathcal{F}_{\mathcal{T}}$ admits a semiorthogonal decomposition into $\mathcal{F}_{\mathcal{T}}^{\on{clst}}$ and $\mathcal{F}_{\mathcal{T}}^{\on{Ind-fin}}$. We will see that the spherical adjunction \eqref{eq:intro1} is the monadic adjunction associated to the adjunction monad $\phi_*\phi^*$ of the adjunction $\phi^*\dashv \phi^*$. We expect the semiorthogonal decomposition of $\mathcal{F}_{\mathcal{T}}$ to be a special case of a general principle, by which, under mild assumptions, a perverse schober decomposes into subschobers with monadic or trivial spherical adjunctions. The passage from $\mathcal{D}(\mathscr{G}_\mathcal{T})$ to the generalized cluster category $\mathcal{C}_{\bf S}$ can hence be motivated intrinsically in the framework of perverse schobers.

Work of Ivan Smith \cite{Smi15} gives a symplectic interpretation of the finite derived category of the (non-relative) Ginzburg algebra associated with a triangulated marked surface: its homotopy category embeds fully faithfully into the derived Fukaya category of a Calabi--Yau threefold $Y$ with a fibration $\pi$ to ${\bf S}$. The wrapped Fukaya category of the regular fiber of $\pi$ is equivalent to $\D^{\on{perf}}(k[t_1])$. From this perspective, the $1$-periodic derived category $\D(k[t_1^\pm])$ describes the Rabinowitz wrapped Fukaya category of this fiber, which is by \cite{GGV22} equivalent to Efimov's (algebraizable) formal punctured neighborhood of infinity \cite{Efi17} of the wrapped Fukaya category of the fiber:

\begin{proposition}\label{prop:formalneighborhood}
The category of perfect complexes on the formal punctured neighborhood of infinity of $\D^{\on{perf}}(k[t_1])$ is equivalent to the perfect derived category of $1$-periodic chain complexes $\D^{\on{perf}}(k[t_1^\pm])$. 
\end{proposition}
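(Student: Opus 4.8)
The plan is to use Efimov's description of the formal punctured neighborhood of infinity via the categorical formal completion, combined with the explicit computation of the relevant Verdier quotient. By work of Efimov \cite{Efi17}, the formal punctured neighborhood of infinity of a smooth (but not proper) dg-category $\mathcal{A}$ is computed from the quotient $\widehat{\mathcal{A}} = \mathcal{D}^{\on{perf}}(\mathcal{A}\otimes_k \mathcal{A}^{\on{op}})\text{-action data}$; concretely, for $\mathcal{A} = \D^{\on{perf}}(k[t_1])$, one first passes to $\mathcal{D}^{\on{fin}}(k[t_1])$, the full subcategory of perfect modules with finite-dimensional total cohomology, which here consists of the finite-length torsion modules supported at the origin. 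I would first record that $k[t_1]$, being the graded polynomial algebra on a degree-$1$ generator, is a smooth dg-algebra, so that $\mathcal{D}^{\on{fin}}(k[t_1]) \subset \mathcal{D}^{\on{perf}}(k[t_1])$; this is the containment used throughout the excerpt in the discussion of Amiot's construction, and it is exactly what makes the Verdier quotient well-behaved.

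Next I would invoke the identification, already asserted in \Cref{sec1.3} of the excerpt, that
\[
\D^{\on{perf}}(k[t_1])/\mathcal{D}^{\on{fin}}(k[t_1]) \simeq \D^{\on{perf}}(k[t_1^{\pm}])\,,
\]
equivalently at the $\on{Ind}$-complete level $\mathcal{D}(k[t_1])/\on{Ind}\mathcal{D}^{\on{fin}}(k[t_1]) \simeq \mathcal{D}(k[t_1^{\pm}])$. This is a localization statement: inverting $t_1$ on the derived category of the graded polynomial ring yields the derived category of the graded Laurent ring, and the kernel of this localization is precisely the subcategory of $t_1$-power-torsion objects, i.e.\ $\mathcal{D}^{\on{fin}}(k[t_1])$ (using that $k[t_1]/(t_1) \simeq k$ generates it, together with dévissage). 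I would spell this out as a short lemma if it is not cited verbatim from \cite{Chr22}.

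The remaining point is the bridge supplied by \cite{GGV22}: the Rabinowitz wrapped Fukaya category, and on the algebraic side Efimov's formal punctured neighborhood of infinity $\widehat{\mathcal{A}}_\infty$ of $\mathcal{A} = \D^{\on{perf}}(k[t_1])$, is computed as the Verdier quotient $\mathcal{D}^{\on{perf}}(\mathcal{A})/\mathcal{D}^{\on{fin}}(\mathcal{A})$ — more precisely, Efimov identifies the category of perfect complexes on the formal punctured neighborhood with a localization of $\mathcal{A}$ at its proper objects, and under smoothness this localization is the quotient by $\mathcal{D}^{\on{fin}}(\mathcal{A})$. Combining this with the previous paragraph immediately gives the claimed equivalence with $\D^{\on{perf}}(k[t_1^{\pm}])$. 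The main obstacle I anticipate is not the algebra — the Verdier quotient computation is essentially the $A_1$ case already used in the paper — but rather pinning down the precise form of Efimov's definition that is being invoked, i.e.\ verifying that for a smooth, connective-type dg-algebra like $k[t_1]$ the "formal punctured neighborhood of infinity" in the sense of \cite{Efi17} does coincide on perfect complexes with the naive Verdier quotient $\mathcal{D}^{\on{perf}}/\mathcal{D}^{\on{fin}}$, with no correction terms; once that identification is in hand, the proof is a two-line chain of equivalences.
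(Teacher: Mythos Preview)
Your proposal correctly identifies the endpoint of the argument --- the equivalence $\D^{\on{perf}}(k[t_1])/\D^{\on{fin}}(k[t_1]) \simeq \D^{\on{perf}}(k[t_1^\pm])$ is indeed the paper's \Cref{lem:laurent} --- but the step you flag as ``the main obstacle'' is in fact the entire content of the proof, and you have not supplied it. Efimov's (algebraizable) formal punctured neighborhood of infinity is \emph{not} defined as the Verdier quotient $\D^{\on{perf}}(\mathcal{A})/\D^{\on{fin}}(\mathcal{A})$; for a smooth category $\mathcal{A}$ it is constructed via a smooth and proper categorical compactification $\mathcal{Z}\to \mathcal{B}\to \mathcal{A}$ and then taking the singularity category $\D^{\on{fin}}(\mathcal{Z})/\D^{\on{perf}}(\mathcal{Z})$ of the kernel. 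Your appeal to \cite{GGV22} does not help here: that reference links the Rabinowitz Fukaya category to Efimov's construction, not Efimov's construction to the naive cluster-type quotient.

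The paper closes the gap by exhibiting an explicit compactification: the Verdier sequence
\[
\D^{\on{perf}}(k[\epsilon_{-2}]/\epsilon_{-2}^2)\longrightarrow \D^{\on{perf}}(kQ)\longrightarrow \D^{\on{perf}}(k[t_1])
\]
with $kQ$ the graded Kronecker quiver (arrows in degrees $0$ and $1$), which is smooth and proper, and kernel the graded dual numbers in degree $-2$. Efimov's definition then gives the formal punctured neighborhood as the singularity category of $k[\epsilon_{-2}]/\epsilon_{-2}^2$. The remaining, and nontrivial, input is \cite[Thm.~4.10]{Ami09}, which identifies this singularity category with the generalized cluster category $\D^{\on{perf}}(k[t_1])/\D^{\on{fin}}(k[t_1])$; only then does \Cref{lem:laurent} finish the argument. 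So your two-line chain of equivalences is missing precisely the bridge you yourself were worried about, and the paper's contribution is to build that bridge concretely rather than assert it.
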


\subsection{The exact \texorpdfstring{$\infty$}{infinity}-structure and the categorification of the cluster algebra}\label{sec1.4}

Let again ${\bf S}$ be a marked surface and choose an auxiliary ideal triangulation with dual trivalent ribbon graph $\mathcal{T}$. As explained in \Cref{sec1.3}, the generalized cluster category $\mathcal{C}_{\bf S}$ arising from the relative Ginzburg algebra $\mathscr{G}_{\mathcal{T}}$ is equivalent to the $1$-periodic topological Fukaya category of the marked surface ${\bf S}$. In this section, we explain how we equip the stable $\infty$-category $\mathcal{C}_{\bf S}$ with the structure of an exact $\infty$-category and how this exact $\infty$-category gives an additive categorification of a cluster algebra with coefficients. 
 
As mentioned in \Cref{sec1.2}, the functor $G\colon\mathcal{C}_{\bf S}\rightarrow \prod_{e\in \mathcal{T}_1^\partial}\mathcal{D}(k[t_1^\pm])$ from \eqref{eq:introg} that restricts global sections to the external edges has a right $2$-Calabi--Yau structure. We use this relative Calabi--Yau structure to identify a subset of morphisms in $\mathcal{C}_{\bf S}$ which satisfy the $2$-Calabi--Yau duality.

\begin{definition}[\Cref{def:cyfun} and \Cref{lem:cyfun}]
Let
\[ \on{Mor}_{\mathcal{C}_{\bf S}}^{\on{CY}}(\mhyphen,\mhyphen)\subset \on{Mor}_{\mathcal{C}_{\bf S}}(\mhyphen,\mhyphen)\] 
be the maximal subfunctor of the derived Hom-functor satisfying that 
\[ \on{Mor}_{\mathcal{C}_{\bf S}}^{\on{CY}}(G(\mhyphen),G(\mhyphen))\simeq 0\,.\] 
For $X,Y\in \mathcal{C}_{\bf S}$, we call the $k$-vector space $\on{Ext}^{i,\on{CY}}(X,Y)\coloneqq\on{H}_0\on{Mor}_{\mathcal{C}_{\bf S}}^{\on{CY}}(X,Y[i])$ the Calabi--Yau extensions. Calabi--Yau extensions form a subfunctor of the $\on{Ext}$-functor.
\end{definition}

\begin{proposition}[\Cref{prop:cydual}]\label{prop:introcy}
The Calabi--Yau extensions satisfy the $2$-Calabi--Yau duality 
\[ \on{Ext}^{i,\on{CY}}_{\mathcal{C}_{\bf S}}(X,Y)\simeq \on{Ext}^{2-i,\on{CY}}_{\mathcal{C}_{\bf S}}(Y,X)^*\,,\]
bifunctorially in $X$ and $Y$.
\end{proposition}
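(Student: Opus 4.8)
The plan is to deduce the $2$-Calabi--Yau duality for the subfunctor $\on{Mor}^{\on{CY}}_{\mathcal{C}_{\bf S}}$ from the relative right $2$-Calabi--Yau structure on the functor $G\colon \mathcal{C}_{\bf S}\to \prod_{e\in\mathcal{T}_1^\partial}\mathcal{D}(k[t_1^\pm])$ recalled in \Cref{sec1.2}. Recall that such a relative Calabi--Yau structure, in the sense of Brav--Dyckerhoff \cite{BD19}, is the datum of a class in the (negative cyclic) homology of the relative cone of $G$, whose associated pairing induces, for the (compact) source, a $k[t_2^\pm]$-linear equivalence between the inverse dualizing bimodule of $\mathcal{C}_{\bf S}$ and (a shift of) the diagonal bimodule, compatibly with the corresponding absolute $1$-Calabi--Yau structure on the target $\prod_e \mathcal{D}(k[t_1^\pm])$. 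Concretely, this yields a fiber sequence of $k[t_2^\pm]$-bimodule (co)pairings relating $\on{Mor}_{\mathcal{C}_{\bf S}}(X,Y)$, $\on{Mor}_{\mathcal{C}_{\bf S}}(Y,X)^\vee[-2]$, and the contribution of the boundary, the latter expressed through $\on{Mor}_{\prod_e\mathcal{D}(k[t_1^\pm])}(G(X),G(Y))$ and its $1$-Calabi--Yau dual. I would write this out as a commuting square (a "relative Serre duality" square) in the derived category of $k[t_2^\pm]$-modules, with horizontal maps the unit/counit of the Calabi--Yau pairing and vertical maps induced by $G$.

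First I would make precise the sense in which $\on{Mor}^{\on{CY}}_{\mathcal{C}_{\bf S}}(X,Y)$ is the fiber of the natural map $\on{Mor}_{\mathcal{C}_{\bf S}}(X,Y)\to \on{Mor}_{\prod_e\mathcal{D}(k[t_1^\pm])}(G(X),G(Y))$: by the definition just given, $\on{Mor}^{\on{CY}}_{\mathcal{C}_{\bf S}}(\mhyphen,\mhyphen)$ is the maximal subfunctor killed by $G$, and since $G$ admits (by smoothness/properness over $k[t_2^\pm]$ and the gluing setup) both adjoints, the relevant universal property identifies $\on{Mor}^{\on{CY}}_{\mathcal{C}_{\bf S}}(X,Y)$ with $\on{fib}\big(\on{Mor}_{\mathcal{C}_{\bf S}}(X,Y)\to \on{Mor}_{\mathcal{D}^{\otimes}}(G(X),G(Y))\big)$, the "morphisms supported away from the boundary". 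Then I would invoke the relative Calabi--Yau square: the total morphism complex $\on{Mor}_{\mathcal{C}_{\bf S}}(X,Y)$ is $2$-Calabi--Yau-dual to $\on{Mor}_{\mathcal{C}_{\bf S}}(Y,X)$ \emph{relative to the boundary}, i.e.\ the cofiber of $\on{Mor}^{\on{CY}}_{\mathcal{C}_{\bf S}}(X,Y)\to \on{Mor}_{\mathcal{C}_{\bf S}}(X,Y)$ is identified, via the $1$-Calabi--Yau structure on the target, with (a shift of) the $k$-linear dual of the corresponding cofiber for the pair $(Y,X)$. Chasing the square, the induced map on fibers gives the desired equivalence
\[
\on{Mor}^{\on{CY}}_{\mathcal{C}_{\bf S}}(X,Y)\;\simeq\;\on{Mor}^{\on{CY}}_{\mathcal{C}_{\bf S}}(Y,X)^{\vee}[-2]
\]
in $\mathcal{D}(k[t_2^\pm])$, bifunctorial in $X,Y$ because every map in the square is. Passing to $k$-vector spaces via $\on{Ext}^{i,\on{CY}}=\on{H}_0\on{Mor}^{\on{CY}}(\mhyphen,\mhyphen[i])$ and unwinding the grading shift $[-2]$ yields $\on{Ext}^{i,\on{CY}}_{\mathcal{C}_{\bf S}}(X,Y)\simeq \on{Ext}^{2-i,\on{CY}}_{\mathcal{C}_{\bf S}}(Y,X)^*$.

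There is one subtlety I would be careful about: the duality $(\mhyphen)^\vee$ coming from relative Calabi--Yau structures is a priori $k[t_2^\pm]$-linear duality, whereas the statement asks for $k$-linear duality on $\on{Ext}$-groups; I would resolve this by noting that $k[t_2^\pm]$ is $2$-periodic and the $\on{Ext}^{i,\on{CY}}$ groups are finite-dimensional over $k$ in each internal degree (which follows from properness of $G$ together with finiteness of the boundary term), so $k[t_2^\pm]$-linear duality induces ordinary $k$-linear duality on homotopy groups up to the degree shift already accounted for. The main obstacle is the first step — establishing that $\on{Mor}^{\on{CY}}_{\mathcal{C}_{\bf S}}(X,Y)$ really is the fiber of the boundary-evaluation map and that this fiber is the "correct" summand matched up by the relative Calabi--Yau square — since this is where the maximality in the definition and the adjointness properties of $G$ have to interact cleanly; once that identification is in place, the duality is a formal consequence of Brav--Dyckerhoff's gluing formalism as recalled in \cite{Chr23}.
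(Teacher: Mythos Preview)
There is a genuine gap in your first step, and you correctly flag it as the main obstacle---but the issue is not that the identification is delicate; it is that it is false. By \Cref{def:cyfun}, $\on{Mor}^{\on{CY}}_{\mathcal{C}_{\bf S}}(X,Y)$ is the \emph{maximal direct summand} of $\on{Mor}_{\mathcal{C}_{\bf S}}(X,Y)$ annihilated by $G$ (equivalently, the kernel in the abelian homotopy category $\on{ho}\mathcal{D}(k[t_2^\pm])$), not the derived fiber of $\on{Mor}_{\mathcal{C}_{\bf S}}(X,Y)\to \on{Mor}(G(X),G(Y))$. The relative $2$-Calabi--Yau structure identifies that derived fiber with $\on{Mor}_{\mathcal{C}_{\bf S}}(X,\on{id}^*_{\mathcal{C}_{\bf S}}(Y)[-2])\simeq \on{Mor}_{\mathcal{C}_{\bf S}}(Y,X)^*[-2]$, which is the full Serre dual, not $\on{Mor}^{\on{CY}}_{\mathcal{C}_{\bf S}}(Y,X)^*[-2]$. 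So your diagram chase, if carried out as written, would yield $\on{Mor}^{\on{CY}}(X,Y)\simeq \on{Mor}(Y,X)^*[-2]$, which is wrong whenever there are boundary intersections. Likewise the cofiber of $\on{Mor}^{\on{CY}}\hookrightarrow \on{Mor}$ is only a split summand of $\on{Mor}(G(X),G(Y))$, so the $1$-Calabi--Yau structure on the target does not directly dualize it.

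The paper's proof (\Cref{prop:cydual}) proceeds differently. It uses that every morphism in $\mathcal{D}(k[t_2^\pm])$ splits to characterize $\on{Mor}^{\on{CY}}(X,Y)$ as the maximal \emph{simultaneous} direct summand of both $\on{Mor}(X,Y)$ and the fiber $\on{Mor}(X,\on{id}^*(Y)[-2])$ preserved by the comparison map $\alpha$ between them. It then rewrites the same four-term sequence via Serre duality and uses the sphericalness of $G$---specifically the relation $H\simeq F\circ T_{\mathcal{D}}$ and the identification of the relevant map with a counit of $F\dashv G$---to show that under dualization this ``maximal simultaneous summand'' condition becomes precisely the defining condition for $\on{Mor}^{\on{CY}}(Y[-2],X)^*$. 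The sphericalness is doing real work here; the relative Calabi--Yau square alone is not enough.
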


In terms of the geometric model for $\mathcal{C}_{\bf S}$ based on curves in ${\bf S}$, Calabi--Yau extensions have a very simple interpretation. Given two objects arising from curves in ${\bf S}$, a basis of the extensions arises from crossings and directed boundary intersections. The extensions arising from crossings are Calabi--Yau extensions, whereas the extensions arising from directed boundary intersections are not Calabi--Yau. 

The Calabi--Yau extensions can be seen as the exact extensions in an exact $\infty$-structure on $\mathcal{C}_{\bf S}$, which arises from the Calabi--Yau functor $G\colon\mathcal{C}_{\bf S}\rightarrow \prod_{e\in \mathcal{T}_1^\partial}\mathcal{D}(k[t_1^\pm])$ as follows. We consider $\prod_{e\in \mathcal{T}_1^\partial}\mathcal{D}(k[t_1^\pm])$ as an exact $\infty$-category with the split-exact structure, meaning that the exact sequences are the split fiber and cofiber sequences. We can now pull back the exact structure along $G$, to an exact structure on $\mathcal{C}_{\bf S}$, see \Cref{ex:splitpb}. The exact sequences in $\mathcal{C}_{\bf S}$ are by definition those which are mapped under $G$ to split-exact sequences in $\prod_{e\in \mathcal{T}_1^\partial} \mathcal{D}(k[t_1^\pm])$. The isomorphism classes of such exact sequences $X\rightarrow Z\rightarrow Y$ are described by $\on{Ext}^{1,\on{CY}}_{\mathcal{C}_{\bf S}}(X,Y)$. The exact structure further induces an extriangulated structure in the sense of Nakaoka-Palu on the homotopy $1$-category $\on{ho}\C_{\bf S}^{\on{c}}$ of the subcategory of compact objects. \Cref{prop:introcy} states that this extriangulated category is $2$-Calabi--Yau. We note that pulling back exact structures is a general source of exact structures. In the setting of extriangulated categories, this is known as 'relative theory', see \cite[Section 3.2]{HLN21}. 

If the functor along which the split-exact structure is pulled back is a spherical functor, we show that the exact $\infty$-category is furthermore Frobenius, meaning that it has enough injective and enough projective objects and that the two classes of these objects coincide, see \Cref{prop:frobex}. In the setting of triangulated categories, this was also observed in \cite{BS21}. The functor $G$ is indeed spherical, see \Cref{cor:sphbdry}. 

The following Theorem is the first part of the categorification of the cluster algebra in terms of $\mathcal{C}_{\bf S}$.

\begin{theorem}[\Cref{cor:bmrcluster}]\label{thm:introct}
There are canonical bijections between the sets of the following objects.
\begin{itemize}
\item Clusters of the cluster algebra with coefficients in the boundary arcs associated with ${\bf S}$.
\item Ideal triangulations of ${\bf S}$.
\item Cluster tilting objects in the $2$-Calabi--Yau Frobenius extriangulated category $\on{ho}\mathcal{C}_{\bf S}^{\on{c}}$.
\end{itemize}
\end{theorem}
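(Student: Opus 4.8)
The plan is to establish the two bijections separately and then compose. The bijection between clusters of the surface cluster algebra with coefficients in the boundary arcs and ideal triangulations of ${\bf S}$ is essentially the content of Fomin--Thurston \cite{FT12}: clusters correspond to maximal collections of pairwise compatible tagged arcs, which (in the absence of punctures) are exactly the arcs of an ideal triangulation, with the boundary arcs playing the role of the frozen variables. So the real work is the bijection between ideal triangulations and cluster tilting objects in $\on{ho}\mathcal{C}_{\bf S}^{\on{c}}$. I would first fix the ``initial'' ideal triangulation $\mathcal{T}$ used to build $\mathscr{G}_{\mathcal{T}}$ and observe that the image of $\mathscr{G}_{\mathcal{T}}$ in $\mathcal{C}_{\bf S}$ (its summands indexed by the arcs of $\mathcal{T}$) is a cluster tilting object in the exact $\infty$-category $\mathcal{C}_{\bf S}^{\on{c}}$: rigidity with respect to $\on{Ext}^{1,\on{CY}}$ and the resolution property follow from the corresponding statement for Amiot's construction applied to the relative Ginzburg algebra, which is precisely Wu's Higgs category picture \cite{Wu21}, together with the identification of $\mathcal{C}_{\bf S}$ with the Higgs category and of the exact structure with the one pulled back along $G$ that is advertised earlier in the introduction.

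Next I would use the theory of mutation of cluster tilting objects in $2$-Calabi--Yau Frobenius extriangulated categories (Iyama--Yoshino \cite{IY08}, in the extriangulated incarnation of Nakaoka--Palu \cite{NP19}), which is available because \Cref{prop:introcy} gives the $2$-Calabi--Yau duality on $\on{Ext}^{1,\on{CY}}$ and \Cref{prop:frobex} (applicable since $G$ is spherical by \Cref{cor:sphbdry}) gives the Frobenius property, so that the indecomposable injective-projectives are exactly the objects attached to the boundary arcs and can never be mutated. Mutation of the cluster tilting object at the summand indexed by a non-boundary arc $\gamma$ produces the cluster tilting object associated with the flipped triangulation, because the geometric model of \Cref{thm:introgeom} identifies both the summands and the mutation exchange triangles with arcs and flips in ${\bf S}$: concretely, the two non-split $\on{CY}$-extensions realizing the mutation correspond to the two ways of resolving the crossing of $\gamma$ with its flip $\gamma'$, matching the two terms of the Ptolemy/exchange relation. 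Since the flip graph of ideal triangulations of a surface without punctures is connected, and since the mutation graph of basic cluster tilting objects is connected, the assignment $\mathcal{T}' \mapsto T_{\mathcal{T}'}$ is a well-defined surjection onto the set of basic cluster tilting objects; injectivity follows because distinct triangulations give non-isomorphic collections of indecomposables under the classification of \Cref{thm:introgeom}.

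The main obstacle I anticipate is the matching of mutation of cluster tilting objects with the flip of triangulations at the level of the geometric model --- i.e.~showing that the exchange triangles computed abstractly in the extriangulated category $\on{ho}\mathcal{C}_{\bf S}^{\on{c}}$ are exactly the ones predicted by resolving curve crossings in ${\bf S}$. This requires knowing that the $1$-dimensional $\on{Ext}^{1,\on{CY}}$ spaces in a mutation are precisely those coming from crossings (not from directed boundary intersections), which is the interpretation of $\on{Ext}^{\on{CY}}$ flagged after \Cref{prop:introcy}, and then computing the middle term of the corresponding extension via the gluing/local-to-global description of objects from matching curves. A secondary subtlety is bookkeeping the frozen (boundary-arc) summands consistently on both sides, so that ``cluster tilting object'' and ``seed'' really match including coefficients; this is handled by noting that every cluster tilting object contains all the injective-projectives, exactly mirroring the fact that every ideal triangulation contains all boundary arcs.
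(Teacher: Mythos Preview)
Your route differs from the paper's, and it contains a genuine gap. The paper does not use mutation or flip-graph connectivity at all: it classifies cluster tilting objects \emph{directly}. By the geometrization theorem, every basic object of $\on{ho}\mathcal{C}_{\bf S}^{\on{c}}$ is $\bigoplus_{\gamma\in J}M_\gamma$ for a finite set $J$ of matching curves. Using \Cref{thm:hom1}, \Cref{thm:hom2} and the identification of $\on{Ext}^{1,\on{CY}}$ with crossings, one shows (i) $M_\gamma$ is rigid iff $\gamma$ is an arc, (ii) $\bigoplus_{\gamma\in J}M_\gamma$ is rigid iff the arcs in $J$ are pairwise compatible, and (iii) the object is cluster tilting iff $J$ is an ideal triangulation, the last step because any matching curve not in a triangulation must cross one of its arcs, so $\on{Ext}^{1,\on{CY}}(\bigoplus_{\gamma\in I}M_\gamma,M_{\gamma'})\neq 0$ whenever $\gamma'\notin I$. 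No mutation, no flip graph, no Higgs identification is needed for this step.

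The gap in your argument is the sentence ``since the mutation graph of basic cluster tilting objects is connected''. This is exactly the nontrivial surjectivity statement you are trying to prove, and it is not a formal consequence of the $2$-Calabi--Yau Frobenius structure; Iyama--Yoshino mutation tells you that you \emph{can} mutate at any non-projective summand and land on another cluster tilting object, not that every cluster tilting object is reachable from a given one. Connectedness of the flip graph for triangulations only gives you that the map $\{\text{ideal triangulations}\}\to\{\text{cluster tilting objects}\}$ is well-defined and compatible with mutation; it does not give surjectivity. You already have the tool that fixes this: apply the geometrization theorem to an \emph{arbitrary} basic cluster tilting object, then use rigidity with respect to $\on{Ext}^{1,\on{CY}}$ to force the summands to be pairwise non-crossing arcs, and cluster-tilting maximality to force the collection to be an ideal triangulation. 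That is precisely the paper's argument, and it bypasses the mutation-graph issue entirely.
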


In \Cref{subsec:clustertilting}, we also describe the endomorphism algebras of the cluster tilting objects, they are given by finite-dimensional non-smooth gentle algebras. 

The second part of the additive categorification of a cluster algebra in terms of $\mathcal{C}_{\bf S}$ is the description of a cluster character on the extriangulated category $\on{ho}\mathcal{C}_{\bf S}^{\on{c}}$ with values in the commutative Kauffman Skein algebra $\on{Sk}^1_{\bf S}$ of the surface ${\bf S}$ (with parameter $q=1$). The elements of this algebra are links in ${\bf S}$, meaning superpositions of curves, modulo relations such as the Kauffman Skein relation, see \Cref{def:skein}. This algebra embeds into the upper cluster algebra of ${\bf S}$ with coefficients in the boundary arcs, see \cite{Mul16}. 

\begin{theorem}[\Cref{thm:char}]
There is a cluster character $\chi\colon\on{obj}(\mathcal{C}_{\bf S}^{\on{c}})\rightarrow \on{Sk}^1_{\bf S}$ with values in the commutative Skein algebra of links in ${\bf S}$. The character maps an object corresponding to a matching curve in ${\bf S}$ to the matching curve considered as a link with a single component. Furthermore, composing $\chi$ with the inclusion of $\on{Sk}^1_{\bf S}$ into the upper cluster algebra yields a cluster character to the upper cluster algebra.
\end{theorem}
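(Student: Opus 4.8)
The plan is to construct $\chi$ directly from the geometric model of \Cref{sec5} and then verify the two defining axioms of a cluster character recalled in \Cref{sec1.1} (see \cite{Pal08}). By \Cref{thm:introgeom} every compact object of $\mathcal{C}_{\bf S}$ decomposes uniquely into indecomposables, each of which is the object $X_\gamma$ associated with a matching curve $\gamma$ in ${\bf S}$; I set $\chi(X_\gamma)$ to be $\gamma$ regarded as a one-component link in $\on{Sk}^1_{\bf S}$ (so a boundary arc goes to its frozen class), and extend by $\chi\!\left(\bigoplus_i X_{\gamma_i}\right)=\prod_i\chi(X_{\gamma_i})$. Well-definedness and the exponential property $\chi(X\oplus Y)=\chi(X)\chi(Y)$ are then immediate, as is the normalization on matching curves stated in the theorem. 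What remains is the cluster multiplication formula for $2$-Calabi--Yau exact extensions, and the claim that $\chi$ takes values in the upper cluster algebra.

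The heart of the argument is local. Suppose $\dim_k\on{Ext}^{1,\on{CY}}_{\mathcal{C}_{\bf S}}(X_\gamma,X_{\gamma'})=\dim_k\on{Ext}^{1,\on{CY}}_{\mathcal{C}_{\bf S}}(X_{\gamma'},X_\gamma)=1$ (the two being simultaneously one-dimensional is automatic by the Calabi--Yau duality of \Cref{prop:introcy}). By the description of Calabi--Yau extensions in the geometric model — Calabi--Yau $\on{Ext}^1$'s are spanned by interior crossings, whereas directed boundary intersections give non-Calabi--Yau classes and do not enter the Kauffman relations — the curves $\gamma$ and $\gamma'$ in minimal position meet in exactly one transverse interior point. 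I claim the middle terms $B$ and $B'$ of the two non-split exact sequences $X_\gamma\to B\to X_{\gamma'}$ and $X_{\gamma'}\to B'\to X_\gamma$ are the objects associated with the two Kauffman smoothings $\gamma''_1$ and $\gamma''_2$ of this crossing. To prove this I would restrict the perverse schober $\mathcal{F}^{\on{clst}}_{\mathcal{T}}$ to a small disk containing the crossing, reducing the computation of the relevant cones to a model situation with a controlled number of edges of the ribbon graph, and then compute the cone realizing the extension using the gluing and Hom computations of \Cref{sec5}; the two smoothings should emerge from the two exact triangles attached to a one-dimensional $\on{Ext}$. Granting this, the Kauffman skein relation at $q=1$ gives $\gamma\cdot\gamma'=[\gamma''_1]+[\gamma''_2]$ in $\on{Sk}^1_{\bf S}$, i.e.\ $\chi(X_\gamma)\chi(X_{\gamma'})=\chi(B)+\chi(B')$; multiplicativity then promotes this to arbitrary compact $X,Y$ via their indecomposable decompositions.

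For the last assertion, Muller's theorem \cite{Mul16} embeds $\on{Sk}^1_{\bf S}$ as a subalgebra of the upper cluster algebra of ${\bf S}$ with coefficients in the boundary arcs, and the two cluster-character axioms are manifestly preserved under postcomposition with an injective ring homomorphism, so $\chi$ composed with this embedding is again a cluster character. For consistency with \Cref{thm:introct} it is worth recording in addition that $\chi$ sends the cluster tilting object attached to the auxiliary triangulation $\mathcal{T}$ to the initial cluster of lambda lengths, so that iterating the cluster multiplication formula along mutations of cluster tilting objects exhibits every cluster variable in the image of $\chi$.

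I expect the main obstacle to be the local identification of the extension middle terms $B,B'$ with the two smoothings $\gamma''_1,\gamma''_2$: this is where one must genuinely compute cones in the glued category, keep track of which of the two exact triangles produces which smoothing (an orientation bookkeeping point, since the Calabi--Yau duality of \Cref{prop:introcy} interchanges the two directions), and handle the degenerate cases where a smoothing yields a closed matching loop, a contractible loop (which is sent to a scalar by the loop relation at $q=1$), or a curve that must first be isotoped off the boundary. Once the single-crossing case is settled, passing from the skein algebra to the upper cluster algebra and from indecomposables to all objects is formal.
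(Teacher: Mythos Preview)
Your approach coincides with the paper's: define $\chi$ on indecomposables via the geometrization theorem, extend multiplicatively, and reduce the cluster multiplication formula to the identification of the two non-split extensions with the two Kauffman smoothings of a single crossing. The paper records that cone computation separately as \Cref{prop:cone}, so the local computation you anticipate is already available; the passage to the upper cluster algebra via \cite{Mul16} is likewise identical.

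There is, however, one case you have overlooked. Your inference that $\dim_k\on{Ext}^{1,\on{CY}}_{\mathcal{C}_{\bf S}}(M_\gamma,M_{\gamma'})=1$ forces a single transverse interior crossing rests on \Cref{thm:hom1} together with \Cref{geomcyprop}, but \Cref{thm:hom1} only applies when $\gamma$ and $\gamma'$ have \emph{distinct underlying curves}. When $\gamma,\gamma'$ are closed matching curves sharing the same underlying curve (hence not ``distinct'' in the sense of \Cref{def:curve}, yet possibly different as matching curves through their rank), the crossing picture is unavailable and no smoothing argument applies. The paper disposes of this case by proving that $\dim_k\on{Ext}^{1,\on{CY}}_{\mathcal{C}_{\bf S}}(M_\gamma,M_{\gamma'})$ is then always even, so the hypothesis of the multiplication formula is vacuous: using the coequalizer presentation \eqref{eq:ii} of $M_\gamma$, one exhibits $\on{Mor}_{\mathcal{C}_{\bf S}}(M_\gamma,M_{\gamma'})$ as the fiber of a map in $\mathcal{D}(k[t_2^\pm])$ that lifts through $\mathcal{D}(k[t_1^\pm])$, which forces it to be free of even rank over $k[t_2^\pm]$. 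Without this parity step your verification of the cluster multiplication formula is incomplete.
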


\subsection{Relation with other types of cluster categories of surfaces}\label{introsubsec:otherclustercats}

We begin by comparing the approach of this paper with Yilin Wu's recent approach to the categorification of cluster algebras with coefficients called relative cluster categories and Higgs categories, see \cite{Wu21}. In loc.~cit., a different generalization of Amiot's construction is considered than in this paper, however the outcome is also a Frobenius $2$-Calabi--Yau extriangulated category $\on{ho}\mathcal{H}$, called the Higgs category. The extriangulated structure of the Higgs category arises from an ambient triangulated category $\on{ho}\C^{\on{rel}}$, called the relative cluster category, in which the Higgs category lies as an extension closed subcategory. The relative cluster category associated with the relative Ginzburg algebra $\mathscr{G}_\mathcal{T}$ of the marked surface ${\bf S}$ with ideal triangulation $\mathcal{T}$ is given by the Verdier quotient $\C^{\on{rel}}=\D^{\on{perf}}(\mathscr{G}_\mathcal{T})/\D^{\on{fin}}_F(\mathscr{G}_\mathcal{T})$, where $\D^{\on{fin}}_F(\mathscr{G}_\mathcal{T})\subset \D^{\on{fin}}(\mathscr{G}_\mathcal{T})$ is a certain subcategory. There is thus a canonical quotient functor $\tau\colon \C^{\on{rel}}\to \C_{\bf S}$. We show in \Cref{thm:equivHiggs} that $\tau$ induces an equivalence of extriangulated categories $\on{ho}\mathcal{H}\simeq \on{ho}\C_{\bf S}$, and also an equivalence on the enhancements given by exact $\infty$-categories. 

We next discuss the relation with the standard triangulated $2$-Calabi--Yau cluster category $C_{\bf S}$ associated with the marked surface ${\bf S}$. As shown in \cite{JKPW22}, the stable category $\bar{\mathcal{C}^{\on{c}}_{\bf S}}$ of the Frobenius exact $\infty$-category $\mathcal{C}_{\bf S}^{\on{c}}$, obtained by a localization annihilating the injective projective objects, is a stable $\infty$-category, see also \Cref{prop:jkpw}. Its homotopy category is equivalent to the triangulated stable category underlying the extriangulated category $\on{ho}\mathcal{H}\simeq \on{ho}\C_{\bf S}$. As shown in \cite{Wu21}, the homotopy category $\on{ho}\mathcal{H}$ agrees with $C_{\bf S}$. We thus obtain:

\begin{theorem}
The stable category $\bar{\mathcal{C}^{\on{c}}_{\bf S}}$ of the Frobenius exact $\infty$-category $\C^{\on{c}}_{\bf S}$ is an enhancement of the standard triangulated cluster category $C_{\bf S}$ associated with ${\bf S}$. Stated differently, there exists an equivalence of triangulated categories $\on{ho}\bar{\mathcal{C}^{\on{c}}_{\bf S}}\simeq C_{\bf S}$.
\end{theorem}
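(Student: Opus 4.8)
The plan is to deduce the theorem by chaining together three inputs already available: the general theory of the stable $\infty$-category attached to a Frobenius exact $\infty$-category from \cite{JKPW22} (recorded in \Cref{prop:jkpw}), the identification of $\C_{\bf S}^{\on{c}}$ with Wu's Higgs category as an exact $\infty$-category (\Cref{thm:equivHiggs}), and Wu's comparison of the stable Higgs category with the standard surface cluster category \cite{Wu21}. So the proof is essentially a deduction; the only genuine work lies in the passage between the $\infty$-categorical and the extriangulated incarnations of the stable category, which is exactly what \cite{JKPW22} is built to provide.

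First I would recall the setup: by \Cref{prop:frobex} and \Cref{cor:sphbdry}, the exact $\infty$-structure on $\C_{\bf S}^{\on{c}}$ obtained by pulling the split-exact structure back along the spherical functor $G$ is Frobenius, its injective-projective objects being precisely the objects associated with the boundary arcs (and their shifts/sums)---these are the frozen/coefficient variables. By \cite{JKPW22}, equivalently \Cref{prop:jkpw}, the localization $\bar{\C^{\on{c}}_{\bf S}}$ of $\C_{\bf S}^{\on{c}}$ at this class of objects is again a stable $\infty$-category, so $\on{ho}\bar{\C^{\on{c}}_{\bf S}}$ is canonically triangulated; moreover the same theory identifies $\on{ho}\bar{\C^{\on{c}}_{\bf S}}$ with the stable category, in the sense of Nakaoka--Palu \cite{NP19}, of the Frobenius $2$-Calabi--Yau extriangulated category $\on{ho}\C_{\bf S}^{\on{c}}$. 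Next I would invoke \Cref{thm:equivHiggs}: the quotient functor $\tau$ induces an equivalence of exact $\infty$-categories $\C_{\bf S}^{\on{c}}\simeq \mathcal{H}$, hence of extriangulated categories $\on{ho}\C_{\bf S}^{\on{c}}\simeq \on{ho}\mathcal{H}$. Since any exact equivalence preserves injective and projective objects, the stable-category construction is functorial along it, and passing to homotopy categories yields a triangulated equivalence $\on{ho}\bar{\C^{\on{c}}_{\bf S}}\simeq \on{ho}\bar{\mathcal{H}}$. Finally, by \cite{Wu21}, for the relative Ginzburg algebra $\mathscr{G}_{\mathcal{T}}$ of a marked surface without punctures the triangulated stable category $\on{ho}\bar{\mathcal{H}}$ underlying the Higgs category is equivalent to the standard triangulated $2$-Calabi--Yau cluster category $C_{\bf S}$ of ${\bf S}$; composing the equivalences gives $\on{ho}\bar{\C^{\on{c}}_{\bf S}}\simeq C_{\bf S}$.

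I do not expect a serious obstacle: the substance is carried by \Cref{thm:equivHiggs} and by the cited results, and the one non-formal point---the compatibility between the $\infty$-categorical localization of \cite{JKPW22} and the Nakaoka--Palu stable quotient---is handled there. Should one wish for a route independent of Wu's comparison, the alternative would be to identify in $\on{ho}\bar{\C^{\on{c}}_{\bf S}}$ the image of the reduced cluster tilting object (the summand of $\mathscr{G}_{\mathcal{T}}$ obtained after deleting the boundary-arc summands), to compute its endomorphism algebra to be the Jacobian algebra of the non-frozen quiver with potential of $\mathcal{T}$, and to apply an Amiot/Keller--Reiten-type recognition theorem for $2$-Calabi--Yau triangulated categories with a cluster tilting object; but this reproves results already in the literature, and routing through the Higgs category via \Cref{thm:equivHiggs} is cleaner.
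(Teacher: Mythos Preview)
Your proposal is correct and follows essentially the same approach as the paper: the paper also deduces the result by combining \Cref{prop:jkpw} (from \cite{JKPW22}), the equivalence with the Higgs category in \Cref{thm:equivHiggs}, and Wu's identification \cite[Cor.~5.19]{Wu21} of the stable Higgs category with the standard cluster category $C_{\bf S}$.
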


Our results on $\C_{\bf S}$ recover many results for $C_{\bf S}$. For instance the classification of the indecomposables objects in $C_{\bf S}$ in terms of matching curves which are not boundary arcs is also given in \cite{BZ11}. The description of the dimensions of the $\on{Ext}^1$'s in $C_{\bf S}$ in terms of numbers of crossings of the matching curves was also shown in \cite{ZZZ13,CS17}. \Cref{rem:shift} shows that $S^{-1}[2]\simeq \on{id}$ on objects in $\bar{\mathcal{C}^{\on{c}}_{\bf S}}$, where $S$ is the Serre functor of $\mathcal{C}_{\bf S}$, which agrees with the description of the shift functor in $C_{\bf S}$ given in \cite[Thm.~5.2]{QZ17}.

Topological Fukaya categories of surfaces (with values in $\D(k)$) are equivalent to the derived categories of graded gentle algebras by \cite{HKK17}. The generalized cluster category $\C_{\bf S}$ can thus be seen as the derived category of $1$-periodic representations of a gentle algebra, see also \Cref{prop:derivedgtl}. We remark that this gentle algebra is not derived equivalent to the Jacobian gentle algebra of the quiver with potential associated with the ideal triangulation of ${\bf S}$ of \cite{Lab09,ABCP10}. The surface associated with the latter gentle algebra in the sense of \cite{OPS18} is the surface obtained from ${\bf S}$ by removing the vertices of the trivalent spanning graph dual to the ideal triangulation. Typical approaches in the literature study the triangulated cluster category $C_{\bf S}$ by relating its representation theory with the representation theory of this Jacobian gentle algebra. The relationship explored in this paper between the cluster category and the derived category of the former gentle algebra, i.e.~the topological Fukaya category, is much closer. 

\subsection{Acknowledgements}

I wish to thank my Ph.D.~advisor Tobias Dyckerhoff for his support. I thank Gustavo Jasso and Mikhail Gorsky for crucial suggestions on the role of exact $\infty$-structures and Higgs categories. I also thank Martin Herschend, Bernhard Keller, Yann Palu, Yu Qiu, Ivan Smith and Yilin Wu for helpful discussions. The author acknowledges support by the Deutsche Forschungsgemeinschaft under Germany’s Excellence Strategy – EXC 2121 “Quantum Universe” – 390833306. This project has received funding from the European Union’s Horizon 2020 research and innovation programme under the Marie Skłodowska-Curie grant agreement No 101034255.

\subsection{Notations and conventions}

In this paper, we freely employ the langue of $\infty$-categories, as developed in \cite{HTT,HA}. We generally follow the notation and conventions of \cite{HTT,HA}. In particular, we use the homological grading. Given an $\infty$-category $\mathcal{C}$ and two objects $X,Y\in \mathcal{C}$, we denote by $\on{Map}_{\mathcal{C}}(X,Y)$ the mapping space. We further use the following conventions.
\begin{itemize}
\item We denote the homotopy $1$-category of an $\infty$-category $\mathcal{C}$ by $\on{ho}\mathcal{C}$.
\item We denote the right adjoint of a functor $F$ by $\on{radj}(F)$ and the left adjoint by $\on{ladj}(F)$ (if existent).
\item Given a functor $F\colon\mathcal{A}\rightarrow \mathcal{B}$, we denote by $F(\mathcal{A})$ the essential image of $F$, meaning the smallest full subcategory of $\mathcal{B}$ which is closed under equivalences in $\mathcal{B}$ and contains the objects in the image of $F$. 
\item Matrices can be used to describe morphisms between direct sums of objects in additive $\infty$-categories, such as stable $\infty$-categories. For this we use the convention of multiplication with a matrix from the left. 
\end{itemize}

\section{Preliminaries}

\subsection{\texorpdfstring{$\infty$}{infinity}-categories of \texorpdfstring{$\infty$}{infinity}-categories}
We use the following notation for different $\infty$-categories of $\infty$-categories. 
We denote by
\begin{itemize}
\item $\mathcal{S}$ the $\infty$-category of spaces.
\item $\on{Cat}_\infty$ the $\infty$-category of $\infty$-categories.
\item $\on{St}$ the $\infty$-category of stable $\infty$-categories and exact functors.
\item $\mathcal{P}r^L$ the $\infty$-category of presentable $\infty$-categories and left adjoint functors. 
\item $\mathcal{P}r^R$ the $\infty$-category of presentable $\infty$-categories and right adjoint functors. 
\item by $\mathcal{P}r^L_{\on{St}}\subset \mathcal{P}r^L$ and $\mathcal{P}r^R_{\on{St}}\subset \mathcal{P}r^R$ the full subcategories consisting of stable $\infty$-categories.
\end{itemize}

We remark that passing to adjoint functors defines an equivalence of $\infty$-categories $\mathcal{P}r^L\simeq (\mathcal{P}r^R)^{\on{op}}$, see \cite[5.5.3.4]{HTT}.

Let $\mathcal{C}$ be an $\infty$-category. An object $x$ in an $\infty$-category $\mathcal{C}$ is called compact if the functor $\on{Map}_{\mathcal{C}}(x,\mhyphen)\colon\mathcal{C}\rightarrow \mathcal{S}$ preserves filtered colimits. We denote by $\mathcal{C}^{\on{c}}\subset \mathcal{C}$ the full subcategory of compact objects. If $\C$ is small, we denote by $\on{Ind}(\mathcal{C})$ the $\on{Ind}$-completion of $\mathcal{C}$. If $\mathcal{C}$ is small, stable and idempotent complete, then $\mathcal{C}\simeq \on{Ind}(\mathcal{C})^{\on{c}}$.

The $\infty$-category $\mathcal{P}r^L$ admits a symmetric monoidal structure, such that a commutative algebra object in $\mathcal{P}r^L$ amounts to a symmetric monoidal presentable $\infty$-category $\mathcal{C}$, satisfying that its tensor product $\mhyphen \otimes \mhyphen \colon\mathcal{C}\times \mathcal{C}\rightarrow \mathcal{C}$ preserves colimits in both entries, see \cite[Section 4.7]{HA}. An example of such a commutative algebra object in $\mathcal{P}r^L$ is the derived $\infty$-category $\mathcal{D}(R)$ of a commutative dg-algebra $R$. 

\begin{definition}
Let $R$ be a commutative dg-algebra. We call the $\infty$-category $\on{Mod}_{\mathcal{D}(R)}(\mathcal{P}r^L)$ of modules, see \cite[Def.~4.5.1.1]{HA}, the $\infty$-category of $R$-linear presentable $\infty$-categories and denote it by $\on{LinCat}_R$.
\end{definition}

As noted in \cite[Section D.1.5]{SAG}, $R$-linear $\infty$-categories in the above sense are automatically stable.
 
\begin{definition}[$\!\!${\cite[4.2.1.28]{HA}}]
Let $R$ be a commutative dg-algebra. Let $\mathcal{A}$ be an $R$-linear $\infty$-category and let $X,Y\in \mathcal{A}$. A morphism object for $X,Y$ is an $R$-module $\on{Mor}_{\mathcal{C}}(X,Y)\in \mathcal{D}(R)$ equipped with a map $\alpha\colon\on{Mor}_{\mathcal{C}}(X,Y)\otimes_R X\rightarrow Y$ in $\mathcal{C}$ such that for every object $C\in \mathcal{D}(R)$ composition with $\alpha$ induces an equivalence of spaces
\[ \on{Map}_{\mathcal{D}(R)}(C,\on{Mor}_{\mathcal{C}}(X,Y))\rightarrow \on{Map}_{\mathcal{C}}(C\otimes_R X,Y)\,.\]
\end{definition}

We thus have $\on{H}_i\on{Mor}_{\mathcal{C}}(X,Y)\simeq \pi_0\on{Map}_{\mathcal{C}}(X[i],Y)$. 

\begin{remark}
Morphism objects always exist and the formation of morphism objects forms an exact functor 
\[ \on{Mor}_\mathcal{C}(\mhyphen,\mhyphen)\colon\mathcal{C}^{\on{op}}\times \mathcal{C}\longrightarrow \mathcal{D}(R)\,.\]
\end{remark}

Given an $R$-linear stable $\infty$-category $\mathcal{C}$ and two objects $A,B\in \mathcal{C}$, the $n$-th $\on{Ext}$-group is defined as 
\[ \on{Ext}^n_{\mathcal{C}}(A,B)\coloneqq \on{H}_{-n}\on{Mor}_{\mathcal{C}}(A,B)\simeq \on{H}_0\on{Mor}_{\mathcal{C}}(A,B[n])\,.\]

\subsection{Semiorthogonal decompositions and Verdier quotients}

Given a stable $\infty$-category $\mathcal{C}$ and a full subcategory $\mathcal{A}\subset \mathcal{C}$, we call $\mathcal{A}$ a stable subcategory of $\mathcal{C}$ if $\mathcal{A}$ is stable, the inclusion is an exact functor and $\mathcal{A}$ is closed under equivalences in $\mathcal{C}$. 

Given a fully faithful functor $i\colon \A \to \C$, we will later on sometimes abuse notation by identifying $\A$ with the stable subcategory of $\C$ given by the essential image of $i$. 

\begin{definition}\label{def:SOD}
Let $\mathcal{C}$ be a stable $\infty$-category and let $\mathcal{A},\mathcal{B}\subset \mathcal{B}$ be stable subcategories. We say that the pair $(\mathcal{A},\mathcal{B})$ forms a semiorthogonal decomposition of $\mathcal{C}$ if 
\begin{itemize}
\item for all $a\in \mathcal{A}$ and $b\in \mathcal{B}$, the mapping space $\on{Map}_{\mathcal{C}}(b,a)$ is contractible and
\item for all $x\in \mathcal{C}$, there exists a fiber and cofiber sequence $b\rightarrow x\rightarrow a$ in $\mathcal{C}$ with $a\in \mathcal{A}$ and $b\in \mathcal{B}$.
\end{itemize}
\end{definition}

\begin{definition}\label{exseqdef}
An exact sequence of stable, presentable $\infty$-categories consists of a cofiber sequence in $\mathcal{P}r^L_{\on{St}}$
\[ 
\begin{tikzcd}
\mathcal{A} \arrow[r, "\iota"] \arrow[d] \arrow[rd, "\ulcorner", phantom] & \mathcal{C} \arrow[d] \\
0 \arrow[r]                                                               & \mathcal{B}          
\end{tikzcd}
\] 
such that $i$ is a fully faithful functor.  
\end{definition}

\begin{remark}
In the setting of \Cref{exseqdef}, the triangulated homotopy category of $\mathcal{B}$  is equivalent to the triangulated Verdier quotient of $\mathcal{C}$ by $\mathcal{A}$, see \cite[Prop.~5.9]{BGT13}. We thus call $\mathcal{B}$ the Verdier quotient of $\mathcal{C}$ by $\mathcal{A}$. 
\end{remark}

The following Lemma shows that the datum of a semiorthogonal decomposition of a stable, presentable $\infty$-category is equivalent to the datum of an exact sequence in $\mathcal{P}r^L_{\on{St}}$.

\begin{lemma}\label{sodquotlem}
Consider a diagram in $\mathcal{P}r^L_{\on{St}}$
\begin{equation}\label{eq:acbseq}
\mathcal{A}\xrightarrow{i}\mathcal{C} \xrightarrow{\pi}\mathcal{B}\,.
\end{equation}
Suppose that $i$ and $\on{radj}(\pi)$ are fully faithful. Then the diagram can be extended to an exact sequence if and only if $\left(\on{radj}(\pi)(\mathcal{B}),i(\mathcal{A})\right)$ forms a semiorthogonal decomposition of $\mathcal{C}$.
\end{lemma}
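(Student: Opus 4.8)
The plan is to prove both implications by extracting the two defining properties of a semiorthogonal decomposition (Definition \ref{def:SOD}) from the structure of an exact sequence, and conversely. Throughout write $\rho \coloneqq \on{radj}(\pi)$, which by hypothesis is fully faithful; since $\pi$ is a left adjoint in $\mathcal{P}r^L_{\on{St}}$, the counit $\pi\rho \Rightarrow \on{id}_{\mathcal{B}}$ is an equivalence, and $\rho$ being fully faithful is equivalent to this. Similarly $i$ fully faithful.

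\textbf{($\Rightarrow$) From exact sequence to SOD.} Suppose \eqref{eq:acbseq} extends to an exact sequence, so that the square in Definition \ref{exseqdef} is a pushout in $\mathcal{P}r^L_{\on{St}}$ with $i$ fully faithful. First I would establish the orthogonality: for $a \in i(\mathcal{A})$ and $b \in \rho(\mathcal{B})$, I claim $\on{Map}_{\mathcal{C}}(b,a)$ is contractible. Write $a = i(a')$, $b = \rho(b')$. Then $\on{Map}_{\mathcal{C}}(\rho(b'), i(a')) \simeq \on{Map}_{\mathcal{B}}(b', \pi i(a'))$ by the adjunction $\pi \dashv \rho$; and $\pi i \simeq 0$ because the square is a cofiber sequence (the composite $\mathcal{A}\to\mathcal{C}\to\mathcal{B}$ factors through $0$). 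Hence the mapping space is contractible. Second, for the filtration property: given $x \in \mathcal{C}$, I would use the unit $\eta_x \colon x \to \rho\pi(x)$ and set $b \coloneqq \on{fib}(\eta_x)$, $a \coloneqq \rho\pi(x)$. Clearly $a \in \rho(\mathcal{B})$, so the only real point is that $b \in i(\mathcal{A})$. For this I would argue that $\pi(b) \simeq \on{fib}(\pi(\eta_x))$ and that $\pi(\eta_x)$ is an equivalence (since $\pi\rho\pi \simeq \pi$ via the counit–unit triangle identity), so $\pi(b) \simeq 0$; thus $b$ lies in $\ker(\pi)$. Then I invoke that in the exact sequence, $i$ identifies $\mathcal{A}$ with $\ker(\pi) = \{c \in \mathcal{C} : \pi(c)\simeq 0\}$ — this is exactly the content of \cite[Prop.~5.9]{BGT13} / the statement that $\mathcal{B}$ is the Verdier quotient of $\mathcal{C}$ by $\mathcal{A}$, combined with $i$ being fully faithful. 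Hence $b \in i(\mathcal{A})$, completing the fiber/cofiber sequence $b \to x \to a$ required by Definition \ref{def:SOD}.

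\textbf{($\Leftarrow$) From SOD to exact sequence.} Now suppose $(\rho(\mathcal{B}), i(\mathcal{A}))$ is a semiorthogonal decomposition of $\mathcal{C}$. I must show the square in Definition \ref{exseqdef} is a pushout in $\mathcal{P}r^L_{\on{St}}$ with $i$ fully faithful; the latter is a hypothesis, so the work is in the pushout. First I would observe that $\pi$ is a localization: from the SOD, every $x$ sits in $b \to x \to a$ with $b \in \rho(\mathcal{B})$, $a \in i(\mathcal{A})$, and by orthogonality $\on{Map}(b', a) \simeq 0$ for all $b' \in \rho(\mathcal{B})$, $a\in i(\mathcal A)$; standard Bousfield-localization arguments (or \cite[5.5.4.15]{HTT}) then identify the left adjoint $\pi$ of the inclusion $\rho$ as the reflective localization at the maps with fiber in $i(\mathcal{A})$, and identify $\ker(\pi)$ with $i(\mathcal{A})$. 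Concretely: $\pi(c) \simeq 0$ iff the unit $c \to \rho\pi(c)$ is an equivalence onto $0$ iff $c$ — using the SOD triangle of $c$ and orthogonality to see the $\rho(\mathcal{B})$-part of $c$ vanishes — lies in $i(\mathcal{A})$. Thus $\mathcal{A} \xrightarrow{i} \mathcal{C} \xrightarrow{\pi}\mathcal{B}$ exhibits $\mathcal{B}$ as the Verdier quotient $\mathcal{C}/i(\mathcal{A})$, i.e.\ the cofiber of $i$ in $\mathcal{P}r^L_{\on{St}}$, which is precisely the assertion that the square is a pushout. (Here I use that Verdier quotients in $\mathcal{P}r^L_{\on{St}}$ are computed as the localization killing the subcategory, e.g.\ via \cite[Prop.~5.9]{BGT13} or the cofiber construction in $\mathcal{P}r^L$, and that $\pi$ being essentially surjective — which follows since $\pi\rho \simeq \on{id}$ — together with $\ker(\pi) = i(\mathcal{A})$ pins down $\mathcal{B}$ up to equivalence.)

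\textbf{Main obstacle.} The routine parts are the adjunction-yoga identities ($\pi\rho\simeq\on{id}$, $\pi i \simeq 0$, triangle identities); the substantive step, in both directions, is the identification $\ker(\pi) = i(\mathcal{A})$ as subcategories of $\mathcal{C}$, and the precise sense in which ``$\mathcal{B}$ is the Verdier quotient'' is the same as ``the square is a pushout in $\mathcal{P}r^L_{\on{St}}$''. I expect the cleanest route is to reduce everything to the statement that a reflective (equivalently coreflective) localization $\mathcal{C}\to\mathcal{B}$ whose kernel is $i(\mathcal{A})$ is automatically the cofiber of $i$ in $\mathcal{P}r^L_{\on{St}}$, citing \cite[Prop.~5.9]{BGT13} and the universal property of pushouts in $\mathcal{P}r^L$; the SOD hypothesis is exactly what guarantees $\pi$ is such a localization with the right kernel, and conversely an exact sequence is exactly what produces the adjoint $\rho$ with the orthogonality needed for the SOD.
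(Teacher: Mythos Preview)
Your forward direction has an orientation slip in the orthogonality step. For the semiorthogonal decomposition $(\rho(\mathcal{B}),i(\mathcal{A}))$ in the sense of Definition~\ref{def:SOD}, what must vanish is $\on{Map}_{\mathcal{C}}(i(a'),\rho(b'))$, not $\on{Map}_{\mathcal{C}}(\rho(b'),i(a'))$; the latter is generally nonzero. Correspondingly your adjunction identity is written the wrong way round: $\pi\dashv\rho$ gives $\on{Map}_{\mathcal{C}}(c,\rho(b'))\simeq\on{Map}_{\mathcal{B}}(\pi(c),b')$, and applying this with $c=i(a')$ yields the correct vanishing via $\pi i\simeq 0$. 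With that swap your argument coincides with the paper's. Your filtration argument via the unit $\eta_x\colon x\to\rho\pi(x)$ and its fiber is correct and is exactly what the paper does (the paper calls this map the ``counit'', but it is the same map).

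For the converse you take a genuinely different route. The paper passes to right adjoints via the equivalence $\mathcal{P}r^L_{\on{St}}\simeq(\mathcal{P}r^R_{\on{St}})^{\on{op}}$, so that the cofiber square becomes a \emph{pullback} square in $\on{Cat}_\infty$; this pullback condition says precisely that $\rho$ is an equivalence onto $\{c\in\mathcal{C}:\on{Map}_{\mathcal{C}}(i(a),c)\simeq\ast\text{ for all }a\}=i(\mathcal{A})^\perp$, which is a general feature of semiorthogonal decompositions cited from \cite[Prop.~2.2.4]{DKSS21}. You instead stay on the left-adjoint side, arguing that $\ker(\pi)=i(\mathcal{A})$ and that a reflective localization in $\mathcal{P}r^L_{\on{St}}$ is the cofiber of the inclusion of its kernel. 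Both approaches are valid; the paper's is a little slicker in that it reduces to a single limit computation and one citation, whereas yours relies on the (true, but not completely immediate) fact that two reflective localizations of $\mathcal{C}$ in $\mathcal{P}r^L_{\on{St}}$ with the same kernel are canonically equivalent.
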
 

\begin{proof}
Suppose that \eqref{eq:acbseq} is part of an exact sequence. We find that for all $a\in \mathcal{A}$ and $b\in \mathcal{B}$, the mapping space $\on{Map}_{\mathcal{C}}(\iota(a),\on{radj}(\pi)(b))\simeq \on{Map}_{\mathcal{B}}(\pi\iota(a),b)\simeq \on{Map}_{\mathcal{B}}(0,b)$ is contractible. Let $c\in \mathcal{C}$ and $\on{cu}_c\colon c\rightarrow \on{radj}(\pi) (\pi(c))$ be the counit. The map $\pi(\on{cu}_c)$ is an equivalence by the 2/3-property and the fact that $\on{radj}(\pi)$ is fully faithful. We hence have $\pi(\on{fib}(\on{cu}_c))\simeq 0$. It follows that $\on{fib}(\on{cu}_c)\in \on{Im}(i)$. There thus exists an object $a\in \mathcal{A}$ and an exact sequence $a\rightarrow c\rightarrow \on{radj}(\pi)\circ \pi(c)$ in $\mathcal{C}$. We have shown that $\left(i(\mathcal{A}),\on{radj}(\pi)(\mathcal{B})\right)$ forms a semiorthogonal decomposition of $\mathcal{C}$. 

For the converse implication, assume that $(\on{radj}(\pi)(\mathcal{B}),i(\mathcal{A}))$ forms a semiorthogonal decomposition of $\mathcal{C}$. The assertion that \eqref{eq:acbseq} can be extended to an exact sequence is via the equivalence $\on{radj}\colon\mathcal{P}r^L_{\on{St}}\simeq (\mathcal{P}r^R_{\on{St}})^{\on{op}}$ equivalent to the assertion that there exists a pullback diagram in $\on{Cat}_{\infty}$ as follows:
\[
\begin{tikzcd}
\mathcal{B} \arrow[r, "\on{radj}(\pi)"] \arrow[d] \arrow[rd, "\lrcorner", phantom] & \mathcal{C} \arrow[d, "\on{radj}(i)"] \\
0 \arrow[r]                                                                        & \mathcal{A}                          
\end{tikzcd}
\]
This is in turn equivalent to the assertion that $\on{radj}(\pi)$ defines an equivalence between $\mathcal{B}$ and the full subcategory of $\mathcal{C}$ spanned by objects $c\in \mathcal{C}$, satisfying that $\on{Map}_{\mathcal{C}}(i(a),c)$ is contractible for all $a\in \mathcal{A}$. Proposition 2.2.4 from \cite{DKSS21} shows that this is a property of any semiorthogonal decomposition, concluding the proof of the converse implication.
\end{proof}

\subsection{Monadicity and derived categories of graded Laurent algebras}\label{sec2.3}

Let $\mathcal{C}$ be an $\infty$-category. A monad $M$ on $\mathcal{C}$ is an associative algebra object in the monoidal $\infty$-category $\on{End}(\mathcal{C})=\on{Fun}(\mathcal{C},\mathcal{C})$ of endofunctors. In other words, a monad is a functor $M\colon\mathcal{C}\rightarrow \mathcal{C}$, together with a unit $u\colon\on{id}_{\mathcal{C}}\rightarrow M$ and a multiplication map $M\circ M\rightarrow M$ equipped with data exhibiting coherent associativity and unitality. A main source of monads are adjunctions: given an adjunction $F\colon \mathcal{C}\leftrightarrow \mathcal{D}\noloc G$ of $\infty$-categories, there is an associated monad $M=GF$, see \cite[Section 4.7.3]{HA}, which we call the adjunction monad. Associated to a monad $M$ on $\mathcal{C}$ is its $\infty$-category $\on{LMod}_M(\mathcal{C})$ of left modules in $\mathcal{C}$ and the free-forget adjunction 
\[ \on{Free}\colon\mathcal{C}\longleftrightarrow \on{LMod}_M(\mathcal{C})\noloc \on{Forget}\,,\]
whose adjunction monad is equivalent to $M$, see \cite{HA}. The $\infty$-category $\on{LMod}_M(\mathcal{C})$ is also called the Eilenberg-Moore $\infty$-category of $M$. A typical example is given by $\mathcal{C}=\mathcal{D}(k)$ the derived $\infty$-category of a field and $M=A\otimes_k\mhyphen$ the monad arising from tensoring with a dg-algebra $A$. In this case $\on{LMod}_M(\mathcal{D}(k))\simeq \mathcal{D}(A)$ and associated adjunction is equivalent to the usual free-forget adjunction $\mathcal{D}(k)\leftrightarrow \mathcal{D}(A)$.

Given an adjunction $F\colon\mathcal{C}\leftrightarrow \mathcal{D}\noloc G$ with monad $M=GF$, there is an associated functor $\mathcal{D}\rightarrow \on{LMod}_M(\mathcal{C})$, making the following diagram commute:
\[
\begin{tikzcd}
\mathcal{D} \arrow[rd, "G"'] \arrow[rr] &             & \on{LMod}_M(\mathcal{C}) \arrow[ld, "\on{Forget}"] \\
                                        & \mathcal{C} &                                                   
\end{tikzcd}
\]

\begin{definition}
Let $G\colon\mathcal{D}\rightarrow \mathcal{C}$ be a functor between $\infty$-categories.
\begin{enumerate}
\item The functor $G$ is called monadic if it admits a left adjoint $F$ with adjunction monad $M=GF$ and the associated functor $\mathcal{D}\rightarrow \on{LMod}_M(\mathcal{C})$ is an equivalence of $\infty$-categories.
\item The functor $G$ is called comonadic if the opposite functor $G^{\on{op}}\colon \mathcal{D}^{\on{op}}\rightarrow \mathcal{C}^{\on{op}}$ is monadic.
\end{enumerate}
\end{definition}

If $\mathcal{C},\mathcal{D}$ are presentable $\infty$-categories and $G$ preserves (sufficient) colimits, then $\mathcal{D}\rightarrow \on{LMod}_M(\mathcal{C})$ admits a fully faithful left adjoint, see \cite[Lemma 4.7.3.13]{HA}. In this case, $\on{LMod}_M(\mathcal{C})$ is presentable and if $\mathcal{C}$ and $\mathcal{D}$ are furthermore stable, then $\on{LMod}_M(\mathcal{C})$ is also stable, see \cite[Prop.~4.2.3.4]{HA}.

To determine whether a functor is monadic, one can use the Lurie--Barr--Beck monadicity theorem, see \cite[Thm.~4.7.3.5]{HA}. Assuming that all involved $\infty$-categories are presentable and that the functor preserves colimits, the theorem reduces to the statement that a right adjoint $G\colon \mathcal{D}\rightarrow \mathcal{C}$ is monadic if and only if it is conservative, i.e.~reflects isomorphisms. Similarly, in this setting a left adjoint $F$ which preserves sufficient limits is comonadic if and only if it is conservative.

We now turn to a specific example of a monadic adjunction. Let $k$ be a field and $n\geq 2$. We are interested in the monadic adjunction arising from the spherical adjunction 
\[ f^*\colon \mathcal{D}(k)\longleftrightarrow \on{Fun}(S^n,\mathcal{D}(k))\noloc f_*\,,\]
where $\on{Fun}(S^n,\mathcal{D}(k))$ is the $\infty$-category of $\mathcal{D}(k)$-valued local systems on the $n$-sphere $S^n$ and $f^*$ is the pullback along $f\colon S^n\rightarrow \ast$, see \cite{Chr20} for a detailed study of this adjunction. In the case $n=2$, this adjunction describes the singularities of the perverse schober $\mathcal{F}_\mathcal{T}$ considered in \Cref{sec4.1}. Let $M=f_*f^*$ be the adjunction monad and denote by $\on{Fun}(S^n,\mathcal{D}(k))^{\on{mnd}}\coloneqq \on{LMod}_M(\mathcal{D}(k))$ the stable, presentable $\infty$-category of left modules over $M$. We denote the associated fully faithful functor $\on{Fun}(S^n,\mathcal{D}(k))^{\on{mnd}}\rightarrow \on{Fun}(S^n,\mathcal{D}(k))$ by $i_{\on{mnd}}$ and we will identify $\on{Fun}(S^n,\mathcal{D}(k))^{\on{mnd}}$ with its essential image under $i_{\on{mnd}}$.

\begin{lemma}\label{lem:cmpgen}
The object $f^*(k)$ is a compact generator of the $\infty$-category $\on{Fun}(S^n,\mathcal{D}(k))^{\on{mnd}}$.  
\end{lemma}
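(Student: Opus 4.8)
The plan is to verify directly that $f^*(k)$ is compact and that it generates $\on{Fun}(S^n,\mathcal{D}(k))^{\on{mnd}}$ under colimits. Throughout, I work inside the ambient presentable $\infty$-category $\on{Fun}(S^n,\mathcal{D}(k))$, using that $i_{\on{mnd}}$ is fully faithful and that, by the monadic identification, $\on{Fun}(S^n,\mathcal{D}(k))^{\on{mnd}}\simeq \on{LMod}_M(\mathcal{D}(k))$ with $M=f_*f^*$. Under this identification the object $f^*(k)$ corresponds to the free module $\on{Free}(k)=M(k)$, and the forgetful functor sends it to $M(k)=f_*f^*(k)\in\mathcal{D}(k)$.

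\textbf{Step 1: compactness.} Since $f^*(k)$ is the image under $\on{Free}\colon\mathcal{D}(k)\to\on{LMod}_M(\mathcal{D}(k))$ of the compact object $k\in\mathcal{D}(k)$, and $\on{Free}$ is left adjoint to $\on{Forget}$, it suffices to observe that $\on{Forget}=\on{radj}(\on{Free})$ preserves filtered colimits. This holds because $\on{Forget}$ is a right adjoint which, being conservative and a map of presentable stable $\infty$-categories creating colimits, preserves all colimits; equivalently, one computes $\on{Map}_{\on{LMod}_M}(\on{Free}(k),\mhyphen)\simeq \on{Map}_{\mathcal{D}(k)}(k,\on{Forget}(\mhyphen))$ and both $\on{Map}_{\mathcal{D}(k)}(k,\mhyphen)$ and $\on{Forget}$ commute with filtered colimits. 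Hence $\on{Free}(k)=f^*(k)$ is compact.

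\textbf{Step 2: generation.} I must show that the smallest stable subcategory of $\on{Fun}(S^n,\mathcal{D}(k))^{\on{mnd}}$ closed under colimits and containing $f^*(k)$ is everything, equivalently that an object $Y\in\on{LMod}_M(\mathcal{D}(k))$ with $\on{Map}(f^*(k)[i],Y)\simeq 0$ for all $i$ is zero. By the free-forget adjunction, $\on{Map}_{\on{LMod}_M}(\on{Free}(k)[i],Y)\simeq \on{Map}_{\mathcal{D}(k)}(k[i],\on{Forget}(Y))\simeq \on{H}_{i}\on{Forget}(Y)$, so the vanishing hypothesis says exactly that the underlying complex $\on{Forget}(Y)\in\mathcal{D}(k)$ has trivial homology, i.e.\ $\on{Forget}(Y)\simeq 0$. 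Since $\on{Forget}$ is conservative (it is the forgetful functor of an Eilenberg--Moore category, or: monadicity of $f_*$ over $\mathcal{D}(k)$), we conclude $Y\simeq 0$. Therefore $f^*(k)$ generates.

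\textbf{Main obstacle.} The only genuine point requiring care is the \emph{conservativity of $\on{Forget}$}, i.e.\ that the comparison functor $\on{Fun}(S^n,\mathcal{D}(k))\to\on{LMod}_M(\mathcal{D}(k))$ is an equivalence (equivalently that $f_*$ is monadic) — but this is already built into the definition $\on{Fun}(S^n,\mathcal{D}(k))^{\on{mnd}}\coloneqq \on{LMod}_M(\mathcal{D}(k))$ and the cited monadicity theorem, so there is nothing new to prove. One should only be mildly careful that $f^*(k)$, viewed via $i_{\on{mnd}}$ inside the larger category $\on{Fun}(S^n,\mathcal{D}(k))$, is \emph{not} compact there (only in the monadic subcategory), but the statement of the Lemma is about compactness within $\on{Fun}(S^n,\mathcal{D}(k))^{\on{mnd}}$, which is exactly what Step 1 gives. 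Hence the argument is essentially formal, relying only on the free-forget adjunction and the conservativity of the forgetful functor.
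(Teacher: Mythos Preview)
Your proof is correct and follows essentially the same approach as the paper: both arguments use the free--forget adjunction (the paper phrases it via the $k$-linear identification $(f^*)^{\on{mnd}}\simeq(\mhyphen)\otimes f^*(k)$ with right adjoint $\on{Mor}(f^*(k),\mhyphen)$), the conservativity of the monadic forgetful functor, and the fact that $k$ is a compact generator of $\mathcal{D}(k)$. The only cosmetic difference is that the paper packages compactness and generation into a single step via the enriched adjunction, whereas you separate them into two explicit steps.
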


\begin{proof}
The restriction of $f_*$ to $\on{Fun}(S^n,\mathcal{D}(k))^{\on{mnd}}$ is by definition monadic. Its left adjoint $(f^*)^{\on{mnd}}$ is obtained from $f^*$ by restricting the target. The $k$-linear functor $(f^*)^{\on{mnd}}$ is fully determined by the image of $k$, which is $f^*(k)$. It is thus equivalent to the functor $(\mhyphen)\otimes f^*(k)$ (defined using the $k$-linear structure). The adjunction
 \[ (\mhyphen)\otimes f^*(k)\dashv \on{Mor}_{\on{Fun}(S^n,\mathcal{D}(k))^{\on{mnd}}}(f^*(k),\mhyphen)\,,\] 
and the facts that the right adjoint $\on{Mor}_{\on{Fun}(S^n,\mathcal{D}(k))^{\on{mnd}}}(f^*(k),\mhyphen)\simeq (f_*)^{\on{mnd}}$ is conservative, because monadic, and that $k\in \mathcal{D}(k)$ is compact generator, now imply that $f^*(k)$ compactly generates $\on{Fun}(S^n,\mathcal{D}(k))^{\on{mnd}}$.
\end{proof}

As shown in \cite[Prop.~5.5]{Chr22}, there exists an equivalence of $k$-linear $\infty$-categories
\[\on{Fun}(S^n,\mathcal{D}(k))\simeq \mathcal{D}(k[t_{n-1}])\,,\] 
where $k[t_{n-1}]$ denotes the graded polynomial algebra with generator in degree $|t_{n-1}|=n-1$. 

\begin{lemma}\label{fin=mndlem}
There exist equivalences of $k$-linear $\infty$-categories 
\[ \on{Fun}(S^n,\mathcal{D}(k))^{\on{mnd}}\simeq \on{Ind}\on{Fun}(S^n,\mathcal{D}^{\on{perf}}(k))\simeq \on{Ind}\mathcal{D}^{\on{fin}}(k[t_{n-1}])\,,\]
compatible with their inclusions into $\on{Fun}(S^n,\mathcal{D}(k))$, where $\mathcal{D}^{\on{fin}}(k[t_{n-1}])$ denotes the derived $\infty$-category of $k[t_{n-1}]$-modules with finite dimensional homology.
\end{lemma}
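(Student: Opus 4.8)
The plan is to establish the two equivalences in turn, starting from the identification $\on{Fun}(S^n,\mathcal{D}(k))\simeq \mathcal{D}(k[t_{n-1}])$ already recorded above (from \cite[Prop.~5.5]{Chr22}), under which the adjunction $f^*\dashv f_*$ becomes the free–forget adjunction $\mathcal{D}(k)\leftrightarrow \mathcal{D}(k[t_{n-1}])$, with $f^*$ the extension of scalars along $k[t_{n-1}]\to k$. For the first equivalence, I would argue that $\on{Fun}(S^n,\mathcal{D}(k))^{\on{mnd}}$ is by construction a compactly generated stable presentable $\infty$-category, with compact generator $f^*(k)$ by \Cref{lem:cmpgen}. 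Hence $\on{Fun}(S^n,\mathcal{D}(k))^{\on{mnd}}\simeq \on{Ind}(\mathcal{P})$, where $\mathcal{P}$ is the thick subcategory generated by $f^*(k)$ inside the compact objects; since $f^*$ sends $\mathcal{D}^{\on{perf}}(k)$ to a full subcategory generated under finite colimits and retracts by $f^*(k)$, this gives $\mathcal{P}\simeq \on{Fun}(S^n,\mathcal{D}^{\on{perf}}(k))$ as the idempotent completion, whence $\on{Fun}(S^n,\mathcal{D}(k))^{\on{mnd}}\simeq \on{Ind}\on{Fun}(S^n,\mathcal{D}^{\on{perf}}(k))$. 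I would also verify compatibility with the inclusions into $\on{Fun}(S^n,\mathcal{D}(k))$: the inclusion $i_{\on{mnd}}$ preserves colimits (its right adjoint, being $f_*$-like, preserves colimits since $f_*$ does on the $n$-sphere for $n\geq 2$), and on compact objects $i_{\on{mnd}}$ restricts to the fully faithful $f^*$ on $\mathcal{D}^{\on{perf}}(k)$, so it is the $\on{Ind}$-extension of that functor.

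For the second equivalence, under the identification $\on{Fun}(S^n,\mathcal{D}(k))\simeq \mathcal{D}(k[t_{n-1}])$ the object $f^*(k)$ corresponds to $k$, the residue field of $k[t_{n-1}]$ viewed as a $k[t_{n-1}]$-module. The subcategory $\mathcal{D}^{\on{fin}}(k[t_{n-1}])$ of complexes with finite-dimensional total homology is exactly the thick subcategory of $\mathcal{D}^{\on{perf}}(k[t_{n-1}])$ generated by $k$: one inclusion is clear since $k$ is perfect over $k[t_{n-1}]$ (it has a length-two Koszul resolution, $t_{n-1}$ being a nonzerodivisor) and has finite-dimensional homology, hence the thick closure lands in $\mathcal{D}^{\on{fin}}$; conversely any finite-dimensional $k[t_{n-1}]$-module is built by finitely many extensions from $k$ (filter by powers of the augmentation ideal, whose subquotients are sums of shifts of $k$), and more generally every object of $\mathcal{D}^{\on{fin}}$ is a finite iterated extension of shifts of $k$, so lies in $\on{thick}(k)$. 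Passing to $\on{Ind}$-completions gives $\on{Ind}\mathcal{D}^{\on{fin}}(k[t_{n-1}])\simeq \on{Ind}\on{thick}(k)\simeq \on{Fun}(S^n,\mathcal{D}(k))^{\on{mnd}}$ by the compact-generator description above, again compatibly with the inclusions into $\mathcal{D}(k[t_{n-1}])\simeq \on{Fun}(S^n,\mathcal{D}(k))$.

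The main obstacle, and the only point requiring genuine care, is the identification of $\mathcal{D}^{\on{fin}}(k[t_{n-1}])$ with $\on{thick}(k)$ inside $\mathcal{D}^{\on{perf}}(k[t_{n-1}])$ — equivalently, that every finite-dimensional graded $k[t_{n-1}]$-module is a finite iterated extension of shifts of the residue field $k$, and that these modules are already perfect. The perfectness is immediate from the Koszul resolution; the finite-extension claim follows from the fact that $k[t_{n-1}]$ is a (graded) local ring with residue field $k$, so the augmentation-ideal-adic filtration of a finite-dimensional module is finite with semisimple subquotients. One must be slightly attentive to the grading: when $n-1$ is odd, $k[t_{n-1}]$ is an exterior algebra on a single odd generator rather than a polynomial ring, but it is still finite-dimensional graded local with residue field $k$, and $k$ is still perfect over it (it is even self-injective), so the argument is unchanged. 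Everything else is a routine application of the $\on{Ind}$-completion formalism and \Cref{lem:cmpgen}.
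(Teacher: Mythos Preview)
Your overall strategy---identify the compact objects of $\on{Fun}(S^n,\mathcal{D}(k))^{\on{mnd}}$ as the thick closure of $f^*(k)\simeq k$ and show that this thick closure coincides with $\mathcal{D}^{\on{fin}}(k[t_{n-1}])$---is exactly the paper's. Your second paragraph (Koszul resolution for perfectness of $k$, augmentation-ideal filtration to build finite modules from $k$) is a clean repackaging of the paper's induction argument, and this part is fine.

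Two issues, however. First, the logic in your first paragraph does not establish $\mathcal{P}\simeq \on{Fun}(S^n,\mathcal{D}^{\on{perf}}(k))$. You observe that $f^*(\mathcal{D}^{\on{perf}}(k))$ lies in $\on{thick}(f^*(k))$, but this says nothing about why every object of $\on{Fun}(S^n,\mathcal{D}^{\on{perf}}(k))$---which consists of \emph{all} local systems with perfect stalks, not just constant ones---lies in $\on{thick}(f^*(k))$. The paper supplies the missing link directly: under $\on{Fun}(S^n,\mathcal{D}(k))\simeq\mathcal{D}(k[t_{n-1}])$ the evaluation functor at a point corresponds to the forgetful functor, so $\on{Fun}(S^n,\mathcal{D}^{\on{perf}}(k))\simeq\mathcal{D}^{\on{fin}}(k[t_{n-1}])$ by definition. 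With that observation in hand, your second-paragraph argument $\on{thick}(k)=\mathcal{D}^{\on{fin}}(k[t_{n-1}])$ yields both equivalences at once. As written, your first paragraph is a non sequitur that is only rescued by the content of the second.

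Second, your parenthetical about the odd case is incorrect: in the paper's conventions (stated explicitly in \Cref{sec2.3}), $k[t_{n-1}]$ is the graded \emph{polynomial} algebra with $t_{n-1}^2\neq 0$ for all $n$, not an exterior algebra when $n-1$ is odd; it is then not graded-commutative, but the Koszul-type cofiber sequence $k[t_{n-1}][n-1]\xrightarrow{t_{n-1}}k[t_{n-1}]\to k$ and the filtration argument work uniformly. Your caution here is misplaced and, if taken literally, would describe a different algebra.
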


\begin{proof}
Under the equivalence $\on{Fun}(S^n,\mathcal{D}(k))\simeq \mathcal{D}(k[t_{n-1}])$, the forgetful functor $\mathcal{D}(k[t_{n-1}])\rightarrow \mathcal{D}(k)$ corresponds to the evaluation functor $\on{Fun}(S^n,\mathcal{D}(k))\rightarrow \mathcal{D}(k)$ at any point $x\in S^n$. This implies that $\on{Fun}(S^n,\mathcal{D}^{\on{perf}}(k))\simeq \mathcal{D}^{\on{fin}}(k[t_{n-1}])$, which yields  $\on{Ind}\on{Fun}(S^n,\mathcal{D}^{\on{perf}}(k))\simeq \on{Ind}\mathcal{D}^{\on{fin}}(k[t_{n-1}])$ by passing to $\on{Ind}$-completions.

We proceed by showing the equivalence $\on{Fun}(S^{n},\mathcal{D}(k))^{\on{mnd}}\simeq \on{Ind}\mathcal{D}^{\on{fin}}(k[t_{n-1}])$. Consider the trivial $k[t_{n-1}]$-module with homology $k$ (corresponding to $f^*(k)$ under the equivalence $\mathcal{D}^{\on{fin}}(k[t_{n-1}])\simeq \on{Fun}(S^n,\mathcal{D}(k)^{\on{perf}})$). Let $X\in \mathcal{D}^{\on{fin}}(k[t_{n-1}])$. We show by induction on the dimension of the homology of $X$, that $X$ lies in the stable closure $\langle k\rangle \subset \mathcal{D}(k[t_{n-1}])$ of $k$, i.e.~the smallest stable subcategory containing $k$. The induction beginning is the case $X\simeq 0$ and thus clear. For the induction step, suppose that $m\in \mathbb{Z}$ is the maximal degree in which the homology $\on{H}_*\on{Mor}_{\mathcal{D}(k[t_{n-1}])}(k[t_{n-1}],X)$ of $X$ is nontrivial. We find a corresponding non-zero morphism $\alpha\colon k[t_{n-1}][m]\rightarrow X$, such that the composite with $k[t_{n-1}][m+n-1]\rightarrow k[t_{n-1}][m]$ is zero. The morphism $\alpha$ thus induces a non-zero morphism $\alpha'\colon k\rightarrow X$, which is injective on homology. Taking the fiber of $\alpha'$ we obtain a module $\on{fib}(\alpha')$, whose homology has one dimension  less. We have by the induction assumption, that $\on{fib}(\alpha')\in \langle k\rangle$. From $X\simeq \on{cof}(\on{fib}(\alpha')\rightarrow k)$, it follows that $X$ also lies in $\langle k \rangle$. Since any object in $\langle k \rangle$ also lies in $\mathcal{D}^{\on{fin}}(k[t_{n-1}])$, we obtain $\langle k\rangle = \mathcal{D}^{\on{fin}}(k[t_{n-1}])$. This $\infty$-category is stable and also idempotent-complete, as having finite dimensional homology is a condition preserved under retracts. Consider the $k$-linear endomorphism dg-algebras $\on{End}_{\mathcal{D}(k[t_{n-1}])}(k)\simeq\on{End}_{\on{Fun}(S^n,\mathcal{D}(k))}(f^*(k))\simeq k\oplus k[-n]$. Since $f^*(k)$ is by \Cref{lem:cmpgen} a compact generator of $\on{Fun}(S^n,\mathcal{D}(k))^{\on{mnd}}$, we have $\on{Fun}(S^n,\mathcal{D}(k))^{\on{mnd}}\simeq \mathcal{D}(\on{End}_{\on{Fun}(S^{n},\mathcal{D}(k))}(f^*(k)))\simeq \mathcal{D}(\on{End}_{\mathcal{D}(k[t_{n-1}])}(k))$, see \cite[7.1.2.1]{HA}. The perfect derived $\infty$-category $\mathcal{D}^{\on{perf}}(\on{End}_{\mathcal{D}(k[t_{n-1}])}(k))$ (consisting of compact objects) is by \cite[7.2.4.1, 7.2.4.4]{HA} the smallest stable subcategory of $\on{Fun}(S^n,\mathcal{D}(k))^{\on{mnd}}\simeq \mathcal{D}(\on{End}_{\mathcal{D}(k[t_{n-1}])}(k))$ containing $k$ which is closed under retracts and thus equivalent to $\langle k\rangle = \mathcal{D}^{\on{fin}}(k[t_{n-1}])$. It follows that 
\[ \on{Fun}(S^n,\mathcal{D}(k))^{\on{mnd}}\simeq \mathcal{D}(\on{End}_{\mathcal{D}(k[t_{n-1}])}(k))\simeq \on{Ind}\mathcal{D}^{\on{perf}}(\on{End}_{\mathcal{D}(k[t_{n-1}])}(k))\simeq \on{Ind}\mathcal{D}^{\on{fin}}(k[t_{n-1}])\,,\]
concluding the proof. 
\end{proof}

We denote by $k[t_{n-1}^\pm]$ the dg-algebra of graded Laurent polynomials with generator in degree $|t_{n-1}|=n-1$. Note that if $i$ is even, then $k[t_{n-1}]$ and $k[t_{n-1}^\pm]$ are graded commutative dg-algebras, whereas if $i$ is odd, then $k[t_{n-1}]$ and $k[t_{n-1}^\pm]$ are not graded commutative, because $t_{n-1}^2\neq 0$. By \Cref{sodquotlem}, the following Lemma shows that $\mathcal{D}(k[t_{n-1}^\pm])$ arises as the Verdier quotient of $\mathcal{D}(k[t_{n-1}])$ by $\on{Ind}\mathcal{D}^{\on{fin}}(k[t_{n-1}])$, or by \Cref{fin=mndlem} equivalently as the Verdier quotient of $\on{Fun}(S^{n},\mathcal{D}(k))$ by $\on{Fun}(S^n,\mathcal{D}(k))^{\on{mnd}}$.

\begin{lemma}\label{lem:laurent}
Let $n\geq 1$. The $\infty$-category $\mathcal{D}(k[t_{n-1}])$ admits a semiorthogonal decomposition $(\mathcal{D}(k[t_{n-1}^\pm]),\on{Ind}\mathcal{D}^{\on{fin}}(k[t_{n-1}]))$.
\end{lemma}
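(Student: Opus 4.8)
The plan is to exhibit the two full subcategories
$\mathcal{D}(k[t_{n-1}^\pm])$ and $\on{Ind}\mathcal{D}^{\on{fin}}(k[t_{n-1}])$
of $\mathcal{D}(k[t_{n-1}])$ explicitly, together with the functors realizing
them, and then verify the two axioms of \Cref{def:SOD}. The subcategory
$\on{Ind}\mathcal{D}^{\on{fin}}(k[t_{n-1}])$ is included via the obvious fully
faithful functor; concretely, by \Cref{fin=mndlem} it is the essential image of
$\on{Fun}(S^n,\mathcal{D}(k))^{\on{mnd}}$ and is compactly generated by the
trivial module $k$ (the image of $f^*(k)$), by \Cref{lem:cmpgen}. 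For the
other subcategory, note that the localization of $k[t_{n-1}]$ at the
multiplicative set generated by $t_{n-1}$ is $k[t_{n-1}^\pm]$, and
base change along $k[t_{n-1}]\to k[t_{n-1}^\pm]$ furnishes a functor
$\mathcal{D}(k[t_{n-1}])\to \mathcal{D}(k[t_{n-1}^\pm])$ whose right adjoint
(restriction of scalars) is fully faithful, since $k[t_{n-1}^\pm]$ is a
localization of $k[t_{n-1}]$ (the unit $k[t_{n-1}^\pm]\to k[t_{n-1}^\pm]\otimes_{k[t_{n-1}]}k[t_{n-1}^\pm]$ is an equivalence). This identifies
$\mathcal{D}(k[t_{n-1}^\pm])$ with the stable subcategory of
$t_{n-1}$-local objects in $\mathcal{D}(k[t_{n-1}])$, i.e.\ those $M$ with
$\on{Mor}_{\mathcal{D}(k[t_{n-1}])}(k,M)\simeq 0$.

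With these identifications in hand, the first axiom is the semiorthogonality
$\on{Map}_{\mathcal{D}(k[t_{n-1}])}(b,a)\simeq 0$ for $b\in \on{Ind}\mathcal{D}^{\on{fin}}(k[t_{n-1}])$ and $a\in \mathcal{D}(k[t_{n-1}^\pm])$. Since $\on{Ind}\mathcal{D}^{\on{fin}}(k[t_{n-1}])$ is generated under colimits by the
compact object $k$, and mapping out of a colimit of compacts into a fixed
object turns the colimit into a limit, it suffices to check
$\on{Mor}_{\mathcal{D}(k[t_{n-1}])}(k,a)\simeq 0$ for every
$t_{n-1}$-local $a$ — which is exactly the defining property of $t_{n-1}$-locality. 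The second axiom, the existence of a fiber--cofiber sequence
$b\to x\to a$ for each $x$, is the standard localization triangle: take $a$ to
be the image of $x$ under the right adjoint of base change, i.e.\ the
$t_{n-1}$-localization $x\to x[t_{n-1}^{-1}]$, with counit map
$x\to a$; then $b\coloneqq\on{fib}(x\to a)$ lies in
$\on{Ind}\mathcal{D}^{\on{fin}}(k[t_{n-1}])$ because it is $t_{n-1}$-torsion
($b[t_{n-1}^{-1}]\simeq 0$), and every $t_{n-1}$-torsion object lies in the
stable closure of $k$ by the same dévissage argument as in the proof of
\Cref{fin=mndlem} (on compact objects; then pass to $\on{Ind}$-completions).
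For $n\geq 2$ one can alternatively invoke \Cref{fin=mndlem} directly and
deduce the semiorthogonal decomposition from the monadic/Verdier formalism;
the case $n=1$, where $k[t_0]=k[t]$ and $k[t_0^\pm]=k[t,t^{-1}]$ are ordinary
(ungraded) algebras, is the classical statement for $\mathbb{G}_m\subset \mathbb{A}^1$
and can be treated by hand, which is why the statement is formulated uniformly
for $n\geq 1$.

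The main obstacle is the verification that the fiber $b$ of the localization
map $x\to x[t_{n-1}^{-1}]$ actually lands in $\on{Ind}\mathcal{D}^{\on{fin}}(k[t_{n-1}])$ rather than merely in the category of $t_{n-1}$-torsion modules; this
requires knowing that the $t_{n-1}$-torsion subcategory is generated under
colimits by $k$, equivalently that every finite-dimensional $k[t_{n-1}]$-module
lies in $\langle k\rangle$. This is precisely the dévissage already carried out
inside the proof of \Cref{fin=mndlem}, so the cleanest route is to isolate that
argument as the key input: the stable closure $\langle k\rangle$ equals
$\mathcal{D}^{\on{fin}}(k[t_{n-1}])$, hence its $\on{Ind}$-completion is the
full $t_{n-1}$-torsion subcategory, and the localization triangle then exhibits
the desired semiorthogonal decomposition.
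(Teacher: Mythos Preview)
Your overall strategy is sound and close to the paper's, but there is one genuine gap and one terminological slip.

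The slip: the map $x\to x[t_{n-1}^{-1}]$ you use is the \emph{unit} of the adjunction $L\dashv R$ (base change $\dashv$ restriction), not a counit, and $x[t_{n-1}^{-1}]$ is $RL(x)$, not the image of $x$ under $R$ alone.

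The gap is in the step ``every $t_{n-1}$-torsion object lies in $\on{Ind}\mathcal{D}^{\on{fin}}(k[t_{n-1}])$''. Your justification, ``by the same d\'evissage as in \Cref{fin=mndlem} on compact objects, then pass to $\on{Ind}$-completions'', does not work as stated: the fiber $b=\on{fib}(x\to x[t_{n-1}^{-1}])$ is not compact in general, so you cannot first reduce to compact torsion objects and then take colimits. What is actually needed is that $k$ \emph{compactly generates} the entire $t_{n-1}$-torsion subcategory. Fortunately, you have already done the relevant computation: if $M$ is torsion and $\on{Mor}(k,M)\simeq 0$, then $t_{n-1}$ acts invertibly on $M$, so $M\simeq M[t_{n-1}^{-1}]\simeq 0$. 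Since $k$ is compact in $\mathcal{D}(k[t_{n-1}])$, this shows the torsion subcategory is compactly generated by $k$, hence equals $\on{Ind}\langle k\rangle=\on{Ind}\mathcal{D}^{\on{fin}}(k[t_{n-1}])$. Once you insert this sentence, your argument is complete.

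For comparison, the paper avoids constructing the localization triangle altogether. It shows directly that $\on{Ind}\mathcal{D}^{\on{fin}}(k[t_{n-1}])^\perp=\mathcal{D}(k[t_{n-1}^\pm])$ by the same computation of $\on{Mor}(k,M)$ you use, and then cites \cite[Prop.~2.2.4, Prop.~2.3.2]{DKSS21} for the abstract fact that a colimit-preserving fully faithful inclusion admitting a right adjoint yields a semiorthogonal decomposition with the right orthogonal as the other piece. Your route via the explicit triangle $b\to x\to x[t_{n-1}^{-1}]$ is more hands-on and arguably more transparent, but once you close the gap above you will see that the two arguments have the same core content: both hinge on the identification of $\{M:\on{Mor}(k,M)\simeq 0\}$ with the $t_{n-1}$-local objects.
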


\begin{proof}
For the proof, we show that $\mathcal{D}(k[t_{n-1}^\pm])=\on{Ind}\mathcal{D}^{\on{fin}}(k[t_{n-1}])^\perp$ is the stable subcategory of $\mathcal{D}(k[t_{n-1}])$ consisting of objects $b$, such that $\on{Mor}_{\mathcal{D}(k[t_{n-1}])}(a,b)\simeq 0$ for all $a\in \on{Ind}\mathcal{D}^{\on{fin}}(k[t_{n-1}])$. Using that the inclusion $\on{Ind}\mathcal{D}^{\on{fin}}(k[t_{n-1}])\subset \mathcal{D}(k[t_{n-1}])$ admits a right adjoint, the Lemma then follows from \cite[Prop.~2.2.4,~Prop.~2.3.2]{DKSS21}.

The objects of $\mathcal{D}(k[t_{n-1}])$ can be identified with dg-modules over the dg-algebra $k[t_{n-1}]$. Using this identification, we observe that $\mathcal{D}(k[t_{n-1}^\pm])\subset \mathcal{D}(k[t_{n-1}])$ is the full subcategory consisting of $k[t_{n-1}]$ dg-modules $M_\bullet$, such that $t_{n-1}\colon M_{i}\rightarrow M_{i+n-1}$ is an isomorphism of $k$-vector spaces for all $i\in \mathbb{Z}$. We have  
\[\on{Mor}_{\mathcal{D}(k[t_{n-1}])}(k[t_{n-1}][i],M_\bullet) \simeq M_{\bullet +i}\,.\]
Using that $k\simeq \on{cof}(k[t_{n-1}][n-1]\rightarrow k[t_{n-1}])$, we thus have 
\[ \on{Mor}_{\mathcal{D}(k[t_{n-1}])}(k[i],M_\bullet) \simeq \on{cof}(t_{n-1}\colon M_{\bullet+i}\rightarrow M_{\bullet+i+n-1})\in \mathcal{D}(k)\,.\]
We thus find that $t_{n-1}\colon M_{i}\rightarrow M_{i+n-1}$ is an isomorphism for all $i\in \mathbb{Z}$ if and only if $M_\bullet\in\on{Ind}\mathcal{D}^{\on{fin}}(k[t_{n-1}])^\perp$. This shows the desired equality
\[\mathcal{D}(k[t_{n-1}^\pm])=\on{Ind}\mathcal{D}^{\on{fin}}(k[t_{n-1}])^\perp\,,\]
concluding the proof.
\end{proof}

\begin{lemma}\label{lem:vectk}
There exists an equivalence of $1$-categories $\on{Vect}_k\simeq \on{ho}\mathcal{D}(k[t_1^\pm])$, where $\on{Vect}_k$ denotes the $1$-category of $k$-vector spaces and $\on{ho}\mathcal{D}(k[t_1^\pm])$ denotes the homotopy category of $\mathcal{D}(k[t_1^\pm])$.
\end{lemma}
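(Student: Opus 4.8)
The plan is to produce an explicit exact equivalence of stable $\infty$-categories $\mathcal{D}(k[t_1^\pm]) \simeq \mathcal{D}(k)$ and then observe that it descends to homotopy categories, where $\on{ho}\mathcal{D}(k) \simeq \on{Vect}_k$ is standard (the homotopy category of the derived $\infty$-category of a field is the $1$-category of $\mathbb{Z}$-graded $k$-vector spaces, but since $k[t_1^\pm]$ identifies an object with its shift, it is better to set this up carefully). Actually, the cleaner route: $k[t_1^\pm]$ is a dg-algebra with underlying graded algebra $k[t_1,t_1^{-1}]$, $|t_1|=1$, and zero differential. Since $t_1$ is a unit of odd degree, left multiplication by $t_1$ gives an isomorphism of dg-modules $M_\bullet \xrightarrow{\sim} M_{\bullet+1}[1]$ (up to sign bookkeeping), so every dg-module over $k[t_1^\pm]$ is, as an object of $\mathcal{D}(k[t_1^\pm])$, equivalent to its own shift. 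The key computation is that $\on{End}_{\mathcal{D}(k[t_1^\pm])}(k[t_1^\pm])\simeq k[t_1^\pm]$ as a dg-algebra, and since $\mathcal{D}(k[t_1^\pm])$ is compactly generated by the free module $k[t_1^\pm]$, we get $\mathcal{D}(k[t_1^\pm])\simeq \mathcal{D}(k[t_1^\pm])$ tautologically — so instead I would directly analyze modules.

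Here is the concrete approach. First I would note that an object of $\on{ho}\mathcal{D}(k[t_1^\pm])$ can be represented by a dg-$k[t_1^\pm]$-module; writing such a module as $M_\bullet$, the action of the unit $t_1$ of degree $1$ forces $t_1\colon M_i \to M_{i+1}$ to be an isomorphism for all $i$ (on homology after taking a cofibrant replacement, but since $k[t_1^\pm]$ is itself quasi-isomorphic to its homology and has underlying field-graded structure, everything is formal and one can work with the homology $\on{H}_\bullet(M)$ which is a graded $k[t_1^\pm]$-module). A graded module over $k[t_1,t_1^{-1}]$ with $|t_1|=1$ is the same datum as a single $k$-vector space $V := \on{H}_0(M)$, with all other $\on{H}_i(M) = t_1^i \cdot V \cong V$. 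Thus $\on{H}_\bullet$ induces a functor $\on{ho}\mathcal{D}(k[t_1^\pm]) \to \on{Vect}_k$, $M \mapsto \on{H}_0(M)$. Next I would check this is essentially surjective (the free module $k[t_1^\pm]$ maps to $k$, and direct sums give all of $\on{Vect}_k$ since $k[t_1^\pm]$ is hereditary-trivial, i.e.\ a graded field), fully faithful on $\on{Hom}$-groups by computing $\on{Hom}_{\on{ho}\mathcal{D}(k[t_1^\pm])}(M,N) = \on{H}_0\on{Mor}(M,N)$ and showing, via the graded-field structure, that this is just $\on{Hom}_k(\on{H}_0 M, \on{H}_0 N)$ with no higher or lower Ext contributions surviving (because $\on{Mor}_{\mathcal{D}(k[t_1^\pm])}(k[t_1^\pm],N)\simeq N_\bullet$ and $N_\bullet$ is $1$-periodic so $\on{H}_0$ captures it entirely, and $k[t_1^\pm]$ generates).

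The main obstacle, and where I would spend the most care, is the bookkeeping showing that $\mathcal{D}(k[t_1^\pm])$ behaves like the derived category of a graded field: concretely, that every object is a (possibly infinite) direct sum of shifts of the free module and that $\on{Ext}^i_{\mathcal{D}(k[t_1^\pm])}(k[t_1^\pm], k[t_1^\pm]) = k$ if $i \equiv 0$ and (since $t_1$ is invertible) there is genuinely nothing in other degrees — one must be careful that $\on{Ext}^i$ for $i\ne 0$ vanishes precisely because the degree-$i$ piece is $k\cdot t_1^{-i}$ but this is identified with the degree-$0$ piece under the periodicity, so on the homotopy category of the \emph{$1$-periodic} derived category there is a single $\on{Hom}$ group, not a $\mathbb{Z}$-graded family. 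Equivalently, one uses that $k[t_1^\pm]$ is formal and its homology $k[t_1,t_1^{-1}]$ is a graded division algebra, so every dg-module is free, hence $\mathcal{D}(k[t_1^\pm])$ is equivalent to the $\infty$-category of free modules, whose homotopy category is $\on{Vect}_k$ via $V \mapsto V \otimes_k k[t_1^\pm]$. Once the graded-division-algebra observation is in place, the rest is a short verification, and I would present it in that order: (1) $k[t_1^\pm]$ is formal and its homology is a graded division ring; (2) hence every object of $\mathcal{D}(k[t_1^\pm])$ is a coproduct of copies of the free module $k[t_1^\pm]$ (no shifts needed, by $1$-periodicity); (3) the functor $\on{Vect}_k \to \on{ho}\mathcal{D}(k[t_1^\pm])$, $V\mapsto V\otimes_k k[t_1^\pm]$, is essentially surjective by (2) and fully faithful because $\on{Hom}_{\on{ho}\mathcal{D}(k[t_1^\pm])}(k[t_1^\pm], k[t_1^\pm]) = \on{H}_0(k[t_1^\pm]) = k$ and both sides commute with coproducts.
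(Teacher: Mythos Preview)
Your final approach is correct and essentially identical to the paper's: both construct the functor $\on{Vect}_k \to \on{ho}\mathcal{D}(k[t_1^\pm])$, $V \mapsto V \otimes_k k[t_1^\pm]$, prove essential surjectivity via the observation that every object is equivalent to its homology (your graded-division-ring phrasing is just a repackaging of this), and prove full faithfulness from $\pi_0\on{Map}_{\mathcal{D}(k[t_1^\pm])}(k[t_1^\pm],k[t_1^\pm])\simeq \on{H}_0(k[t_1^\pm])\simeq k$. The paper's proof is three sentences; you would benefit from pruning the exploratory false starts and presenting only the clean route you reach at the end.
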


\begin{proof}
Since any complex of $k$-vector spaces is quasi-isomorphic to its homology, any $1$-periodic complex with values in $k$ is equivalent to an object in the image of $N(\on{Vect}_k)\hookrightarrow \mathcal{D}(k)\xrightarrow{\mhyphen \otimes k[t_1^\pm]} \mathcal{D}(k[t_1^\pm])$. This functor is thus essentially surjective, and using that 
\[ \pi_0\on{Map}_{\mathcal{D}(k[t_1^\pm])}(k[t_1^\pm],k[t_1^\pm])\simeq \on{H}_0(k[t_1^\pm])\simeq k\,,\] 
one sees that this functor is also fully faithful on the level of homotopy categories. 
\end{proof}

In the triangulated homotopy category $\on{ho}\mathcal{D}(k[t_1^\pm])$, distinguished triangles take the form
\[ \on{ker}(\alpha)\oplus \on{coker}(\alpha)\xlongrightarrow{(\iota,0)} k[t_1^\pm]^{\oplus I}\xlongrightarrow{\alpha}k[t_1^\pm]^{\oplus J}\xrightarrow{(0,\pi)}\on{ker}(\alpha)\oplus \on{coker}(\alpha)\]
with $\iota$ being the kernel map and $\pi$ being the cokernel map and $I,J$ two sets.

Finally, we record the proof of \Cref{prop:formalneighborhood}.

\begin{proof}[Proof of \Cref{prop:formalneighborhood}.]
Consider the Verdier localization sequence 
\[ \D^{\on{perf}}(k[\epsilon_{-2}]/\epsilon_{-2}^2)\longrightarrow \D^{\on{perf}}(kQ) \longrightarrow \D^{\on{perf}}(k[t_1])\,,\]
where $k[\epsilon_{-2}]/\epsilon_{-2}^2$ denotes the graded dual numbers with $|\epsilon_{-2}|=-2$ and $kQ$ denotes the graded Kronecker quiver, with two vertices $1,2$ and arrows $a,b\colon 1\to 2$ in degrees $|a|=0,|b|=1$. The $k$-linear $\infty$-category $\D^{\on{perf}}(kQ)$ is smooth and proper. The $\infty$-category of perfect complexes on the formal punctured neighborhood of infinity of $\D^{\on{perf}}(k[t_1])$ is thus given by the (automatically idempotent complete) singularity category $\D^{\on{fin}}(k[\epsilon_{-2}]/\epsilon_{-2}^2)/\D^{\on{perf}}(k[\epsilon_{-2}]/\epsilon_{-2}^2)$. As shown for instance in the proof of Theorem 4.10 in \cite{Ami09}, this singularity category is equivalent to the the generalized cluster category of $\D^{\on{perf}}(k[t_1])$, which is by \Cref{lem:laurent} given by 
\[ \D^{\on{perf}}(k[t_1])/\D^{\on{fin}}(k[t_1])\simeq \D^{\on{perf}}(k[t_1^\pm])\,,\]
as desired. 
\end{proof}

\subsection{Relative Calabi--Yau structures}\label{sec2.4}

Let $k$ be a field and $\kappa=k$ or $\kappa=k[t_{2m}^\pm]$ the dg-algebra of graded Laurent polynomials, see \Cref{sec2.3}, for some $m\in \mathbb{N}$. In this section, we sketch the definition of a relative right $\kappa$-linear Calabi--Yau structure. The notion of relative Calabi--Yau structure was described in case $\kappa=k$ the setting of dg-categories in \cite{BD19} and generalized to arbitrary $R$-linear relative Calabi--Yau structures, with $R$ an $\mathbb{E}_\infty$-ring spectrum, on stable $\infty$-categories in \cite{Chr23}.

\begin{definition}\label{def:serre}
Let $\mathcal{A}$ be a compactly generated $\kappa$-linear $\infty$-category. A Serre functor on $\mathcal{A}$ is a $\kappa$-linear endofunctor $U\colon \mathcal{A}\rightarrow \mathcal{A}$, satisfying that there exists a natural equivalence of functors 
\[ \on{Mor}_{\mathcal{A}}(\mhyphen_1,\mhyphen_2)\simeq \on{Mor}_{\mathcal{A}}(\mhyphen_2,U(\mhyphen_1))^\ast\colon \mathcal{A}^{\on{c},\on{op}}\times \mathcal{A}^{\on{c}}\rightarrow \mathcal{D}(\kappa)\,.
\]
Here $\mathcal{A}^{\on{c}}$ denotes the full subcategory of $\mathcal{A}$ of compact objects and $(\mhyphen)^*=\on{Mor}_{\mathcal{D}(\kappa)}(\mhyphen,\kappa)\colon \mathcal{D}(\kappa)^{\on{op}}\rightarrow \mathcal{D}(\kappa)$.
\end{definition}

Every compactly generated, proper $\kappa$-linear $\infty$-category $\mathcal{A}$ admits a Serre functor $U$. This functor is obtained by identifying the bimodule right dual (i.e.~the right adjoint) of the evaluation functor $\on{Mor}_\D(\mhyphen,\mhyphen)\colon \A^\vee\otimes \A\to \D(\kappa)$ with an endofunctor $\on{id}_{\mathcal{A}}^\ast=U$ of $\A$. If $\A$ is firthermore smooth, then the evaluation functor also admits a left dual, which we can identify with a functor $\on{id}_\A^!$. In this case, the functor $\on{id}_\A^!$ is inverse to $\on{id}_\A^*$. We refer to \cite{Chr23} for background. A weak right $n$-Calabi--Yau structure on such an $\infty$-category $\mathcal{A}$ consists of a natural equivalence $U\simeq \on{id}_{\mathcal{A}}[n]$. Such an equivalence arises from a $\kappa$-linear dual Hochschild homology class 
\[ \sigma\colon \kappa[n] \to \on{HH}(\mathcal{A})^* \simeq \on{Mor}_{\on{Lin}_{\kappa}(\mathcal{A},\mathcal{A})}(\on{id}_{\mathcal{A}},\on{id}_{\mathcal{A}}^\ast)\,,\]
where $\on{Lin}_{\kappa}(\mathcal{A},\mathcal{A})$ denotes the $\kappa$-linear $\infty$-category of $\kappa$-linear endofunctors of $\A$. A right $n$-Calabi--Yau structure on $\A$ consists of a lift of a weak right Calabi--Yau structure $\sigma$ to a dual cyclic homology class $\eta \colon \kappa[n] \to \on{HH}(\A)_{S^1}^*$.

Consider a $\kappa$-linear functor $G\colon \mathcal{B}\rightarrow \mathcal{A}$ between compactly generated, proper $\kappa$-linear $\infty$-categories that admits a $\kappa$-linear right adjoint $H$. The functor $G$ induces a morphism $\on{HH}(G)^*\colon \on{HH}(\mathcal{A})^*\rightarrow \on{HH}(B)^*$ on the dual Hochschild homology and we define $\on{HH}(\mathcal{B},\mathcal{A})^*=\on{cof}(\on{HH}(G)^*)$. The relative dual cyclic homology $\on{HH}(\mathcal{B},\mathcal{A})^*_{S^1}$ is defined similarly. A relative Hochschild class $\sigma \colon \kappa[n]\to \on{HH}(\mathcal{B},\mathcal{A})^*$ gives rise to a diagram
\[
\begin{tikzcd}
{\on{id}_{\mathcal{B}}} \arrow[r, "{\on{u}}"] & {HG} \arrow[d]               &                       \\
                                         & {H\on{id}_{\mathcal{A}}^*G[1-n]} \arrow[r, "\tilde{\on{cu}}"] & {\on{id}_{\mathcal{B}}^*[1-n]}
\end{tikzcd}
\]
where $\on{u}$ is the unit of $G\dashv H$ and $\tilde{\on{cu}}$ a kind of counit, see \cite{BD21,Chr23}, together with a choice of null-homotopy of the composite morphism.

\begin{definition}\label{def:cystr}
A right $n$-Calabi--Yau structure on the functor $G\colon\mathcal{B}\rightarrow \mathcal{A}$ consists of a class $\eta \colon \kappa[n]\to \on{HH}(\mathcal{B},\mathcal{A})^*_{S^1}$, whose composite with $\on{HH}(\mathcal{B},\mathcal{A})^*_{S^1}\to \on{HH}(\mathcal{B},\mathcal{A})^*$ induces a diagram with horizontal fiber and cofiber sequences
\[\begin{tikzcd}
{\on{id}_{\mathcal{B}}} \arrow[r] \arrow[d] & {HG} \arrow[d] \arrow[r]       & \on{cof} \arrow[d]    \\
\on{fib} \arrow[r]                               & {H\on{id}_{\mathcal{A}}^*G[1-n]} \arrow[r] & {\on{id}_{\mathcal{B}}^*[1-n]}
\end{tikzcd}
\]
whose vertical morphisms are equivalences. 
\end{definition}

Informally, relative right Calabi--Yau structures can be understood as follows. There is a morphism $U[-n]\simeq \on{id}_{\mathcal{B}}^*[-n]\rightarrow \on{id}_{\mathcal{B}}$ of endofunctors, with $U$ the Serre functor, which is not an equivalence but whose fiber is equivalent to $HG[-1]$. Thus, a relative Calabi--Yau structure is a description of how much the Serre functor differs from a suspension of the identity, in terms of the adjunction $G\dashv H$.

\section{Perverse schobers on surfaces}

\subsection{Marked surfaces and ribbon graphs}

\begin{definition}\label{def:surf}
A surface is a smooth compact oriented surface ${\bf S}$ with non-empty boundary $\partial {\bf S}$ and interior ${\bf S}^\circ$. Note that this implies that the boundary of ${\bf S}$ is a disjoint union of circles.

A marked surface consists of a surface ${\bf S}$ and a finite set $M\subset \partial {\bf S}$ of marked points on the boundary of ${\bf S}$ such that each connected component of $\partial {\bf S}$ contains at least one marked point. We always assume that ${\bf S}$ is not the monogon or digon.
\end{definition}

\begin{definition}
\begin{itemize}
\item A graph ${\bf G}$ consists of two finite sets ${\bf G}_0$ of vertices and $\on{H}$ of halfedges together with an involution $\tau\colon\on{H}\rightarrow \on{H}$ and a map $\sigma\colon\on{H}\rightarrow {\bf G}_0$.
\item Let ${\bf G}$ be a graph. We denote the set of orbits of $\tau$ by ${\bf G}_1$. The elements of ${\bf G}_1$ are called the edges of ${\bf G}$. An edge is called internal if the orbit contains two elements and called external if the orbit contains a single element. An internal edge is called a loop at $v\in {\bf G}_0$ if it consists of two halfedges both being mapped under $\sigma$ to $v$. The set of internal edges of ${\bf G}$ is denoted by ${\bf G}_1^\circ$ and the set of external edges is denoted by ${\bf G}_1^\partial$.
\item A ribbon graph consists of a graph ${\bf G}$ together with a choice of a cyclic order on the set $\on{H}(v)$ of halfedges incident to $v$ for each $v\in{\bf G}_0$.
\end{itemize}
\end{definition}

\begin{definition}
Let ${\bf G}$ be a graph. We denote by $\on{Exit}({\bf G})$ the simplicial set given by the nerve of the $1$-category with 
\begin{itemize}
\item objects given by the set ${\bf G}_0 \amalg {\bf G}_1$ and 
\item non-identity morphisms of the form $v\rightarrow e$ with $v\in {\bf G}_0$ a vertex and $e\in {\bf G}_1$ an edge incident to $v$. If $e$ is a loop at $v$, then there are two morphisms $v\rightarrow e$.
\end{itemize}
We call $\on{Exit}({\bf G})$ the exit path category of ${\bf G}$.

The geometric realization $|{\bf G}|$ of the graph ${\bf G}$ is defined as the geometric realization $|\on{Exit}({\bf G})|$ of the simplicial set $\on{Exit}({\bf G})$. 
\end{definition}

\begin{remark}
Let ${\bf G}$ be a graph and ${\bf S}$ an oriented surface. An embedding of $|{\bf G}|$ into ${\bf S}$ determines a canonical ribbon graph structure on ${\bf G}$ by requiring the halfedges at any vertex to be ordered in the counterclockwise direction.
\end{remark}

\begin{definition}\label{def:sg}
Let ${\bf S}$ be a marked surface. We call a graph ${\bf G}$ together with an embedding $i\colon |{\bf G}|\subset {\bf S}\backslash M$ a spanning graph for ${\bf S}$, if $i$ is a homotopy equivalence and the restriction $i^{-1}(\partial {\bf S}\backslash M) \rightarrow \partial {\bf S}\backslash M$ of $i$ is also a homotopy equivalence. We consider spanning graphs as endowed with the canonical ribbon graph structure arising from the embedding into the oriented surface ${\bf S}$.  
\end{definition}

\begin{remark}
Any \idtr{} of a marked surface ${\bf S}$ arises as the dual graph of an ideal triangulation of ${\bf S}$ in the sense of \Cref{def:idealtr} and this defines a bijection between sets of suitable homotopy classes of \idtr{}s and ideal triangulations.
\end{remark}

\subsection{Perverse schobers on marked surfaces}\label{sec3.2}

In this section, we recall the definition of perverse schober parametrized by a ribbon graph. For more background on perverse schobers on marked surfaces and their interpretation as categorified perverse sheaves, we refer to \cite[Sections 3 and 4]{Chr22}.

We first briefly recall the definition of spherical functor or a spherical adjunction. Let $F\colon\mathcal{A}\leftrightarrow \mathcal{B}\noloc G$ be an adjunction between stable $\infty$-categories with unit $\on{u}\colon \on{id}_{\mathcal{A}}\rightarrow GF$ and counit $\on{cu}\colon FG\rightarrow \on{id}_{\mathcal{B}}$. Taking the cofiber of the unit $\on{u}$ in the stable $\infty$-category $\on{Fun}(\mathcal{A},\mathcal{A})$ of endofunctors, we obtain a functor $T_{\mathcal{A}}\colon \mathcal{A}\rightarrow \mathcal{A}$ called the twist functor. Similarly, the cotwist functor $T_{\mathcal{B}}\colon \mathcal{B}\rightarrow \mathcal{B}$ is defines as the fiber of the counit. The adjunction $F\dashv G$ is called spherical if both $T_{\mathcal{A}}$ and $T_{\mathcal{B}}$ are autoequivalences. In this case, we also call the functors $F$ and $G$ spherical. For treatments of spherical adjunctions in the setting of stable $\infty$-categories, we refer to \cite{DKSS21,Chr20}.

We turn to describing the local model of a perverse schober arising from a spherical functor. Let ${\bf G}$ be a ribbon graph and $v$ a vertex of ${\bf G}$ of valency $m$. The objects of the under category $\on{Exit}({\bf G})_{v/}$ are the vertex $v$ and its incident halfedges. We choose a labeling of these halfedges by $1,\dots,m$, compatibly with their given cyclic order. Given a spherical functor $F_v\colon \mathcal{V}_v\rightarrow \mathcal{N}$, we define a functor $\mathcal{G}_v(F_v)\colon\on{Exit}({\bf G})_{/v}\rightarrow \on{St}$ as follows. Consider the diagram $D\colon [m-1]\rightarrow \on{Set}_\Delta$ given by
\begin{equation}\label{eq:VNdiag} \mathcal{V}_v\xrightarrow{F_v} \underbrace{\mathcal{N}\xrightarrow{\on{id}} \dots \xrightarrow{\on{id}} \mathcal{N}}_{(m-1)\text{-many}}\,.\end{equation}
Let $p\colon \Gamma(D)\rightarrow \Delta^{m-1}\simeq N([m-1])$ be the (covariant) Grothendieck construction of $D$, see \cite[3.2.5.2]{HA}, where it is called the relative nerve. The Grothendieck construction is an explicit model for the coCartesian fibration classified by the functor $D$.

\begin{itemize}
\item We set $\mathcal{G}_v(F_v)(v)=\on{Fun}_{\Delta^{m-1}}(\Delta^{m-1},\Gamma(D))$ to be the stable $\infty$-category of sections of $p$. We also denote $\mathcal{V}_{F_v}^m=\mathcal{G}_v(F_v)(v)$. We note that if $\mathcal{V}_v\simeq 0$, then $\mathcal{V}^m_{F_v}\simeq \on{Fun}(\Delta^{m-2},\mathcal{N})$, this should be considered as the $\infty$-category of representations of the $A_{n-1}$-quiver in $\mathcal{N}$. In general, $\mathcal{V}^m_{F_v}$ is a model for the lax limit of the diagram \eqref{eq:VNdiag} in the $(\infty,2)$-category of stable $\infty$-categories.
\item We set $\mathcal{G}_v(F_v)(i)=\mathcal{N}$ for all $1,\dots,m$.
\item We denote $\varrho_i\coloneqq \mathcal{G}_v(F_v)(v\rightarrow i)\colon \mathcal{V}^m_{F_v}\rightarrow \mathcal{N}$. We set \[\varrho_1=\on{ev}_{m-1}\] to be the evaluation functor of the sections of $p$ at $m-1\in \Delta^{m-1}$. For $1\leq i<m$, we set $\varrho_{i+1}=\on{ladj}(\on{ladj}(\varrho_i))$. For a detailed description (and thus also a proof of existence) of these functors, we refer to \cite[Section 3]{Chr22}.
\end{itemize} 

In the case $m=1$, we have $\on{Exit}({\bf G})_{/v}\simeq \Delta^1$ and the functor $\mathscr{G}_v(F_v)$ is simply the functor $F_v$. One can further show that making a different choice of total order on the halfedges of $v$ changes the functor $\mathcal{G}_v(F_v)$ only by composition with an autoequivalence, see \cite[Prop.~3.11]{Chr22}.

\begin{definition}\label{def:ps}
Let ${\bf G}$ be a ribbon graph. A ${\bf G}$-parametrized perverse schober is a functor $\mathcal{F}\colon \on{Exit}({\bf G})\rightarrow \mathcal{P}r^L_{\on{St}}$ such that for each vertex $v$ of $\mathcal{T}$ there exists a spherical functor $F_v\colon \mathcal{V}_v\rightarrow \mathcal{N}$, satisfying that the composite of $\on{Exit}({\bf G})_{v/}\rightarrow \on{Exit}({\bf G})$ with $\mathcal{F}$ is equivalent to $\mathcal{G}_{v}(F_v)$ in $\on{Fun}(\on{Exit}({\bf G})_{v/}\,\on{St})$. 

Given a vertex $v$ of ${\bf G}$, the $\infty$-category $\mathcal{V}_v$ is called the $\infty$-category of vanishing cycles at $v$. If $\mathcal{V}_v\not\simeq 0$, we call $v$ a singularity of $\mathcal{F}$. We call $F_v$ the spherical functor describing or underlying $\mathcal{F}$ at $v$ (this functor is only determined up to equivalence and composition with autoequivalences).
\end{definition}

Assuming that the graph ${\bf G}$ is connected, a ${\bf G}$-parametrized perverse schober assigns to each edge of ${\bf G}$ an equivalent $\infty$-category $\mathcal{N}$, called the generic stalk of the perverse schober, or also the $\infty$-category of nearby cycles.

\begin{remark}
We restrict in \Cref{def:ps} to perverse schobers taking values in stable and presentable $\infty$-categories and colimit preserving functors, but one could also consider perverse schobers valued in stable $\infty$-categories. Any perverse schober taking values in presentable $\infty$-categories also defines a functor to $\mathcal{P}r^R_{\on{St}}$, because the functors $\varrho_i$ defined above admits repeated left and right adjoints, see \cite[Rem.~4.11]{Chr21b}.
\end{remark}

\begin{definition}\label{def:pscat}
The $\infty$-categories of left/right ${\bf G}$-parametrized perverse schobers $\mathfrak{P}^{L}({\bf G})$ and $\mathfrak{P}^R({\bf G})$ are defined as the full subcategories of $\on{Fun}(\on{Exit}({\bf G}),\mathcal{P}r^L_{\on{St}})$ and $\on{Fun}(\on{Exit}({\bf G}),\mathcal{P}r^R_{\on{St}})$, respectively, spanned by ${\bf G}$-parametrized perverse schobers.
\end{definition}

\begin{definition}\label{def:glsec}
Let ${\bf G}$ be a ribbon graph and $\mathcal{F}$ a ${\bf G}$-parametrized perverse schober. 
\begin{itemize}
\item The $\infty$-category of global sections $\mathcal{H}({\bf G},\mathcal{F})\in \mathcal{P}r^L_{\on{St}}$ of $\mathcal{F}$ is defined as the limit of $\mathcal{F}$. 
\item The $\infty$-category of local sections $\mathcal{L}({\bf G},\mathcal{F})=\on{Fun}_{\on{Exit}({\bf G})}(\on{Exit}({\bf G}),\Gamma(\mathcal{F}))\in \mathcal{P}r^L_{\on{St}}$ of $\mathcal{F}$ is defined as the $\infty$-category of sections of the Grothendieck construction $p\colon \Gamma(\mathcal{F})\rightarrow \on{Exit}({\bf G})$. 
\end{itemize}
\end{definition}

\begin{remark}
As a model for the limit $\mathcal{H}({\bf G},\mathcal{F})$ of a ${\bf G}$-parametrized perverse schober $\mathcal{F}$, we usually consider the full subcategory of the $\infty$-category of sections $\mathcal{L}({\bf G},\mathcal{F})$ of the  Grothendieck construction $p\colon \Gamma(\mathcal{F})\rightarrow \on{Exit}({\bf G})$ of $\mathcal{F}$ spanned by coCartesian sections. This description of the limit is proven in \cite[7.4.1.10]{Ker}. A section $s\colon \on{Exit}({\bf G})\rightarrow \Gamma(\mathcal{F})$ of $p$ is called coCartesian if $s(\alpha)$ is a coCartesian edge in the dual sense of \cite[2.4.1.1]{HTT} for each $1$-simplex $\alpha$ in $\on{Exit}({\bf G})$.
\end{remark}

Let $\mathcal{F}$ be a ${\bf G}$-parametrized perverse schober. Given an edge $e$ of ${\bf G}$, we denote by $\on{ev}_e\colon \mathcal{H}({\bf G},\mathcal{F})\rightarrow \mathcal{F}(e)$ the evaluation functor, which maps a coCartesian section $s$ of $p$ to its value $s(e)$ at $e$. Consider the product of the evaluation functors at the external edges 
\[ \prod_{e\in {\bf G}_1^\partial} \on{ev}_e\colon \mathcal{H}({\bf G},\mathcal{F})\rightarrow  \prod_{e\in {\bf G}_1^\partial} \mathcal{F}(e)\,.\]
The functor $\prod_{e\in {\bf G}_1^\partial} \on{ev}_e$ preserves all limits and colimits by \cite[5.1.2.3, 4.3.1.10, 4.3.1.16]{HTT}. By the $\infty$-categorical adjoint functor theorem, $\prod_{e\in {\bf G}_1^\partial}\on{ev}_e$ thus admits a left adjoint.

\begin{definition}\label{def:cupfunctor}
The functor 
\[\partial \mathcal{F}\colon \prod_{e\in {\bf G}_1^\partial} \mathcal{F}(e)\longrightarrow \mathcal{H}({\bf G},\mathcal{F})\]  
is defined as the left adjoint of $\prod_{e\in {\bf G}_1^\partial}\on{ev}_e$.
\end{definition}

\begin{remark}
In the study of partially wrapped Fukaya categories, particularly Fukaya-Seidel categories, the functor $\partial \mathcal{F}$ is often called the Orlov functor or cup functor and $\prod_{e\in {\bf G}_1^\partial}\on{ev}_e$ is called the cap functor, see for instance \cite{Syl19} for background. 
\end{remark}

\subsection{Semiorthogonal decompositions of perverse schobers}\label{sec3.3} 

Let ${\bf G}$ be a ribbon graph. We note that a morphism $\alpha\colon \mathcal{F}\rightarrow \mathcal{G}$ of ${\bf G}$-parametrized perverse schobers in one of the $\infty$-categories $\mathfrak{P}^{L}({\bf G})$ or $\mathfrak{P}^{R}({\bf G})$, see \Cref{def:pscat}, is simply a natural transformation $\Delta^1\times \on{Exit}({\bf G})\rightarrow \mathcal{P}r^{L}_{\on{St}}$ or $\Delta^1\times \on{Exit}({\bf G})\rightarrow \mathcal{P}r^{R}_{\on{St}}$ between the diagrams defining $\mathcal{F}$ and $\mathcal{G}$.

\begin{definition}
Let ${\bf G}$ be a ribbon graph and let $\alpha\colon \mathcal{F}\rightarrow \mathcal{G}$ be a morphism of ${\bf G}$-parametrized perverse schobers in $\mathfrak{P}^L({\bf G})$ or $\mathfrak{P}^R({\bf G})$.
\begin{enumerate}
\item We call $\alpha$ an inclusion of perverse schobers in $\mathfrak{P}^{L}({\bf G})$ or $\mathfrak{P}^R({\bf G})$ if $\alpha(x)\colon \mathcal{F}(x)\rightarrow \mathcal{G}(x)$ is the inclusion of a stable subcategory (in particular fully faithful) for all $x\in \on{Exit}({\bf G})$.
\item Suppose that $\alpha$ is an inclusion in $\mathfrak{P}^L({\bf G})$. We call $\alpha$ right admissible if there exists a morphism $\beta\colon \mathcal{G}\rightarrow \mathcal{F}$ in $\mathfrak{P}^{R}({\bf G})$ such that $\beta(x)$ is right adjoint to $\alpha(x)$ for all $x\in \on{Exit}({\bf G})$.
\item Suppose that $\alpha$ is an inclusion in $\mathfrak{P}^R({\bf G})$. We call $\alpha$ left admissible if there exists a morphism $\beta\colon \mathcal{G}\rightarrow \mathcal{F}$ in $\mathfrak{P}^{L}({\bf G})$ such that $\beta(x)$ is left adjoint to $\alpha(x)$ for all $x\in \on{Exit}({\bf G})$.
\end{enumerate}
\end{definition}

\begin{remark}\label{rem:adminc}
Let $\alpha\colon \mathcal{F}\rightarrow \mathcal{G}$ be an inclusion of ${\bf G}$-parametrized perverse schobers. Spelling out the condition that $\alpha$ is left admissible yields the requirement that for each morphism $v\rightarrow e$ in $\on{Exit}({\bf G})$ the diagram
\begin{equation}\label{faleq}
\begin{tikzcd}
\mathcal{F}(v) \arrow[d, "\mathcal{F}(v\rightarrow e)"'] \arrow[r, "\alpha(v)"] & \mathcal{G}(v) \arrow[d, "\mathcal{G}(v\rightarrow e)"] \\
\mathcal{F}(e) \arrow[r, "\alpha(e)"]                                           & \mathcal{G}(e)                                         
\end{tikzcd}
\end{equation}
is left adjointable, meaning that the diagram
\[
\begin{tikzcd}
\mathcal{F}(v) \arrow[d, "\mathcal{F}(v\rightarrow e)"'] & \mathcal{G}(v) \arrow[d, "\mathcal{G}(v\rightarrow e)"] \arrow[l, "\beta(v)"'] \\
\mathcal{F}(e)                                           & \mathcal{G}(e) \arrow[l, "\beta(e)"']                                         
\end{tikzcd}
\]
commutes, with $\beta(x)$ left adjoint to $\alpha(x)$ for $x=v,e$. Analogously, the condition that $\alpha$ is right admissible is equivalent to the right adjointability of the diagram \eqref{faleq}. Adjointability of a commutative square is also called the \textit{Beck-Chevalley condition}. 
\end{remark}

\begin{definition}\label{def:SODschobers}
Let ${\bf G}$ be a ribbon graph. A semiorthogonal decomposition $\{\mathcal{F}_1,\mathcal{F}_2\}$ of a ${\bf G}$-parametrized perverse schober $\mathcal{G}$ consists of
\begin{itemize} 
\item 
an inclusion $\alpha_1\colon\mathcal{F}_1\rightarrow \mathcal{G}$ in $\mathfrak{P}^R({\bf G})$ and 
\item an inclusion $\alpha_2\colon\mathcal{F}_2\rightarrow \mathcal{G}$ in $\mathfrak{P}^L({\bf G})$, satisfying that
\item $\alpha_2$ is right admissible and that
\item the cofiber of $\alpha_2$ in $\on{Fun}(\on{Exit}({\bf G}),\mathcal{P}r^L_{\on{St}})$ is given by $\mathcal{F}_1$, with cofiber morphism $\beta\colon \mathcal{G}\to \mathcal{F}_1$ pointwise left adjoint to $\alpha_1$, thus exhibiting $\alpha_1$ as left admissible.
\end{itemize}
\end{definition}

\begin{remark}\label{rem:sod1}
Consider a semiorthogonal decomposition $\{\mathcal{F}_1,\mathcal{F}_2\}$ of the ${\bf G}$-parametrized perverse schober $\mathcal{G}$.
\begin{enumerate}
\item Then for each $x\in \on{Exit}({\bf G})$, the $\infty$-category $\mathcal{G}(x)$ admits a semiorthogonal decomposition $(\mathcal{F}_1(x),\mathcal{F}_2(x))$.
\item The condition that $\mathcal{F}_1$ is the cofiber of $\alpha_2$ in $\on{Fun}(\on{Exit}({\bf G}),\mathcal{P}r^L_{\on{St}})$ is equivalent to the condition that $\mathcal{F}_2$ is the fiber of the pointwise right adjoint $\beta_1\colon\mathcal{G}\rightarrow \mathcal{F}_1$ of $\alpha_1$ in $\on{Fun}(\on{Exit}({\bf G}),\mathcal{P}r^R_{\on{St}})$. This follows from the fact that limits and colimits in functor categories are computed pointwise, see \cite[5.1.2.3]{HTT}. Since limits commute with limits, we find that there exists a fiber sequence in $\mathcal{P}r^L_{\on{St}}$
\[ \mathcal{H}({\bf G},\mathcal{F}_2)\xrightarrow{i} \mathcal{H}({\bf G},\mathcal{G})\xrightarrow{\pi} \mathcal{H}({\bf G},\mathcal{F}_1)\,.\]
Passing to right adjoints yields a cofiber sequence in $\mathcal{P}^R_{\on{St}}$, showing by \Cref{sodquotlem} that $\mathcal{H}({\bf G},\mathcal{G})$ admits a semiorthogonal decomposition \[\big(\mathcal{H}({\bf G},\mathcal{F}_1),\mathcal{H}({\bf G},\mathcal{F}_2)\big)\,.\]
\end{enumerate}
\end{remark}

\begin{definition}\label{def:mndschob}
Let $\mathcal{F}$ be a ${\bf G}$-parametrized perverse schober. Recall that given a vertex $v$ of ${\bf G}$, $\mathcal{F}$ is described near $v$ by a spherical functor $F\colon \mathcal{V}_v\to \mathcal{N}$, where $\mathcal{V}_v$ is called the $\infty$-category of vanishing cycles and $\mathcal{N}$ the $\infty$-category of nearby cycles. We call
\begin{itemize}
\item $\mathcal{F}$ vanishing-monadic, if at each vertex $v$ of ${\bf G}$ the spherical functor $F_v\colon\mathcal{V}_v\rightarrow \mathcal{N}$ describing $\mathcal{F}$ at $v$ is a monadic functor. 
\item $\mathcal{F}$ nearby-monadic, if at each vertex $v$ of ${\bf G}$ the right adjoint of the spherical functor $F_v$ describing $\mathcal{F}$ at $v$ is a monadic functor.
\item $\mathcal{F}$ locally constant if $\mathcal{F}$ has no singularities.
\end{itemize}
\end{definition}

\begin{remark}
A spherical functor between presentable $\infty$-categories is monadic if and only if it is comonadic if and only if it is conservative, as follows from the Lurie--Barr--Beck theorem \cite[4.7.3.5]{HA}.
\end{remark}

\section{Generalized cluster categories of surfaces and perverse schobers}\label{sec4} 

We fix a field $k$ and a marked surface ${\bf S}$ with a \idtr{} $\mathcal{T}$. Theorem 6.1 in \cite{Chr22} constructs a $\mathcal{T}$-parametrized perverse schober $\mathcal{F}_\mathcal{T}$, whose $\infty$-category of global sections $\mathcal{H}(\mathcal{T},\mathcal{F}_\mathcal{T})$ is equivalent to the unbounded derived $\infty$-category of the relative Ginzburg algebra $\mathscr{G}_\mathcal{T}$ associated with $\mathcal{T}$. The potential of the underlying quiver consists of $3$-cycles, one for each triangle of the dual ideal triangulation of $\mathcal{T}$. In \Cref{sec4.1,sec4.2}, we describe the generalized cluster category associated to $\mathscr{G}_\mathcal{T}$ in terms of the global sections of a component of a semiorthogonal decomposition of $\mathcal{F}_\mathcal{T}$. Using this, we are able to deduce in \Cref{sec4.3} that the generalized cluster category is equivalent to the $1$-periodic topological Fukaya category.

\subsection{A semiorthogonal decomposition of perverse schobers}\label{sec4.1}

Consider the $\mathcal{T}$-parametrized perverse schober $\mathcal{F}_\mathcal{T}$ from \cite[Theorem 6.1]{Chr22}. The spherical adjunction underlying $\mathcal{F}_\mathcal{T}$ at any vertex of $\mathcal{T}$ is given by
\[ f^*\colon\mathcal{D}(k)\longleftrightarrow \on{Fun}(S^2,\mathcal{D}(k))\noloc f_*\]
where $f^*$ is the pullback functor along $S^2\rightarrow \ast$. There is a $k$-linear equivalence $\on{Fun}(S^2,\mathcal{D}(k))\simeq \mathcal{D}(k[t_1])$, where $k[t_1]$ is the graded polynomial algebra with $|t_1|=1$, under which the functor $f^*$ is identified with the pullback functor $\phi^*$ along $\phi\colon k[t_1]\xrightarrow{t_1\mapsto 0}k$, see \cite[Prop.~5.5]{Chr22}. In particular, the functor $f^*$ is conservative and the adjunction $f^*\dashv f_*$ thus comonadic. The adjunction $f^*\dashv f_*$ is however not monadic. The Eilenberg-Moore $\infty$-category $\on{Fun}(S^2,\mathcal{D}(k))^{\on{mnd}}$ of the monad $f_*f^*$ can be identified with the full subcategory $\on{Ind}\on{Fun}(S^2,\mathcal{D}^{\on{perf}}(k))\subset \on{Fun}(S^2,\mathcal{D}(k))$, see \Cref{fin=mndlem}. Using this observation, we can obtain a nearby-monadic and vanishing-monadic $\mathcal{T}$-parametrized perverse schober $\mathcal{F}^{\on{mnd}}_{\mathcal{T}}$ by restricting in the construction of $\mathcal{F}_{\mathcal{T}}$ in \cite{Chr21b} the spherical adjunction $f^*\dashv f_*$ to the spherical monadic and comonadic adjunction
\[ (f^*)^{\on{Ind-fin}}\colon \mathcal{D}(k)\longleftrightarrow \on{Ind}\on{Fun}(S^2,\mathcal{D}^{\on{perf}}(k))\noloc f_*^{\on{Ind-fin}}\,.\]
We remark that to give this definition of $\mathcal{F}_{\mathcal{T}}^{\on{mnd}}$, we also use that in the construction of $\mathcal{F}_{\mathcal{T}}$ all appearing autoequivalences of $\on{Fun}(S^2,\mathcal{D}(k))$ preserve the subcategory $\on{Ind}\on{Fun}(S^2,\mathcal{D}^{\on{perf}}(k))\subset \on{Fun}(S^2,\mathcal{D}(k))$. 

There is an apparent inclusion of perverse schobers $\mathcal{F}_{\mathcal{T}}^{\on{mnd}}\rightarrow \mathcal{F}_\mathcal{T}$.

\begin{lemma}\label{lem:incadm}
The inclusion $\mathcal{F}^{\on{mnd}}_\mathcal{T}\rightarrow \mathcal{F}_\mathcal{T}$ is right admissible. 
\end{lemma}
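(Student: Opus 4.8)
The plan is to verify right admissibility vertex-by-vertex and edge-by-edge, reducing the global statement to a local adjointability (Beck--Chevalley) check for the spherical adjunction data. By \Cref{rem:adminc}, the inclusion $\alpha\colon\mathcal{F}_\mathcal{T}^{\on{mnd}}\to\mathcal{F}_\mathcal{T}$ in $\mathfrak{P}^L(\mathcal{T})$ is right admissible if and only if for every exit morphism $v\to e$ in $\on{Exit}(\mathcal{T})$ the square
\[
\begin{tikzcd}
\mathcal{F}^{\on{mnd}}_\mathcal{T}(v) \arrow[d] \arrow[r, "\alpha(v)"] & \mathcal{F}_\mathcal{T}(v) \arrow[d] \\
\mathcal{F}^{\on{mnd}}_\mathcal{T}(e) \arrow[r, "\alpha(e)"] & \mathcal{F}_\mathcal{T}(e)
\end{tikzcd}
\]
is right adjointable, where the right adjoints $\beta(x)$ of $\alpha(x)$ exist since each $\alpha(x)$ is the inclusion of a stable subcategory of a presentable stable $\infty$-category and preserves colimits (so $\beta(x)$ exists by the adjoint functor theorem). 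Thus the content is purely the pointwise adjointability of the structure functors of $\mathcal{G}_v(F_v)$ against the inclusions, at each vertex $v$.

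First I would record the local picture. At each vertex $v$ of valency $m$, the perverse schober $\mathcal{F}_\mathcal{T}$ is described by $\mathcal{G}_v(F)$ where $F=f^*\colon\mathcal{D}(k)\to\on{Fun}(S^2,\mathcal{D}(k))$, and $\mathcal{F}_\mathcal{T}^{\on{mnd}}$ by $\mathcal{G}_v(F^{\on{mnd}})$ where $F^{\on{mnd}}=(f^*)^{\on{Ind-fin}}\colon\mathcal{D}(k)\to\on{Fun}(S^2,\mathcal{D}(k))^{\on{mnd}}$, using the identification $\on{Fun}(S^2,\mathcal{D}(k))^{\on{mnd}}\simeq\on{Ind}\on{Fun}(S^2,\mathcal{D}^{\on{perf}}(k))$ from \Cref{fin=mndlem}. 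The inclusion $\alpha$ is, on $\infty$-categories of vanishing cycles, the identity on $\mathcal{D}(k)$, and on the generic stalks $\mathcal{N}$ it is the inclusion $j\colon\on{Fun}(S^2,\mathcal{D}(k))^{\on{mnd}}\hookrightarrow\on{Fun}(S^2,\mathcal{D}(k))$, whose right adjoint is the Bousfield localization functor $L$ (the right adjoint to the monadic inclusion). On the section $\infty$-categories $\mathcal{V}^m_{F}$ versus $\mathcal{V}^m_{F^{\on{mnd}}}$, $\alpha(v)$ is induced levelwise: a section $(V\to N_1\to\cdots\to N_{m-1})$ of the Grothendieck construction of the diagram \eqref{eq:VNdiag} for $F^{\on{mnd}}$ maps to the same data viewed via $j$. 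The structure functors $\varrho_i$ are built as iterated left adjoints of $\varrho_1=\on{ev}_{m-1}$, so the key is to see that these iterated adjoints commute appropriately with $j$ and $L$.

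The decisive step, and the main obstacle, is the adjointability of the squares involving the functors $\varrho_{i+1}=\on{ladj}(\on{ladj}(\varrho_i))$ for $i\geq 1$: one must check that forming these iterated left adjoints is compatible with passing from $F$ to its restriction $F^{\on{mnd}}$, i.e.\ that the subcategory $\on{Fun}(S^2,\mathcal{D}(k))^{\on{mnd}}$ and its inclusion interact correctly with the repeated adjunctions used to build the local model. Concretely I would argue as follows. For the morphism $v\to e$ with $\varrho=\on{ev}_{m-1}$ (the ``first'' leg), the relevant square is adjointable essentially because $\on{ev}_{m-1}$ is a levelwise evaluation, and evaluation of sections commutes with the inclusion $j$ on the nose; its right adjoint is the right Kan extension, which is built out of $L$ on the relevant factors, and one checks the Beck--Chevalley square commutes using that $L$ is a localization. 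For the remaining legs $\varrho_{i}$ with $i\geq 2$, the key input is that the spherical adjunction $F^{\on{mnd}}\dashv\on{radj}(F^{\on{mnd}})$ is \emph{both} monadic and comonadic (it is the monadic adjunction of the monad $f_*f^*$ and, being a spherical adjunction between presentable stable $\infty$-categories, also comonadic by the remark after \Cref{def:mndschob}), so that $L\circ F\simeq F^{\on{mnd}}$ and $L$ intertwines the twist/cotwist functors of $F$ with those of $F^{\on{mnd}}$; the iterated left adjoints $\varrho_{i+1}$ are, up to the autoequivalences given by (co)twists, governed by these, and the autoequivalences of $\on{Fun}(S^2,\mathcal{D}(k))$ appearing in the construction of $\mathcal{F}_\mathcal{T}$ preserve the subcategory $\on{Fun}(S^2,\mathcal{D}(k))^{\on{mnd}}$ (as already noted in the construction of $\mathcal{F}_\mathcal{T}^{\on{mnd}}$). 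Assembling: each of the finitely many squares \eqref{faleq} is right adjointable, so $\alpha$ is right admissible. I expect the bookkeeping of the iterated adjoints $\varrho_{i+1}=\on{ladj}(\on{ladj}(\varrho_i))$ and their compatibility with $j,L$ to be the only genuinely delicate point; everything else is a formal consequence of $F^{\on{mnd}}\dashv\on{radj}(F^{\on{mnd}})$ being monadic and of limits/colimits in functor categories being computed pointwise.
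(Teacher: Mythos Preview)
Your reduction to the pointwise adjointability condition via \Cref{rem:adminc} is exactly how the paper begins, and your identification of the local picture is correct. The overall strategy matches.

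Where your execution diverges from the paper is in the verification for the functors $\varrho_i$. You argue via the iterated-left-adjoint description $\varrho_{i+1}=\on{ladj}(\on{ladj}(\varrho_i))$ together with twist/cotwist compatibility and the fact that the relevant autoequivalences preserve the monadic subcategory. This is a valid line of reasoning, but the justification you give for ``$L$ intertwines the twist/cotwist functors'' and for how this propagates through the double-left-adjoint construction remains sketchy; to make it rigorous you would need to spell out why the Beck--Chevalley map for each $\varrho_i$ is an equivalence, not merely that the ingredients are compatible in some informal sense. The paper instead exploits that $\mathcal{V}^m_{F}$ is a lax limit of the diagram \eqref{eq:VNdiag}, and that the right adjoint of the inclusion $\mathcal{V}^m_{(f^*)^{\on{Ind-fin}}}\hookrightarrow\mathcal{V}^m_{f^*}$ between such lax limits is computed componentwise (citing \cite[Rem.~A.2.5]{CDW23}). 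This reduces the check for $\varrho_1=\on{ev}_{m-1}$ to the trivial observation that $\pi_{\on{mnd}}$ acts as the right adjoint on the last component, and the remaining $\varrho_i$ then follow either by an analogous componentwise argument or by invoking the paraspherical symmetry of \cite[Prop.~3.11]{Chr22}. The paper's route avoids the bookkeeping you flag as delicate; your route would work but needs that bookkeeping filled in.
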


\begin{proof}
By \Cref{rem:adminc}, we need to show that for $1\leq i\leq 3$, the diagram
\begin{equation}\label{eq:2adjbldiag}
\begin{tikzcd}
\mathcal{V}^3_{(f^*)^{\on{Ind-fin}}} \arrow[r, hook, "i_\mathcal{V}"] \arrow[d, "{\varrho_i}"'] & \mathcal{V}^3_{f^*} \arrow[d, "{\varrho_i}"] \\
{\on{Ind}\on{Fun}(S^2,\mathcal{D}^{\on{perf}}(k))} \arrow[r, hook, "i_{\on{mnd}}"]                      & {\on{Fun}(S^2,\mathcal{D}(k))}                     
\end{tikzcd}
\end{equation}
is right adjointable. We only consider the case $i=1$, the other cases can be treated analogously or also follow using \cite[Prop.~3.11]{Chr22} from the $i=1$ case.

It is straightforward to see that the following diagram commutes,
\[
\begin{tikzcd}
\D(k) \arrow[d, "(f^*)^{\on{Ind-fin}}"'] \arrow[r, "\on{id}"]                                      & \D(k) \arrow[d, "f^*"] \arrow[r, "\on{id}"]                                     & \D(k) \arrow[d, "(f^*)^{\on{Ind-fin}}"]                                 \\
{\on{Ind}\on{Fun}(S^2,\mathcal{D}^{\on{perf}}(k))} \arrow[r, "i_{\on{mnd}}"] \arrow[d, "\on{id}"'] & {\on{Fun}(S^2,\mathcal{D}(k))} \arrow[d, "\on{id}"] \arrow[r, "\pi_{\on{mnd}}"] & {\on{Ind}\on{Fun}(S^2,\mathcal{D}^{\on{perf}}(k))} \arrow[d, "\on{id}"] \\
{\on{Ind}\on{Fun}(S^2,\mathcal{D}^{\on{perf}}(k))} \arrow[r, "i_{\on{mnd}}"]                       & {\on{Fun}(S^2,\mathcal{D}(k))} \arrow[r, "\pi_{\on{mnd}}"]                      & {\on{Ind}\on{Fun}(S^2,\mathcal{D}^{\on{perf}}(k))}                     
\end{tikzcd}
\]
where $\pi_{\on{mnd}}$ denotes the right adjoint of $i_{\on{mnd}}$. The right adjoint of the functor $i_{\mathcal{V}}$ between lax limits is obtained by passing componentwise to the right adjoint, see \cite[Remark A.2.5]{CDW23}, and thus acts on the third component via $\pi_{\on{mnd}}$. This shows the desired right adjointability.
\end{proof}

\begin{definition}\label{def:gclst}
We define the functor $\mathcal{F}_{\mathcal{T}}^{\on{clst}}\colon \on{Exit}(\mathcal{T})\rightarrow \on{St}$ as the cofiber of the inclusion $\mathcal{F}^{\on{mnd}}_{\mathcal{T}}\hookrightarrow \mathcal{F}_{\mathcal{T}}$ in $\on{Fun}(\on{Exit}({\mathcal{T}}),\on{St})$.
\end{definition}

\begin{lemma}\label{lem:locadj}
The functor $\mathcal{F}_{\mathcal{T}}^{\on{clst}}$ is a $\mathcal{T}$-parametrized perverse schober. At each vertex of $\mathcal{T}$, it is described by the trivial spherical adjunction 
\[ 0\leftrightarrow \mathcal{D}(k[t_1^\pm])\,.\]
\end{lemma}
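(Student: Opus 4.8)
The plan is to verify the two assertions pointwise over $\on{Exit}(\mathcal{T})$ and then recognize the resulting local data as the trivial spherical adjunction. First I would observe that, since colimits in the functor category $\on{Fun}(\on{Exit}(\mathcal{T}),\on{St})$ are computed pointwise by \cite[5.1.2.3]{HTT}, the cofiber $\mathcal{F}_{\mathcal{T}}^{\on{clst}}$ of the inclusion $\mathcal{F}^{\on{mnd}}_{\mathcal{T}}\hookrightarrow \mathcal{F}_{\mathcal{T}}$ is given on objects $x\in \on{Exit}(\mathcal{T})$ by the Verdier quotient $\mathcal{F}_{\mathcal{T}}(x)/\mathcal{F}^{\on{mnd}}_{\mathcal{T}}(x)$, and on morphisms by the induced functors on quotients; by \Cref{lem:incadm} the inclusion is moreover right admissible, so these quotients are well behaved and the structure functors $\mathcal{F}_{\mathcal{T}}^{\on{clst}}(v\to e)$ exist as colimit-preserving functors. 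It remains to identify these local $\infty$-categories and functors.

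Next I would compute the quotient on edges. On an edge $e$ the perverse schober $\mathcal{F}_{\mathcal{T}}$ takes the value of the generic stalk $\on{Fun}(S^2,\mathcal{D}(k))\simeq \mathcal{D}(k[t_1])$, and $\mathcal{F}^{\on{mnd}}_{\mathcal{T}}(e)$ is the monadic subcategory $\on{Fun}(S^2,\mathcal{D}^{\on{perf}}(k))^{\on{mnd}}\simeq \on{Ind}\mathcal{D}^{\on{fin}}(k[t_1])$ identified in \Cref{fin=mndlem}. By \Cref{lem:laurent} (with $n=2$), the Verdier quotient of $\mathcal{D}(k[t_1])$ by $\on{Ind}\mathcal{D}^{\on{fin}}(k[t_1])$ is $\mathcal{D}(k[t_1^\pm])$, so $\mathcal{F}_{\mathcal{T}}^{\on{clst}}(e)\simeq \mathcal{D}(k[t_1^\pm])$. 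For the vertices, the value $\mathcal{F}_{\mathcal{T}}(v)=\mathcal{V}^3_{f^*}$ is the lax limit of the diagram $\mathcal{D}(k)\xrightarrow{f^*}\on{Fun}(S^2,\mathcal{D}(k))\xrightarrow{\on{id}}\on{Fun}(S^2,\mathcal{D}(k))$, and $\mathcal{F}^{\on{mnd}}_{\mathcal{T}}(v)=\mathcal{V}^3_{(f^*)^{\on{Ind-fin}}}$ is the lax limit of the same diagram with the middle and right terms replaced by the monadic subcategory. Since quotients (being colimits in $\mathcal{P}r^L_{\on{St}}$) commute with the finite limits defining the lax limit — or more concretely, using the explicit description of the lax limit as sections of a Grothendieck construction together with the pointwise right adjoints from \cite[Remark A.2.5]{CDW23} used in the proof of \Cref{lem:incadm} — the quotient $\mathcal{V}^3_{f^*}/\mathcal{V}^3_{(f^*)^{\on{Ind-fin}}}$ is the lax limit of $\mathcal{D}(k)/\mathcal{D}(k)\xrightarrow{} \mathcal{D}(k[t_1])/\on{Ind}\mathcal{D}^{\on{fin}}(k[t_1])\xrightarrow{\on{id}} \mathcal{D}(k[t_1])/\on{Ind}\mathcal{D}^{\on{fin}}(k[t_1])$, i.e.\ the lax limit of $0\xrightarrow{0}\mathcal{D}(k[t_1^\pm])\xrightarrow{\on{id}}\mathcal{D}(k[t_1^\pm])$. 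By the remark after \eqref{eq:VNdiag} (the case $\mathcal{V}_v\simeq 0$), this lax limit is $\on{Fun}(\Delta^1,\mathcal{D}(k[t_1^\pm]))$, which is exactly the value $\mathcal{V}^3_0$ at a trivalent vertex of the perverse schober attached to the trivial spherical adjunction $0\leftrightarrow \mathcal{D}(k[t_1^\pm])$.

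Finally I would check that the structure functors $\mathcal{F}_{\mathcal{T}}^{\on{clst}}(v\to i)$ agree with the functors $\varrho_i$ prescribed by the local model $\mathcal{G}_v$ of the trivial spherical adjunction. The functors $\varrho_i$ for $\mathcal{F}_{\mathcal{T}}$ are built from $\varrho_1=\on{ev}_{m-1}$ by iterated left adjoints $\varrho_{i+1}=\on{ladj}(\on{ladj}(\varrho_i))$; since passing to the quotient is compatible with these adjunctions (the inclusion being right admissible pointwise, by \Cref{lem:incadm}, so all the relevant squares are adjointable), the induced functors on the quotients are again obtained from $\on{ev}_{m-1}$ by the same recipe, hence coincide with the $\varrho_i$ of the trivial spherical adjunction $0\leftrightarrow \mathcal{D}(k[t_1^\pm])$. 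This verifies the defining condition of \Cref{def:ps} for $\mathcal{F}_{\mathcal{T}}^{\on{clst}}$ with $F_v$ the trivial functor $0\to \mathcal{D}(k[t_1^\pm])$ at every vertex, establishing both that $\mathcal{F}_{\mathcal{T}}^{\on{clst}}$ is a perverse schober and that it is locally constant of the stated type. I expect the main obstacle to be the bookkeeping in the vertex computation: one must be careful that the Grothendieck-construction model of the lax limit really does pass to quotients termwise and that the autoequivalences appearing in the construction of $\mathcal{F}_{\mathcal{T}}$ descend compatibly — but the hard analytic input, namely the identification $\mathcal{D}(k[t_1])/\on{Ind}\mathcal{D}^{\on{fin}}(k[t_1])\simeq \mathcal{D}(k[t_1^\pm])$ and the adjointability in \eqref{eq:2adjbldiag}, is already available from \Cref{lem:laurent} and \Cref{lem:incadm}.
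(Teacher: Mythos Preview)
Your proposal is correct and follows the same approach as the paper: compute the cofiber pointwise using \cite[5.1.2.3]{HTT} and identify the resulting local data via \Cref{lem:laurent}. The paper's proof is a one-line reference to exactly these two ingredients, while you spell out the vertex computation and the compatibility of the structure functors; your expanded argument is sound, though the phrasing ``quotients commute with the finite limits defining the lax limit'' is imprecise (lax limits are not limits in $\mathcal{P}r^L_{\on{St}}$), and you rightly fall back on the explicit sections-of-Grothendieck-construction description together with the adjointability from \Cref{lem:incadm} to make this step rigorous.
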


\begin{proof}
Using \Cref{lem:laurent}, this is straightforward to check by using that pushouts in the functor category $\on{Fun}(\on{Exit}(\mathcal{T}),\on{St})$ are computed pointwise in $\on{Exit}(\mathcal{T})$.
\end{proof}

\begin{remark}
The abbreviation $\on{clst}$ stands for 'cluster', or rearranging the letters to $\on{lcst}$, it stands for 'locally constant'. Note that the perverse schober $\mathcal{F}_{\trivalent}^{\on{clst}}$ is locally constant in the sense of \Cref{def:mndschob}, meaning that $\mathcal{F}_{\trivalent}^{\on{clst}}$ has no singularities.
\end{remark}

\begin{proposition}\label{prop:sodG}
The pair $\{\mathcal{F}_{\mathcal{T}}^{\on{clst}},\mathcal{F}^{\on{mnd}}_{\mathcal{T}}\}$ forms a semiorthogonal decomposition of the $\mathcal{T}$-parametrized perverse schober $\mathcal{F}_{\mathcal{T}}$.
\end{proposition}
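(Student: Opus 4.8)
The plan is to verify the four requirements of \Cref{def:SODschobers} with $\mathcal{G}=\mathcal{F}_{\mathcal{T}}$, $\mathcal{F}_2=\mathcal{F}^{\on{mnd}}_{\mathcal{T}}$ and $\mathcal{F}_1=\mathcal{F}^{\on{clst}}_{\mathcal{T}}$. First I would check that the canonical morphism $\alpha_2\colon\mathcal{F}^{\on{mnd}}_{\mathcal{T}}\to\mathcal{F}_{\mathcal{T}}$ is an inclusion in $\mathfrak{P}^L(\mathcal{T})$: by the construction of $\mathcal{F}^{\on{mnd}}_{\mathcal{T}}$ the component $\alpha_2(x)$ is, at each edge, the fully faithful left adjoint $i_{\on{mnd}}\colon\on{Ind}\on{Fun}(S^2,\mathcal{D}^{\on{perf}}(k))\hookrightarrow\on{Fun}(S^2,\mathcal{D}(k))$ and, at each vertex, the componentwise inclusion of the associated lax limits, which is again fully faithful and colimit preserving, so $\alpha_2(x)$ is the inclusion of a stable subcategory for all $x\in\on{Exit}(\mathcal{T})$. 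That $\alpha_2$ is right admissible is exactly \Cref{lem:incadm}, read through the reformulation in \Cref{rem:adminc}.

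Next I would identify the cofiber. Let $\beta\colon\mathcal{F}_{\mathcal{T}}\to\mathcal{F}^{\on{clst}}_{\mathcal{T}}$ be the cofiber of $\alpha_2$ taken in $\on{Fun}(\on{Exit}(\mathcal{T}),\on{St})$ as in \Cref{def:gclst}; by \Cref{lem:locadj} the target is a $\mathcal{T}$-parametrized perverse schober. Since colimits in functor $\infty$-categories are computed pointwise \cite[5.1.2.3]{HTT}, it is enough to work at a fixed $x\in\on{Exit}(\mathcal{T})$, where $\alpha_2(x)$ is a fully faithful, colimit-preserving, right-admissible functor of presentable stable $\infty$-categories. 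For such a functor the cofiber in $\on{St}$ is the Verdier quotient, is again presentable, agrees with the cofiber in $\mathcal{P}r^L_{\on{St}}$, and the quotient functor $\beta(x)$ admits a fully faithful right adjoint $\alpha_1(x)$ with essential image the right orthogonal $\mathcal{F}^{\on{mnd}}_{\mathcal{T}}(x)^\perp\subset\mathcal{F}_{\mathcal{T}}(x)$, so that $\beta(x)\dashv\alpha_1(x)$; I would cite \cite[Prop.~5.9]{BGT13}, \cite[Prop.~2.2.4,~2.3.2]{DKSS21} and \Cref{sodquotlem} here. At an edge this reproduces, under the identifications of \Cref{fin=mndlem}, the semiorthogonal decomposition $(\mathcal{D}(k[t_1^\pm]),\on{Ind}\mathcal{D}^{\on{fin}}(k[t_1]))$ of \Cref{lem:laurent}, matching \Cref{lem:locadj}; thus the pointwise cofiber of $\alpha_2$ in $\mathcal{P}r^L_{\on{St}}$ is $\mathcal{F}^{\on{clst}}_{\mathcal{T}}$, and the fourth condition will follow once $\alpha_1$ is upgraded to a morphism of perverse schobers.

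The remaining step, which I expect to be the main obstacle, is to show that the pointwise right adjoints $\alpha_1(x)$ assemble into a morphism $\alpha_1\colon\mathcal{F}^{\on{clst}}_{\mathcal{T}}\to\mathcal{F}_{\mathcal{T}}$ in $\mathfrak{P}^R(\mathcal{T})$ --- equivalently, by \Cref{rem:adminc}, that for each morphism $v\to e$ of $\on{Exit}(\mathcal{T})$ the naturality square of $\beta$ over $v\to e$ is right adjointable. I would deduce this from \Cref{lem:incadm} by a diagram chase: right adjointability of the $\alpha_2$-square forces the right-adjoint truncation functors $\on{radj}(\alpha_2(x))\colon\mathcal{F}_{\mathcal{T}}(x)\to\mathcal{F}^{\on{mnd}}_{\mathcal{T}}(x)$ to commute with the restriction functors $\mathcal{F}_{\mathcal{T}}(v\to e)$, whence $\mathcal{F}_{\mathcal{T}}(v\to e)$ carries $\mathcal{F}^{\on{mnd}}_{\mathcal{T}}(v)^\perp$ into $\mathcal{F}^{\on{mnd}}_{\mathcal{T}}(e)^\perp$; under the equivalences $\mathcal{F}^{\on{clst}}_{\mathcal{T}}(x)\simeq\mathcal{F}^{\on{mnd}}_{\mathcal{T}}(x)^\perp$ this is precisely the right adjointability of the $\beta$-square. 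This is the genuinely delicate point, since right adjointability of a square is not automatically inherited by its cofiber and one really has to invoke the explicit orthogonality description of the quotients. With $\alpha_1$ so constructed and $\beta$ pointwise left adjoint to it by definition, all conditions of \Cref{def:SODschobers} are met, which proves the proposition.
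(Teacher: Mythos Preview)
Your proposal is correct and follows the same overall structure as the paper's proof: verify that $\alpha_2$ is a right admissible inclusion via \Cref{lem:incadm}, and then show that the cofiber morphism $\beta$ is pointwise left adjoint to a left admissible inclusion $\alpha_1$. The only substantive difference lies in how you establish the latter. The paper simply asserts that ``a similar argument as in the proof of \Cref{lem:incadm}'' shows the required right adjointability for $\alpha_1$, i.e.\ it repeats the explicit lax-limit computation with the roles of the subcategories exchanged. You instead deduce it abstractly from the right adjointability of the $\alpha_2$-square: since $\on{radj}(\alpha_2(x))$ commutes with $\mathcal{F}_{\mathcal{T}}(v\to e)$ and kills exactly $\mathcal{F}^{\on{mnd}}_{\mathcal{T}}(x)^\perp$, the structure maps preserve right orthogonals, which under the identification $\mathcal{F}^{\on{clst}}_{\mathcal{T}}(x)\simeq\mathcal{F}^{\on{mnd}}_{\mathcal{T}}(x)^\perp$ is precisely the commutativity of the $\alpha_1$-square. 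This abstract route is clean and avoids redoing the computation; the paper's route is more hands-on but equally short given that the computation is essentially identical to \Cref{lem:incadm}. Both arguments are valid and yield the same conclusion.
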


\begin{proof} 
The inclusion $\alpha\colon \mathcal{F}_{\mathcal{T}}^{\on{mnd}}\rightarrow \mathcal{F}_{\mathcal{T}}$ is by \Cref{lem:incadm} right admissible. A similar argument as in the proof of \Cref{lem:incadm} further shows that the cofiber morphism $\mathcal{F}_{\mathcal{T}}\rightarrow \mathcal{F}_{\mathcal{T}}^{\on{clst}}$ arising from the definition of $\mathcal{F}_{\mathcal{T}}^{\on{clst}}$ is the pointwise left adjoint of a left admissible inclusion $\mathcal{F}_{\mathcal{T}}^{\on{clst}}\rightarrow \mathcal{F}_{\mathcal{T}}$. This shows that $\{\mathcal{F}_{\mathcal{T}}^{\on{clst}},\mathcal{F}^{\on{mnd}}_{\mathcal{T}}\}$ indeed forms a semiorthogonal decomposition of $\mathcal{F}_{\mathcal{T}}$.
\end{proof}

\subsection{The generalized cluster category of a marked surface}\label{sec4.2} 

\begin{definition}\label{def:clcat}
Let $\mathcal{D}$ be a compactly generated $k$-linear $\infty$-category. 
\begin{enumerate}[(1)]
\item We define the finite subcategory $\mathcal{D}^{\on{fin}}\subset \mathcal{D}$ as the full subcategory consisting of objects $Y\in \mathcal{D}$, satisfying that $\on{Mor}_{\mathcal{D}}(X,Y)\in \mathcal{D}(k)$ is perfect for all compact objects $X\in \mathcal{D}^{\on{c}}$. We note if $\D$ admits a compact generator $Z$, then $Y$ is finite if and only if $\on{Mor}_{\mathcal{D}}(Z,Y)$ is perfect.
\item Suppose that $\mathcal{D}$ is smooth. Then the evaluation functor of $\D$ admits a left adjoint, which we identify with an endofunctor $\on{id}_\D^!$ of $\D$ (also known as the inverse dualizing bimodule). Suppose further that $\on{id}_\D^!$ is an equivalence. Then the finite objects in $\D$ are compact, i.e.~$\mathcal{D}^{\on{fin}}\subset \D^{\on{c}}$, see \cite[Lem.~2.23]{Chr23}. Passing to $\on{Ind}$-completions yields the fully faithful inclusion $\on{Ind}\mathcal{D}^{\on{fin}}\subset \mathcal{D}$. We define the generalized cluster category of $\mathcal{D}$ as the $\on{Ind}$-complete Verdier quotient $\mathcal{D}/\on{Ind}\mathcal{D}^{\on{fin}}$ (i.e.~cofiber in $\mathcal{P}r^L_{\on{St}}$). 
\end{enumerate}
\end{definition}

\begin{remark}\label{rem:clsod}
Let $\mathcal{D}$ be as in part (2) of \Cref{def:clcat}. Then $\mathcal{D}$ admits a semiorthogonal decomposition $(\mathcal{D}/\on{Ind}\mathcal{D}^{\on{fin}},\on{Ind}\mathcal{D}^{\on{fin}})$ into its generalized cluster category and its $\on{Ind}$-finite part, see \Cref{sodquotlem}.
\end{remark}

The goal of this section is to prove the following theorem.

\begin{theorem}\label{thm:clstcat}
The generalized cluster category of $\mathcal{D}(\mathscr{G}_\mathcal{T})$ is equivalent to the $\infty$-category $\mathcal{H}(\mathcal{T},\mathcal{F}_{\mathcal{T}}^{\on{clst}})$ of global sections of the perverse schober $\mathcal{F}_{\mathcal{T}}^{\on{clst}}$. 
\end{theorem}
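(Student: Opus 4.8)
The plan is to deduce the theorem from the semiorthogonal decomposition of perverse schobers established in \Cref{prop:sodG} together with the identification of $\mathcal{F}_{\mathcal{T}}^{\on{mnd}}$ with the $\on{Ind}$-finite part. By \Cref{prop:sodG}, the pair $\{\mathcal{F}_{\mathcal{T}}^{\on{clst}},\mathcal{F}^{\on{mnd}}_{\mathcal{T}}\}$ is a semiorthogonal decomposition of $\mathcal{F}_{\mathcal{T}}$, so by part (2) of \Cref{rem:sod1} the $\infty$-category $\mathcal{H}(\mathcal{T},\mathcal{F}_{\mathcal{T}})$ admits a semiorthogonal decomposition $\big(\mathcal{H}(\mathcal{T},\mathcal{F}_{\mathcal{T}}^{\on{clst}}),\mathcal{H}(\mathcal{T},\mathcal{F}^{\on{mnd}}_{\mathcal{T}})\big)$, and in particular, via \Cref{sodquotlem} or directly from the fiber sequence in \Cref{rem:sod1}(2), there is an exact sequence in $\mathcal{P}r^L_{\on{St}}$
\[ \mathcal{H}(\mathcal{T},\mathcal{F}^{\on{mnd}}_{\mathcal{T}})\longrightarrow \mathcal{H}(\mathcal{T},\mathcal{F}_{\mathcal{T}})\longrightarrow \mathcal{H}(\mathcal{T},\mathcal{F}_{\mathcal{T}}^{\on{clst}})\,, \]
exhibiting $\mathcal{H}(\mathcal{T},\mathcal{F}_{\mathcal{T}}^{\on{clst}})$ as the Verdier quotient of $\mathcal{H}(\mathcal{T},\mathcal{F}_{\mathcal{T}})$ by $\mathcal{H}(\mathcal{T},\mathcal{F}^{\on{mnd}}_{\mathcal{T}})$.

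\textbf{Key identifications.} It remains to match the two outer terms with the objects in \Cref{def:clcat}(2). For the middle term, $\mathcal{H}(\mathcal{T},\mathcal{F}_{\mathcal{T}})\simeq \mathcal{D}(\mathscr{G}_\mathcal{T})$ by \cite[Theorem 6.1]{Chr22}. For the left term, I would argue that $\mathcal{H}(\mathcal{T},\mathcal{F}^{\on{mnd}}_{\mathcal{T}})\simeq \on{Ind}\mathcal{D}^{\on{fin}}(\mathscr{G}_\mathcal{T})$; this is precisely the content of \Cref{prop:indfin} as quoted in \Cref{introthm:schobers}, so one may simply invoke it. (If a self-contained argument were wanted, one would check that $\mathscr{G}_\mathcal{T}$ is smooth with invertible inverse dualizing bimodule so that $\mathcal{D}^{\on{fin}}(\mathscr{G}_\mathcal{T})\subset \mathcal{D}^{\on{perf}}(\mathscr{G}_\mathcal{T})$, and then identify finiteness of a global section with the condition, local on $\mathcal{T}$, that each stalk lands in the monadic subcategory $\on{Fun}(S^2,\mathcal{D}^{\on{perf}}(k))^{\on{mnd}}$ of \Cref{fin=mndlem}; the relative Ginzburg algebra being concentrated so that the $\on{Hom}$-finiteness can be tested edgewise is the point that makes $\mathcal{F}^{\on{mnd}}_{\mathcal{T}}$ rather than some larger subschober the correct one.) Granting these two identifications, the exact sequence above reads
\[ \on{Ind}\mathcal{D}^{\on{fin}}(\mathscr{G}_\mathcal{T})\longrightarrow \mathcal{D}(\mathscr{G}_\mathcal{T})\longrightarrow \mathcal{H}(\mathcal{T},\mathcal{F}_{\mathcal{T}}^{\on{clst}})\,, \]
whose cofiber $\mathcal{H}(\mathcal{T},\mathcal{F}_{\mathcal{T}}^{\on{clst}})$ is by definition the generalized cluster category $\mathcal{D}(\mathscr{G}_\mathcal{T})/\on{Ind}\mathcal{D}^{\on{fin}}(\mathscr{G}_\mathcal{T})$ of \Cref{def:clcat}(2), completing the proof.

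\textbf{Main obstacle.} The genuinely nontrivial input is the identification $\mathcal{H}(\mathcal{T},\mathcal{F}^{\on{mnd}}_{\mathcal{T}})\simeq \on{Ind}\mathcal{D}^{\on{fin}}(\mathscr{G}_\mathcal{T})$, i.e.\ that passing to the monadic subschober at every vertex exactly cuts out the $\on{Ind}$-finite part of the global sections; the compatibility of "being finite" (a condition involving all compact objects of the global category) with a local-to-global description on the ribbon graph is where the special structure of $\mathscr{G}_\mathcal{T}$ is used, and this is the step I would delegate to \Cref{prop:indfin}. Everything else — the passage from the schober-level semiorthogonal decomposition to the global-sections level, and recognizing the cofiber as the Amiot quotient — is formal, using that limits commute with limits and that colimits/limits in perverse schober functor categories are computed pointwise.
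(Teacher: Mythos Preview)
Your proposal is correct and follows exactly the paper's approach: the paper's proof is the one-line ``Combine \Cref{prop:sodG}, \Cref{rem:sod1}.2 and \Cref{prop:indfin},'' which is precisely what you have spelled out. Your identification of \Cref{prop:indfin} as the genuinely nontrivial input is also accurate.
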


\begin{remark}
A shown in \cite[Section 6.4]{Chr22}, the $\infty$-category $\mathcal{D}(\mathscr{G}_\mathcal{T})\simeq \mathcal{H}(\mathcal{T},\mathcal{F}_{\mathcal{T}})$ does not depend on the choice of \idtr{} $\mathcal{T}$ of the marked surface ${\bf S}$, up to equivalence of $\infty$-categories. The same thus also holds for its associated generalized cluster category. We therefore write $\mathcal{C}_{\bf S}\coloneqq \mathcal{H}(\mathcal{T},\mathcal{F}_{\mathcal{T}}^{\on{clst}})$ and call $\mathcal{C}_{\bf S}$ the generalized cluster category of the marked surface ${\bf S}$.
\end{remark}

To prove \Cref{thm:clstcat}, we show that the $\infty$-category of global sections of $\mathcal{F}_{\mathcal{T}}^{\on{mnd}}$ is equivalent to $\on{Ind}\mathcal{H}(\mathcal{T},\mathcal{F}_{\mathcal{T}})^{\on{fin}}$ and then make use of the semiorthogonal decomposition $\{\mathcal{F}_{\mathcal{T}}^{\on{clst}},\mathcal{F}_{\mathcal{T}}^{\on{mnd}}\}$ of $\mathcal{F}_{\mathcal{T}}$. The argument for the former relies on a computational argument using compact generators arising from curves described in \cite{Chr21b}.

\begin{definition}
We denote by $\mathcal{H}(\mathcal{T},\mathcal{F}_{\mathcal{T}})^{\on{Ind-fin}}\subset \mathcal{H}(\mathcal{T},\mathcal{F}_{\mathcal{T}})$ the full subcategory of global sections $X\in \mathcal{H}(\mathcal{T},\mathcal{F}_{\mathcal{T}})$ satisfying that $\on{ev}_e(X)\in \on{Ind}\on{Fun}(S^2,\mathcal{D}(k)^{\on{perf}})$ for all edges $e$ of $\mathcal{T}$.
\end{definition}

\begin{lemma}\label{lem:curvgen}
The $\infty$-category $\mathcal{H}(\mathcal{T},\mathcal{F}_{\mathcal{T}})^{\on{Ind-fin}}$ is compactly generated by the objects $M_{\gamma}^L$ associated to finite matching data $(\gamma,f^*(k))$ with $\gamma$ pure and $f^*(k)\in \on{Fun}(S^2,\mathcal{D}(k))^{\on{fin}}\subset \on{Fun}(S^2,\mathcal{D}(k))$, as defined in \cite{Chr21b}. 
\end{lemma}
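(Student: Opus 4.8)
The plan is to identify the compact objects of $\mathcal{H}(\mathcal{T},\mathcal{F}_\mathcal{T})^{\on{Ind-fin}}$ with a suitable full subcategory generated by the curve objects $M_\gamma^L$ for $\gamma$ pure and the vanishing-cycles-valued label $f^*(k)$, and then check this subcategory generates under filtered colimits. First I would observe that $\mathcal{H}(\mathcal{T},\mathcal{F}_\mathcal{T})^{\on{Ind-fin}}$ is a presentable stable subcategory of $\mathcal{H}(\mathcal{T},\mathcal{F}_\mathcal{T}) \simeq \mathcal{D}(\mathscr{G}_\mathcal{T})$: it is the limit over the edges $e$ of the pullback of the inclusions $\on{Ind}\on{Fun}(S^2,\mathcal{D}^{\on{perf}}(k)) \hookrightarrow \on{Fun}(S^2,\mathcal{D}(k)) = \mathcal{F}_\mathcal{T}(e)$ along $\on{ev}_e$, hence a limit of presentable stable $\infty$-categories and accessible localizations, so it is itself presentable and the inclusion into $\mathcal{H}(\mathcal{T},\mathcal{F}_\mathcal{T})$ preserves limits and filtered colimits. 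In fact, by \Cref{fin=mndlem} this subcategory is precisely $\mathcal{H}(\mathcal{T},\mathcal{F}_\mathcal{T}^{\on{mnd}})$, the global sections of the nearby- and vanishing-monadic perverse subschober, since global sections are computed as a limit and limits commute with limits (cf.\ \Cref{rem:sod1}).

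Next I would recall from \cite{Chr21b} the construction of the objects $M_\gamma^L$ from matching data $(\gamma, N)$, where $\gamma$ is a curve (here required to be \emph{pure}, i.e.\ its endpoints are not on the boundary, so that it is evaluated to $0$ or to an object of the vanishing cycles category at the relevant external edges) and $N$ is an object of the generic stalk $\on{Fun}(S^2,\mathcal{D}(k))$ attached along $\gamma$. The key local point is that the matching datum is called \emph{finite} exactly when $N \simeq f^*(k) \in \on{Fun}(S^2,\mathcal{D}(k))^{\on{fin}}$, and that in this case $\on{ev}_e(M_\gamma^L)$ is, for every edge $e$, a finite direct sum of shifts of $f^*(k)$ — hence lies in $\on{Ind}\on{Fun}(S^2,\mathcal{D}^{\on{perf}}(k))$; this is where I would invoke the explicit description of the evaluations of curve objects from the prequel. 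Thus each such $M_\gamma^L$ genuinely lies in $\mathcal{H}(\mathcal{T},\mathcal{F}_\mathcal{T})^{\on{Ind-fin}}$, and I would also check it is compact there, which follows because it is compact in $\mathcal{D}(\mathscr{G}_\mathcal{T})$ (being a curve object on a pure curve with a finite-dimensional label, hence finite) together with \Cref{def:clcat}(2): finiteness forces compactness, and compactness is inherited by the presentable subcategory since the inclusion preserves filtered colimits.

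The remaining and main step is to show that the $M_\gamma^L$ for $\gamma$ pure generate $\mathcal{H}(\mathcal{T},\mathcal{F}_\mathcal{T})^{\on{Ind-fin}}$ under filtered colimits, equivalently that the smallest stable subcategory closed under filtered colimits containing them is everything. I would run this as a local-to-global argument over the ribbon graph $\mathcal{T}$. The generic stalk is $\on{Fun}(S^2,\mathcal{D}(k))$ and its $\on{Ind}$-finite part $\on{Fun}(S^2,\mathcal{D}^{\on{perf}}(k))$ is compactly generated by the single object $f^*(k)$ (the argument of \Cref{lem:cmpgen}, $\langle k\rangle = \mathcal{D}^{\on{fin}}(k[t_1])$ from the proof of \Cref{fin=mndlem}). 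So locally the $\on{Ind}$-finite part is generated by objects carrying the label $f^*(k)$; globally one must assemble these into sections, and the curve objects $M_\gamma^L$ on pure curves realize precisely the sections whose edge-values are built from $f^*(k)$. Concretely I would argue: given $X \in \mathcal{H}(\mathcal{T},\mathcal{F}_\mathcal{T})^{\on{Ind-fin}}$, use that the generic-stalk value at each edge is an $\on{Ind}$-object of finite complexes of shifts of $f^*(k)$, reduce (by filtered colimits and exact triangles) to the case where all edge values are single shifts of $f^*(k)$, and then use the gluing/decomposition results for curve objects from \cite{Chr21b} — the same ones powering the geometrization \Cref{thm:introgeom} — to write $X$ as an iterated extension of objects $M_\gamma^L$ with $\gamma$ pure. \textbf{The hard part} will be this last gluing step: one must verify that every such "locally-$f^*(k)$" global section decomposes along pure matching curves, which requires knowing that no boundary-type (non-pure) curve objects are needed — precisely because the $\on{Ind}$-finiteness condition forces the evaluations at \emph{all} edges, including external ones, to land in the vanishing-cycles subcategory, excluding the curve objects attached to the boundary. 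I would make this precise by using the Beck--Chevalley/adjointability properties of the perverse schober together with the classification of indecomposable curve objects, treating the restriction to a neighborhood of each vertex (a trivalent vertex, with the lax limit description $\mathcal{V}^3_{f^*}$) and gluing, and I expect the bookkeeping of which curves remain pure after these reductions to be the only genuinely delicate point.
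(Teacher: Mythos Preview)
Your overall strategy is in the right spirit, but there are two substantive misunderstandings that derail it. First, ``pure'' in the sense of \cite{Chr21b} does \emph{not} mean the curve avoids the boundary; it is a condition on the matching datum, and pure finite matching curves can and do end on $\partial{\bf S}\backslash M$ (see e.g.\ the proof of \Cref{prop:smprp}, where $\on{ev}_e^L(k[t_1^\pm])\simeq M_{c_e}^{k[t_1^\pm]}$ for a pure finite curve $c_e$ ending at the external edge $e$). Second, the $\on{Ind}$-finiteness condition does not force evaluations into the vanishing-cycles category $\mathcal{D}(k)$; it forces them into $\on{Ind}\on{Fun}(S^2,\mathcal{D}^{\on{perf}}(k))$, which is the full generic stalk of $\mathcal{F}_\mathcal{T}^{\on{mnd}}$. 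So your ``hard part'' --- excluding boundary-type curves --- is addressing a nonexistent problem: curve objects ending on the boundary with label $f^*(k)$ already lie in $\mathcal{H}(\mathcal{T},\mathcal{F}_\mathcal{T})^{\on{Ind-fin}}$ and are among the generators.

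The paper's proof is much shorter and does not attempt a decomposition argument. It simply invokes \cite[Prop.~5.19]{Chr21b}, a general statement to the effect that if $N$ is a compact generator of the generic stalk, then the objects $\on{ev}_e^L(N)$ are identified with curve objects $M_{c_e}^N$ for finite pure matching curves $c_e$, and these compactly generate the global sections. One applies this to $\mathcal{F}_\mathcal{T}^{\on{mnd}}$ with the compact generator $N=f^*(k)$ of $\on{Ind}\on{Fun}(S^2,\mathcal{D}^{\on{perf}}(k))$ from \Cref{lem:cmpgen}. The paper also notes that finiteness of the curves $c_e$ uses the standing hypothesis that every boundary component carries a marked point --- a point your sketch does not address. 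Your proposed geometrization-style argument would, if corrected and carried out, amount to reproving \cite[Prop.~5.19]{Chr21b} from scratch; that is possible but unnecessary here, and your sketch of it rests on the incorrect reading of ``pure''.
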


\begin{proof}
Using that $f^*(k)$ is a compact generator of $\on{Ind}\on{Fun}(S^2,\mathcal{D}(k))^{\on{fin}}$, \cite[Prop.~5.19]{Chr21b} shows that a certain collection of maching data with local value $f^*(k)$ and pure, finite underlying matching curves compactly generate $\mathcal{H}(\mathcal{T},\mathcal{F}_{\mathcal{T}})^{\on{Ind-fin}}$. We remark that for the fact that the matching curves giving rise to this compact generator are finite, it is crucial that the marked surface has a marked point on each boundary component. 
\end{proof}

\begin{lemma}\label{lem:cptgen}
For $e$ an edge of $\mathcal{T}$, consider the left adjoint 
\[ \on{ev}^L_e\colon \mathcal{D}(k[t_1])\simeq \mathcal{F}_{\mathcal{T}}(e)\rightarrow \mathcal{H}(\mathcal{T},\mathcal{F}_{\mathcal{T}})\] 
of the evaluation functor $\on{ev}_e$ at $e$. The object $\bigoplus_{e\in \mathcal{T}_1}\on{ev}^L_e(k[t_1])\in \mathcal{H}(\mathcal{T},\mathcal{F}_{\mathcal{T}})$ is identified under the equivalence $\mathcal{H}(\mathcal{T},\mathcal{F}_{\mathcal{T}})\simeq \mathcal{D}(\mathscr{G}_\mathcal{T})$ with the relative Ginzburg algebra $\mathscr{G}_\mathcal{T}\in \mathcal{D}(\mathscr{G}_\mathcal{T})$. In particular, $\bigoplus_{e\in \mathcal{T}_1}\on{ev}^L_e(k[t_1])$ is a compact generator.
\end{lemma}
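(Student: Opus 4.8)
The plan is to identify $\bigoplus_{e\in\mathcal{T}_1}\on{ev}^L_e(k[t_1])$ with the image of the free module $\mathscr{G}_\mathcal{T}$ under the equivalence $\mathcal{H}(\mathcal{T},\mathcal{F}_\mathcal{T})\simeq\mathcal{D}(\mathscr{G}_\mathcal{T})$ established in \cite[Theorem 6.1]{Chr22}, and then to invoke the standard fact that the free module of finite rank is a compact generator of the derived category of a dg-algebra. First I would recall that the equivalence of loc.~cit.\ is constructed precisely so that the generic stalk $\mathcal{F}_\mathcal{T}(e)\simeq\mathcal{D}(k[t_1])$ at each edge $e$ corresponds to the derived category of the idempotent subalgebra $e_v\mathscr{G}_\mathcal{T} e_v$ at the corresponding vertex $v$ of the quiver, and that under this identification the evaluation functor $\on{ev}_e$ corresponds to the restriction-of-scalars functor along $e_v\mathscr{G}_\mathcal{T} e_v\hookrightarrow\mathscr{G}_\mathcal{T}$, i.e.\ to $X\mapsto e_v X$. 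Its left adjoint $\on{ev}^L_e$ is then the induction functor $M\mapsto \mathscr{G}_\mathcal{T} e_v\otimes_{e_v\mathscr{G}_\mathcal{T} e_v} M$, so that $\on{ev}^L_e(k[t_1])$ corresponds to the projective module $\mathscr{G}_\mathcal{T} e_v=P_v$ (using $k[t_1]\simeq e_v\mathscr{G}_\mathcal{T} e_v$ as the free rank-one module). Taking the direct sum over all edges, and using that the edges of $\mathcal{T}$ are in bijection with the vertices of the dual quiver, gives $\bigoplus_{e\in\mathcal{T}_1}\on{ev}^L_e(k[t_1])\simeq\bigoplus_v P_v\simeq\mathscr{G}_\mathcal{T}$ as a $\mathscr{G}_\mathcal{T}$-module, which is exactly the free module of rank one (over the semisimple base $\bigoplus_v k e_v$).

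Once this identification is in place, compact generation is immediate: $\mathscr{G}_\mathcal{T}\in\mathcal{D}(\mathscr{G}_\mathcal{T})$ is the image of the free rank-one module, hence is a compact generator of $\mathcal{D}(\mathscr{G}_\mathcal{T})$ by \cite[7.1.2.1]{HA} (a dg-algebra, regarded as a module over itself, is always a compact generator of its derived $\infty$-category), and this property transports across the equivalence $\mathcal{H}(\mathcal{T},\mathcal{F}_\mathcal{T})\simeq\mathcal{D}(\mathscr{G}_\mathcal{T})$.

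The main obstacle is the bookkeeping in the first step: one must check that the explicit equivalence of \cite[Theorem 6.1]{Chr22} really does intertwine $\on{ev}_e$ with the relevant restriction functor and $\on{ev}^L_e$ with induction, up to the autoequivalences that appear when comparing different orderings of halfedges at a vertex (cf.\ \cite[Prop.~3.11]{Chr22}). These autoequivalences permute or shift the summands $P_v$ but do not change the isomorphism class of the total direct sum $\bigoplus_v P_v$, so the conclusion is unaffected; nonetheless this is the point where care is needed. An alternative, if one prefers to avoid tracing through the construction of the equivalence, is to argue abstractly: the family $\{\on{ev}_e\}_{e\in\mathcal{T}_1}$ of evaluation functors at \emph{all} edges is jointly conservative on $\mathcal{H}(\mathcal{T},\mathcal{F}_\mathcal{T})$ (a coCartesian section is zero iff it vanishes on every edge), each $\on{ev}_e$ preserves colimits, and each target $\mathcal{D}(k[t_1])$ is compactly generated by $k[t_1]$; a standard argument (as in \cite[Lemma 4.7.3.13]{HA} together with joint conservativity) then shows that $\bigoplus_e\on{ev}^L_e(k[t_1])$ generates, and it is compact since each $\on{ev}_e$ preserves filtered colimits so that each $\on{ev}^L_e$ preserves compact objects. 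Either route yields the claim.
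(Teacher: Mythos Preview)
Your proposal is correct. The paper's own proof consists of a single sentence citing \cite[Prop.~6.7]{Chr22}, so you have effectively reconstructed (a sketch of) the content of that external proposition rather than taking a different route. Your first argument---identifying $\on{ev}_e$ with the idempotent-truncation functor $X\mapsto e_vX$ and its left adjoint with induction $\mathscr{G}_\mathcal{T} e_v\otimes_{e_v\mathscr{G}_\mathcal{T} e_v}(-)$, whence $\on{ev}^L_e(k[t_1])\simeq P_v$---is exactly the computation that underlies the cited result. One terminological nitpick: the functor $X\mapsto e_vX$ is not ``restriction of scalars along $e_v\mathscr{G}_\mathcal{T} e_v\hookrightarrow\mathscr{G}_\mathcal{T}$'' in the usual sense (that would just forget module structure), but rather $\on{Hom}_{\mathscr{G}_\mathcal{T}}(\mathscr{G}_\mathcal{T} e_v,-)$; you clearly have the right functor in mind, but the phrasing should be adjusted.

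Your alternative abstract argument via joint conservativity is also correct and is a pleasant shortcut if one only wants compact generation; note however that it does not by itself yield the identification with $\mathscr{G}_\mathcal{T}$, which is the sharper part of the statement and is what gets used downstream (e.g.\ in the proof of \Cref{prop:indfin}, where one needs to know that finiteness of a global section is detected by evaluations at edges).
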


\begin{proof}
This is shown in \cite[Prop.~6.7]{Chr22}.
\end{proof}

\begin{proposition}\label{prop:indfin}
There exist equivalences of $\infty$-categories 
\[ \mathcal{H}(\mathcal{T},\mathcal{F}_{\mathcal{T}}^{\on{mnd}})\simeq \mathcal{H}(\mathcal{T},\mathcal{F}_{\mathcal{T}})^{\on{Ind-fin}}\simeq \on{Ind}\mathcal{D}(\mathscr{G}_\mathcal{T})^{\on{fin}}\,.\]
\end{proposition}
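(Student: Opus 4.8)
The plan is to establish the two equivalences in \Cref{prop:indfin} separately, moving from the more abstract characterization to the concrete algebraic one. First I would prove $\mathcal{H}(\mathcal{T},\mathcal{F}_{\mathcal{T}}^{\on{mnd}})\simeq \mathcal{H}(\mathcal{T},\mathcal{F}_{\mathcal{T}})^{\on{Ind-fin}}$. By construction, $\mathcal{F}_{\mathcal{T}}^{\on{mnd}}$ is obtained from $\mathcal{F}_{\mathcal{T}}$ by replacing, at each edge, the $\infty$-category $\on{Fun}(S^2,\mathcal{D}(k))$ by its full subcategory $\on{Ind}\on{Fun}(S^2,\mathcal{D}^{\on{perf}}(k))$ (via \Cref{fin=mndlem}), and at each vertex the lax limit $\mathcal{V}^3_{f^*}$ by $\mathcal{V}^3_{(f^*)^{\on{Ind-fin}}}$, with all structure functors restricted. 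Since the inclusion $\mathcal{F}_{\mathcal{T}}^{\on{mnd}}\hookrightarrow \mathcal{F}_{\mathcal{T}}$ is pointwise fully faithful and the limit defining global sections is computed as the full subcategory of coCartesian sections of the Grothendieck construction, a coCartesian section of $\mathcal{F}_{\mathcal{T}}^{\on{mnd}}$ is precisely a coCartesian section of $\mathcal{F}_{\mathcal{T}}$ all of whose values at edges lie in $\on{Ind}\on{Fun}(S^2,\mathcal{D}^{\on{perf}}(k))$ --- here one uses that the structure functors $\varrho_i$ of $\mathcal{F}_{\mathcal{T}}$ preserve these subcategories (as recorded in the proof of \Cref{lem:incadm}), so that the condition at edges automatically forces the values at vertices to lie in the lax limit of the restricted diagram. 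This identifies the global sections with $\mathcal{H}(\mathcal{T},\mathcal{F}_{\mathcal{T}})^{\on{Ind-fin}}$ essentially by unwinding definitions; some care is needed to check the identification is an equivalence of $\infty$-categories and not just a bijection on objects, but this follows from the pointwise full faithfulness of the inclusion of perverse schobers together with the fact that mapping spaces in categories of sections are computed as suitable limits.

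Next I would prove $\mathcal{H}(\mathcal{T},\mathcal{F}_{\mathcal{T}})^{\on{Ind-fin}}\simeq \on{Ind}\mathcal{D}(\mathscr{G}_\mathcal{T})^{\on{fin}}$. Both sides are compactly generated presentable stable $\infty$-categories: the left by the objects $M_\gamma^L$ of \Cref{lem:curvgen}, and the right by definition. The strategy is to use the equivalence $\mathcal{H}(\mathcal{T},\mathcal{F}_{\mathcal{T}})\simeq \mathcal{D}(\mathscr{G}_\mathcal{T})$ from \cite{Chr22} and show that under it, the subcategory $\mathcal{H}(\mathcal{T},\mathcal{F}_{\mathcal{T}})^{\on{Ind-fin}}$ corresponds to $\on{Ind}\mathcal{D}(\mathscr{G}_\mathcal{T})^{\on{fin}}$. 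For this I would check that the compact generators $M_\gamma^L$ of \Cref{lem:curvgen} are finite objects of $\mathcal{D}(\mathscr{G}_\mathcal{T})$ in the sense of \Cref{def:clcat}(1), i.e.\ that $\on{Mor}_{\mathcal{D}(\mathscr{G}_\mathcal{T})}(Z, M_\gamma^L)$ is perfect for the compact generator $Z=\bigoplus_{e\in\mathcal{T}_1}\on{ev}_e^L(k[t_1])$ of \Cref{lem:cptgen} --- equivalently, that $\on{Mor}_{\mathcal{H}(\mathcal{T},\mathcal{F}_{\mathcal{T}})}(\on{ev}_e^L(k[t_1]), M_\gamma^L)\simeq \on{Mor}_{\mathcal{F}_{\mathcal{T}}(e)}(k[t_1],\on{ev}_e(M_\gamma^L))$ is perfect for each $e$. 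Since $\on{ev}_e(M_\gamma^L)\in \on{Ind}\on{Fun}(S^2,\mathcal{D}^{\on{perf}}(k))$ equals $\mathcal{D}^{\on{fin}}(k[t_1])$ by \Cref{fin=mndlem}, and the matching curves $\gamma$ are finite, the evaluations $\on{ev}_e(M_\gamma^L)$ are genuinely perfect $k[t_1]$-modules with finite-dimensional homology, so these mapping complexes are perfect over $k$. This shows $M_\gamma^L\in \mathcal{D}(\mathscr{G}_\mathcal{T})^{\on{fin}}$, hence gives a fully faithful functor $\on{Ind}\mathcal{D}(\mathscr{G}_\mathcal{T})^{\on{fin}}\supseteq \on{Ind}\langle M_\gamma^L\rangle \hookrightarrow \mathcal{H}(\mathcal{T},\mathcal{F}_{\mathcal{T}})^{\on{Ind-fin}}$, which is an equivalence since the $M_\gamma^L$ generate the target. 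Conversely, one must show every finite object of $\mathcal{D}(\mathscr{G}_\mathcal{T})$ lies in $\mathcal{H}(\mathcal{T},\mathcal{F}_{\mathcal{T}})^{\on{Ind-fin}}$: a finite object $Y$ satisfies $\on{Mor}(\on{ev}_e^L(k[t_1]), Y)$ perfect, equivalently $\on{ev}_e(Y)\in \mathcal{D}^{\on{fin}}(k[t_1])=\on{Ind}\on{Fun}(S^2,\mathcal{D}^{\on{perf}}(k))^{\on{c}}$; since finite objects are compact (as $\mathscr{G}_\mathcal{T}$ is smooth with invertible inverse dualizing bimodule, by \Cref{def:clcat}(2)), $\on{ev}_e(Y)$ is a compact object with perfect mapping complexes out of $k[t_1]$, hence lies in $\on{Ind}\on{Fun}(S^2,\mathcal{D}^{\on{perf}}(k))$, giving $Y\in \mathcal{H}(\mathcal{T},\mathcal{F}_{\mathcal{T}})^{\on{Ind-fin}}$.

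Assembling these two steps yields the chain of equivalences. I expect the main obstacle to be the second equivalence, specifically the careful identification of the compact objects: one must verify not merely that the generators $M_\gamma^L$ are finite but that $\on{Ind}$ of the finite subcategory is exactly recovered, which requires knowing that $\langle M_\gamma^L\rangle$ (the thick closure) equals $\mathcal{D}(\mathscr{G}_\mathcal{T})^{\on{fin}}$ rather than a proper subcategory --- this uses that \cite[Prop.~5.19]{Chr21b} provides a \emph{generating} family and that $\on{Ind}$-completion of a thick subcategory of compact objects of a compactly generated category is the expected localizing subcategory, together with idempotent-completeness arguments analogous to those in the proof of \Cref{fin=mndlem}. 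The first equivalence, by contrast, is largely formal once one has the pointwise description of $\mathcal{F}_{\mathcal{T}}^{\on{mnd}}$ and the stability of the relevant subcategories under the structure functors $\varrho_i$, both of which are already in hand from the construction in \Cref{sec4.1} and from \Cref{lem:incadm}.
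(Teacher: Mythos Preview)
Your proposal is correct and follows essentially the same approach as the paper: the first equivalence is obtained by unwinding the definition of $\mathcal{F}_{\mathcal{T}}^{\on{mnd}}$ via coCartesian sections, and the second by showing that both subcategories of $\mathcal{H}(\mathcal{T},\mathcal{F}_{\mathcal{T}})$ are compactly generated by the same objects $M_\gamma^L$, using \Cref{lem:curvgen} and the adjunction $\on{ev}_e^L\dashv \on{ev}_e$ from \Cref{lem:cptgen}. The paper phrases the second step slightly more compactly---observing directly that ``finite'' for global sections is equivalent to edgewise finiteness via \Cref{lem:cptgen}, then sandwiching $\on{Ind}\mathcal{D}(\mathscr{G}_\mathcal{T})^{\on{fin}}$ between $\on{Ind}\langle M_\gamma^L\rangle$ and $\mathcal{H}(\mathcal{T},\mathcal{F}_{\mathcal{T}})^{\on{Ind-fin}}$---but your two-way containment argument is the same in substance.
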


\begin{proof}
It follows from the definition of $\mathcal{F}_{\mathcal{T}}^{\on{mnd}}$, that the image on global sections of the inclusion $\mathcal{F}_{\mathcal{T}}^{\on{mnd}}\rightarrow \mathcal{F}_{\mathcal{T}}$ consists of global sections which evaluate on each edge of $\mathcal{T}$ to an object in $\on{Ind}\on{Fun}(S^2,\mathcal{D}(k)^{\on{perf}})\subset \on{Fun}(S^2,\mathcal{D}(k))$. We thus see that $\mathcal{H}(\mathcal{T},\mathcal{F}_{\mathcal{T}}^{\on{mnd}})\simeq \mathcal{H}(\mathcal{T},\mathcal{F}_{\mathcal{T}})^{\on{Ind-fin}}$.

We proceed with showing the equivalence $\mathcal{H}(\mathcal{T},\mathcal{F}_{\mathcal{T}})^{\on{Ind-fin}}\simeq \on{Ind}\mathcal{D}(\mathscr{G}_\mathcal{T})^{\on{fin}}$. We can realize both of these $\infty$-categories as presentable, stable subcategories of $\mathcal{H}(\mathcal{T},\mathcal{F}_{\mathcal{T}})$. An object of $\mathcal{H}(\mathcal{T},\mathcal{F}_{\mathcal{T}})$ is by \Cref{lem:cptgen} finite if and only if its evaluations to the edges of $\mathcal{T}$ are finite in $\on{Fun}(S^2,\mathcal{D}(k))$, which is equivalent to lying in $\on{Fun}(S^2,\mathcal{D}(k)^{\on{perf}})\subset \on{Fun}(S^2,\mathcal{D}(k))$. It follows that $\mathcal{D}(\mathscr{G}_\mathcal{T})^{\on{fin}}\subset \mathcal{H}(\mathcal{T},\mathcal{F}_{\mathcal{T}})^{\on{Ind-fin}}$. Using that the objects in $\mathcal{D}(\mathscr{G}_\mathcal{T})^{\on{fin}}$ are compact in $\mathcal{H}(\mathcal{T},\mathcal{F}_{\mathcal{T}})$, we further get $\on{Ind}\mathcal{D}(\mathscr{G}_\mathcal{T})^{\on{fin}}\subset \mathcal{H}(\mathcal{T},\mathcal{F}_{\mathcal{T}})^{\on{Ind-fin}}$. \Cref{lem:curvgen}, in combination with the fact that the global sections associated to finite, pure matching data with finite local value, in the sense of \cite{Chr21b}, lie by construction in $\mathcal{D}(\mathscr{G}_\mathcal{T})^{\on{fin}}\subset \mathcal{H}(\mathcal{T},\mathcal{F}_{\mathcal{T}})^{\on{Ind-fin}}$, now implies that both $\mathcal{H}(\mathcal{T},\mathcal{F}_{\mathcal{T}})^{\on{Ind-fin}}$ and $\on{Ind}\mathcal{D}(\mathscr{G}_\mathcal{T})^{\on{fin}}$ are compactly generated by the same set of objects and thus equivalent.
\end{proof}

\begin{proof}[Proof of \Cref{thm:clstcat}.]
Combine \Cref{prop:sodG}, \Cref{rem:sod1}.2 and \Cref{prop:indfin}.
\end{proof}

The dg-algebra $k[t_2^\pm]$ is commutative and the $\infty$-category $\mathcal{D}(k[t_1^\pm])$ comes with an apparent $k[t_2^\pm]$-linear structure. This can be used to construct a factorization of the perverse schober $\mathcal{F}_{\mathcal{T}}^{\on{clst}}$ through $\on{LinCat}_{k[t_2^\pm]}\rightarrow \on{St}$.  In particular, its limit $\mathcal{C}_{\bf S}=\mathcal{H}(\mathcal{T},\mathcal{F}_{\mathcal{T}}^{\on{clst}})$ inherits a $k[t_2^\pm]$-linear structure.

\begin{proposition}\label{prop:smprp}
The $k[t_2^\pm]$-linear $\infty$-category $\mathcal{C}_{\bf S}$ is smooth and proper.
\end{proposition}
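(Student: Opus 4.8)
The plan is to deduce smoothness and properness of the $k[t_2^\pm]$-linear $\infty$-category $\mathcal{C}_{\bf S}=\mathcal{H}(\mathcal{T},\mathcal{F}_{\mathcal{T}}^{\on{clst}})$ from the corresponding local statement about the generic stalk, together with the fact that $\mathcal{C}_{\bf S}$ is a finite limit of copies of the generic stalk glued along exact functors. First I would record the local input: by \Cref{lem:locadj} the perverse schober $\mathcal{F}_{\mathcal{T}}^{\on{clst}}$ is locally constant with generic stalk $\mathcal{D}(k[t_1^\pm])$, and this $\infty$-category is $k[t_2^\pm]$-linear. One checks directly that $\mathcal{D}(k[t_1^\pm])$ is smooth and proper over $k[t_2^\pm]$: it has the compact generator $k[t_1^\pm]$ whose $k[t_2^\pm]$-linear endomorphism algebra is $k[t_1^\pm]$ itself (a perfect $k[t_2^\pm]$-module, since $k[t_1^\pm]\simeq k[t_2^\pm]\oplus k[t_2^\pm][1]$ as a $k[t_2^\pm]$-module, giving properness), and the inverse dualizing bimodule of $k[t_1^\pm]$ over $k[t_2^\pm]$ is invertible (indeed $k[t_1^\pm]$ is a smooth $k[t_2^\pm]$-algebra, being a finite-rank graded Azumaya-type extension; alternatively, $\mathcal{D}(k[t_1^\pm])$ carries a right $1$-Calabi--Yau structure over $k[t_2^\pm]$ as recalled in \Cref{sec1.2}, which forces smoothness).

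Next I would globalize. Since $\mathcal{F}_{\mathcal{T}}^{\on{clst}}$ is locally constant, it factors through $\on{LinCat}_{k[t_2^\pm]}$: every $\infty$-category it assigns is a copy of $\mathcal{D}(k[t_1^\pm])$ and every structure functor $\mathcal{F}_{\mathcal{T}}^{\on{clst}}(v\to e)$ is one of the functors $\varrho_i$, which in the locally constant (trivial spherical adjunction) case are equivalences, hence automatically $k[t_2^\pm]$-linear. Thus $\mathcal{C}_{\bf S}$ is the limit in $\on{LinCat}_{k[t_2^\pm]}$ of a finite diagram indexed by $\on{Exit}(\mathcal{T})$ whose values are all equivalent to the smooth proper $k[t_2^\pm]$-linear category $\mathcal{D}(k[t_1^\pm])$ and whose transition functors are equivalences. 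Equivalently, using the coCartesian-section model, $\mathcal{C}_{\bf S}$ sits in a finite limit diagram, or one can present it via a gluing: there is an exact (indeed, by \Cref{def:cupfunctor}, adjointable) sequence relating $\mathcal{C}_{\bf S}$, the product $\prod_{e\in\mathcal{T}_1^\partial}\mathcal{D}(k[t_1^\pm])$, and a product over internal edges, all of which are finite products of $\mathcal{D}(k[t_1^\pm])$'s. Finiteness of $\mathcal{T}$ is what makes all these (co)limits finite.

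Finally I would invoke closure of smooth proper $k[t_2^\pm]$-linear $\infty$-categories under finite limits and colimits (equivalently, under semiorthogonal gluing along functors that are themselves suitably finite): smoothness and properness are each preserved under retracts and finite (co)limits in $\on{LinCat}_{k[t_2^\pm]}$, so $\mathcal{C}_{\bf S}$, being built from finitely many copies of $\mathcal{D}(k[t_1^\pm])$ by such operations, is smooth and proper. Concretely: a compact generator of $\mathcal{C}_{\bf S}$ is obtained as $\bigoplus_{e}\on{ev}_e^L(k[t_1^\pm])$ (compare \Cref{lem:cptgen}), its $k[t_2^\pm]$-linear endomorphism dg-algebra is assembled from finitely many morphism complexes each of which is perfect over $k[t_2^\pm]$ (properness), and smoothness follows because the inverse dualizing bimodule is computed by gluing the local ones, each invertible, along the equivalences of the locally constant schober. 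The main obstacle I anticipate is purely bookkeeping: making the $k[t_2^\pm]$-linear gluing precise — i.e.\ checking that the limit defining global sections can be computed in $\on{LinCat}_{k[t_2^\pm]}$ and that the resulting endomorphism algebra of the glued generator is perfect over $k[t_2^\pm]$ and has invertible inverse dualizing bimodule — rather than any conceptual difficulty, since the relative $2$-Calabi--Yau structure on $G$ from \Cref{sec1.2}, once in hand, already encodes exactly the smoothness-and-properness package needed (a functor between smooth proper categories carrying a relative right Calabi--Yau structure forces the source to be smooth proper once the target is).
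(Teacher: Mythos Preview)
Your overall strategy matches the paper's: smoothness from a finite-colimit presentation of $\mathcal{C}_{\bf S}$ over $k[t_2^\pm]$ (the paper cites \cite[Cor.~3.11]{Chr23} for this), and properness by exhibiting the compact generator $\bigoplus_{e\in\mathcal{T}_1}\on{ev}_e^L(k[t_1^\pm])$ and checking its endomorphism object is perfect over $k[t_2^\pm]$. However, there is one concrete error and one genuine gap.

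\textbf{The error.} Your claim that the structure functors $\varrho_i=\mathcal{F}_{\mathcal{T}}^{\on{clst}}(v\to e)$ are equivalences is false. For the trivial spherical adjunction $0\leftrightarrow\mathcal{D}(k[t_1^\pm])$ at a trivalent vertex $v$, the vertex value is $\mathcal{V}^3_{F_v}\simeq\on{Fun}(\Delta^1,\mathcal{D}(k[t_1^\pm]))$ (see the discussion after \eqref{eq:VNdiag}, and \Cref{constr:geom1}), not $\mathcal{D}(k[t_1^\pm])$, and the $\varrho_i$ are the three non-invertible projection-type functors to $\mathcal{D}(k[t_1^\pm])$. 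So the diagram is not one of equivalences among copies of the generic stalk; ``locally constant'' only means there are no singularities, not that the schober is constant. This does not kill the argument, since $\on{Fun}(\Delta^1,\mathcal{D}(k[t_1^\pm]))$ is still smooth and proper over $k[t_2^\pm]$ and the $\varrho_i$ preserve compact objects, but you need to argue with these actual categories and functors.

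\textbf{The gap.} For properness you assert that the endomorphism algebra of the generator ``is assembled from finitely many morphism complexes each of which is perfect''. This is the right target statement, but you give no mechanism to compute $\on{Mor}_{\mathcal{C}_{\bf S}}(\on{ev}_e^L(k[t_1^\pm]),\on{ev}_{e'}^L(k[t_1^\pm]))$ for distinct edges $e,e'$; these are not local quantities and are not obviously perfect from the limit description alone. The paper closes this gap by identifying each $\on{ev}_e^L(k[t_1^\pm])$ with an object $M_{c_e}$ associated to a finite pure matching curve (via \cite[Prop.~5.19]{Chr21b}) and then invoking \Cref{thm:hom1,thm:hom2} to read off that the endomorphism object is a finite sum of copies of $k[t_1^\pm]$, hence perfect over $k[t_2^\pm]$. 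You should either use this geometric input or else make precise a general statement of the form ``finite limits of proper $R$-linear categories along compact-preserving functors are proper'' and verify its hypotheses here. Finally, your parenthetical appeal to the relative right $2$-Calabi--Yau structure on $G$ is circular: in this paper the Calabi--Yau structure (\Cref{def:cystr}, \Cref{clcatcythm}) is formulated for proper categories, so it presupposes the very properness you are trying to prove.
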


\begin{proof}
Note that $\mathcal{C}_{\bf S}$ is equivalent to the colimit of the right adjoint diagram of $\mathcal{F}_{\mathcal{T}}^{\on{clst}}$ in $\on{LinCat}_{k[t_2^\pm]}$. The smoothness thus follows from the fact that finite colimits of smooth $k[t_2^\pm]$-linear $\infty$-categories along compact objects preserving functors are again smooth, see \cite[Cor.~3.11]{Chr23}. 

To see that $\mathcal{C}_{\bf S}$ is proper, consider the compact generator $\bigoplus_{e\in \mathcal{T}_1}\on{ev}^L_e(k[t_1^\pm])$ of $\C_{\bf S}$, given by the sum of the images of the left adjoints of the evaluation functors $\on{ev}_e\colon \mathcal{H}(\mathcal{T},\mathcal{F}_{\mathcal{T}}^{\on{clst}})\rightarrow \mathcal{F}_{\mathcal{T}}^{\on{clst}}(e)\simeq \mathcal{D}(k[t_1^\pm])$. Proposition 5.19 in \cite{Chr21b} shows that $\on{ev}^L_e(k[t_1^\pm])\simeq M_{c_e}^{k[t_1^\pm]}$ for some matching datum $(c_e,k[t_1^\pm])$, where $c_e$ is a finite, pure matching curve (or equivalently $\on{ev}^L_e(k[t_1^\pm])\simeq M_{c_e}$ in the notation of \Cref{sec5.2} below for some matching curve $c_e$). Using \Cref{thm:hom1,thm:hom2}, we see that $\on{End}(\bigoplus_{e\in \mathcal{T}_1}\on{ev}^L_e(k[t_1^\pm]))$ is perfect in $\mathcal{D}(k[t_2^\pm])$, showing that $\mathcal{C}_{\bf S}$ is proper.
\end{proof}

\subsection{Relation to the 1-periodic topological Fukaya category}\label{sec4.3}

The perverse schober $\mathcal{F}_{\mathcal{T}}^{\on{clst}}$ is locally constant, i.e.~has no singularities, and its generic stalk is the derived $\infty$-category $\mathcal{D}(k[t_1^\pm])$ of $1$-periodic chain complexes. It is thus locally described by the $1$-periodic derived category of the $A_2$-quiver. Global sections of locally constant perverse schobers may be referred to as topological Fukaya categories, as these recover in special cases the constructions given in \cite{DK18,DK15,HKK17}. We refer to the $\infty$-category of global sections $\mathcal{C}_{\bf S}=\mathcal{H}({\mathcal{T}},\mathcal{F}_{\mathcal{T}}^{\on{clst}})$ as the topological Fukaya category of ${\bf S}$ with values in the derived $\infty$-category of $1$-periodic chain complexes, or the $1$-periodic topological Fukaya category for short. 

There is a notion of monodromy for parametrized perverse schobers \cite{Chr23} and this monodromy of $\mathcal{F}_{\mathcal{T}}^{\on{clst}}$ along any closed curve in ${\bf S}^\circ$ is trivial, this is shown in the proof of Theorem 6.5 in \cite{Chr21b}. Furthermore, any locally constant parametrized perverse schober is fully determined up to equivalence by its generic stalk and monodromy, see \cite[Prop.~4.26]{Chr23}. We are hence justified in calling the generalized cluster category $\mathcal{C}_{\bf S}$ {\textit{the}} $1$-periodic topological Fukaya category of ${\bf S}$.

The $\infty$-category $\mathcal{C}_{\bf S}^{\on{c}}$ can further be considered as an $\infty$-categorical avatar of the topological Fukaya (dg-)category in the sense of \cite{DK18} with coefficients in the cyclic $2$-Segal object arising from the Waldhausen $S_\bullet$-construction of the $2$-periodic dg-category $\on{dgMod}_{k[t_1^\pm]}$ of dg $k[t_1^\pm]$-modules. In particular, Corollary 3.4.7 of \cite{DK18} shows the following.

\begin{theorem}\label{thm:mcgact}
The stable $\infty$-category $\mathcal{C}_{\bf S}$ is acted upon by automorphisms in $\on{ho}\on{LinCat}_{k}$ by the mapping class group of the surface ${\bf S}$ of isotopy classes of orientation preserving diffeomorphisms ${\bf S}\rightarrow {\bf S}$ restricting to the identity on $\partial {\bf S}$.
\end{theorem}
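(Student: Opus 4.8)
The plan is to reduce the statement to the functoriality of the topological Fukaya category construction of Dyckerhoff--Kapranov \cite{DK18}. By \Cref{lem:locadj}, the perverse schober $\mathcal{F}_{\mathcal{T}}^{\on{clst}}$ is locally constant with generic stalk $\mathcal{D}(k[t_1^\pm])$, and, as recalled just above, its monodromy along every closed curve in ${\bf S}^\circ$ is trivial; hence by \cite[Prop.~4.26]{Chr23} it is equivalent to the constant perverse schober on $\mathcal{T}$ with value $\mathcal{D}(k[t_1^\pm])$. The first step is therefore to identify its $\infty$-category of global sections $\mathcal{C}_{\bf S}=\mathcal{H}(\mathcal{T},\mathcal{F}_{\mathcal{T}}^{\on{clst}})$ with the topological Fukaya category of ${\bf S}$ in the sense of \cite{DK18}, taken with coefficients in the cyclic $2$-Segal object $S_\bullet(\on{dgMod}_{k[t_1^\pm]})$ obtained from the Waldhausen $S_\bullet$-construction of the $2$-periodic dg-category of $1$-periodic complexes (using the $\infty$-categorical model of this construction, compare the discussion preceding this theorem). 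Here it is essential that $\on{dgMod}_{k[t_1^\pm]}$ is $2$-periodic: this is precisely the hypothesis under which the $S_\bullet$-object is cyclic rather than merely paracyclic, hence under which the resulting (co)sheaf on a ribbon graph descends to an invariant of the \emph{oriented} marked surface with no auxiliary choice of framing or line field.

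Granting this identification, the second step is to invoke \cite[Cor.~3.4.7]{DK18}: for a cyclic $2$-Segal coefficient object, the assignment sending a marked surface to its topological Fukaya category is the object part of a functor out of a category (or $\infty$-groupoid) whose objects are marked surfaces and whose morphisms are induced by orientation-preserving diffeomorphisms up to isotopy. Precomposing with the inclusion of the automorphism group of the fixed marked surface ${\bf S}$ that fixes $\partial{\bf S}$ pointwise --- that is, with the mapping class group $\on{MCG}({\bf S})$ --- produces the desired homomorphism $\on{MCG}({\bf S})\to \on{Aut}_{\on{ho}\on{LinCat}_k}(\mathcal{C}_{\bf S})$. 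That the resulting autoequivalences are $k$-linear is automatic, since the $S_\bullet$-construction and the passage to global sections are carried out internally to $k$-linear $\infty$-categories; and that we land in $\on{ho}\on{LinCat}_k$, rather than recording a homotopy-coherent action, is all that is claimed.

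I expect the main obstacle to be the bookkeeping in the first step, namely matching the two combinatorial models: the perverse schober formalism of \cite{Chr22} parametrizes $\mathcal{F}_{\mathcal{T}}^{\on{clst}}$ by the ribbon graph $\mathcal{T}$ dual to an ideal triangulation and computes $\mathcal{C}_{\bf S}$ as a limit over $\on{Exit}(\mathcal{T})$, whereas \cite{DK18} phrases everything in terms of a (co)sheaf on an arbitrary spanning ribbon graph and its (co)limit. One must check that for a locally constant perverse schober these two recipes agree, and, crucially, that the equivalences are natural with respect to the action of diffeomorphisms, so that the diffeomorphism functoriality supplied by \cite{DK18} transports to the perverse schober side; this in particular requires that the global sections depend only on the isotopy class of the embedding $|{\bf G}|\hookrightarrow {\bf S}\setminus M$, which follows from the homotopy-invariance of the construction together with the contractibility of the relevant space of compatible spanning ribbon graphs. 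Since the perverse schober here has trivial monodromy and no singularities, there is no essential difficulty beyond unwinding definitions, but this naturality check is where the real work lies.
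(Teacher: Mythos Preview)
Your proposal is correct and follows the same route as the paper: the paper simply notes that $\mathcal{C}_{\bf S}^{\on{c}}$ is an $\infty$-categorical avatar of the topological Fukaya category of \cite{DK18} with coefficients in the cyclic $2$-Segal object $S_\bullet(\on{dgMod}_{k[t_1^\pm]})$ and then cites \cite[Cor.~3.4.7]{DK18} directly. You have spelled out in more detail the identification step and the naturality bookkeeping that the paper leaves implicit, but the strategy is identical.
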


Finally, we also record a description of the generalized cluster category $\mathcal{C}_{\bf S}$ in terms of a gentle algebra. 

\begin{proposition}\label{prop:derivedgtl}
There exists an ungraded gentle algebra $\on{gtl}$ and an equivalence of $k$-linear $\infty$-categories $\mathcal{C}_{\bf S} \simeq \mathcal{D}(\on{gtl})\otimes_k \mathcal{D}(k[t_1^\pm])$.
\end{proposition}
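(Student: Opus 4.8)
The plan is to present $\mathcal{C}_{\bf S}$ as the derived $\infty$-category of an explicit algebra and to recognize that algebra as $\on{gtl}\otimes_k k[t_1^\pm]$. By the proof of \Cref{prop:smprp}, $\mathcal{C}_{\bf S}$ is compactly generated by $P\coloneqq\bigoplus_{e\in\mathcal{T}_1}\on{ev}^L_e(k[t_1^\pm])$, and each summand is the object $M_{c_e}$ attached to a finite pure matching curve $c_e$; hence $\mathcal{C}_{\bf S}\simeq\mathcal{D}\bigl(\on{End}_{\mathcal{C}_{\bf S}}(P)\bigr)$ by \cite[7.1.2.1]{HA}, the endomorphism object being taken in $\mathcal{D}(k)$. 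First I would compute $A\coloneqq\on{End}_{\mathcal{C}_{\bf S}}(P)$ via the geometric model of \Cref{sec5}: by \Cref{thm:hom1,thm:hom2} the underlying graded $k$-vector space of $A$ has a basis indexed by the oriented intersection paths between the curves $c_e$ (interior crossings and directed boundary intersections), which is precisely the data the combinatorics of the trivalent ribbon graph $\mathcal{T}$ assigns to a gentle algebra in the sense of \cite{OPS18,HKK17}. I would then check that the composition law (concatenation of intersection paths, subject to the relations imposed by the trivalent vertices of $\mathcal{T}$) identifies $A$, as a graded algebra with its formal $A_\infty$-structure, with $\on{gtl}\otimes_k k[t_1^\pm]$, where $\on{gtl}$ is the ordinary (ungraded) gentle algebra directly associated with $\mathcal{T}$ and the $k[t_1^\pm]$-factor records that all Hom-complexes in $\mathcal{C}_{\bf S}$ are $1$-periodic (\Cref{lem:vectk}). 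This gives $\mathcal{C}_{\bf S}\simeq\mathcal{D}(\on{gtl}\otimes_k k[t_1^\pm])\simeq\mathcal{D}(\on{gtl})\otimes_k\mathcal{D}(k[t_1^\pm])$, the last step being base change.

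An alternative, more schober-theoretic route — which I would fall back on if the bookkeeping above becomes unwieldy — runs as follows. As recalled in \Cref{sec4.3}, $\mathcal{F}_{\mathcal{T}}^{\on{clst}}$ is locally constant with generic stalk $\mathcal{D}(k[t_1^\pm])$ and trivial monodromy. Choosing a line field on ${\bf S}$ yields a locally constant $\mathcal{T}$-parametrized perverse schober $\mathcal{F}^{\ell}$ with generic stalk $\mathcal{D}(k)$ whose global sections are, by \cite{HKK17}, the derived $\infty$-category $\mathcal{D}(A)$ of a graded gentle algebra $A$; its monodromy takes values in the shift autoequivalences of $\mathcal{D}(k)$. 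Since multiplication by the degree-one central unit $t_1$ trivialises every shift as a $k$-linear autoequivalence of $\mathcal{D}(k[t_1^\pm])$, the base change $\mathcal{F}^{\ell}\otimes_k\mathcal{D}(k[t_1^\pm])$ is again locally constant with generic stalk $\mathcal{D}(k[t_1^\pm])$ and trivial monodromy, hence equivalent to $\mathcal{F}_{\mathcal{T}}^{\on{clst}}$ by the classification of locally constant perverse schobers \cite[Prop.~4.26]{Chr23}. Global sections of a locally constant perverse schober on a finite ribbon graph are computed by a finite colimit in $\mathcal{P}r^L_{\on{St}}$ (the cosheaf description of the topological Fukaya category, \cite{DK18}), so the colimit-preserving functor $-\otimes_k\mathcal{D}(k[t_1^\pm])$ commutes with taking global sections and $\mathcal{C}_{\bf S}\simeq\mathcal{D}(A)\otimes_k\mathcal{D}(k[t_1^\pm])\simeq\mathcal{D}(A\otimes_k k[t_1^\pm])$. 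Finally, centrality and degree of $t_1$ give an isomorphism of graded algebras $A\otimes_k k[t_1^\pm]\cong |A|\otimes_k k[t_1^\pm]$ (sending a homogeneous $a\in A$ to $a\otimes t_1^{-|a|}$), where $|A|$ is the underlying ungraded algebra of $A$; taking $\on{gtl}=|A|$, an ungraded gentle algebra, concludes.

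The main obstacle is the multiplicative comparison, not the additive one: one must match the composition of intersection paths in the geometric model (resp.\ the gluing maps of the cosheaf) with the gentle relations defining $\on{gtl}$, and make the splitting-off of the $k[t_1^\pm]$-factor precise. In the first approach this is an explicit, if tedious, check at the trivalent vertices of $\mathcal{T}$ using \Cref{thm:hom1,thm:hom2}; in the second it is encapsulated in the grading-absorption isomorphism $A\otimes_k k[t_1^\pm]\cong|A|\otimes_k k[t_1^\pm]$ together with the compatibility of base change with global sections of a locally constant schober. Either way, no geometric input beyond \Cref{sec5} is required.
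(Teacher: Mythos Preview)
Your fallback approach is precisely the paper's proof: invoke \cite{HKK17} to identify the $\mathcal{D}(k)$-valued topological Fukaya category parametrized by $\mathcal{T}$ with $\mathcal{D}(\on{gtl}')$ for a graded gentle algebra $\on{gtl}'$, observe that $-\otimes_k\mathcal{D}(k[t_1^\pm])$ preserves the finite colimit in $\on{LinCat}_k$ computing global sections so that $\mathcal{C}_{\bf S}\simeq\mathcal{D}(\on{gtl}')\otimes_k\mathcal{D}(k[t_1^\pm])$, and then absorb the grading to pass to the ungraded algebra $\on{gtl}$. You supply more detail than the paper does (the monodromy-trivialisation step identifying the base-changed schober with $\mathcal{F}_\mathcal{T}^{\on{clst}}$, and the explicit isomorphism $a\mapsto a\otimes t_1^{-|a|}$ realising the grading absorption), but the architecture is identical.

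Your primary approach, computing $\on{End}_{\mathcal{C}_{\bf S}}(P)$ directly from \Cref{thm:hom1,thm:hom2}, is a legitimate alternative that trades the black-box citation of \cite{HKK17} for explicit combinatorics at the trivalent vertices. Two caveats are worth flagging. The curves $c_e$ arising from \cite[Prop.~5.19]{Chr21b} wrap once counterclockwise around vertices of $\mathcal{T}$ (cf.\ \Cref{rem:rightadjointGi}) and are not the arcs of the dual ideal triangulation, so the algebra you obtain is \emph{not} the Jacobian gentle algebra of \Cref{prop:gtl}; you would still need to verify that it is gentle (or matches the HKK17 algebra on the nose). And the formality of $\on{End}(P)$ is not automatic: for the HKK17 generator formality is part of their theorem, but for your $P$ it requires a separate argument. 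Neither point is fatal, but together they make the schober-theoretic route the cleaner one here, as you yourself anticipate.
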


\begin{proof}
Choose a \idtr{} $\mathcal{T}$ of ${\bf S}$. As observed in \cite{HKK17}, the $\infty$-category of global sections of a locally constant perverse schober with generic stalk $\mathcal{D}(k)$ on ${\bf S}$ parametrized by $\mathcal{T}$ admits a formal generator whose endomorphism algebra is a finite dimensional, and in general graded, gentle algebra, denoted $\on{gtl}'$. Using that tensoring with $\mathcal{D}(k[t_1^\pm])$ (with respect to the symmetric monoidal structure of $\on{LinCat}_k$) preserves colimits in $\on{LinCat}_k$, we find an equivalence $\mathcal{C}_{\bf S} \simeq \mathcal{D}(\on{gtl}')\otimes_k \mathcal{D}(k[t_1^\pm])$ in $\on{LinCat}_k$. Let $\on{gtl}$ be the ungraded gentle algebra obtained from $\on{gtl}'$ by discarding the grading. It is easy to see that $\mathcal{D}(\on{gtl}')\otimes_k \mathcal{D}(k[t_1^\pm])\simeq \mathcal{D}(\on{gtl})\otimes_k \mathcal{D}(k[t_1^\pm])$ in $\on{LinCat}_k$, showing the claim.
\end{proof}

Topological Fukaya categories admit relative Calabi--Yau structures if the monodromy of the perverse schober acts trivially on a relative Calabi--Yau structure of the generic stalk, see \cite[Thm.~5.11]{Chr23}. In our setting, this gives the following.

\begin{theorem}[$\!\!${\cite[Thm.~6.1]{Chr23}}]\label{clcatcythm}\label{thm:cyclcat}
Assume that $\on{char}(k)\neq 2$. The $k[t_2^\pm]$-linear functor
\[ \prod_{e\in \mathcal{T}_1^\partial}\on{ev}_e\colon \mathcal{C}_{\bf S}\longrightarrow  \prod_{e\in \mathcal{T}_1^\partial}\mathcal{F}_{\mathcal{T}}^{\on{clst}}(e) \]
admits a $2$-Calabi--Yau structure in the sense of \Cref{def:cystr}.
\end{theorem}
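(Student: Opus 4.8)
The strategy is to deduce this from the gluing theorem for relative Calabi--Yau structures on topological Fukaya categories, \cite[Thm.~5.11]{Chr23} (which specialises to \cite[Thm.~6.1]{Chr23} in exactly the situation at hand). That theorem takes as input a locally constant $\kappa$-linear perverse schober on a marked surface whose generic stalk is a smooth and proper $\kappa$-linear $\infty$-category carrying a right $n$-Calabi--Yau structure fixed by the monodromy, and it outputs a relative right $(n{+}1)$-Calabi--Yau structure on the functor restricting a global section to the external edges. I would apply it with $\kappa = k[t_2^\pm]$, $n = 1$, and the perverse schober $\mathcal{F}_{\mathcal{T}}^{\on{clst}}$, so that the output functor is precisely $G = \prod_{e\in\mathcal{T}_1^\partial}\on{ev}_e$ discussed around \Cref{def:cupfunctor}.

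First I would recall the three hypotheses. (i) By \Cref{lem:locadj} the perverse schober $\mathcal{F}_{\mathcal{T}}^{\on{clst}}$ is locally constant with generic stalk $\mathcal{D}(k[t_1^\pm])$, and by the discussion preceding \Cref{prop:smprp} it factors through $\on{LinCat}_{k[t_2^\pm]}$, making $\mathcal{C}_{\bf S}$, the target $\prod_{e}\mathcal{F}_{\mathcal{T}}^{\on{clst}}(e)\simeq\prod_e\mathcal{D}(k[t_1^\pm])$, and $G$ all $k[t_2^\pm]$-linear. (ii) As recorded in \Cref{sec1.2}, the generic stalk $\mathcal{D}(k[t_1^\pm])$, viewed as $k[t_2^\pm]$-linear via $t_2 = t_1^2$, is smooth and proper and admits a right $1$-Calabi--Yau structure: properness because $k[t_1^\pm]$ is a finite free $k[t_2^\pm]$-module; smoothness because $\on{char}(k)\neq 2$ forces the quadratic extension $k[t_2^\pm]\to k[t_1^\pm]$ to be separable; and the right $1$-Calabi--Yau structure is given by the trace pairing realising $k[t_1^\pm]$ as self-dual up to the shift $[1]$ as a $k[t_2^\pm]$-bimodule. (iii) The monodromy of $\mathcal{F}_{\mathcal{T}}^{\on{clst}}$ along every closed curve in ${\bf S}^\circ$ is trivial --- shown in the proof of \cite[Thm.~6.5]{Chr21b}, and equivalently via the classification of locally constant perverse schobers in terms of generic stalk and monodromy \cite[Prop.~4.26]{Chr23} --- so in particular it fixes the $1$-Calabi--Yau structure on the generic stalk. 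With (i)--(iii) in place, \cite[Thm.~5.11]{Chr23} produces a $k[t_2^\pm]$-linear right $2$-Calabi--Yau structure on $G$ in the sense of \Cref{def:cystr}.

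The only real work is bookkeeping, and that is where I would expect any difficulty to lie. One must match the functor produced by \cite[Thm.~5.11]{Chr23} --- the cap functor of the perverse schober, left adjoint to the cup functor $\partial\mathcal{F}_{\mathcal{T}}^{\on{clst}}$ --- with the functor $G$ as written here, track the degree shift from $1$ to $2$ through the $(\infty,2)$-categorical gluing of the local Calabi--Yau data, and verify that the entire construction takes place in $\on{LinCat}_{k[t_2^\pm]}$ rather than merely in $\on{St}$, which is what makes the resulting Calabi--Yau structure $k[t_2^\pm]$-linear. Finally I would emphasise that the hypothesis $\on{char}(k)\neq 2$ is essential and not cosmetic: in characteristic $2$ the $\infty$-category $\mathcal{D}(k[t_1^\pm])$ ceases to be smooth over $k[t_2^\pm]$ --- indeed it becomes a genuinely $1$-periodic $\infty$-category --- and the relative Calabi--Yau formalism of \Cref{sec2.4} no longer applies.
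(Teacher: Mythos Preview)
Your proposal is correct and matches the paper's approach precisely: the paper does not include its own proof of this theorem but states it as a citation of \cite[Thm.~6.1]{Chr23}, and the sentence immediately preceding the theorem indicates exactly the route you take, namely applying \cite[Thm.~5.11]{Chr23} after observing that the monodromy acts trivially on the Calabi--Yau structure of the generic stalk $\mathcal{D}(k[t_1^\pm])$. One minor slip: you describe the cap functor as the ``left adjoint to the cup functor $\partial\mathcal{F}_{\mathcal{T}}^{\on{clst}}$'', but by \Cref{def:cupfunctor} the cup functor is the left adjoint of $\prod_e\on{ev}_e$, so the cap functor $G$ is the \emph{right} adjoint of the cup functor; this does not affect the argument since you correctly identify $G$ as the functor carrying the relative Calabi--Yau structure.
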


\begin{corollary}\label{cor:sphbdry}
Assume that $\on{char}(k)\neq 2$. The adjunction 
\[\partial \mathcal{F}_{\mathcal{T}}^{\on{clst}}\colon \prod_{e\in \mathcal{T}_1^\partial}\mathcal{F}_{\mathcal{T}}^{\on{clst}}(e)\longleftrightarrow  \mathcal{C}_{\bf S}\noloc \prod_{e\in \mathcal{T}_1^\partial}\on{ev}_e\]
is spherical.
\end{corollary}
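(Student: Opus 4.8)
The plan is to derive sphericality from the relative right $2$-Calabi--Yau structure on $G \coloneqq \prod_{e\in\mathcal{T}_1^\partial}\on{ev}_e$ furnished by \Cref{thm:cyclcat} (this is the only place the hypothesis $\on{char}(k)\neq 2$ is needed), combined with the smoothness and properness of the two $\kappa$-linear $\infty$-categories involved, where $\kappa = k[t_2^\pm]$. Smoothness and properness of the source $\mathcal{C}_{\bf S}$ are exactly \Cref{prop:smprp}; the target $\prod_{e\in\mathcal{T}_1^\partial}\mathcal{F}_{\mathcal{T}}^{\on{clst}}(e)\simeq\prod_{e\in\mathcal{T}_1^\partial}\mathcal{D}(k[t_1^\pm])$ is a finite product of copies of $\mathcal{D}(k[t_1^\pm])$, which is smooth and proper over $k[t_2^\pm]$ as recalled in \Cref{sec1.2}, and finite products preserve both properties. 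The general input I would invoke is that a $\kappa$-linear functor between smooth and proper $\kappa$-linear $\infty$-categories carrying a relative right Calabi--Yau structure is a spherical functor; this is \cite{BD19} in the original $k$-linear dg-setting and \cite{Chr23} in the $\kappa$-linear $\infty$-categorical generality used here. Since $\partial\mathcal{F}_{\mathcal{T}}^{\on{clst}} = \on{ladj}(G)$ and $G$ sits in a chain of adjoints $\dots\dashv\on{ladj}(G)\dashv G\dashv\on{radj}(G)\dashv\dots$, sphericality of $G$ as a functor is insensitive to which of its adjoints is used to form the adjunction (see \cite{DKSS21,Chr20}); so it transfers from the adjunction $G\dashv\on{radj}(G)$, which is the one governed by \Cref{def:cystr}, to the adjunction $\partial\mathcal{F}_{\mathcal{T}}^{\on{clst}}\dashv G$ appearing in the statement.

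To make the mechanism concrete, write $H = \on{radj}(G)$. Then \Cref{def:cystr}, applied with $\mathcal{B}=\mathcal{C}_{\bf S}$, $\mathcal{A}=\prod_e\mathcal{D}(k[t_1^\pm])$ and $n=2$, produces from the relative class an equivalence $\on{cofib}(\on{id}_{\mathcal{C}_{\bf S}}\xrightarrow{\on{u}}HG)\simeq\on{id}_{\mathcal{C}_{\bf S}}^*[-1]$, where $\on{id}_{\mathcal{C}_{\bf S}}^*$ is the Serre functor of $\mathcal{C}_{\bf S}$. As $\mathcal{C}_{\bf S}$ is smooth and proper, $\on{id}_{\mathcal{C}_{\bf S}}^*$ is an autoequivalence (with inverse $\on{id}_{\mathcal{C}_{\bf S}}^!$, see \Cref{sec2.4}), so the twist functor $T_{\mathcal{C}_{\bf S}} = \on{cofib}(\on{id}_{\mathcal{C}_{\bf S}}\to HG)$ of the adjunction $G\dashv H$ is an autoequivalence. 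For the cotwist $T_{\mathcal{A}} = \on{fib}(GH\to\on{id}_{\mathcal{A}})$ one uses the remaining part of the relative Calabi--Yau datum together with the smoothness of $\mathcal{C}_{\bf S}$ to identify it, up to shift, with an equivalence; this is precisely the computation that the general statement in \cite{BD19,Chr23} carries out. With both $T_{\mathcal{C}_{\bf S}}$ and $T_{\mathcal{A}}$ autoequivalences, the adjunction $G\dashv H$ is spherical by definition, and the corollary follows by the transfer of sphericality noted above.

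I expect the main obstacle to be the cotwist step: the Calabi--Yau datum of \Cref{def:cystr} visibly controls the endofunctor $HG$ of the source, and seeing that it also forces $GH$ to deviate from $\on{id}_{\mathcal{A}}$ only by an autoequivalence is where smoothness of $\mathcal{C}_{\bf S}$ (\Cref{prop:smprp}, hence ultimately \cite[Cor.~3.11]{Chr23}) is used essentially rather than cosmetically. A routine final point is that sphericality is unchanged by restriction of scalars along $k\to k[t_2^\pm]$, since the twist and cotwist are built from the same unit and counit, so the conclusion holds $k$-linearly, and in particular in $\on{ho}\on{LinCat}_k$, as well.
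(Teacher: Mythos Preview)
Your argument for the twist side coincides with the paper's: both identify the twist of the adjunction with a suspension of the inverse Serre functor of $\mathcal{C}_{\bf S}$ via the relative $2$-Calabi--Yau structure of \Cref{thm:cyclcat}, and invoke smoothness and properness (\Cref{prop:smprp}) to conclude invertibility. The divergence is entirely in the cotwist step. The paper does not appeal to an abstract principle ``relative CY between smooth and proper categories $\Rightarrow$ spherical''; instead it computes the cotwist directly from the equivalences $\on{ev}^L_e(k[t_1^\pm])\simeq M_{c_e}$ established in the proof of \Cref{prop:smprp}, and shows it acts by cyclically permuting, for each boundary circle, the factors $\mathcal{D}(k[t_1^\pm])$ indexed by the external edges on that circle. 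This explicit description is not merely cosmetic: it is reused later (see \Cref{rem:rightadjointGi}) to relate the left and right adjoints of ${\bf Gi}^*$.

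Your abstract route is appealing, but the step you flag as the ``main obstacle'' is genuinely a gap as written. The relative right Calabi--Yau datum of \Cref{def:cystr} is a package of equivalences among endofunctors of $\mathcal{B}=\mathcal{C}_{\bf S}$; nothing in that definition directly constrains the endofunctor $GH$ of $\mathcal{A}=\prod_e\mathcal{D}(k[t_1^\pm])$, and neither \cite{BD19} nor the portions of \cite{Chr23} cited in this paper contain a ready-made theorem of the form ``relative right CY plus smooth and proper implies spherical''. One can try to close this via the $2$-out-of-$4$ criteria for sphericality in \cite{DKSS21,Chr20}: once the twist is an equivalence, it suffices to check that the canonical comparison between the left and the twice-right adjoint of $G$ is an equivalence, and the induced absolute $1$-Calabi--Yau structure on $\mathcal{A}$ together with the Serre duality on $\mathcal{B}$ gives tools for this. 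But that is an actual argument, not a citation, and carrying it out is essentially equivalent in effort to the paper's direct computation. So either supply that argument, or follow the paper and compute the cotwist on generators using the matching-curve description of $\on{ev}_e^L(k[t_1^\pm])$.
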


\begin{proof}
The twist functor of the adjunction $\partial \mathcal{F}^{\on{clst}}_{\trivalent}\dashv \prod_{e\in \trivalent_1^\partial}\on{ev}_e$ is by \Cref{thm:cyclcat} a suspension of the inverse Serre functor $\on{id}_{\mathcal{C}_{\bf S}}^!$ and thus invertible. The cotwist functor of the adjunction $\partial \mathcal{F}^{\on{clst}}_{\trivalent}\dashv \prod_{e\in \trivalent_1^\partial}\on{ev}_e$ can be readily determined using the equivalence $\on{ev}^L_e(k[t_1^\pm])\simeq M_{c_e}^{k[t_1^\pm]}$ from the proof of \Cref{prop:smprp}, and shown to be an equivalence. It acts by permuting cyclically the copies of $\mathcal{F}_\mathcal{T}^{\on{clst}}(e)\simeq \mathcal{D}(k[t_1^\pm])$ corresponding to the external edges of each boundary circle.
\end{proof}

\section{Exact structures from relative Calabi--Yau structures}\label{sec:exactstr}

Consider an exact functor $G\colon \mathcal{B}\rightarrow \mathcal{A}$ between stable $\infty$-categories, meaning a functor which preserves finite limits and colimits. We can pull back the split-exact structure on $\mathcal{A}$ to an exact structure on the $\infty$-category $\mathcal{B}$, such that a sequence in $\mathcal{B}$ is exact if and only if its image under $G$ is a split-exact sequence in $\mathcal{A}$, see \Cref{ex:splitpb}. The goal of this section is to show that if $G$ carries a right $2$-Calabi--Yau structure and is spherical, then the exact structure on $\mathcal{B}$ is Frobenius and its extriangulated homotopy category is $2$-Calabi--Yau. 

In \Cref{sec6.1}, we recall the notion of an exact $\infty$-category and describe pullbacks of exact structures. In \Cref{sec6.2}, we recall what is an extriangulated category and why it is natural to consider the extriangulated homotopy categories of exact $\infty$-categories. In \Cref{subsec:exactfromCY}, we prove that the Frobenius extriangulated structure arising from a functor carrying a right $2$-Calabi--Yau structure is extriangulated $2$-Calabi--Yau.

\subsection{Exact \texorpdfstring{$\infty$}{infinity}-categories}\label{sec6.1}

\begin{definition}[$\!\!$\cite{Bar15}]
An exact $\infty$-category is a triple $(\mathcal{C},\mathcal{C}_{\dagger},\mathcal{C}^{\dagger})$, where $\mathcal{C}$ is an additive $\infty$-category and $\mathcal{C}_{\dagger},\mathcal{C}^{\dagger}\subset \mathcal{C}$ are subcategories (called subcategories of inflations and deflations), satisfying that 
\begin{enumerate}[(1)]
\item every morphism $0\rightarrow X$ in $\mathcal{C}$ lies in $\mathcal{C}_{\dagger}$ and every morphism $X\rightarrow 0$ in $\mathcal{C}$ lies in $\mathcal{C}^\dagger$.
\item pushouts in $\mathcal{C}$ along morphisms in $\mathcal{C}_{\dagger}$ exist and lie in $\mathcal{C}_{\dagger}$. Dually, pullbacks in $\mathcal{C}$ along morphisms in $\mathcal{C}^{\dagger}$ exist and lie in $\mathcal{C}^{\dagger}$. 
\item  Given a commutative square in $\mathcal{C}$ of the form
\[
\begin{tikzcd}
X \arrow[r, "a"] \arrow[d, "b"] & Y \arrow[d, "c"] \\
X' \arrow[r, "d"]               & Y'              
\end{tikzcd}
\]
the following are equivalent.
\begin{itemize}
\item The square is pullback, $c\in \mathcal{C}_{\dagger}$ and $d\in \mathcal{C}^{\dagger}$.
\item The square is pushout, $b\in \mathcal{C}_{\dagger}$ and $a\in \mathcal{C}^{\dagger}$.
\end{itemize}
\end{enumerate}
When the exact structure is clear from the context, we also simply refer to $\mathcal{C}$ as an exact $\infty$-category.
\end{definition}

\begin{definition}
An exact sequence $X\rightarrow Y\rightarrow Z$ in an exact $\infty$-category $(\mathcal{C},\mathcal{C}_{\dagger},\mathcal{C}^{\dagger})$ consists of a fiber and cofiber sequence in $\mathcal{C}$
\[
\begin{tikzcd}
X \arrow[r, "a"] \arrow[dr, "\square", phantom] \arrow[d] & Y \arrow[d, "b"] \\
0 \arrow[r]                & Z               
\end{tikzcd}
\]
with $a\in \mathcal{C}_{\dagger}$ and $b\in \mathcal{C}^{\dagger}$. 
\end{definition}

\begin{example}\label{ex:exstr}
\begin{enumerate}
\item Let $\mathcal{C}$ be an additive $\infty$-category. Then there is an exact $\infty$-category $(\mathcal{C},\mathcal{C}_{\dagger},\mathcal{C}^{\dagger})$ with inflations consisting of inclusions of direct summands and deflations consisting of projections onto direct summands, called the split-exact structure. The exact sequences are the split-exact sequences $X\hookrightarrow X\oplus Y \twoheadrightarrow Y$. 
\item Let $\mathcal{C}$ be a stable $\infty$-category. Then $(\mathcal{C},\mathcal{C},\mathcal{C})$ is an exact $\infty$-category.
\end{enumerate}
\end{example}

\begin{definition}\label{def:frob}
Let $(\mathcal{C},\mathcal{C}_{\dagger},\mathcal{C}^{\dagger})$ be an exact $\infty$-category.
\begin{enumerate}[1)]
\item An object $P\in \mathcal{C}$ is called projective if every exact sequence $X\rightarrow Y \rightarrow P$ is split-exact. An object $I\in \mathcal{C}$ is called injective if every exact sequence $I\rightarrow Y \rightarrow Z$ is split-exact.
\item We say that $\mathcal{C}$ has enough projectives if for each object $X\in \mathcal{C}$ there exists an exact sequence $X\rightarrow P\rightarrow Y$ with $P$ projective. Similarly, we say that $\mathcal{C}$ has enough injectives if for each object $Y\in \mathcal{C}$ there exists an exact sequence $Y\rightarrow I \rightarrow X$ with $I$ injective.
\item We say that $\mathcal{C}$ is Frobenius if $\mathcal{C}$ has enough projectives and injectives and the classes of projective and injective objects coincide. 
\end{enumerate}
\end{definition}

The $\infty$-categorical version of the stable $1$-category of a Frobenius exact $1$-category is the following.

\begin{proposition}[$\!\!$\cite{JKPW22}]\label{prop:jkpw}
Let $(\mathcal{C},\mathcal{C}_{\dagger},\mathcal{C}^{\dagger})$ be a Frobenius exact $\infty$-category and $W$ the class of morphisms $f\colon X\rightarrow Y$ which fit into an exact sequence 
\[ X\xlongrightarrow{(f,g)} Y\oplus I\longrightarrow J\] 
with $I$ and $J$ injective and $g$ arbitrary. Then, the $\infty$-categorical localisation $\bar{\mathcal{C}}\coloneqq \mathcal{C}[W^{-1}]$ of $\mathcal{C}$ at $W$ is a stable $\infty$-category.
\end{proposition}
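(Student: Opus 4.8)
This is established in \cite{JKPW22}; we sketch the approach one would follow. The plan is to verify Lurie's recognition criterion for stability \cite[\S 1.1]{HA}: once $\bar{\mathcal{C}}$ is known to be pointed and to admit finite limits and colimits, it is stable as soon as every commutative square in $\bar{\mathcal{C}}$ is a pushout if and only if it is a pullback. The argument exploits a self-duality: the opposite of a Frobenius exact $\infty$-category is again one, with inflations and deflations exchanged, the injective-projective objects unchanged, and $W$ replaced by the evident opposite class, while $\bar{\mathcal{C}}^{\on{op}}$ is the corresponding localization of $\mathcal{C}^{\on{op}}$. It therefore suffices to produce a zero object, construct pushouts in $\bar{\mathcal{C}}$, and check that the pushout squares thus obtained are also pullbacks.

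First I would record how the localization functor $\gamma\colon\mathcal{C}\to\bar{\mathcal{C}}$ treats the distinguished morphisms. For an injective-projective object $I$ the identity $I\to I$ extends to the exact sequence $I\xrightarrow{\on{id}}I\to 0$, so $I\to 0$ lies in $W$ and $\gamma(I)\simeq 0$; in particular $\gamma(0)$ is a zero object and $\bar{\mathcal{C}}$ is pointed. Likewise, for any $Y$ and any injective $I$ the projection $I\oplus Y\to Y$ extends $\on{id}_{I\oplus Y}$ to an exact sequence and hence lies in $W$, so $\gamma$ inverts it. The central point is then that, after applying $\gamma$, every morphism of $\mathcal{C}$ becomes the composite of an equivalence with the image of an inflation: given $f\colon X\to Y$, pick an inflation $\iota\colon X\to I$ into an injective object and form the pushout of $\iota$ along $f$; the associated exact sequence has inflation $\binom{\iota}{-f}\colon X\to I\oplus Y$, and $-f$ is recovered by post-composing with the projection $I\oplus Y\to Y$, which $\gamma$ inverts. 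Combined with the fact that $W$ admits a homotopy calculus of left and right fractions --- so that every morphism of $\bar{\mathcal{C}}$ has the form $\gamma(g)\circ\gamma(s)^{-1}$ with $s\in W$ --- this shows that every morphism of $\bar{\mathcal{C}}$ agrees, up to equivalence, with $\gamma$ of an inflation, and dually with $\gamma$ of a deflation. Since pushouts along inflations already exist in $\mathcal{C}$, one obtains all pushouts in $\bar{\mathcal{C}}$ by replacing one leg of a span by such an inflation.

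The remaining, and genuinely technical, steps are (i) that $\gamma$ carries pushout squares along inflations in $\mathcal{C}$ to pushout squares in $\bar{\mathcal{C}}$, and (ii) that the squares so obtained are also pullbacks. Step (i) reduces to showing that $W$ is compatible with the exact structure --- that it satisfies two-out-of-three, is closed under retracts, and is stable under cobase change along inflations and base change along deflations --- which is what lets the localization commute with the relevant colimits. Step (ii) uses that a pushout square along an inflation in an exact $\infty$-category is already bicartesian in $\mathcal{C}$, together with the Frobenius hypothesis: it is precisely the coincidence of the injectives with the projectives that makes both such a square and its dual, a pullback square along a deflation, survive the localization, forcing pushout and pullback squares to coincide in $\bar{\mathcal{C}}$. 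I expect the main obstacle to be exactly this localization-theoretic bookkeeping for $W$ relative to the exact structure --- the fraction calculus and the interplay with inflations and deflations --- which is the content that requires the exact $\infty$-categorical framework of \cite{Bar15} rather than a formal transcription of the classical Happel--Keller--Vossieck argument, and which is carried out in detail in \cite{JKPW22}.
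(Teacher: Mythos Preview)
Your sketch is correct and follows essentially the same route as the paper's own ``Proof idea''. The paper is much terser: it simply invokes \cite[Prop.~7.5.6]{Cis} and its dual to conclude that $\bar{\mathcal{C}}$ admits finite limits and colimits with pushouts and pullbacks coinciding, whereas you spell out the mechanism (injectives become zero, every morphism is equivalent to the image of an inflation, the Frobenius self-duality) that underlies the applicability of that localization result.
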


\begin{proof}[Proof idea]
Applying \cite[Prop.~7.5.6]{Cis} and its dual version, one finds that $\bar{\mathcal{C}}$ admits finite limits and colimits and pushouts and pullbacks coincide.
\end{proof}

\begin{definition}
\begin{enumerate}
\item An exact functor $G\colon (\mathcal{C},\mathcal{C}_{\dagger},\mathcal{C}^{\dagger})\rightarrow (\mathcal{D},\mathcal{D}_\dagger,\mathcal{D}^\dagger)$ between exact $\infty$-categories consists of a functor $G\colon \mathcal{C}\rightarrow \mathcal{D}$ which preserves zero objects, inflations, deflations, as well as exact sequences\footnote{This is equivalent to the a priori slightly stronger assumption of preserving pushouts along inflations and pullbacks along deflations.}. 
\item A sub-exact structure of an exact $\infty$-category $(\mathcal{C},\mathcal{C}_{\dagger},\mathcal{C}^{\dagger})$ consists of an exact $\infty$-category $(\mathcal{C},\mathcal{C}_{\ddagger},\mathcal{C}^{\ddagger})$, satisfying that $\mathcal{C}_{\ddagger}\subset \mathcal{C}_{\dagger}$ and $\mathcal{C}^{\ddagger}\subset \mathcal{C}^{\dagger}$. 
\end{enumerate}
\end{definition}

Sub-exact structures can be pulled back along exact functors as follows. 

\begin{lemma}\label{lem:expb}
Let $G\colon (\mathcal{C},\mathcal{C}_{\dagger},\mathcal{C}^{\dagger})\rightarrow (\mathcal{D},\mathcal{D}_\dagger,\mathcal{D}^\dagger)$ be an exact functor between exact $\infty$-categories and $(\mathcal{D},\mathcal{D}_{\ddagger},\mathcal{D}^{\ddagger})$ a sub-exact structure of $(\mathcal{D},\mathcal{D}_\dagger,\mathcal{D}^\dagger)$. Then there exists a sub-exact structure $(\mathcal{C},\mathcal{C}_\ddagger,\mathcal{C}^{\ddagger})$ of $(\mathcal{C},\mathcal{C}_{\dagger},\mathcal{C}^{\dagger})$, such that $G$ defines an exact functor $G\colon (\mathcal{C},\mathcal{C}_{\ddagger},\mathcal{C}^{\ddagger})\rightarrow (\mathcal{D},\mathcal{D}_\ddagger,\mathcal{D}^\ddagger)$.
\end{lemma}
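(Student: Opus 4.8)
The strategy is to define the pulled-back inflations and deflations as the preimages under $G$ of the chosen sub-exact inflations and deflations, intersected with the given exact structure on $\mathcal{C}$; that is, set $\mathcal{C}_\ddagger \coloneqq \mathcal{C}_\dagger \cap G^{-1}(\mathcal{D}_\ddagger)$ and $\mathcal{C}^\ddagger \coloneqq \mathcal{C}^\dagger \cap G^{-1}(\mathcal{D}^\ddagger)$, where $G^{-1}(\mathcal{D}_\ddagger)$ denotes the subcategory of morphisms in $\mathcal{C}$ whose image under $G$ lies in $\mathcal{D}_\ddagger$. One then verifies the three axioms of an exact $\infty$-category for the triple $(\mathcal{C},\mathcal{C}_\ddagger,\mathcal{C}^\ddagger)$. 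Axiom (1) is immediate: $G$ preserves zero objects, so a morphism $0\to X$ in $\mathcal{C}$ maps to $0\to G(X)$, which lies in $\mathcal{D}_\ddagger$ by axiom (1) for the sub-exact structure on $\mathcal{D}$, and it already lies in $\mathcal{C}_\dagger$; dually for deflations.

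For axiom (2), given an inflation $i\colon X\to Y$ in $\mathcal{C}_\ddagger$ and an arbitrary morphism $X\to X'$ in $\mathcal{C}$, the pushout exists in $\mathcal{C}$ because $i\in\mathcal{C}_\dagger$ and the ambient structure satisfies axiom (2); moreover the resulting morphism $X'\to Y'$ lies in $\mathcal{C}_\dagger$. Since $G$ is an exact functor, it preserves pushouts along inflations, so $G$ carries this pushout square to a pushout square in $\mathcal{D}$ along the inflation $G(i)\in\mathcal{D}_\ddagger$; by axiom (2) for $(\mathcal{D},\mathcal{D}_\ddagger,\mathcal{D}^\ddagger)$ the image $G(X'\to Y')$ lies in $\mathcal{D}_\ddagger$, hence $X'\to Y'\in\mathcal{C}_\ddagger$. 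The dual argument handles pullbacks along deflations in $\mathcal{C}^\ddagger$.

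Axiom (3) is the only place requiring slight care. Consider a commutative square in $\mathcal{C}$ as in the axiom. Suppose it is a pullback with $c\in\mathcal{C}_\ddagger$ and $d\in\mathcal{C}^\ddagger$. Then in particular $c\in\mathcal{C}_\dagger$ and $d\in\mathcal{C}^\dagger$, so by axiom (3) for the ambient structure the square is also a pushout with $b\in\mathcal{C}_\dagger$ and $a\in\mathcal{C}^\dagger$. It remains to upgrade $b$ to an inflation in $\mathcal{C}_\ddagger$ and $a$ to a deflation in $\mathcal{C}^\ddagger$, i.e.\ to check that $G(b)\in\mathcal{D}_\ddagger$ and $G(a)\in\mathcal{D}^\ddagger$. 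Because $G$ preserves both pullbacks along deflations and pushouts along inflations, the image of the square is simultaneously a pullback and pushout square in $\mathcal{D}$; it has $G(c)\in\mathcal{D}_\ddagger$ and $G(d)\in\mathcal{D}^\ddagger$, so axiom (3) for $(\mathcal{D},\mathcal{D}_\ddagger,\mathcal{D}^\ddagger)$ forces $G(b)\in\mathcal{D}_\ddagger$ and $G(a)\in\mathcal{D}^\ddagger$. This gives $b\in\mathcal{C}_\ddagger$ and $a\in\mathcal{C}^\ddagger$, as needed; the converse implication (pushout with $b\in\mathcal{C}_\ddagger$, $a\in\mathcal{C}^\ddagger$ implies pullback with $c\in\mathcal{C}_\ddagger$, $d\in\mathcal{C}^\ddagger$) is entirely dual. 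Finally, $\mathcal{C}_\ddagger\subset\mathcal{C}_\dagger$ and $\mathcal{C}^\ddagger\subset\mathcal{C}^\dagger$ by construction, so $(\mathcal{C},\mathcal{C}_\ddagger,\mathcal{C}^\ddagger)$ is a sub-exact structure, and $G$ sends inflations to $\mathcal{D}_\ddagger$, deflations to $\mathcal{D}^\ddagger$, zero objects to zero objects, and exact sequences to exact sequences, hence is exact as a functor $(\mathcal{C},\mathcal{C}_\ddagger,\mathcal{C}^\ddagger)\to(\mathcal{D},\mathcal{D}_\ddagger,\mathcal{D}^\ddagger)$.

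The only genuinely nontrivial point is the bookkeeping in axiom (3): one must use that an exact functor preserves pushouts along inflations \emph{and} pullbacks along deflations (the strengthening mentioned in the footnote to the definition of exact functor) in order to transport the biCartesian square to $\mathcal{D}$ and read off membership of $G(a)$ and $G(b)$ in the sub-exact classes. Everything else is a formal consequence of the axioms for the ambient structures on $\mathcal{C}$ and $\mathcal{D}$ together with functoriality of $G$; one should also note in passing that $\mathcal{C}_\ddagger$ and $\mathcal{C}^\ddagger$, being intersections of subcategories, are again subcategories of $\mathcal{C}$ containing all equivalences.
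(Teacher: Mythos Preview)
Your proof is correct and follows exactly the same approach as the paper: define $\mathcal{C}_\ddagger$ as the inflations in $\mathcal{C}_\dagger$ whose image under $G$ lies in $\mathcal{D}_\ddagger$, and similarly for $\mathcal{C}^\ddagger$. The paper's proof simply asserts that the verification is ``straightforward''; you have spelled out that verification in full, including the only mildly delicate point (axiom (3)), which is handled correctly.
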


\begin{proof}
We set $\mathcal{C}_{\ddagger}\subset \mathcal{C}_{\dagger}$ to be the subcategory of morphisms whose image under $G$ lies in $\mathcal{D}_{\ddagger}$. We define $\mathcal{C}^{\ddagger}$ similarly. It is straightforward to verify that $(\mathcal{C},\mathcal{C}_{\ddagger},\mathcal{C}^{\ddagger})$ is an exact $\infty$-category and that $G\colon (\mathcal{C},\mathcal{C}_{\ddagger},\mathcal{C}^{\ddagger})\rightarrow (\mathcal{D},\mathcal{D}_\ddagger,\mathcal{D}^\ddagger)$ is an exact functor.
\end{proof}

\begin{example}\label{ex:splitpb}
Suppose that $\mathcal{C}$ and $\mathcal{D}$ are stable $\infty$-categories and $G\colon \mathcal{C}\rightarrow \mathcal{D}$ is an exact functor in the usual sense, i.e.~a functor that preserves finite limits and colimits. Then $G$ defines an exact functor $G\colon (\mathcal{C},\mathcal{C},\mathcal{C})\rightarrow (\mathcal{D},\mathcal{D},\mathcal{D})$ between exact $\infty$-categories. Applying \Cref{lem:expb}, we obtain an additional exact structure $(\mathcal{C},\mathcal{C}_{\dagger},\mathcal{C}^{\dagger})$ on $\mathcal{C}$ by pulling back the split-exact structure $(\mathcal{D},\mathcal{D}_{\dagger},\mathcal{D}^{\dagger})$ on $\mathcal{D}$. A fiber and cofiber sequence in $\mathcal{C}$ is exact in $(\mathcal{C},\mathcal{C}_{\dagger},\mathcal{C}^{\dagger})$ if and only if its image under $G$ is split-exact.
\end{example}

\begin{proposition}\label{prop:frobex}
Let $G\colon \mathcal{C}\rightarrow \mathcal{D}$ be an exact functor between stable $\infty$-categories. If $G$ is a spherical functor, then the exact $\infty$-category $(\mathcal{C},\mathcal{C}_{\dagger},\mathcal{C}^{\dagger})$ from \Cref{ex:splitpb} is Frobenius. The subcategory of injective and projective objects of $\mathcal{C}$ consists of the additive closure of the essential image of the right adjoint $H$ of $G$.
\end{proposition}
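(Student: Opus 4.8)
The plan is to exhibit the candidate class of injective-projective objects explicitly and verify the two halves of the Frobenius condition: that every object admits both an inflation into and a deflation from this class, and that the class consists of exactly the injective objects and exactly the projective objects. First I would fix notation: write $F = G$, let $H = \on{radj}(G)$ be the right adjoint, and—using that $G$ is spherical—let $F^{L} = \on{ladj}(G)$ be the left adjoint, which exists and satisfies $F^{L} \simeq H \circ C_{\mathcal{D}}$ for an autoequivalence $C_{\mathcal{D}}$ (a shift of the cotwist), so that in particular $H$ and $F^{L}$ have the same essential image up to the autoequivalence $C_{\mathcal{D}}$; hence the additive closure $\mathcal{I}$ of $H(\mathcal{D})$ equals the additive closure of $F^{L}(\mathcal{D})$. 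The key point is that a fiber-and-cofiber sequence $X \to Y \to Z$ in $\mathcal{C}$ is exact, by definition of the pulled-back structure in \Cref{ex:splitpb}, if and only if $G$ sends it to a split sequence in $\mathcal{D}$.

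The core computation is the identification of the injective and projective objects with $\mathcal{I}$. For projectivity: an object $P$ is projective iff every exact sequence $X \to Y \to P$ splits, i.e.\ iff for every such sequence $G$ already splitting forces the original to split. I would show $H(d)$ is projective for all $d \in \mathcal{D}$ by a direct adjunction argument: given an exact sequence $X \to Y \xrightarrow{p} H(d)$, applying $G$ gives a split sequence, so $\on{Ext}^{1}_{\mathcal{D}}(G(H(d)), G(X)) \to \on{Ext}^{1}_{\mathcal{D}}$ of the original—wait, more cleanly: a section of $G(p)$ in $\mathcal{D}$ corresponds, via the adjunction $G \dashv H$, to a section of $p$ in $\mathcal{C}$, since $\on{Map}_{\mathcal{C}}(H(d), Y) \simeq \on{Map}_{\mathcal{D}}(d, G(Y))$ and this equivalence is compatible with postcomposition by $p$ and $G(p)$; the splitting of $G(p)$ provides a point of $\on{Map}_{\mathcal{D}}(d, G(Y))$ lifting $\on{id}$, hence a point of $\on{Map}_{\mathcal{C}}(H(d), Y)$ lifting $\on{id}_{H(d)}$. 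So $H(\mathcal{D}) \subset $ projectives, and since projectives are closed under direct summands, $\mathcal{I} \subset $ projectives. The dual argument using the left adjoint $F^{L}$ shows $F^{L}(\mathcal{D}) \subset$ injectives, so $\mathcal{I} \subset$ injectives as well.

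For the reverse inclusions and "enough" injectives/projectives, I would use the counit and unit triangles. For any $X \in \mathcal{C}$, the counit $\on{cu}\colon F^{L} G(X) \to X$—careful, I want a sequence with the middle term in $\mathcal{I}$. Using $G \dashv H$, the unit $u\colon X \to H G(X)$ has the property that $G(u)$ is a split monomorphism in $\mathcal{D}$ (this is the standard fact that the unit of an adjunction becomes split after applying the left adjoint... here applying $G$, the left adjoint of $H$, to the unit of $F^{L} \dashv G$—I need to set the variances straight, but the mechanism is: one of the two composites of unit/counit is split by a triangle identity). Thus $X \to HG(X) \to \on{cof}$ is an exact sequence with $HG(X) \in \mathcal{I}$, giving enough injectives; dually, using $F^{L} G(X) \to X$ with $G$ applied being a split epi, one gets an exact sequence $\on{fib} \to F^{L}G(X) \to X$ with $F^{L}G(X) \in \mathcal{I}$, giving enough projectives. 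Finally, if $I$ is injective, the exact sequence $I \to HG(I) \to \on{cof}$ must split, so $I$ is a direct summand of $HG(I) \in \mathcal{I}$, whence $I \in \mathcal{I}$; dually for projectives. This shows injectives $=$ projectives $= \mathcal{I}$, so $\mathcal{C}$ is Frobenius.

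The main obstacle I anticipate is bookkeeping the variances and the precise form of the splitting statements: one must pin down which of the natural transformations $F^{L}G \to \on{id}$, $\on{id} \to GF^{L}$, $\on{id} \to HG$, $GH \to \on{id}$ becomes split after applying which functor, and verify—using sphericity to get the left adjoint and the relation between $F^{L}$ and $H$ via the cotwist autoequivalence—that the middle terms of the resulting exact sequences genuinely lie in the \emph{additive closure} of $H(\mathcal{D})$ (equivalently $F^{L}(\mathcal{D})$). Sphericity enters precisely here: without the left adjoint $F^{L}$ one gets enough injectives but not enough projectives, and the coincidence of the two classes fails; the autoequivalence relating $F^{L}$ and $H$ is what makes the single class $\mathcal{I}$ serve both roles. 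I expect the $\infty$-categorical splitting arguments to be routine once phrased via mapping-space equivalences, as in \cite{BS21} in the triangulated setting.
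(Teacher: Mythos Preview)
Your plan is essentially the paper's proof: identify the images of the left and right adjoints via the cotwist autoequivalence, show objects in this common class are injective and projective via adjunction, produce enough of them via the unit/counit triangles of $G\dashv H$ and $F^L\dashv G$, and finally absorb any injective or projective as a summand of $HG(I)$ or $F^LG(P)$.

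The one concrete slip is exactly the variance issue you flag. The equivalence $\on{Map}_{\mathcal{C}}(H(d),Y)\simeq \on{Map}_{\mathcal{D}}(d,G(Y))$ is \emph{not} the adjunction $G\dashv H$; it would be $H\dashv G$. What actually works is either (a) show $F^L(d)$ is projective using $F^L\dashv G$, which gives precisely $\on{Map}_{\mathcal{C}}(F^L(d),Y)\simeq \on{Map}_{\mathcal{D}}(d,G(Y))$, and then transport to $H(d)$ via the cotwist; or (b) show $H(d)$ is \emph{injective} using $G\dashv H$, via $\on{Ext}^1_{\mathcal{C}}(c,H(d))\simeq \on{Ext}^1_{\mathcal{D}}(G(c),d)$. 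The paper takes route (a), phrased in terms of the extension class rather than a section: for $\alpha\in\on{Ext}^1_{\mathcal{C}}(F^L(d),c)$, the composite $\on{Ext}^1_{\mathcal{C}}(F^L(d),c)\xrightarrow{G}\on{Ext}^1_{\mathcal{D}}(GF^L(d),G(c))\xrightarrow{\on{u}^*}\on{Ext}^1_{\mathcal{D}}(d,G(c))$ is the adjunction isomorphism, so $G(\alpha)=0$ forces $\alpha=0$. Once you straighten out which adjunction governs which direction, your argument and the paper's coincide.
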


\begin{remark}
A result similar to \Cref{prop:frobex} appears for triangulated categories in \cite{BS21}, see Theorem 4.23 and Remark 4.24 in loc.~cit.
\end{remark}

\begin{proof}[Proof of \Cref{prop:frobex}]
Let $F$ be the left adjoint of $G$ and $H$ the right adjoint of $G$. By the sphericalness of $G$, we have that $H\simeq F\circ T_{\mathcal{D}}$, with $T_\mathcal{D}$ the cotwist functor of the adjunction $G\dashv H$, see \cite[Cor.~2.5.16]{DKSS21}. In particular, it follows that the essential images of $F$ and $H$, and hence also their additive closures in $\mathcal{C}$, agree. 

We begin by showing that every object of the form $F(d)\in \mathcal{C}$ with $d\in \mathcal{D}$ is injective. A dual argument shows that every object of the form $H(d)\in \mathcal{C}$ is projective. Let $c\in \mathcal{C}$ and consider an extension $\alpha \in \on{Ext}^1_{\mathcal{C}}(F(d),c)$. The extension $\alpha$ corresponds to a fiber and cofiber sequence $c\rightarrow z \rightarrow F(d)$ in $\mathcal{C}$. The image of this fiber and cofiber sequence under $G$ splits (i.e.~defines an exact sequence) if and only if $G(\alpha)\simeq 0$. The commutative diagram,
\[
\begin{tikzcd}
                                                                           & {\on{Ext}^{1}_{\mathcal{D}}(GF(d),G(c))} \arrow[rd, "\on{u}"] &                                      \\
{\on{Ext}^{1}_{\mathcal{C}}(F(d),c)} \arrow[rr, "\simeq"'] \arrow[ru, "G"] &                                                               & {\on{Ext}^{1}_{\mathcal{D}}(d,G(c))}
\end{tikzcd}
\]
with $\on{u}$ the unit of $F\dashv G$, shows that $G(\alpha)\simeq 0$ if and only if $\alpha\simeq 0$. This is the case, if and only if the fiber and cofiber sequence $c\rightarrow z \rightarrow F(d)$ already splits. Since $c$ and $\alpha$ were chosen arbitrarily, it follows that $F(d)$ is injective.

Next, we show that $\mathcal{C}$ has enough projective objects. Let $c\in \mathcal{C}$. We have a fiber and cofiber sequence $c\xrightarrow{\on{u}_c}HG(c)\rightarrow T_{\mathcal{C}}(c)$, where $\on{u}_c$ is a unit map and $T_{\mathcal{C}}$ the twist functor of $G\dashv H$. We apply $G$ and extend to the diagram
\[
\begin{tikzcd}
                                                                      & T_\mathcal{D}G(c) \arrow[rd, "\square", phantom] \arrow[d] \arrow[r]    & 0 \arrow[d]    \\
G(c) \arrow[d] \arrow[r, "G\on{u}_c"'] \arrow[rd, "\square", phantom] & GHG(c) \arrow[r, "\on{cu}_{G(c)}"'] \arrow[d] \arrow[rd, "\square", phantom] & G(c) \arrow[d] \\
0 \arrow[r]                                                           & GT_\mathcal{C}(c) \arrow[r]                                             & 0             
\end{tikzcd}
\]
with $\on{cu}_{G(c)}$ a counit map and all squares biCartesian. This shows that the sequence $G(c)\xrightarrow{G\on{u}_c}GHG(x)\rightarrow GT_{\mathcal{C}}(c)$ in $\mathcal{D}$ splits and hence that that $c\xrightarrow{\on{u}_c}HG(x)\rightarrow T_{\mathcal{C}}(c)$ is exact. This shows that $\mathcal{C}$ has enough projective objects.

Finally, we show that any injective or projective object lies in the additive closure (closure under taking direct sums of direct summands) of the essential image of $H$. Let $I$ be injective. Then we have an exact sequence $I\rightarrow HG(I)\rightarrow T_{\mathcal{C}}(I)$ in $\mathcal{C}$. Since $I$ is injective, this sequence splits, showing that $I$ is a direct summand of $HG(I)$, as desired. If $P$ is projective, we similarly find an exact sequence $T_{\mathcal{C}}^{-1}(P)\rightarrow FG(P)\rightarrow P$, showing that $P$ is a direct summand of $FG(P)\simeq HT_\mathcal{D}^{-1}G(P)$, as desired, concluding the proof. 
\end{proof}

\subsection{Extriangulated categories}\label{sec6.2}

Extriangulated categories were introduced by Nakaoka-Palu in \cite{NP19} as a simultaneous generalization of triangulated and exact $1$-categories. An extriangulated category $(C,\mathbb{E},\mathfrak{s})$ consists of 
\begin{itemize}
\item an additive $1$-category $C$,
\item an additive bifunctor $\mathbb{E}\colon C^{\on{op}}\times C\rightarrow \on{Ab}$ to the additive $1$-category of abelian groups, of which we think as describing an interesting class of extensions in $C$ and 
\item a sequence $\left(Y\rightarrow Z\rightarrow X\right)=\mathfrak{s}(\alpha)$, called realization, associated to each $\alpha\in \mathbb{E}(X,Y)$,
\end{itemize}
subject to number of conditions, see \cite{NP19}. We will refer to the realization sequences as exact sequences in $C$, though Nakaoka-Palu call them conflation sequences. 

Examples of extriangulated categories are, besides triangulated and exact categories, extension closed subcategories of triangulated categories. A further natural source of extriangulated categories are the homotopy $1$-categories of exact $\infty$-categories. To see this, note that by \cite{Kle20} any exact $\infty$-category admits a stable hull, meaning that it can be embedded as an extension closed subcategory in a stable $\infty$-category. Passing to homotopy categories, one obtains an extriangulated structure on the homotopy category, as it is an extension closed subcategory of the triangulated homotopy category of the stable hull. An independent and more direct proof that the homotopy category is extriangulated also appears in \cite{NP20}.  

In the remainder of this article, we will assume all extriangulated categories to be $k$-linear, with $k$ a field, meaning that the $1$-category $C$ is $k$-linear and $\mathbb{E}$ factors through $\on{Vect}_k\to \on{Ab}$.

Given a $k$-linear exact $\infty$-category $\mathcal{C}$, the additive functor of extensions $\mathbb{E}\colon \on{ho}\mathcal{C}^{\on{op}}\times \on{ho}\mathcal{C}\rightarrow \on{Vect}_k$ in the extriangulated structure of $\on{ho}\mathcal{C}$ describes isomorphism classes of exact sequences in $\mathcal{C}$. The functor $\mathbb{E}$ can be useful for studying the exact $\infty$-category $\mathcal{C}$. For instance, one can use it to formulate a notion of cluster tilting object in the exact $\infty$-category $\mathcal{C}$, and this is simply an object which becomes cluster tilting in the extriangulated homotopy category $\on{ho}\mathcal{C}$ in the sense of \Cref{def:ctobj} below.

The definition of a Frobenius extriangulated category is analogous to the definition of a Frobenius exact $\infty$-category.

\begin{definition}\label{def:frobex}
Let $(C,\mathbb{E},\mathfrak{s})$ be an extriangualated category.
\begin{enumerate}[1)]
\item An object $P\in C$ is called projective if $\mathbb{E}(P,X)\simeq 0$ for all $X\in C$ and injective if $\mathbb{E}(X,P)\simeq 0$ for all $X\in C$.
\item We say that $C$ has enough projectives if for each object $X\in C$ there exists an exact sequence $X\rightarrow P\rightarrow Y$ with $P$ projective. Similarly, we say that $C$ has enough injectives if for each object $Y\in C$ there exists an exact sequence $Y\rightarrow I \rightarrow X$ with $I$ injective.
\item We say that $C$ is Frobenius if $C$ has enough projectives and injectives and the classes of projective and injective objects coincide. 
\end{enumerate}
\end{definition}

The inflations, deflations and exact sequences in an exact $\infty$-category can be read off from the extriangulated structure of its homotopy category. Similarly, the condition of being a projective or injective object in an exact $\infty$-category can be tested in the extriangulated homotopy category. We also get the following.

\begin{lemma}
A ($k$-linear) exact $\infty$-category is Frobenius if and only if its extriangulated homotopy category is Frobenius.
\end{lemma}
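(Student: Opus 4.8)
The plan is to prove the equivalence by transferring the defining data of ``Frobenius'' between an exact $\infty$-category $(\mathcal{C},\mathcal{C}_\dagger,\mathcal{C}^\dagger)$ and its homotopy category $\on{ho}\mathcal{C}$ with the extriangulated structure $(\on{ho}\mathcal{C},\mathbb{E},\mathfrak{s})$ described in \Cref{sec6.2}. The key observation, already flagged in the paragraph preceding the statement, is that the classes of inflations, deflations, and exact sequences in $\mathcal{C}$ are recovered from the extriangulated structure of $\on{ho}\mathcal{C}$: a morphism is an inflation (resp.\ deflation) in $\mathcal{C}$ iff its image in $\on{ho}\mathcal{C}$ is an inflation (resp.\ deflation) in the extriangulated sense, and a fiber-and-cofiber sequence in $\mathcal{C}$ is exact iff its image is a realization sequence $\mathfrak{s}(\alpha)$ for some $\alpha\in\mathbb{E}(X,Y)$. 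I would cite \cite{NP20} (or the stable-hull construction of \cite{Kle20}) for these dictionary facts. Granting them, the proof is essentially a matter of unwinding definitions.

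First I would show that an object $P\in\mathcal{C}$ is projective in the exact $\infty$-category sense (every exact sequence $X\to Y\to P$ is split) iff $P$, viewed in $\on{ho}\mathcal{C}$, is projective in the extriangulated sense ($\mathbb{E}(P,X)\simeq 0$ for all $X$). For one direction: if $\mathbb{E}(P,X)=0$ then every realization sequence ending in $P$ is (isomorphic to) the split one, so every exact sequence $X\to Y\to P$ in $\mathcal{C}$ is split-exact. Conversely, $\mathbb{E}(P,X)$ is precisely the set of isomorphism classes of exact sequences $X\to Y\to P$ in $\mathcal{C}$, with the zero class corresponding to the split sequence; if all such are split then $\mathbb{E}(P,X)=0$. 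The injective case is dual. This already shows the two notions of projective/injective objects coincide under $\mathcal{C}\mapsto\on{ho}\mathcal{C}$.

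Next I would treat ``enough projectives/injectives.'' An exact sequence $X\to P\to Y$ in $\mathcal{C}$ with $P$ projective maps to an exact (realization) sequence in $\on{ho}\mathcal{C}$ with $P$ extriangulated-projective, and conversely every realization sequence in $\on{ho}\mathcal{C}$ lifts to (i.e.\ is the image of) an exact sequence in $\mathcal{C}$ by the dictionary above; combined with the previous paragraph this gives ``$\mathcal{C}$ has enough projectives $\iff$ $\on{ho}\mathcal{C}$ has enough projectives,'' and dually for injectives. Finally, ``the classes of projectives and injectives coincide in $\mathcal{C}$'' is, object-for-object, the same statement as ``the classes of projectives and injectives coincide in $\on{ho}\mathcal{C}$,'' since $\mathcal{C}$ and $\on{ho}\mathcal{C}$ have the same objects and the two notions of projective (resp.\ injective) object agree. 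Assembling the three equivalences (enough projectives, enough injectives, the two classes coincide) yields the claim.

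The only real subtlety — the step I expect to be the main obstacle — is being careful that the dictionary between exact sequences in $\mathcal{C}$ and realization sequences in $\on{ho}\mathcal{C}$ is faithful enough for this argument: specifically, that \emph{every} conflation in the extriangulated category $\on{ho}\mathcal{C}$ genuinely arises from a fiber-and-cofiber sequence in $\mathcal{C}$ that is exact, and that the split conflations correspond exactly to the split-exact sequences. This is where one must invoke the precise construction of the extriangulated structure on $\on{ho}\mathcal{C}$ — either via the stable hull of \cite{Kle20} (embedding $\mathcal{C}$ as an extension-closed subcategory of a stable $\infty$-category and reading off $\mathbb{E}$ and $\mathfrak{s}$ from the ambient triangulation) or via the direct construction of \cite{NP20} — and check that $\mathbb{E}(X,Y)$ is canonically the set of equivalence classes of exact sequences $X\to Z\to Y$ in $\mathcal{C}$ with the evident zero element. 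Once that bookkeeping is in place, everything else is a routine translation.
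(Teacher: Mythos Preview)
Your proposal is correct and follows precisely the approach the paper takes. In fact, the paper does not supply an explicit proof at all: the lemma is stated as an immediate consequence of the paragraph preceding it, which asserts that inflations, deflations, exact sequences, and the projectivity/injectivity conditions can all be read off from the extriangulated structure on $\on{ho}\mathcal{C}$; your write-up simply makes this dictionary argument explicit.
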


\begin{remark}
Given a Frobenius extriangulated category $C$, its stable category $\bar{C}$ is defined as the quotient category by the ideal of morphisms which factor through an injective object. The stable category $\bar{C}$ inherits the structure of a triangulated category, see Corollary 7.4 and Remark 7.5 in \cite{NP19}.

If $(\mathcal{C},\mathcal{C}_{\dagger},\mathcal{C}^{\dagger})$ is a ($k$-linear) Frobenius exact $\infty$-category, the triangulated homotopy category of its associated stable $\infty$-category $\bar{\mathcal{C}}$ can be identified with the triangulated stable category $\overline{\on{ho}\mathcal{C}}$ of the Frobenius extriangulated homotopy category $\on{ho}\mathcal{C}$.
\end{remark} 

\begin{definition}\label{def:extcy}
An extriangulated category $(C,\mathbb{E},\mathfrak{s})$ is called extrianguled $2$-Calabi--Yau if there exists an isomorphism of vector spaces $\mathbb{E}(X,Y)\simeq \mathbb{E}(Y,X)^\ast$, bifunctorial in $X$ and $Y$.
\end{definition}

We conclude this section with the definition of a cluster tilting object in an extriangulated category. 

\begin{definition}\label{def:ctobj}
Let $T$ be an object in an extriangulated category $(C,\mathbb{E},\mathfrak{s})$.
\begin{itemize}
\item We denote by $\on{Add}(T)\subset C$ the additive closure of $T$, given by the subcategory given by objects equivalent to finite direct sums of direct summands of $T$. 
\item $T$ is called basic if it can be decomposed into finitely many indecomposable direct summands which are pairwise non-isomorphic.
\item $T$ is called rigid if 
\[\mathbb{E}(T,T)\simeq 0\,.\]
Further, a basic rigid object $T$ is called maximal rigid, if there exist no object $X\in C$, satisfying that $T\oplus X$ is rigid and $X\not \in \on{Add}(T)$.
\item $T$ is called cluster tilting if it is basic rigid object and for every $X\in C$ there exists an exact sequence $T_1\to T_0\to X$ with $T_0,T_1\in \on{Add}(T)$. 
\end{itemize}
\end{definition}

\begin{lemma}\label{lem:2termresolution}
Let $(C,\mathbb{E},\mathfrak{s})$ be a Frobenius extriangulated category with finite-dimensional Homs and $T\in C$ a basic rigid object. 
\begin{enumerate}[(1)]
\item Then $T$ is a cluster tilting object if and only if for every object $X\in C$ the condition $\mathbb{E}(X,T)\simeq 0$ implies that $X\in \on{Add}(T)$. 
\item Dually, the following two statements are equivalent:
\begin{itemize}
\item For every $X\in C$ there exists an exact sequence $X\to T_0\to T_1$ with $T_0,T_1\in \on{Add}(T)$. 
\item For every $X\in C$, the condition $\mathbb{E}(T,X)\simeq 0$ implies that $X\in \on{Add}(T)$. 
\end{itemize}
\end{enumerate}
Note that if $C$ is extriangulated $2$-Calabi--Yau, this shows that $T$ is a cluster tilting object if and only if the equivalent conditions of (2) hold.
\end{lemma}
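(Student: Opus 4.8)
The plan is to prove (1) in full and then to deduce (2) and the closing remark formally. Statement (2) is exactly (1) applied to the opposite extriangulated category $C^{\mathrm{op}}$: it is again Frobenius with finite-dimensional Homs, $T$ is still basic rigid, $\mathbb{E}^{\mathrm{op}}(X,Y)=\mathbb{E}(Y,X)$, and a conflation $T_1\to T_0\to X$ in $C^{\mathrm{op}}$ is the same datum as a conflation $X\to T_0\to T_1$ in $C$; so (1) for $C^{\mathrm{op}}$ says precisely that the two conditions in (2) are equivalent. For the final remark, if $C$ is extriangulated $2$-Calabi--Yau then $\mathbb{E}(X,T)\simeq\mathbb{E}(T,X)^{\ast}$ shows that the condition appearing in (1) and the condition appearing in (2) coincide, so by (1) each is equivalent to $T$ being cluster tilting and by (2) to the existence of the coresolutions $X\to T_0\to T_1$.

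The ``only if'' part of (1) is short. If $T$ is cluster tilting and $\mathbb{E}(X,T)\simeq 0$, pick a conflation $T_1\to T_0\to X$ with $T_0,T_1\in\on{Add}(T)$; since $\mathbb{E}(X,-)$ is additive and $T_1$ is a summand of a finite power of $T$, the class of this conflation lies in $\mathbb{E}(X,T_1)\simeq 0$, so it splits, $X$ is a direct summand of $T_0$, and $X\in\on{Add}(T)$.

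For the ``if'' part, assume $\mathbb{E}(X,T)\simeq 0$ forces $X\in\on{Add}(T)$; since $T$ is basic rigid by hypothesis, I must produce for every $X$ a conflation $T_1\to T_0\to X$ with $T_i\in\on{Add}(T)$. Two preliminary observations: every injective-projective object $P$ satisfies $\mathbb{E}(P,T)\simeq 0$, hence $P\in\on{Add}(T)$; and, by finiteness of Homs, a $k$-basis of $C(X,T)$ assembles into a left $\on{Add}(T)$-approximation $X\to T^{\oplus m}$ (every morphism $X\to T'$ with $T'\in\on{Add}(T)$ factors through it), so $\on{Add}(T)$ is covariantly finite. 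The first real step is a Wakamatsu-type argument producing, for every $X$, a conflation $X\to T^{0}\to T^{1}$ with $T^{0},T^{1}\in\on{Add}(T)$: choose a left $\on{Add}(T)$-approximation $f'\colon X\to T'$ and, using that $C$ has enough injectives, an inflation $\iota\colon X\to I$ with $I$ injective-projective (so $I\in\on{Add}(T)$); then $(f',\iota)\colon X\to T^{0}:=T'\oplus I$ is again a left $\on{Add}(T)$-approximation and, by the standard pushout property of extriangulated categories \cite{NP19}, is an inflation because $\iota$ is, so it sits in a conflation $X\to T^{0}\to T^{1}$. Applying $C(-,T)$ to this extriangle: the map $C(T^{0},T)\to C(X,T)$ is surjective (approximation property), hence the connecting map vanishes, so $\mathbb{E}(T^{1},T)$ injects into $\mathbb{E}(T^{0},T)\simeq 0$ by rigidity; therefore $\mathbb{E}(T^{1},T)\simeq 0$ and $T^{1}\in\on{Add}(T)$ by hypothesis.

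It remains to turn this coresolution into a resolution. Fix $X$; since $C$ is Frobenius there is a conflation $\Omega X\xrightarrow{u}P\xrightarrow{p}X$ with $P$ injective-projective, so $P\in\on{Add}(T)$. Apply the previous step to $\Omega X$ to get a conflation $\Omega X\xrightarrow{f}T^{0}\to T^{1}$ with $T^{0},T^{1}\in\on{Add}(T)$, and form the pushout $Q$ of the span $P\xleftarrow{u}\Omega X\xrightarrow{f}T^{0}$. Since both $u$ and $f$ are inflations, the axioms of extriangulated categories \cite{NP19} yield two conflations $T^{0}\to Q\to X$ and $P\to Q\to T^{1}$; the second has class in $\mathbb{E}(T^{1},P)\simeq 0$ (both objects lie in $\on{Add}(T)$ and $T$ is rigid), hence splits, so $Q\cong P\oplus T^{1}\in\on{Add}(T)$, and then $T^{0}\to Q\to X$ is the desired conflation. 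This proves (1). The delicate point of the whole argument is exactly this last maneuver: a single approximation plus Wakamatsu only ever produces a coresolution $X\to T^{0}\to T^{1}$ on the ``wrong'' side, and one genuinely needs both the projective cover supplied by the Frobenius structure and the fact that rigidity of $T$ collapses every extension between objects of $\on{Add}(T)$ in order to convert it into the conflation $T_1\to T_0\to X$ demanded by the definition of cluster tilting; everything else is bookkeeping with the six-term exact sequences and the pushout axioms of \cite{NP19}.
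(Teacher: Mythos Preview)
Your argument is correct. The ``only if'' direction of (1) matches the paper's, and where the paper simply says the proof of (2) is analogous you instead reduce (2) to (1) for $C^{\mathrm{op}}$; these amount to the same thing. The substantive difference is in the ``if'' direction of (1). The paper builds the resolution directly: a right $\on{Add}(T)$-approximation $T\otimes\on{Hom}_C(T,X)\to X$ is combined with a projective cover to obtain a deflation $T_0\to X$, and applying $\on{Hom}(T,-)$ to the resulting conflation yields $\mathbb{E}(T,T_1)\simeq 0$; the paper then concludes $T_1\in\on{Add}(T)$. You instead first produce a \emph{coresolution} $X\to T^0\to T^1$ via a left approximation and a Wakamatsu argument (giving $\mathbb{E}(T^1,T)\simeq 0$, which is the hypothesis of (1) on the nose), and only then convert it into the required resolution through the syzygy--pushout maneuver, using rigidity once more to split $P\to Q\to T^1$. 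What your approach buys is that it lands directly on the stated hypothesis $\mathbb{E}(-,T)\simeq 0\Rightarrow(-)\in\on{Add}(T)$; the paper's passage from $\mathbb{E}(T,T_1)\simeq 0$ to $T_1\in\on{Add}(T)$ appeals to the dual condition, which is not assumed in part (1) without the $2$-Calabi--Yau duality. Your argument is therefore more careful in the generality the lemma claims.
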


\begin{proof}
The proof of part (2) is analogous to the proof of part (1), we thus only prove part (1). 

Assume that $T$ is a cluster tilting object. Choose an exact sequence $T_1\to T_0\to X$ in $C$ with $T_0,T_1\in \on{Add}(T)$. The exact sequence of vector spaces, see \cite[Thm.~3.5]{GNP21}, 
\[ \on{Hom}(T_0,T) \to \on{Hom}(T_1,T)\to \mathbb{E}(X,T)\to \underbrace{\mathbb{E}(T_0,T)}_{\simeq 0}\]
shows that if $\mathbb{E}(X,T)\simeq 0$, then the sequence $T_1\to T_0 \to X$ splits (since the connecting homomorphism vanishes), so that $X$ is a direct summand of $T_0$.

For the converse direction, assume that for all $X\in C$, the condition $\mathbb{E}(X,T)\simeq 0$ implies that $X\in \on{Add}(T)$. Since $C$ is Frobenius, we can find a deflation $P\to X$ with $P$ projective. We define $T_0=P\oplus (T\otimes \on{Hom}_C(T,X))\in \on{Add}(T)$. The apparent morphisms $T_0\to X$ is again a deflation by \cite[Cor.~3.16]{NP19}. It remains to show that the term $T_0$ appearing in the exact sequence $T_1\to T_0\to X$ lies in $\on{Add}(T)$. We have an exact sequence of vector spaces
\[ \on{Hom}(T,T_0)\to \on{Hom}(T,X) \to \mathbb{E}(T,T_1)\to \underbrace{\mathbb{E}(T,T_0)}_{\simeq 0} \]
see again \cite[Thm.~3.5]{GNP21}. The first morphisms is surjective by the definition of $T_0$, which shows that $\mathbb{E}(T,T_1)\simeq 0$. It follows that $T_1\in \on{Add}(T)$, concluding the proof.
\end{proof}

\subsection{Exact structures via relative Calabi--Yau structures}\label{subsec:exactfromCY}

Let $\kappa=k$ be a field or $\kappa=k[t_{2n}^\pm]$ the commutative dg-algebra of graded Laurent polynomials over a field. We fix a functor $G\colon \mathcal{B}\rightarrow \mathcal{A}$ between $\kappa$-linear smooth and proper $\infty$-categories, which is spherical and carries a right $n$-Calabi--Yau structure. 

We begin this section by exhibiting a relative version of the triangulated $n$-Calabi--Yau property $\on{Ext}^i(X,Y)\simeq \on{Ext}^{n-i}(Y,X)^\ast$ for $\mathcal{B}$, using the relative right Calabi--Yau structure of $\mathcal{B}$. This relative Calabi--Yau property then yields a $2$-Calabi--Yau property on the Frobenius extriangulated structure on $\on{ho}\mathcal{B}$.

\begin{definition}\label{def:cyfun}
Let $X,Y\in \mathcal{B}^{\on{c}}$ be compact objects.
\begin{enumerate}
\item We denote by $\on{Mor}_{\mathcal{B}}^{\on{CY}}(X,Y)\subset \on{Mor}_{\mathcal{B}}(X,Y)\in \mathcal{D}(\kappa)$ the maximal direct summand\footnote{Equivalently this is the kernel of \eqref{eq:GMor} in the abelian homotopy $1$-category $\on{ho}\D(\kappa)$.} satisfying that the composite with 
\begin{equation}\label{eq:GMor} 
G\colon \on{Mor}_{\mathcal{B}}(X,Y)\longrightarrow \on{Mor}_{\mathcal{A}}(G(X),G(Y))
\end{equation} 
yields the zero morphism in $\mathcal{D}(\kappa)$. We call $\on{Mor}_{\mathcal{B}}^{\on{CY}}(X,Y)$ the \textit{Calabi--Yau morphism object}. 
\item We denote by $\on{Ext}_{\mathcal{B}}^{i,\on{CY}}(X,Y)\coloneqq \on{H}_0\on{Mor}_{\mathcal{B}}^{\on{CY}}(X,Y[i])\in \on{N}(\on{Vect}_k)$ the $k$-vector space of \textit{Calabi--Yau extensions}. 
\end{enumerate}
\end{definition}

\begin{lemma}\label{lem:cyfun}
\begin{enumerate}[(1)]
\item The Calabi--Yau morphism objects assemble into a functor 
\[ \on{Mor}_{\mathcal{B}}^{\on{CY}}(\mhyphen,\mhyphen)\colon \mathcal{B}^{\on{c},\on{op}}\times \mathcal{B}^{\on{c}}\rightarrow \mathcal{D}(\kappa)\,.\]
There further exists a natural transformation $\on{Mor}_{\mathcal{B}}^{\on{CY}}(\mhyphen,\mhyphen)\rightarrow \on{Mor}_{\mathcal{B}}(\mhyphen,\mhyphen)$ which at a point $(X,Y)\in \mathcal{B}^{\on{c},\on{op}}\times \mathcal{B}^{\on{c}}$ is given by the inclusion of the direct summand $\on{Mor}_{\mathcal{B}}^{\on{CY}}(X,Y)\subset \on{Mor}_{\mathcal{B}}(X,Y)$.
\item The Calabi--Yau extensions form the maximal subfunctor 
\[ \on{Ext}^{i,\on{CY}}_{\mathcal{B}}(\mhyphen,\mhyphen)\subset \on{Ext}^i_{\mathcal{B}}(\mhyphen,\mhyphen)\colon \mathcal{B}^{\on{c},\on{op}}\times \mathcal{B}^{\on{c}}\rightarrow \on{N}(\on{Vect}_k)\]
satisfying that $\on{Ext}^{i,\on{CY}}_{\mathcal{B}}(G(\mhyphen),G(\mhyphen))\simeq 0$. 
\end{enumerate}

\end{lemma}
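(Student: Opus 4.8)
The plan is to verify the claimed functoriality directly from the pointwise definition. First I would observe that the definition of $\on{Mor}_{\mathcal{B}}^{\on{CY}}(X,Y)$ as the maximal direct summand of $\on{Mor}_{\mathcal{B}}(X,Y)$ on which the natural transformation
\[ G_{X,Y}\colon \on{Mor}_{\mathcal{B}}(\mhyphen,\mhyphen)\Longrightarrow \on{Mor}_{\mathcal{A}}(G(\mhyphen),G(\mhyphen)) \]
vanishes can be reformulated: since $\mathcal{D}(\kappa)$ has a $t$-structure (for $\kappa=k$) or one works with the abelian homotopy category $\on{ho}\mathcal{D}(\kappa)$, the maximal such summand is simply the kernel of $G_{X,Y}$ computed in $\on{ho}\mathcal{D}(\kappa)$, as indicated in the footnote. (Here one uses that $\on{ho}\mathcal{D}(\kappa)$ is abelian — for $\kappa = k[t_{2n}^\pm]$ this is the category of $2n$-periodic graded $k$-vector spaces — and that a morphism of such objects has a well-defined kernel which is automatically a direct summand of the source.) The point of this reformulation is that kernels are functorial: given $f\colon X'\to X$ and $g\colon Y\to Y'$ in $\mathcal{B}^{\on{c}}$, the square relating $G_{X,Y}$ and $G_{X',Y'}$ commutes by naturality of $G$ on morphism objects, hence induces a map on kernels $\on{Mor}_{\mathcal{B}}^{\on{CY}}(X,Y)\to \on{Mor}_{\mathcal{B}}^{\on{CY}}(X',Y')$ compatible with composition and identities, and compatible with the inclusions into $\on{Mor}_{\mathcal{B}}(\mhyphen,\mhyphen)$ by construction. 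This gives part (1).

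For part (2), I would pass to homology: applying $\on{H}_{-i}$ to the natural transformation $\on{Mor}_{\mathcal{B}}^{\on{CY}}(\mhyphen,\mhyphen)\hookrightarrow \on{Mor}_{\mathcal{B}}(\mhyphen,\mhyphen)$ yields a natural transformation $\on{Ext}^{i,\on{CY}}_{\mathcal{B}}(\mhyphen,\mhyphen)\to \on{Ext}^i_{\mathcal{B}}(\mhyphen,\mhyphen)$ of functors $\mathcal{B}^{\on{c},\on{op}}\times\mathcal{B}^{\on{c}}\to \on{N}(\on{Vect}_k)$. Since $\on{Mor}_{\mathcal{B}}^{\on{CY}}(X,Y)$ is a direct summand of $\on{Mor}_{\mathcal{B}}(X,Y)$, taking $\on{H}_{-i}$ preserves this splitting, so $\on{Ext}^{i,\on{CY}}_{\mathcal{B}}(X,Y)$ is a direct summand (in particular a subspace) of $\on{Ext}^i_{\mathcal{B}}(X,Y)$ and the natural transformation is pointwise injective, exhibiting it as a subfunctor. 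The defining vanishing property $\on{Mor}_{\mathcal{B}}^{\on{CY}}(G(\mhyphen),G(\mhyphen))\simeq 0$ gives $\on{Ext}^{i,\on{CY}}_{\mathcal{B}}(G(\mhyphen),G(\mhyphen))\simeq 0$ upon applying $\on{H}_{-i}$. For maximality: if $E\subset \on{Ext}^i_{\mathcal{B}}(\mhyphen,\mhyphen)$ were a larger subfunctor with $E(G(\mhyphen),G(\mhyphen))=0$, I would argue that at each pair $(X,Y)$ the subspace $E(X,Y)$ must land in the kernel of the map $\on{Ext}^i_{\mathcal{B}}(X,Y)\to \on{Ext}^i_{\mathcal{A}}(G(X),G(Y))$ induced by $G$; but the maximal-summand description shows $\on{H}_{-i}$ of the $\mathcal{D}(\kappa)$-level kernel realizes exactly this kernel (here one uses that $G_{X,Y}$, being a split epi onto its image at the chain level after removing the $\on{CY}$ summand, induces the expected map on homology with no extension-problem obstruction), so $E(X,Y)\subseteq \on{Ext}^{i,\on{CY}}_{\mathcal{B}}(X,Y)$.

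The step I expect to be the main obstacle is pinning down the relationship between "maximal direct summand of the morphism object on which $G$ vanishes" and "kernel of the induced map on $\on{Ext}$-groups." At the level of $\mathcal{D}(\kappa)$ the first notion is cleaner, but the $\on{Ext}$-level maximality claim requires knowing that no homology classes are "lost" when passing from the summand to its $\on{H}_{-i}$; the cleanest way around this is to note that $\on{Mor}_{\mathcal{B}}(X,Y)\simeq \on{Mor}_{\mathcal{B}}^{\on{CY}}(X,Y)\oplus Q_{X,Y}$ with $G$ restricting to a monomorphism on $Q_{X,Y}$ in $\on{ho}\mathcal{D}(\kappa)$ (this is what "maximal" buys us), so that on each $\on{H}_{-i}$ the map $G$ is injective on $\on{H}_{-i}(Q_{X,Y})$, and hence $\on{Ext}^{i,\on{CY}}_{\mathcal{B}}(X,Y) = \ker\big(\on{Ext}^i_{\mathcal{B}}(X,Y)\to \on{Ext}^i_{\mathcal{A}}(G(X),G(Y))\big)$ exactly. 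With that identification the subfunctor and maximality statements are immediate, since kernels of natural transformations are manifestly the maximal subfunctor killed downstream. I would also remark that this whole argument is purely formal and does not use the Calabi--Yau structure — that hypothesis enters only in the subsequent \Cref{prop:cydual}.
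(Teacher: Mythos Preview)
Your argument for part (1) has an $\infty$-categorical gap. Taking kernels in $\on{ho}\mathcal{D}(\kappa)$ is indeed functorial at the level of homotopy categories, so your construction yields a functor $\on{ho}(\mathcal{B}^{\on{c},\on{op}}\times \mathcal{B}^{\on{c}})\to \on{ho}\mathcal{D}(\kappa)$ together with a natural transformation there. But the statement asks for a functor to $\mathcal{D}(\kappa)$, i.e.\ an $\infty$-functor, and ``compatible with composition and identities'' for individual morphisms $f,g$ does not supply the higher coherences. Note also that the kernel in $\on{ho}\mathcal{D}(\kappa)$ is \emph{not} the fiber in $\mathcal{D}(\kappa)$ (the latter picks up a shifted cokernel), so there is no obvious $\infty$-categorical limit that realizes your pointwise kernel functorially.

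The paper sidesteps this by working one level down, with mapping spaces rather than morphism objects. One first observes that the subsets $(\pi_0\on{Map}_{\mathcal{B}})^{\on{CY}}(X,Y)\subset \pi_0\on{Map}_{\mathcal{B}}(X,Y)$ of homotopy classes sent to zero by $G$ form a subfunctor valued in sets (zero morphisms are closed under composition), and then defines $\on{Map}_{\mathcal{B}}^{\on{CY}}$ as the pullback of $\on{Map}_{\mathcal{B}}\to \pi_0\on{Map}_{\mathcal{B}}\hookleftarrow (\pi_0\on{Map}_{\mathcal{B}})^{\on{CY}}$ in $\on{Fun}(\mathcal{B}^{\on{c},\on{op}}\times\mathcal{B}^{\on{c}},\mathcal{S})$; this is a genuine $\infty$-categorical construction. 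The morphism-object functor $\on{Mor}_{\mathcal{B}}^{\on{CY}}$ is then produced by representability via $\on{Map}_{\mathcal{D}(\kappa)}(C,\on{Mor}_{\mathcal{B}}^{\on{CY}}(\mhyphen,\mhyphen))\simeq \on{Map}_{\mathcal{B}}^{\on{CY}}(\mhyphen\otimes C,\mhyphen)$. Finally one checks that on each $\on{H}_i$ the induced inclusion into $\on{Mor}_{\mathcal{B}}$ recovers $(\pi_0\on{Map}_{\mathcal{B}})^{\on{CY}}(X[i],Y)\subset \pi_0\on{Map}_{\mathcal{B}}(X[i],Y)$, which (using that objects of $\mathcal{D}(\kappa)$ are equivalent to their homology) identifies the result with the maximal-summand definition.

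Your treatment of part (2) is essentially correct once part (1) is in place, and agrees with the paper's: the identification of $\on{Ext}^{i,\on{CY}}_{\mathcal{B}}(X,Y)$ with $\ker\big(\on{Ext}^i_{\mathcal{B}}(X,Y)\to\on{Ext}^i_{\mathcal{A}}(G(X),G(Y))\big)$ is exactly what the homology computation above gives, and maximality then follows immediately. Your closing remark that the Calabi--Yau hypothesis plays no role here is also correct.
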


\begin{proof}
Consider the mapping space functor $\on{Map}_{\mathcal{B}}(\mhyphen,\mhyphen)\colon \mathcal{B}^{\on{c},\on{op}}\times \mathcal{B}^{\on{c}}\rightarrow \mathcal{S}$ and the functor $\pi_0\on{Map}_{\mathcal{B}}(\mhyphen,\mhyphen)\colon \mathcal{B}^{\on{c},\on{op}}\times \mathcal{B}^{\on{c}}\rightarrow \mathcal{S}$, obtained from composing $\on{Map}_{\mathcal{B}}(\mhyphen,\mhyphen)$ with the functor $\mathcal{S}\xrightarrow{\pi_0}N(\on{Set})\hookrightarrow \mathcal{S}$ taking connected components. Given $X,Y\in \mathcal{B}$, we denote by $(\pi_0\on{Map}_{\mathcal{B}})^{\on{CY}}(X,Y)\subset \pi_0\on{Map}_{\mathcal{B}}(X,Y)$ the subset of homotopy classes of morphisms, satisfying that their image under $G$ is a zero morphism in $\mathcal{A}$. Since zero morphisms are closed under composition, we find that $(\pi_0\on{Map}_{\mathcal{B}})^{\on{CY}}(\mhyphen,\mhyphen)$ defines a subfunctor of $\pi_0\on{Map}_{\mathcal{B}}(X,Y)$. We define the Calabi--Yau mapping space functor $\on{Map}_{\mathcal{B}}^{\on{CY}}(\mhyphen,\mhyphen)\colon \mathcal{B}^{\on{c},\on{op}}\times \mathcal{B}^{\on{c}}\rightarrow \mathcal{S}$ as the pullback of the diagram 
\[
\begin{tikzcd}
                                                                          & {\on{Map}_{\mathcal{B}}(\mhyphen,\mhyphen)} \arrow[d] \\
{(\pi_0\on{Map}_{\mathcal{B}})^{\on{CY}}(\mhyphen,\mhyphen)} \arrow[r] & {\pi_0\on{Map}_{\mathcal{B}}(\mhyphen,\mhyphen)}     
\end{tikzcd}
\]
in $\on{Fun}(\mathcal{B}^{\on{c},\on{op}}\times \mathcal{B}^{\on{c}},\mathcal{S})$. Using the fully-faithfulness of the Yoneda embedding, we can define a functor $\on{Mor}_{\mathcal{B}}^{\on{CY}}(\mhyphen,\mhyphen)\colon \mathcal{B}^{\on{c},\on{op}}\times \mathcal{B}^{\on{c}}\rightarrow \mathcal{D}(\kappa)$ via 
\[ \on{Map}_{\mathcal{D}(\kappa)}(C,\on{Mor}_{\mathcal{B}}^{\on{CY}}(\mhyphen,\mhyphen))\simeq \on{Map}_{\mathcal{B}}^{\on{CY}}(\mhyphen\otimes C,\mhyphen)\,.\]
The natural transformation $\on{Map}_{\mathcal{B}}^{\on{CY}}(\mhyphen,\mhyphen)\subset \on{Map}_{\mathcal{B}}(\mhyphen,\mhyphen)$ induces a natural transformation $\eta\colon \on{Mor}^{\on{CY}}_{\mathcal{B}}(\mhyphen,\mhyphen)\rightarrow \on{Mor}_{\mathcal{B}}(\mhyphen,\mhyphen)$. Note that for any pair $(X,Y)\in \mathcal{B}^{\on{c},\on{op}}\times \mathcal{B}^{\on{c}}$, the natural transformation $\eta$ evaluates on the $i$-th homology group as the inclusion \begin{equation}\label{eq:hieta}(\pi_0\on{Map}_{\mathcal{B}})^{\on{CY}}(X[i],Y)\subset \pi_0\on{Map}_{\mathcal{B}}(X[i],Y)\,.\end{equation}
Using that any object in $\mathcal{D}(\kappa)$ is equivalent to its homology, which is the direct sum of suspensions of copies of $\kappa$, we find that the inclusion $\on{Mor}_{\mathcal{B}}^{\on{CY}}(X,Y)\subset \on{Mor}_{\mathcal{B}}(X,Y)$ of the Calabi--Yau morphism object evaluates on the $i$-th homology group to the inclusion \eqref{eq:hieta}. This implies that the functor $\on{Mor}_{\mathcal{B}}^{\on{CY}}(\mhyphen,\mhyphen)$ indeed describes the Calabi--Yau morphism objects and $\eta$ evaluates pointwise to their inclusion. This shows part (1). 

Part (2) follows from the fact that on $i$-th homology, $\eta$ exhibits $\on{Ext}^{i,\on{CY}}_{\mathcal{B}}(\mhyphen,\mhyphen)$ as the desired maximal subfunctor of $\on{Ext}^{i}_{\mathcal{B}}(\mhyphen,\mhyphen)$, by the description in \eqref{eq:hieta}. 
\end{proof}

Recall from \Cref{def:serre}, that given $A\in \mathcal{D}(\kappa)$, we write $A^*=\on{Mor}_{\mathcal{D}(\kappa)}(A,\kappa)$. Given a $k$-vector space $B$, we write $B^*=\on{Hom}_{\on{Vect}_k}(B,k)$.

\begin{proposition}\label{prop:cydual}
Let $X,Y\in \mathcal{B}^c$ be compact objects.
\begin{enumerate}[(1)]
\item There exists an equivalence in $\mathcal{D}(\kappa)$
\[ 
\on{Mor}_{\mathcal{B}}^{\on{CY}}(X,Y) \simeq \on{Mor}_{\mathcal{B}}^{\on{CY}}(Y[-n],X)^\ast\,,
\]
bifunctorial in $X$ and $Y$.
\item There exists an equivalence in $N(\on{Vect}_k)$
\[ \on{Ext}^{i,\on{CY}}_\mathcal{B}(X,Y)\simeq \on{Ext}^{n-i,\on{CY}}_\mathcal{B}(Y,X)^{\ast}\,,\]
bifunctorial in $X,Y$. 
\end{enumerate}
\end{proposition}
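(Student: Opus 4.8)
The plan is to first establish part (1) and then deduce part (2) by taking homology. The starting point is the relative right $n$-Calabi--Yau structure on $G\colon \mathcal{B}\rightarrow \mathcal{A}$ of \Cref{def:cystr}, which, after unwinding, provides a fiber sequence of endofunctors of $\mathcal{B}^{\on{c}}$
\[
HG[-1]\longrightarrow \on{id}_{\mathcal{B}}^*[-n]\longrightarrow \on{id}_{\mathcal{B}}\,,
\]
with $H=\on{radj}(G)$ and $\on{id}_{\mathcal{B}}^*$ the Serre functor $U$ of the proper $\kappa$-linear $\infty$-category $\mathcal{B}$ (using \Cref{def:serre}). Applying $\on{Mor}_{\mathcal{B}}(X,\mhyphen)$ to this fiber sequence evaluated at $Y$, and using the Serre duality $\on{Mor}_{\mathcal{B}}(X,U(Y))\simeq \on{Mor}_{\mathcal{B}}(Y,X)^*$ together with the adjunction identity $\on{Mor}_{\mathcal{B}}(X,HG(Y))\simeq \on{Mor}_{\mathcal{A}}(G(X),G(Y))$, yields a fiber sequence in $\mathcal{D}(\kappa)$
\[
\on{Mor}_{\mathcal{A}}(G(X),G(Y))[-1]\longrightarrow \on{Mor}_{\mathcal{B}}(Y,X)^*[-n]\longrightarrow \on{Mor}_{\mathcal{B}}(X,Y)\,,
\]
bifunctorial in $X,Y$. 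The connecting map $\on{Mor}_{\mathcal{B}}(X,Y)\to \on{Mor}_{\mathcal{A}}(G(X),G(Y))$ in this sequence should be identified with the functor \eqref{eq:GMor}; this identification is the content that makes the whole argument work, and it follows from tracing through how the unit $\on{u}\colon \on{id}_{\mathcal{B}}\to HG$ appears in the diagram of \Cref{def:cystr}.

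The key step is then a \emph{splitting} argument. Since $G$ is spherical, the unit $\on{u}_X\colon X\to HG(X)$ admits a retraction up to the twist functor, and more importantly the natural transformation $\on{Mor}_{\mathcal{B}}(X,Y)\to \on{Mor}_{\mathcal{A}}(G(X),G(Y))$ is a \emph{split} epimorphism in $\on{ho}\mathcal{D}(\kappa)$: indeed \Cref{def:cyfun} defines $\on{Mor}_{\mathcal{B}}^{\on{CY}}(X,Y)$ precisely as the complementary direct summand, so one has a direct sum decomposition
\[
\on{Mor}_{\mathcal{B}}(X,Y)\simeq \on{Mor}_{\mathcal{B}}^{\on{CY}}(X,Y)\oplus \on{Mor}_{\mathcal{A}}(G(X),G(Y))
\]
in $\on{ho}\mathcal{D}(\kappa)$, compatibly with the projection to $\on{Mor}_{\mathcal{A}}(G(X),G(Y))$. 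Here one uses that every object of $\mathcal{D}(\kappa)$ splits as a sum of shifts of $\kappa$, so a map that is surjective on all homology groups is split (this is exactly the content of the footnote in \Cref{def:cyfun} identifying $\on{Mor}_{\mathcal{B}}^{\on{CY}}$ with the kernel). Feeding this splitting into the fiber sequence above: the fiber of the split surjection $\on{Mor}_{\mathcal{B}}(X,Y)\to \on{Mor}_{\mathcal{A}}(G(X),G(Y))$ is $\on{Mor}_{\mathcal{B}}^{\on{CY}}(X,Y)$, while the fiber sequence identifies this fiber with $\on{Mor}_{\mathcal{B}}(Y,X)^*[-n]$ up to a shift — more precisely, rotating the triangle, $\on{Mor}_{\mathcal{B}}^{\on{CY}}(X,Y)\simeq \on{Mor}_{\mathcal{B}}(Y,X)^*[-n]$ would be too naive; instead one must observe that the \emph{same} decomposition applies to $\on{Mor}_{\mathcal{B}}(Y,X)$, and the dualized $\on{CY}$-summand $\on{Mor}_{\mathcal{B}}^{\on{CY}}(Y,X)^*$ is exactly the part of $\on{Mor}_{\mathcal{B}}(Y,X)^*[-n]$ that survives after discarding the $\on{Mor}_{\mathcal{A}}$-contribution. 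This gives the bifunctorial equivalence $\on{Mor}_{\mathcal{B}}^{\on{CY}}(X,Y)\simeq \on{Mor}_{\mathcal{B}}^{\on{CY}}(Y[-n],X)^*$ of part (1), where the shift is absorbed using $\on{Mor}_{\mathcal{B}}^{\on{CY}}(Y[-n],X)\simeq \on{Mor}_{\mathcal{B}}^{\on{CY}}(Y,X)[n]$ and $(\mhyphen[n])^* = (\mhyphen)^*[-n]$.

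For part (2), I would simply apply $\on{H}_0$ to the equivalence of part (1): by \Cref{def:cyfun}(2), $\on{H}_0\on{Mor}_{\mathcal{B}}^{\on{CY}}(X,Y[i]) = \on{Ext}^{i,\on{CY}}_{\mathcal{B}}(X,Y)$, and $\on{H}_0$ of the right-hand side $\on{Mor}_{\mathcal{B}}^{\on{CY}}(Y[i][-n],X)^* = \on{Mor}_{\mathcal{B}}^{\on{CY}}(Y[i-n],X)^*$ is $\on{H}_0(\on{Mor}_{\mathcal{B}}^{\on{CY}}(Y,X[n-i]))^* = \on{Ext}^{n-i,\on{CY}}_{\mathcal{B}}(Y,X)^*$, using that $\kappa$-linear duality $(\mhyphen)^*$ is exact and sends $\on{H}_0$ to $\on{H}_0$ composed with vector-space dual (as $\kappa$ is concentrated in degrees divisible by $2n$ — in the case $\kappa = k$ this is immediate, and for $\kappa = k[t_{2m}^\pm]$ one checks $\on{H}_0(A^*)\simeq \on{H}_0(A)^*$ directly). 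Bifunctoriality is inherited from part (1).

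I expect the main obstacle to be the careful bookkeeping in the splitting step: verifying that the direct-sum decomposition of $\on{Mor}_{\mathcal{B}}$ interacts correctly with the Calabi--Yau fiber sequence so that dualizing genuinely produces $\on{Mor}_{\mathcal{B}}^{\on{CY}}(Y,X)^*$ rather than some other summand, and tracking the shift $[-n]$ through the rotation of the triangle. A secondary subtlety is confirming that the connecting map in the fiber sequence coming from the relative CY structure is \emph{the} functor \eqref{eq:GMor} (induced by $G$) and not merely abstractly of that form; this requires genuinely using the datum of the null-homotopy in \Cref{def:cystr} rather than just the existence of the fiber sequence.
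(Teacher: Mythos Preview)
Your overall architecture matches the paper's: reduce (2) to (1), use the fiber sequence $HG[-1]\to\on{id}_{\mathcal{B}}^*[-n]\to\on{id}_{\mathcal{B}}$ coming from the relative Calabi--Yau structure, apply $\on{Mor}_{\mathcal{B}}(X,\mhyphen)$, and exploit that everything in $\mathcal{D}(\kappa)$ splits. However, there is a concrete error and, behind it, a genuine gap.

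The error is the claim that $\on{Mor}_{\mathcal{B}}(X,Y)\to\on{Mor}_{\mathcal{A}}(G(X),G(Y))$ is a split \emph{epimorphism}, giving $\on{Mor}_{\mathcal{B}}(X,Y)\simeq\on{Mor}_{\mathcal{B}}^{\on{CY}}(X,Y)\oplus\on{Mor}_{\mathcal{A}}(G(X),G(Y))$. The functor $G$ is not full, so this map has no reason to be surjective; \Cref{def:cyfun} only gives that $\on{Mor}_{\mathcal{B}}^{\on{CY}}(X,Y)$ is a direct summand of $\on{Mor}_{\mathcal{B}}(X,Y)$, with complement the \emph{image} of $G$, not all of $\on{Mor}_{\mathcal{A}}(G(X),G(Y))$. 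Sphericalness does not fix this: it does not make $G$ full on morphisms.

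The genuine gap is precisely what you flag as ``bookkeeping'' but do not carry out. After splitting $\on{Mor}_{\mathcal{B}}(X,Y)$ and passing through the fiber sequence, you obtain an abstract direct-sum decomposition of $\on{Mor}_{\mathcal{B}}(Y[-n],X)^*$. To conclude, you must show that the summand $\on{Mor}_{\mathcal{B}}^{\on{CY}}(X,Y)$ you produced coincides with the dual of the summand $\on{Mor}_{\mathcal{B}}^{\on{CY}}(Y[-n],X)$ defined via the map $G$ on the \emph{other} side; a mere isomorphism of direct sums does not match summands. The paper resolves this by identifying, after Serre duality, the boundary map $\on{Mor}_{\mathcal{B}}(HG(Y)[-1],\on{id}_{\mathcal{B}}^*(X))^*\to\on{Mor}_{\mathcal{B}}(\on{id}_{\mathcal{B}}^*(Y)[-n],\on{id}_{\mathcal{B}}^*(X))^*$ with the dual of postcomposition by the unit $\on{u}$ of $G\dashv H$. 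This identification is where sphericalness is actually used: one needs $F\simeq(\on{id}_{\mathcal{B}}^*[1-n])^{-1}\circ H$ (so that $HG(Y)[-1]\simeq FG(\on{id}_{\mathcal{B}}^*(Y)[-n])$) together with the fact that under this identification the counit of $F\dashv G$ becomes the relevant map in the fiber sequence. Only then does the ``maximal summand killed by $G$'' description apply on both sides and force the desired equivalence. Your invocation of sphericalness (``the unit admits a retraction up to twist'') does not touch this point.
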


Part (2) of \Cref{prop:cydual} should be seen as a relative version of the triangulated $n$-Calabi--Yau property. We postpone the proof of \Cref{prop:cydual} to the end of this subsection.

Let $(\mathcal{B},\mathcal{B}_\dagger,\mathcal{B}^\dagger)$ be the exact $\infty$-category obtained by pulling back the split-exact structure on $\mathcal{A}$ along $G$, see \Cref{ex:splitpb}. We denote by $(\on{ho}\mathcal{B}^{\on{c}},\on{Ext}_{\mathcal{B}}^{1,\on{CY}},\mathfrak{s})$ the arising extriangulated homotopy category. Here, we use that $\on{Ext}_{\mathcal{B}}^{1,\on{CY}}$ describes the extensions in this extriangulated category, as follows from \Cref{lem:ext}. To be precise, we also abuse notation by labeling by $\on{Ext}_{\mathcal{B}}^{1,\on{CY}}$ the second functor in the factorization 
\[ \on{Ext}_{\mathcal{B}}^{1,\on{CY}}\colon \mathcal{B}^{\on{c},\on{op}}\times \mathcal{B}^{\on{c}}\longrightarrow \on{ho}\mathcal{B}^{\on{c},\on{op}}\times \on{ho}\mathcal{B}^{\on{c}}\longrightarrow N(\on{Vect}_k)\,.\]

\begin{lemma}\label{lem:ext}
Let $X,Y\in \mathcal{B}^{\on{c}}$. Consider a fiber and cofiber sequence $X\rightarrow Z\xrightarrow{\alpha} Y$ in $\mathcal{B}$ and let $\beta\colon X\rightarrow Y[1]$ be the cofiber morphism of $\alpha$. Then $\beta$ lies in $\on{Ext}^{1,\on{CY}}_{\mathcal{B}}(X,Y)\subset \on{Ext}^{1}_{\mathcal{B}}(X,Y)$ if and only if the image of the fiber and cofiber sequence under $G$ splits. 
\end{lemma}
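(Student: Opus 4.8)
The plan is to unwind the definition of $\on{Ext}^{1,\on{CY}}_{\mathcal{B}}$ and reduce the statement to the standard fact that a fiber and cofiber sequence in a stable $\infty$-category is split-exact precisely when its classifying extension class vanishes.

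First I record how $G$ acts on the sequence. Since $G$ is exact, it preserves finite limits and colimits, hence fiber and cofiber sequences, and it commutes with the suspension; therefore $G(X)\rightarrow G(Z)\xrightarrow{G(\alpha)}G(Y)$ is again a fiber and cofiber sequence in $\mathcal{A}$, whose cofiber morphism $G(X)\rightarrow G(Y)[1]$ is $G(\beta)$. Under the map on extensions $\on{Ext}^1_{\mathcal{B}}(X,Y)\rightarrow \on{Ext}^1_{\mathcal{A}}(G(X),G(Y))$ induced on $\on{H}_0$ by the morphism-object functoriality \eqref{eq:GMor} of $G$, the class $\beta$ is carried to the class of $G(\beta)$; this is immediate from the naturality of the identifications $\on{Ext}^1_{\mathcal{C}}(A,B)\simeq \on{H}_0\on{Mor}_{\mathcal{C}}(A,B[1])$ together with the exactness of $G$.

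Second I invoke the standard description of split fiber and cofiber sequences: in a stable $\infty$-category a fiber and cofiber sequence $A\rightarrow B\rightarrow C$ is split-exact, i.e.\ equivalent as such a sequence to $A\xrightarrow{(\on{id},0)} A\oplus C\xrightarrow{(0,\on{id})} C$, if and only if its classifying class in $\on{Ext}^1(C,A)$ is zero. Applied to the $G$-image sequence, this says that the image of $X\rightarrow Z\rightarrow Y$ under $G$ splits if and only if the class of $G(\beta)$ in $\on{Ext}^1_{\mathcal{A}}(G(X),G(Y))$ vanishes, which by the first step is equivalent to $\beta$ mapping to $0$ under the morphism induced by $G$.

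Third I combine this with the definition of the Calabi--Yau extensions. By \Cref{def:cyfun} (in particular the footnote identifying $\on{Mor}^{\on{CY}}_{\mathcal{B}}(X,Y)$ with the kernel of \eqref{eq:GMor} in $\on{ho}\mathcal{D}(\kappa)$) together with \Cref{lem:cyfun}(2), the subgroup $\on{Ext}^{1,\on{CY}}_{\mathcal{B}}(X,Y)\subset \on{Ext}^1_{\mathcal{B}}(X,Y)$ is exactly the kernel of the map to $\on{Ext}^1_{\mathcal{A}}(G(X),G(Y))$ induced by $G$; here one uses that objects of $\mathcal{D}(\kappa)$ are sums of shifts of $\kappa$, so that passing to $\on{H}_0$ commutes with forming this kernel. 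Hence $\beta\in \on{Ext}^{1,\on{CY}}_{\mathcal{B}}(X,Y)$ if and only if $G(\beta)=0$ if and only if the image of $X\rightarrow Z\rightarrow Y$ under $G$ splits, which is the assertion. The only point requiring a little care is matching the connecting morphism of the $G$-image sequence with the image of $\beta$ under the functorially induced map on $\on{Ext}^1$, but this is routine given the exactness of $G$; there is no substantial obstacle, as the lemma is essentially the bookkeeping identity relating the pulled-back exact structure of \Cref{ex:splitpb} to the Calabi--Yau subfunctor.
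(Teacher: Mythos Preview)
Your proof is correct and follows essentially the same route as the paper: both reduce to the equivalence ``the $G$-image sequence splits $\Leftrightarrow$ $G(\beta)=0$'' and then invoke the definition of $\on{Ext}^{1,\on{CY}}$ as the kernel of the map on $\on{Ext}^1$ induced by $G$. The paper's version is terser (it states one implication as ``clear'' and handles the other by noting that if $G(\beta)=0$ then its fiber $G(\alpha)$ is the projection $G(X)\oplus G(Y)\to G(Y)$), while you spell out the exactness of $G$ and the standard splitting criterion more explicitly; there is no substantive difference.
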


\begin{proof}
We show that the fiber and cofiber sequence $G(X)\rightarrow G(Z)\xrightarrow{G(\alpha)}G(Y)$ splits if and only if the cofiber morphism $G(\beta)$ vanishes. By definition, the latter is equivalent to $\beta$ being a Calabi--Yau extension. The forward implication is clear. For the converse, suppose that $G(\beta)$ is zero. Then its fiber morphism, given by $G(\alpha)$, is equivalent to $G(X)\oplus G(Y)\xrightarrow{(0,\on{id})}G(Y)$. This shows that the fiber and cofiber sequence splits.
\end{proof}

\begin{proposition}\label{prop:2cyextr}
The extriangulated category $(\on{ho}\mathcal{B}^{\on{c}},\on{Ext}_{\mathcal{B}}^{1,\on{CY}},\mathfrak{s})$ is Frobenius and extriangled $2$-Calabi--Yau. The exact $\infty$-category $(\mathcal{B}^{\on{c}},\mathcal{B}_\dagger^{\on{c}},\mathcal{B}^{\on{c},\dagger})$ is hence also Frobenius.
\end{proposition}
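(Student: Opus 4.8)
The plan is to split the statement into a \textbf{Frobenius part} and a \textbf{$2$-Calabi--Yau part}, each of which reduces to a result already established. For the Frobenius part I would first note that, by the Lemma recalled in \Cref{sec6.2} (a $k$-linear exact $\infty$-category is Frobenius if and only if its extriangulated homotopy category is), it suffices to prove that the exact $\infty$-category $(\mathcal{B}^{\on{c}},\mathcal{B}_\dagger^{\on{c}},\mathcal{B}^{\on{c},\dagger})$ is Frobenius; this simultaneously settles the last sentence of the Proposition. For the $2$-Calabi--Yau part, I would use that, as recorded after \Cref{lem:ext}, the functor $\on{Ext}^{1,\on{CY}}_{\mathcal{B}}(\mhyphen,\mhyphen)$ is precisely the extension bifunctor $\mathbb{E}$ of the extriangulated category $\on{ho}\mathcal{B}^{\on{c}}$, so that by \Cref{def:extcy} it is enough to produce a bifunctorial isomorphism $\on{Ext}^{1,\on{CY}}_{\mathcal{B}}(X,Y)\simeq\on{Ext}^{1,\on{CY}}_{\mathcal{B}}(Y,X)^{\ast}$.

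The one subtlety in the Frobenius part is that \Cref{prop:frobex} produces a Frobenius structure on the \emph{ambient} stable $\infty$-category, whereas the extriangulated structure of interest lives on $\mathcal{B}^{\on{c}}$, so I first need everything to descend to compact objects. To that end I would check that the left adjoint $F$, the functor $G$ and the right adjoint $H$ all preserve compact objects: since $G$ is spherical it admits both adjoints and hence preserves all limits and colimits; its right adjoint satisfies $H\simeq F\circ T_{\mathcal{A}}$ with $T_{\mathcal{A}}$ the cotwist of $G\dashv H$, an equivalence, and $F$ a left adjoint (\cite[Cor.~2.5.16]{DKSS21}), so $H$ preserves colimits, whence $G$ preserves compact objects; then $F$ preserves compact objects because $G$ preserves filtered colimits, and $H\simeq FT_{\mathcal{A}}$ does as a composite of compact-object-preserving functors. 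Consequently $G$ restricts to a spherical functor $G^{\on{c}}\colon\mathcal{B}^{\on{c}}\to\mathcal{A}^{\on{c}}$ of (small, idempotent-complete) stable $\infty$-categories with adjoints $F^{\on{c}}\dashv G^{\on{c}}\dashv H^{\on{c}}$. Since $\mathcal{B}^{\on{c}}$ is closed in $\mathcal{B}$ under finite limits, finite colimits and retracts, it is closed under the exact sequences of $(\mathcal{B},\mathcal{B}_\dagger,\mathcal{B}^\dagger)$, and one verifies directly that $(\mathcal{B}^{\on{c}},\mathcal{B}_\dagger^{\on{c}},\mathcal{B}^{\on{c},\dagger})$ is exactly the exact $\infty$-category obtained from \Cref{ex:splitpb} applied to $G^{\on{c}}$ (a morphism of compact objects is an inflation, resp.\ a deflation, iff its $G^{\on{c}}$-image is the inclusion of, resp.\ the projection onto, a direct summand). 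Now \Cref{prop:frobex} applies verbatim to $G^{\on{c}}$, so $(\mathcal{B}^{\on{c}},\mathcal{B}_\dagger^{\on{c}},\mathcal{B}^{\on{c},\dagger})$ is Frobenius with injective-projective objects the additive closure of the essential image of $H^{\on{c}}$; feeding this back through the Lemma of \Cref{sec6.2} gives that $\on{ho}\mathcal{B}^{\on{c}}$ is Frobenius as an extriangulated category.

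For the $2$-Calabi--Yau property I would simply invoke \Cref{prop:cydual}(2) with $n=2$ and $i=1$, which provides a bifunctorial equivalence $\on{Ext}^{1,\on{CY}}_{\mathcal{B}}(X,Y)\simeq\on{Ext}^{2-1,\on{CY}}_{\mathcal{B}}(Y,X)^{\ast}=\on{Ext}^{1,\on{CY}}_{\mathcal{B}}(Y,X)^{\ast}$ for compact $X,Y$. Under the identification of $\on{Ext}^{1,\on{CY}}_{\mathcal{B}}$ with the extension bifunctor $\mathbb{E}$ of $\on{ho}\mathcal{B}^{\on{c}}$ coming from \Cref{lem:ext}, this is exactly the isomorphism $\mathbb{E}(X,Y)\simeq\mathbb{E}(Y,X)^{\ast}$ demanded by \Cref{def:extcy}, so $\on{ho}\mathcal{B}^{\on{c}}$ is extriangulated $2$-Calabi--Yau. (Here one uses that the relative Calabi--Yau dimension in force is $n=2$; the Frobenius part of the statement needs no such restriction.)

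I expect the only genuinely non-formal input to be \Cref{prop:cydual}, which is already proved; the main obstacle in the present proof is therefore organizational rather than mathematical, namely reconciling the ``ambient'' output of \Cref{prop:frobex} with the ``compact'' setting required by the extriangulated formalism — i.e.\ the verification that $F$, $G$ and $H$ preserve compact objects so that the whole construction descends to $\mathcal{B}^{\on{c}}$ and $\mathcal{A}^{\on{c}}$. Once that is in place, both assertions follow by combining \Cref{prop:frobex} and \Cref{prop:cydual} with the exact-versus-extriangulated Frobenius dictionary and \Cref{lem:ext}.
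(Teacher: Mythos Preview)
Your proposal is correct and follows essentially the same route as the paper, which simply invokes \Cref{prop:frobex} for the Frobenius property and \Cref{prop:cydual} for the $2$-Calabi--Yau duality. The careful verification you add --- that $F$, $G$ and $H$ preserve compact objects so that \Cref{prop:frobex} applies to $G^{\on{c}}$ --- fills in a passage the paper leaves implicit (under the standing smooth-and-proper hypothesis on $\mathcal{A}$ and $\mathcal{B}$), but the overall strategy is the same.
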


\begin{proof}
The Frobenius property is shown in \Cref{prop:frobex}. The statement about the $2$-Calabi--Yau property follows directly from \Cref{prop:cydual}.
\end{proof}

\begin{remark}\label{rem:shift}
The proof of \Cref{prop:frobex} shows that the suspension functor of the stable $\infty$-category $\bar{\mathcal{B}^{\on{c}}}$ arising from $(\mathcal{B}^{\on{c}},\mathcal{B}_\dagger^{\on{c}},\mathcal{B}^{\on{c},\dagger})$, see \Cref{prop:jkpw}, acts on objects as the twist functor of the spherical adjunction $G\dashv H$, or by the relative right $2$-Calabi--Yau structure equivalently as the delooping (or negative shift) of the Serre functor of $\mathcal{B}$.

It is an interesting problem to determine whether $\bar{\mathcal{B}^{\on{c}}}$ inherits a right $2$-Calabi--Yau structure from the relative right $2$-Calabi--Yau structure of $\mathcal{B}$.
\end{remark}

We conclude this section with the proof of \Cref{prop:cydual}.

\begin{proof}[Proof of \Cref{prop:cydual}.]
Part (2) follows from part (1). We thus show part (1). Let $H$ be the right adjoint of $G$ and $\on{u}\colon \on{id}_{\mathcal{B}}\rightarrow HG$ be the unit. Note that there  is a commutative triangle as follows. 
\begin{equation}\label{eq:cutri}
\begin{tikzcd}
{\on{Map}_{\mathcal{B}}(\mhyphen_1,\mhyphen_2)} \arrow[r, "G"] \arrow[rd, "\on{u}\circ\, \mhyphen"'] & {\on{Map}_{\mathcal{A}}(G(\mhyphen_1),G(\mhyphen_2))} \arrow[d, "\simeq"] \\
                                                                                                  & {\on{Map}_{\mathcal{B}}(\mhyphen_1,HG(\mhyphen_2))}                      
\end{tikzcd}
\end{equation}
We thus have a commutative diagram in $\on{Fun}(\mathcal{B}^{\on{c},\on{op}}\times\mathcal{B}^{\on{c}},\mathcal{D}(\kappa))$
\[
\begin{tikzcd}
{\on{Mor}_{\mathcal{B}}^{\on{CY}}(\mhyphen_1,\mhyphen_2)} \arrow[d] \arrow[r] & {\on{Mor}_{\mathcal{B}}(\mhyphen_1,\mhyphen_2)} \arrow[d, "\on{u}\circ\, \mhyphen"] \\
0 \arrow[r]                                                                   & {\on{Mor}_{\mathcal{B}}(\mhyphen_1,HG(\mhyphen_2)))}                             
\end{tikzcd}
\]
which induces by the relative Calabi--Yau structure a natural transformation 
\[\xi\colon \on{Mor}_{\mathcal{B}}^{\on{CY}}(\mhyphen_1,\mhyphen_2)\rightarrow \on{fib}(\on{u}\circ\,\mhyphen)\simeq  \on{Mor}_{\mathcal{B}}(\mhyphen_1,\on{id}_{\mathcal{B}}^\ast(\mhyphen_2)[-n])\,.\] 
Consider the natural transformation $\nu\colon \on{Mor}_{\mathcal{B}}^{\on{CY}}(\mhyphen_1,\mhyphen_2)\rightarrow \on{Mor}_{\mathcal{B}}^{\on{CY}}(\mhyphen_2[-n],\mhyphen_1)^\ast$ appearing in the following commutative diagram.
\[
\begin{tikzcd}
{\on{Mor}_{\mathcal{B}}(\mhyphen_1,\on{id}_{\mathcal{B}}^\ast(\mhyphen_2)[-n])} \arrow[r, "\simeq"]                    & {\on{Mor}_{\mathcal{B}}(\on{id}_{\mathcal{B}}^*(\mhyphen_2)[-n],\on{id}_{\mathcal{B}}^*(\mhyphen_1))^\ast} \arrow[d]                     &                                                                    \\
{\on{Mor}_{\mathcal{B}}^{\on{CY}}(\mhyphen_1,\mhyphen_2)} \arrow[u, "\xi"] \arrow[r] \arrow[rr, "\nu"', bend right=10] & {\on{Mor}_{\mathcal{B}}^{\on{CY}}(\on{id}_{\mathcal{B}}^*(\mhyphen_2)[-n],\on{id}_{\mathcal{B}}^*(\mhyphen_1))^\ast} \arrow[r, "\simeq"] & {\on{Mor}_{\mathcal{B}}^{\on{CY}}(\mhyphen_2[-n],\mhyphen_1)^\ast}
\end{tikzcd}
\] To prove part (1), we show that $\nu$ is a natural equivalence, meaning that it evaluates at any pair $X,Y\in \mathcal{B}^c$ to an equivalence. Consider the sequence $HG(Y)[-1]\rightarrow \on{id}_{\mathcal{B}}^*(Y)[-n]\rightarrow Y\xrightarrow{\on{u}} HG(Y)$ in $\mathcal{B}^{\on{c}}$, where any three consecutive terms form a fiber and cofiber sequence. Applying $\on{Mor}_{\mathcal{B}}(X,\mhyphen)$, we obtain the upper sequence in $\mathcal{D}(\kappa)$ in the following diagram,
\[ 
\begin{tikzcd}[column sep=small]
{\on{Mor}_{\mathcal{B}}(X,HG(Y)[-1])} \arrow[r] & {\on{Mor}_{\mathcal{B}}(X,\on{id}_{\mathcal{B}}^*(Y)[-n])} \arrow[r, "\alpha"] & {\on{Mor}_{\mathcal{B}}(X,Y)} \arrow[r]                           & {\on{Mor}_{\mathcal{B}}(X,HG(Y))} \\
0 \arrow[r] \arrow[u]                            & {\on{Mor}_{\mathcal{B}}^{\on{CY}}(X,Y)} \arrow[r, "\on{id}"] \arrow[u, hook]  & {\on{Mor}_{\mathcal{B}}^{\on{CY}}(X,Y)} \arrow[r] \arrow[u, hook] & 0 \arrow[u]                      
\end{tikzcd}
\]
which arises from extending the rightmost square by using that any three consecutive horizontal terms form a fiber and cofiber sequence. Note also that the inclusions $\on{Mor}_{\mathcal{B}}^{\on{CY}}(X,Y)\subset \on{Mor}_{\mathcal{B}}(X,\on{id}_{\mathcal{B}}^*(Y)[-n]),\on{Mor}_{\mathcal{B}}(X,Y)$ split, as any morphism in $\mathcal{D}(\kappa)$ splits (into a direct sum of equivalences and zero morphisms). It follows that the Calabi--Yau morphism object is the maximal simultaneous direct summand of both $\on{Mor}_{\mathcal{B}}(X,Y)$ and $\on{Mor}_{\mathcal{B}}(X,\on{id}_{\mathcal{B}}^*(Y)[-n])$, being preserved by $\alpha$. 

Using the natural equivalence $\on{Mor}_{\mathcal{B}}(\mhyphen_1,\mhyphen_2)\simeq \on{Mor}_{\mathcal{B}}(\mhyphen_2,\on{id}_{\mathcal{B}}^*(\mhyphen_1))^\ast$, we see that the upper part of the above diagram is equivalent to the upper part of the following diagram.

\begin{adjustwidth}{-0.9in}{-0.9in}
\begin{equation}\label{eq:dualcy}
\begin{tikzcd}[column sep=small]
{\on{Mor}_{\mathcal{B}}(HG(Y)[-1],\on{id}_{\mathcal{B}}^*(X))^\ast} \arrow[r, "\delta"] \arrow[d] & {\on{Mor}_{\mathcal{B}}(\on{id}_{\mathcal{B}}^*(Y)[-n],\on{id}_{\mathcal{B}}^*(X))^\ast} \arrow[r, "\beta"] \arrow[d, two heads]  & {\on{Mor}_{\mathcal{B}}(Y,\on{id}_{\mathcal{B}}^*(X))^\ast} \arrow[r] \arrow[d, two heads]                            & {\on{Mor}_{\mathcal{B}}(HG(Y),\on{id}_{\mathcal{B}}^*(X))^\ast} \arrow[d] \\
0 \arrow[r]                                                                                      & {\on{Mor}_{\mathcal{B}}^{\on{CY}}(\on{id}_{\mathcal{B}}^*(Y)[-n],\on{id}_{\mathcal{B}}^*(X))^\ast} \arrow[r, "\on{id}"] \arrow[r] & {\on{Mor}_{\mathcal{B}}^{\on{CY}}(\on{id}_{\mathcal{B}}^*(Y)[-n],\on{id}_{\mathcal{B}}^*(X))^\ast} \arrow[r] \arrow[r] & 0                                                                        
\end{tikzcd}\end{equation}
\end{adjustwidth}

The left adjoint $F$ of $G$ is given by $(\on{id}_{\mathcal{B}}^*[1-n])^{-1}\circ H$, where $\on{id}_{\mathcal{B}}^*[1-n]$ is equivalent to the twist functor of the adjunction $G\dashv H$, see \cite[Corollary 2.5.16]{DKSS21}. Furthermore by \cite[Lemma 2.11]{Chr20}, the morphism (from the above sequence in $\mathcal{B}^c)$
\[ FG\circ \on{id}_{\mathcal{B}}^*(Y)[-n]\simeq HG(Y)[-1]\rightarrow \on{id}_{\mathcal{B}}^*(Y)[-n]\]
is a counit morphism of the adjunction $F\dashv G$, and thus adjoint under $FG\dashv HG$ to the unit of $G\dashv H$. It follows that the morphism $\delta$ in the above sequence is equivalent to the dual of the morphism
\begin{align*} 
{\on{Mor}_{\mathcal{B}}(\on{id}_{\mathcal{B}}^*(Y)[-n],\on{id}_{\mathcal{B}}^*(X))} \xlongrightarrow{\on{u}\circ \mhyphen}& {\on{Mor}_{\mathcal{B}}( \on{id}_{\mathcal{B}}^*(Y)[-n],HG\circ\on{id}_{\mathcal{B}}^*(X))}\,.\\
 \simeq  &~ {\on{Mor}_{\mathcal{B}}(FG\circ \on{id}_{\mathcal{B}}^*(Y)[-n],\on{id}_{\mathcal{B}}^*(X))} \\ 
  \simeq  &~ {\on{Mor}_{\mathcal{B}}(HG(Y)[-1],\on{id}_{\mathcal{B}}^*(X))} 
\end{align*}
arising from postcomposition with the unit $\on{u}$ of $G\dashv H$. By the commutativity of the diagram \eqref{eq:cutri}, this shows that the Calabi--Yau morphism object $\on{Mor}_{\mathcal{B}}(\on{id}_{\mathcal{B}}^*(Y)[-n],\on{id}_{\mathcal{B}}^*(X))$ fits into the diagram \eqref{eq:dualcy} as indicated. Again, the vertical morphisms split and ${\on{Mor}_{\mathcal{B}}(\on{id}_{\mathcal{B}}^*(Y)[-n],\on{id}_{\mathcal{B}}^*(X))}$ forms the maximal direct summand which is preserved by $\beta$. This shows that the composite
\begin{adjustwidth}{-0.3in}{-0.3in} 
\[ \on{Mor}_{\mathcal{B}}^{\on{CY}}(X,Y)\hookrightarrow \on{Mor}_{\mathcal{B}}(X,\on{id}_{\mathcal{B}}^*(Y)[-n])\simeq \on{Mor}_{\mathcal{B}}(\on{id}_{\mathcal{B}}^*(Y)[-n],\on{id}_{\mathcal{B}}^*(X))^\ast\twoheadrightarrow \on{Mor}_{\mathcal{B}}^{\on{CY}}(\on{id}_{\mathcal{B}}^*(Y)[-n],\on{id}_{\mathcal{B}}^*(X))^* \]
\end{adjustwidth}
and hence also $\nu$ evaluated at $(X,Y)$ are equivalences. This concludes the proof of part (1) and the proof.
\end{proof}

\section{The geometric model}\label{sec5}

In this section, we describe a geometric model for the generalized cluster category $\mathcal{C}_{\bf S}$, including a classification of all indecomposable objects in terms so called matching curves, which we introduce in \Cref{sec5.1}. In \Cref{sec5.2}, we recall the results from \cite{Chr21b}, which associate objects of $\mathcal{C}_{\bf S}$ to matching curves and describe the Homs in terms of the intersections of the curves. In \Cref{sec5.5}, we compare the computation of cones in $\mathcal{C}_{\bf S}$ of morphisms arising from intersections with the Kauffman Skein relation. The more technical aspects are discussed in \Cref{sec5.3,sec5.4}. In \Cref{sec5.3}, we give a review of the construction of the objects from matching curves in \cite{Chr21b}. In the final \Cref{sec5.4}, we prove the geometrization \Cref{thm:geom}, which states that every compact object in $\mathcal{C}_{\bf S}$ decomposes uniquely into the direct sum of objects associated to matching curves.

For the remainder of this article, we will always assume that $k$ is an algebraically closed field. For the entirety of this section, we also fix a marked surface ${\bf S}$ and an auxiliary \idtr{} $\mathcal{T}$ of ${\bf S}$. 

\subsection{Matching curves}\label{sec5.1}

\begin{definition}\label{def:curve}
An allowed curve is a continuous map $\gamma\colon U\rightarrow {\bf S}\backslash M$ with $U=[0,1],S^1$, satisfying that 
\begin{enumerate}
\item all existent endpoints of $\gamma$ (possibly none) lie in $\partial {\bf S}\backslash M$.
\item away from the endpoints, $\gamma$ is disjoint from $\partial {\bf S}$.
\item $\gamma$ does not cut out an unmarked disc\footnote{By cutting out, we mean that a connected part of the curve bounds an unmarked marked disc.} in ${\bf S}$.
\item if $U=S^1$, then $\gamma$ is not homotopic to the composite of multiple identical closed curves.
\end{enumerate}
An open matching curve $\gamma$ in ${\bf S}$ is an equivalence class of allowed curves under homotopies relative $\partial {\bf S}\backslash M$ with domain $U=[0,1]$ and considered up to reversal of orientation. We say that the rank of $\gamma$ is $1$.

A closed matching curve $\gamma$ in ${\bf S}$ is an equivalence class of allowed curves under homotopies relative $\partial {\bf S}\backslash M$ with domain $U=[0,1]$ together with choices of an integer $a\geq 1$ and $\lambda\in k^{\times}$. We call $a$ the rank of $\gamma$ and $\lambda$ the monodromy datum\footnote{The corresponding monodromy matrix is the $a\times a$-Jordan block with eigenvalue $\lambda$.}. We consider closed matching curves up to simultaneously reversing the orientation and replacing the monodromy datum $\lambda$ by its inverse $\lambda^{-1}$. We say that two closed matching curves are distinct if either their underlying curves differ.
\end{definition}

\begin{definition}\label{intdef}
Let $\gamma:U\rightarrow {\bf S},\,\gamma'\colon U'\rightarrow {\bf S}$ be two matching curves. We choose representatives of $\gamma$ and $\gamma'$ with the minimal number of intersections. 
\begin{itemize}
\item A crossing of $\gamma$ and $\gamma'$ is an intersection of $\gamma$ and $\gamma'$ away from their endpoints. We denote the set of crossings of $\gamma$ and $\gamma'$ by $i^{\on{cr}}(\gamma,\gamma')$. If $\gamma=\gamma'$, then $i^{\on{cr}}(\gamma,\gamma)$ counts each self-crossing only once.
\item A directed boundary intersection from $\gamma$ to $\gamma'$ is an intersection of both $\gamma$ and $\gamma'$ with the same connected component $B$ of $\partial {\bf S}\backslash M$ such that the intersection of $\gamma'$ with $B$ follows the intersection of $\gamma$ with $B$ in the orientation of $B$ induced by the (clockwise) orientation of ${\bf S}$. We denote the set of directed boundary intersection from $\gamma$ to $\gamma'$ by $i^{\on{bdry}}(\gamma,\gamma')$. If $\gamma=\gamma'$, then $i^{\on{bdry}}(\gamma,\gamma)$ only counts those directed boundary intersections arising from two distinct ends of $\gamma$ lying on the same component of $\partial {\bf S}\backslash M$.
\end{itemize}
\end{definition}

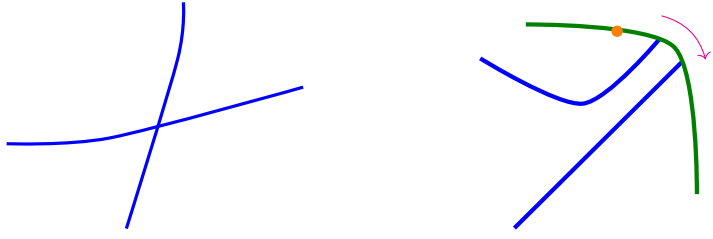
\begin{figure}[ht]\label{figta}
\begin{center}
\begin{tikzpicture}[scale=1.5]
  \draw[color=blue][very thick] plot [smooth] coordinates {(0.75,0) (1.2,1.5) (1.25,2)};
   \draw[color=blue][very thick] plot [smooth] coordinates {(-0.3,0.75) (0.6,0.8) (2.3, 1.25)};
  \node (0) at (1.33, 1.33){};
  \node (1) at (0.67, 0.67){};
\end{tikzpicture}
\quad\quad\quad
\begin{tikzpicture}[scale=1.5]
\draw[color=blue][ultra thick] plot [smooth] coordinates {(-0.9,0.7) (0,0.3) (0.68,0.88)};
\draw[color=blue][ultra thick] plot [smooth] coordinates {(-0.6,-0.8) (0.88,0.68)};
\node (1) at (0.6,1.1){};
\node (2) at (1.1,0.6){} edge [<-, bend right, color=magenta] (1);
\node (3) at (-1.5,-0.5){};
\node (4) at (1.5,-0.5){};
\node (5) at (0,1){};
\draw[color=ao][ultra thick] plot [smooth] coordinates {(-0.5,1)  (0.8,0.8) (1,-0.5)};
\node (0) at (0.3,0.94){};
\fill[color=orange] (0) circle (0.05);
\end{tikzpicture}
\caption{A crossing and  a direct endpoint intersection of two matching curves. The boundary of the surface is depicted in green. The magenta arrow indicates the clockwise direction. The orange vertex describes a marked point.}
\end{center}
\end{figure}

\begin{definition}\label{def:arc}
An open matching curve $\gamma\colon [0,1]\rightarrow {\bf S}$ is called an arc if it has no self-crossings. An arc is called a boundary arc, if it cuts out a monogon. An arc is called an internal arc, if it is not a boundary arc.
\end{definition}

\begin{remark}
The notion of an internal arc in the sense of \Cref{def:arc} coincides with the notion of an arc from \cite[Def.~5.2]{FT12}, except that we use a different convention regarding endpoints. In this paper, endpoints lie in $\partial{\bf S}\backslash M$, whereas in loc.~cit.~endpoints lie in $M$. These two perspectives are however equivalent, as can be seen by expanding the marked points to intervals, contracting their complements to points and using that arcs are considered up to homotopy.
\end{remark}

\begin{definition}[See e.g.~\cite{FST08}]\label{def:idealtr}
Two arcs in ${\bf S}$ are called compatible, if they do not have any crossings. An ideal triangulation of ${\bf S}$ consists of a maximal (by inclusion) collection $\I$ of pairwise compatible arcs in ${\bf S}$.  
\end{definition}

We note that any ideal triangulation of ${\bf S}$ contains all boundary arcs of ${\bf S}$ and that any two ideal triangulations of ${\bf S}$ have the same cardinality.

\subsection{Objects and Homs}\label{sec5.2} 

The generalized cluster category $\mathcal{C}_{\bf S}$ embeds fully faithfully into the $\infty$-category $\mathcal{H}({\mathcal{T}},\mathcal{F}_{\mathcal{T}})$ of global sections of the perverse schober $\mathcal{F}_{\mathcal{T}}$ as the full subcategory spanned by global sections whose value at any edge $e\in \mathcal{T}_1$ lies in $\mathcal{D}(k[t_1^\pm])\subset \mathcal{D}(k[t_1])=\mathcal{F}_\mathcal{T}(e)$. To describe the objects and morphism objects in the generalized cluster category $\mathcal{C}_{\bf S}$, we may thus make use of the partial geometric model for $\mathcal{H}({\mathcal{T}},\mathcal{F}_{\mathcal{T}})$ of \cite{Chr21b}. 

Consider a matching curve $\gamma$ in ${\bf S}$. Such a curve defines by \cite[Lemma 5.8]{Chr21b} a matching datum $(\gamma,k[t_1^\pm])$ in the sense of \cite[Def.~5.9]{Chr21b} with $\gamma$ a finite, pure matching curve in ${\bf S}\backslash M$ in the sense of \cite[Def.~5.5]{Chr21b}. By the results of Section 5.2.~in loc.~cit., we thus get for each matching curve $\gamma$ in ${\bf S}$ a global section $M_{\gamma}\coloneqq M_{\gamma}^{k[t_1^\pm]}\in \mathcal{C}_{\bf S}\subset \mathcal{H}({\mathcal{T}},\mathcal{F}_{\mathcal{T}})$. We further have the following.

\begin{proposition}\label{prop:indec}
Let $\gamma$ be a matching curve in ${\bf S}$. Then the discrete endomorphism ring of $M_{\gamma}\in \mathcal{C}_{\bf S}$ is local and $M_{\gamma}$ thus indecomposable. 
\end{proposition}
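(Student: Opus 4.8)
The plan is to reduce the statement to a computation of the discrete endomorphism ring $\on{H}_0\on{Mor}_{\mathcal{C}_{\bf S}}(M_\gamma,M_\gamma)$ and to exhibit it as a local ring by identifying its non-units with a nilpotent (hence proper) ideal. First I would invoke the Hom computations recalled from \cite{Chr21b} (the results of Section 5.2 there, which will be restated below as \Cref{thm:hom1,thm:hom2}) to describe a $k$-basis of $\on{Mor}_{\mathcal{C}_{\bf S}}(M_\gamma,M_\gamma)$ in terms of the geometry of $\gamma$: the identity morphism together with morphisms indexed by self-crossings of $\gamma$ and by directed self-boundary-intersections, and, in the closed case, additionally a contribution from the monodromy datum living on the underlying closed curve. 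The key point is that on $\on{H}_0$ these geometric morphisms are all of positive ``length'' in the sense that they strictly decrease some filtration, so that any $k$-linear combination of them is nilpotent.

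Concretely, the main steps are: (1) Write $\on{H}_0\on{Mor}_{\mathcal{C}_{\bf S}}(M_\gamma,M_\gamma) = k\cdot \on{id}_{M_\gamma} \oplus R$, where $R$ is spanned by the geometric morphisms coming from self-crossings and directed self-boundary-intersections (and the monodromy contribution in the closed case). (2) Show that $R$ is an ideal and consists of non-invertible elements; the cleanest way is to produce a grading or a finite filtration on $M_\gamma$ — e.g. by the order in which the curve $\gamma$ traverses the edges of $\mathcal{T}$, or equivalently by the ``position along the curve'' — with respect to which $\on{id}$ is the unique degree-zero piece and every geometric morphism is strictly positive, so compositions of $n$ such morphisms vanish for $n$ large. (3) Conclude that every element of $\on{H}_0\on{Mor}(M_\gamma,M_\gamma)$ is either a unit (nonzero scalar times $\on{id}$ plus a nilpotent) or lies in the maximal ideal $R$; hence the ring is local, and a local (finite-dimensional) ring has no nontrivial idempotents, so $M_\gamma$ is indecomposable.

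For the closed-curve case one must be slightly more careful: the endomorphism ring of the local system datum on the circle is $k[x]/(x^a - \lambda)$-ish; but since $\lambda \in k^\times$ and the monodromy is a single Jordan block with eigenvalue $\lambda$, its endomorphism algebra is $k[x]/(x-\lambda)^a \cong k[\epsilon]/(\epsilon^a)$ — already local. So in the closed case I would first treat the ``tensor'' contribution from the local system (local by the Jordan block structure) and then argue that the remaining geometric self-intersection morphisms again land in the radical, using that $k$ is algebraically closed (assumed from the start of Section 5) so that the residue field is $k$.

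The main obstacle I anticipate is step (2): rigorously producing the filtration on $M_\gamma$ witnessing that all geometric morphisms are radical, and checking that it is compatible with composition. This requires unpacking the construction of $M_\gamma$ as a global section / coCartesian section of the Grothendieck construction of $\mathcal{F}_{\mathcal{T}}^{\on{clst}}$ and tracking how a self-crossing or directed self-boundary-intersection morphism acts edge-by-edge along $\gamma$; the content is that such a morphism ``shifts'' the curve forward and so cannot be an isomorphism on $\on{H}_0$. One could alternatively bypass an explicit filtration by arguing more structurally: the geometric morphisms, being supported away from the identity in the intersection basis, automatically span a proper ideal because the class of $\on{id}$ is detected by a nonzero functional (e.g. the ``diagonal coefficient'' in the basis) that is a ring homomorphism to $k$ modulo these morphisms — but verifying that this functional is multiplicative is essentially the same computation. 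I expect the cited Hom-description from \cite{Chr21b}, together with the $1$-periodicity, already encodes the needed vanishing of long compositions, so the proof should be short modulo those inputs.
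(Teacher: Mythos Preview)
The paper does not actually give a proof here: it simply cites \cite[Cor.~6.13]{Chr21b}. So there is nothing to compare your argument against in this paper; you are reconstructing what is presumably done in the prequel. Your overall strategy---compute $\on{H}_0\on{Mor}(M_\gamma,M_\gamma)$ via the geometric basis and show everything except the identity spans a nilpotent ideal---is the natural one and is in the right spirit.

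Two points where your sketch needs tightening. First, in the closed case your dimension count is off: \Cref{thm:hom2}(ii) gives $\on{H}_0\on{End}(M_\gamma)\simeq k^{2a+2a^2 i^{\on{cr}}(\gamma,\gamma)}$, not $k^{a+\cdots}$. The extra factor of two over your Jordan-block count $k[\epsilon]/(\epsilon^a)$ comes from the $H^1(S^1)$ contribution (Poincar\'e duality on the circle), which in the $1$-periodic setting lands in $\on{H}_0$ as well. This extra piece is still nilpotent (it squares to zero already at the level of $H^*(S^1)$), so your conclusion survives, but you should account for it. Second, your proposed filtration by ``position along the curve'' does not obviously work for self-crossings: a self-crossing at parameters $s<t$ produces two morphisms, one jumping $s\to t$ and one $t\to s$, so one of them \emph{decreases} your filtration. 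You need a different bookkeeping device (e.g.\ tracking a two-parameter length of overlaps, or reducing to the known multiplicative structure of string/band morphisms in the gentle-algebra model) to get the nilpotency. This is the genuine content of the argument and is where the citation to \cite{Chr21b} is doing real work.
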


\begin{proof}
This is \cite[Cor.~6.13]{Chr21b}.
\end{proof}

The morphism objects between the $M_{\gamma}$'s can be described in terms of intersections of the curves as in the following two Theorems, see \cite[Theorems 6.4 and 6.5]{Chr21b}. 

\begin{theorem}\label{thm:hom1}
Let $\gamma,\gamma'$ be two distinct matching curves in ${\bf S}$. Let $a$ be the rank of $\gamma$ and $a'$ the rank of $\gamma'$. There exists an equivalence in $\mathcal{D}(k[t_2^\pm])$ 
\[ \on{Mor}_{\mathcal{C}_{\bf S}}(M_{\gamma},M_{\gamma'})\simeq k[t_{1}^\pm]^{\oplus aa' i^{\on{cr}}(\gamma,\gamma')\oplus i^{\on{bdry}}(\gamma,\gamma')}\,.\]
\end{theorem}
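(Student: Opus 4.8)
The plan is to reduce the computation of $\on{Mor}_{\mathcal{C}_{\bf S}}(M_{\gamma}, M_{\gamma'})$ to the corresponding computation in the ambient category $\mathcal{H}(\mathcal{T},\mathcal{F}_{\mathcal{T}})$, where a description of morphism objects between the objects $M_{\gamma}^{k[t_1^\pm]}$ attached to matching data was already established in \cite{Chr21b}. Since $\mathcal{C}_{\bf S}$ embeds fully faithfully into $\mathcal{H}(\mathcal{T},\mathcal{F}_{\mathcal{T}})$ (as the full subcategory of global sections valued edgewise in $\mathcal{D}(k[t_1^\pm]) \subset \mathcal{D}(k[t_1])$, cf.\ \Cref{sec5.2}), and the objects $M_{\gamma}$ lie in this subcategory, we have $\on{Mor}_{\mathcal{C}_{\bf S}}(M_{\gamma},M_{\gamma'}) \simeq \on{Mor}_{\mathcal{H}(\mathcal{T},\mathcal{F}_{\mathcal{T}})}(M_{\gamma},M_{\gamma'})$ as objects of $\mathcal{D}(k)$; the content of the theorem is then (a) to identify this with the stated direct sum and (b) to promote the equivalence to one of $k[t_2^\pm]$-modules. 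First I would invoke the Hom-computation of \cite[Thm.~6.4]{Chr21b} for the relative Ginzburg setting, which expresses $\on{Mor}_{\mathcal{H}(\mathcal{T},\mathcal{F}_{\mathcal{T}})}(M_{\gamma}^{L}, M_{\gamma'}^{L})$ as a sum indexed by crossings and directed boundary intersections of $\gamma$ and $\gamma'$; the local value at each intersection point is governed by $\on{Mor}$ in the generic stalk, which for $\mathcal{F}_{\mathcal{T}}^{\on{clst}}$ is $\mathcal{D}(k[t_1^\pm])$ rather than $\mathcal{D}(k[t_1])$.

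The key local input is a computation in the generic stalk. At a crossing, two matching curves meet transversally and the contribution is the space of morphisms between the corresponding local objects in $\mathcal{D}(k[t_1^\pm])$; since each $M_\gamma$ restricts near an interior edge to a copy of $k[t_1^\pm]$ (up to shift and the rank-$a$ Jordan data), and $\on{End}_{\mathcal{D}(k[t_1^\pm])}(k[t_1^\pm]) \simeq k[t_1^\pm]$, a crossing of curves of ranks $a$ and $a'$ contributes $k[t_1^\pm]^{\oplus a a'}$ — this is where the factor $a a' i^{\on{cr}}(\gamma,\gamma')$ comes from, with the $a a'$ accounting for $\on{Hom}$ between the relevant Jordan blocks (and using that we are counting self-crossings once, matching \Cref{intdef}). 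At a directed boundary intersection, both curves end on the same boundary component and the contribution is again a single copy of $k[t_1^\pm]$, independent of the ranks, because the relevant evaluation is at an external edge and the morphism there is one-dimensional over $k[t_1^\pm]$ in the appropriate degree; this gives the summand $i^{\on{bdry}}(\gamma,\gamma')$. Summing over all intersection points, and using that $\gamma \neq \gamma'$ so there are no degenerate contributions from a curve to itself, yields the formula. The $k[t_2^\pm]$-linear refinement follows because $\mathcal{F}_{\mathcal{T}}^{\on{clst}}$ factors through $\on{LinCat}_{k[t_2^\pm]}$ (as noted after \Cref{thm:clstcat}), so all morphism objects are naturally $k[t_2^\pm]$-modules and $k[t_1^\pm]$ is a $k[t_2^\pm]$-algebra; one checks the equivalence is compatible with this structure, which is automatic since it is assembled from $k[t_2^\pm]$-linear local equivalences.

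The main obstacle I anticipate is the bookkeeping needed to pass cleanly from the $\mathcal{D}(k[t_1])$-valued Hom-computation of \cite{Chr21b} to the $\mathcal{D}(k[t_1^\pm])$-valued one: one must verify that the contributions from \emph{finite} or \emph{monadic} pieces — precisely the part killed when passing from $\mathcal{F}_{\mathcal{T}}$ to $\mathcal{F}_{\mathcal{T}}^{\on{clst}}$ via \Cref{prop:sodG} — do not enter the answer, i.e.\ that localizing at $t_1$ replaces each local $\on{End}_{\mathcal{D}(k[t_1])}(k) \simeq k \oplus k[-2]$-type contribution by the honest $k[t_1^\pm]$. Concretely, I would argue that the objects $M_\gamma$ already lie in $\mathcal{C}_{\bf S} = \mathcal{H}(\mathcal{T},\mathcal{F}_{\mathcal{T}}^{\on{clst}})$, so their Hom-objects are computed directly in the locally constant schober, and then the local model is literally the $1$-periodic derived category of the $A_2$-quiver, where the relevant Hom-spaces are transparent. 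The secondary subtlety is confirming the rank multiplicities $a a'$ at crossings versus rank $1$ at boundary intersections; this should follow from the explicit form of the matching datum $(\gamma, k[t_1^\pm])$ together with the Jordan-block monodromy datum, exactly as in the computation underlying \cite[Thm.~6.4, 6.5]{Chr21b}, so the work is in citing and adapting rather than redoing that argument.
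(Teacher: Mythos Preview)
Your proposal is correct and matches the paper's approach exactly: the paper does not give an independent proof but simply observes that $\mathcal{C}_{\bf S}$ sits fully faithfully in $\mathcal{H}(\mathcal{T},\mathcal{F}_{\mathcal{T}})$ and then cites \cite[Theorems~6.4 and~6.5]{Chr21b} for the Hom-computation. Your additional remarks on the $k[t_2^\pm]$-linearity and the rank multiplicities are accurate elaborations; note in particular that the absence of an $aa'$ factor on the boundary term is automatic, since only open curves (which have rank~$1$) can have boundary intersections.
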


\begin{theorem}\label{thm:hom2}
\begin{enumerate}[(i)]
\item Let $\gamma\colon [0,1]\rightarrow {\bf S}$ be an open matching curve in ${\bf S}$. There exists an equivalence in $\mathcal{D}(k[t_2^\pm])$
\[ \on{Mor}_{\mathcal{C}_{\bf S}}(M_{\gamma},M_{\gamma})\simeq k[t_{1}^\pm]^{\oplus 1+2i^{\on{cr}}(\gamma,\gamma)+i^{\on{bdry}}(\gamma,\gamma)}\,.\]
\item 
Let $\gamma\colon S^1\rightarrow {\bf S}$ be a closed matching curve in ${\bf S}$ of rank $a$. There exists an equivalence in $\mathcal{D}(k[t_2^\pm])$
\[ \on{Mor}_{\mathcal{C}_{\bf S}}(M_\gamma,M_\gamma)\simeq k[t_{1}^\pm]^{\oplus 2a+2a^2i^{\on{cr}}(\gamma,\gamma)}\,.\]
\end{enumerate}
\end{theorem}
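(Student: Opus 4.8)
The plan is to deduce both statements from Theorem \ref{thm:hom1}, whose proof (as cited from \cite{Chr21b}) handles the case of two \emph{distinct} matching curves, by a careful limiting/perturbation argument that accounts for the self-intersection contributions. The key point is that the formula in Theorem \ref{thm:hom1} for $\on{Mor}_{\mathcal{C}_{\bf S}}(M_\gamma,M_{\gamma'})$ is established locally on the ribbon graph $\mathcal{T}$: a basis of morphisms is produced by a local-to-global argument, with one basis element in $k[t_1^\pm]$ for each crossing and each directed boundary intersection of the (minimal) representatives. First I would revisit that local computation in the degenerate case $\gamma=\gamma'$. Choosing a small generic perturbation $\gamma'$ of $\gamma$ (homotopic to $\gamma$ but transverse to it), each self-crossing of $\gamma$ gives rise to \emph{two} transverse crossings of $\gamma$ with $\gamma'$, while each self-directed-boundary-intersection contributes one, plus there is one extra contribution from the "diagonal" — the identity-like morphism supported where $\gamma'$ runs parallel to $\gamma$ along its entire length. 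Passing to the limit $\gamma'\to\gamma$ and using that $\on{Mor}_{\mathcal{C}_{\bf S}}(M_\gamma,-)$ is continuous under homotopy (the objects $M_\gamma$ depend only on the homotopy class), this yields the count $1+2\,i^{\on{cr}}(\gamma,\gamma)+i^{\on{bdry}}(\gamma,\gamma)$ of part (i).

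For part (ii), the closed case, I would run the same perturbation argument but now keeping track of the rank-$a$ local system with monodromy datum $\lambda$ that is part of the data of a closed matching curve. A generic perturbation $\gamma'$ of the underlying closed curve of $\gamma$ intersects $\gamma$ transversally in $2\,i^{\on{cr}}(\gamma,\gamma)$ points coming from the self-crossings (again each self-crossing splits into two), and the "diagonal" contribution now consists of two copies of the closed curve running parallel (once on each side), which after tensoring with the rank-$a$ local systems on both factors and taking the appropriate invariants/Hochschild-type contribution yields a term of total rank $2a$ rather than $1$; similarly each crossing contributes $a^2$ copies of $k[t_1^\pm]$ because both the source and the target carry rank-$a$ local data at that point. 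There are no boundary intersections since $\gamma$ is closed. Assembling these gives the stated $k[t_1^\pm]^{\oplus 2a+2a^2 i^{\on{cr}}(\gamma,\gamma)}$.

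Concretely, the safest way to organize this is: (1) reduce, via Theorem \ref{thm:clstcat} and the fully faithful embedding $\mathcal{C}_{\bf S}\hookrightarrow \mathcal{H}(\mathcal{T},\mathcal{F}_{\mathcal{T}})$ recalled in \Cref{sec5.2}, to computing $\on{Mor}_{\mathcal{H}(\mathcal{T},\mathcal{F}_{\mathcal{T}})}(M_\gamma,M_\gamma)$; (2) realize this mapping object as the homotopy limit over $\on{Exit}(\mathcal{T})$ of the local mapping objects, as in the proof of Theorem \ref{thm:hom1} in \cite{Chr21b}; (3) for an internal arc or a generic matching curve, choose a minimal-position representative and a small transverse push-off $\gamma'$, identify the local contributions at each vertex/edge of $\mathcal{T}$ with those already computed for the distinct-curve case, and observe that the global limit splits as a direct sum of $k[t_1^\pm]$'s indexed exactly as claimed; (4) handle the extra "parallel strand" contribution separately — this is the piece that is \emph{not} visible from the distinct-curve formula — by an explicit local computation showing it contributes $k[t_1^\pm]$ in the open case and $k[t_1^\pm]^{\oplus 2a}$ in the closed rank-$a$ case. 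The main obstacle I anticipate is precisely step (4): keeping honest track of the diagonal/parallel contribution, including the factor of $2$ in the closed case (coming from the curve having two sides) and the correct multiplicities $a$, $a^2$ induced by the Jordan-block local system. This requires carefully unwinding the definition of $M_\gamma^{k[t_1^\pm]}$ from \cite{Chr21b} near a closed strand and computing the relevant endomorphism complex of the rank-$a$ $k[t_1^\pm]$-local system — ultimately an (easy but bookkeeping-heavy) computation with $\on{Mor}$ in $\mathcal{D}(k[t_1^\pm])$, whose homotopy category is $\on{Vect}_k$ by \Cref{lem:vectk}.
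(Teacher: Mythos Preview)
The paper does not actually prove this statement: both \Cref{thm:hom1} and \Cref{thm:hom2} are imported verbatim from \cite[Theorems~6.4 and~6.5]{Chr21b}, with no argument given beyond the citation. So there is no in-paper proof to compare your proposal against; what you have written is an outline of how one might re-prove the result of \cite{Chr21b} in the self-case.

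On the substance of your outline: the concrete plan in steps (1)--(4) is reasonable and is in the spirit of how such computations are carried out in \cite{Chr21b}, namely by realising $M_\gamma$ as a (co)limit of local sections and computing $\on{Mor}$ as a homotopy limit over $\on{Exit}(\mathcal{T})$. However, the ``perturbation/limit'' framing in your first two paragraphs is misleading and, taken literally, does not work. Since $M_\gamma$ depends only on the homotopy class of $\gamma$, a small push-off $\gamma'\sim\gamma$ gives $M_{\gamma'}\simeq M_\gamma$ on the nose; there is no limit to take, and \Cref{thm:hom1} is simply not applicable to the pair $(\gamma,\gamma')$ because the curves are not distinct in the sense of \Cref{def:curve}. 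In particular, the phrase ``minimal intersection of $\gamma$ with $\gamma'$'' is not well-posed here, and the bookkeeping of how many crossings a push-off produces (and whether the result is minimal) is delicate---for a simple arc the push-off can be made disjoint from $\gamma$, so the identity summand $k[t_1^\pm]$ must genuinely come from somewhere else, not from a crossing count.

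What actually needs to happen (and what \cite{Chr21b} does) is a direct computation of $\on{Mor}_{\mathcal{L}(\mathcal{T},\mathcal{F}_\mathcal{T}^{\on{clst}})}(M_\gamma,M_\gamma)$ from the gluing description in \Cref{sec5.3}: write $M_\gamma$ as an iterated pushout of local pieces $M_\delta$ along the $Z_e$'s (or as a coequaliser \eqref{eq:ii} in the closed case), and compute the endomorphism object by the resulting (co)limit formula. The ``diagonal'' contribution you anticipate in step (4) then appears naturally as the summand coming from the identity component of this (co)limit, and the factor $2a$ in the closed case comes out of the coequaliser combinatorics together with the fact that everything lives over $\mathcal{D}(k[t_1^\pm])$, whose homotopy category is $\on{Vect}_k$ by \Cref{lem:vectk}. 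Your step (4) correctly identifies this as the crux; I would drop the perturbation language entirely and go straight to the explicit (co)limit computation.
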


\subsection{Skein relations from mapping cones}\label{sec5.5}

In the following, we collect geometric descriptions of the cones of the morphisms between the objects of $\mathcal{C}_{\bf S}$ associated to the matching curves. Similar descriptions of cones in the derived categories of gentle algebras appear in \cite{OPS18}. We fix two open matching curves $\gamma,\gamma'$ in ${\bf S}$ and distinguish two cases.\\

\noindent {\bf Case 1}: $\gamma$ and $\gamma'$ have a crossing. 

There are two possible \textit{smoothings} of this crossing, each consisting of two matching curves in ${\bf S}$, denoted $\gamma_1,\gamma_2$ and $\gamma_3,\gamma_4$. The first curve in each of the two smoothings is obtained by starting at an endpoint of $\gamma$ and tracing along $\gamma$ up to that crossing, and then tracing along $\gamma'$ in one of two possible directions. Similarly, the second curve in the two smoothings is obtained by starting at the other endpoint of $\gamma$, tracing along $\gamma$ up to the crossing, and then tracing along $\gamma'$ to the other end. This process is locally at the crossing illustrated on the left in \Cref{crosssmooth}. 

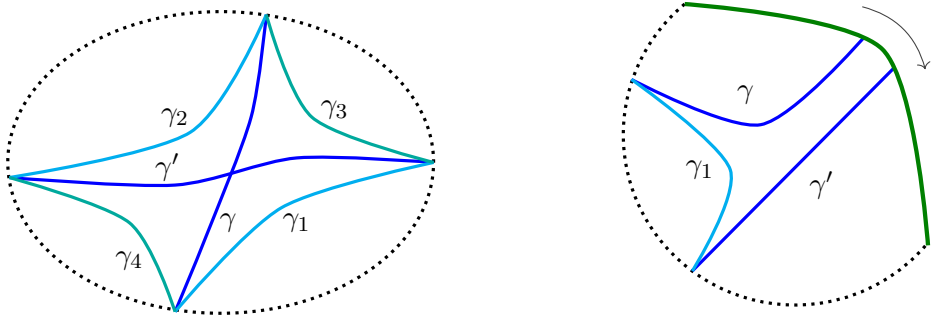
\begin{figure}[ht]
\begin{center}
\begin{tikzpicture}[scale=2]
\draw[dotted, very thick] (0.9,1) ellipse (1.4 and 1);
  \draw[color=blue][very thick] plot [smooth] coordinates {(0.6,0.01) (0.8,0.5) (1.1,1.3) (1.2,1.98)};
   \draw[color=blue][very thick] plot [smooth] coordinates {(-0.49,0.9) (0.6,0.85) (1.4,1.03) (2.3, 1)};
  \draw[color=cyan][very thick] plot [smooth] coordinates {(0.6,0.01) (1.3,0.7) (2.3, 1)};
   \draw[color=cyan][very thick] plot [smooth] coordinates {(-0.49,0.9) (0.7,1.2)  (1.2,1.98)};
 \draw[color=Emerald][very thick] plot [smooth] coordinates {(0.6,0.01) (0.3,0.6) (-0.49,0.9) };
   \draw[color=Emerald][very thick] plot [smooth] coordinates {(2.3, 1) (1.5,1.3)  (1.2,1.98)}; 
  \node (0) at (1.33, 1.33){};
  \node (1) at (0.67, 0.67){};
  \node () at (0.95,0.6){$\gamma$};
  \node () at (0.55,0.97){$\gamma'$};
  \node () at (1.4,0.6){$\gamma_1$};
  \node () at (0.6,1.3){$\gamma_2$};
  \node () at (1.65,1.35){$\gamma_3$};
  \node () at (0.3,0.35){$\gamma_4$};
\end{tikzpicture}\quad\quad\quad
\begin{tikzpicture}[scale=2]
\node (0) at (0,0){};
\draw[very thick, dotted] (-0.5,1.1) arc[start angle=130, end angle=320,radius=1.13];
\draw[color=blue][very thick] plot [smooth] coordinates {(-0.85,0.6) (0,0.3) (0.68,0.88)};
\draw[color=blue][very thick] plot [smooth] coordinates {(-0.45,-0.67) (0.88,0.68)};
\draw[color=cyan][very thick] plot [smooth] coordinates {(-0.85,0.6) (-0.2,0) (-0.45,-0.67)};
\node ()  at (-0.1,0.5){$\gamma$};
\node () at (0.4,-0.1){$\gamma'$};
\node ()  at (-0.4,0){$\gamma_1$};
\node (1) at (0.6,1.1){};
\node (2) at (1.1,0.6){} edge [<-, bend right, color=darkgray] (1);
\node (3) at (-1.5,-0.5){};
\node (4) at (1.5,-0.5){};
\node (5) at (0,1){};
\draw[color=ao][ultra thick] plot [smooth] coordinates {(-0.5,1.1)  (0.8,0.8) (1.1,-0.5)};
\end{tikzpicture}
\caption{On the left: a crossing of two matching curves $\gamma,\gamma$ (in blue) and the two possible smoothings $\gamma_1,\gamma_2$ and $\gamma_3,\gamma_4$. On the right: a directed boundary intersection of two matching curves $\gamma,\gamma'$ and the corresponding smoothed composite $\gamma_1$. The boundary of ${\bf S}$ is depicted in green. Outside of the depicted parts of ${\bf S}$, the matching curves continue identically.}
\label{crosssmooth}\label{compsmoth}\label{figta2}
\end{center}
\end{figure}

\noindent{\bf Case 2}: there is a directed boundary intersection from $\gamma$ to $\gamma'$. 

We can compose $\gamma$ with part of a boundary component of ${\bf S}\backslash M$ and $\gamma'$ to a curve, which we then smooth to a matching curve $\gamma_1$. This process is illustrated on the right in \Cref{compsmoth}.\\

To both types of intersection, \Cref{thm:hom1,thm:hom2} associate a direct summand of the morphism object $\on{Mor}_{\mathcal{C}_{\bf S}}(M_{\gamma},M_{\gamma'})$ and also $\on{Mor}_{\mathcal{C}_{\bf S}}(M_{\gamma},M_{\gamma'})$ in case of a crossing. \Cref{prop:cone} describes the cones of these morphisms in terms of the above smoothings. 

\begin{proposition}\label{prop:cone}
Let $\gamma,\gamma'$ be two matching curves in ${\bf S}$. 
\begin{enumerate}[(1)]
\item Suppose that $\gamma$ and $\gamma'$ have a crossing. 
There exist fiber and cofiber sequences in $\mathcal{C}_{\bf S}$ 
\[ M_{\gamma_1}\oplus M_{\gamma_2}\rightarrow M_{\gamma}\xrightarrow{\alpha}M_{\gamma'}\,,\quad\quad M_{\gamma_3}\oplus M_{\gamma_4}\rightarrow M_{\gamma'}\xrightarrow{\beta}M_{\gamma}\,,\]
with $\gamma_1,\gamma_2$ and $\gamma_3,\gamma_4$ being the two possible smoothings of the crossing. The morphisms $\alpha$ and $\beta$ describe any non-zero degree $0$ elements of the direct summands $ k[t_1^\pm]\subset \on{Mor}_{\mathcal{C}_{\bf S}}(M_{\gamma},M_{\gamma'})$, $k[t_1^\pm]\subset \on{Mor}_{\mathcal{C}_{\bf S}}(M_{\gamma'},M_{\gamma})$ associated to the crossing in \Cref{thm:hom1}.
\item Suppose that there is a directed boundary intersection from $\gamma$ to $\gamma'$. There exist a fiber and cofiber sequence in $\mathcal{C}_{\bf S}$ 
\[ M_{\gamma_1}\rightarrow M_{\gamma}\xrightarrow{\alpha}M_{\gamma'}\,,\]
with $\gamma_1$ the smoothed composite of $\gamma,\gamma'$. The morphism $\alpha$ describes any non-zero degree $0$ element of the direct summand $k[t_1^\pm]\subset \on{Mor}_{\mathcal{C}_{\bf S}}(M_{\gamma},M_{\gamma'})$ associated to the directed boundary intersection in \Cref{thm:hom1}.
\end{enumerate}
\end{proposition}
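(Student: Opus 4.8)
\textbf{Proof plan for \Cref{prop:cone}.}

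The plan is to reduce the computation of the cones to a local statement near the intersection point and then to propagate the local identification to a global one using the perverse schober description of $\mathcal{C}_{\bf S}$, i.e.\ the fact that an object of $\mathcal{C}_{\bf S}=\mathcal{H}(\mathcal{T},\mathcal{F}_{\mathcal{T}}^{\on{clst}})$ is a coCartesian section of the Grothendieck construction, and that a morphism between such sections is determined edge by edge. Concretely, I would first recall from \cite{Chr21b} the explicit description of the global section $M_\gamma$ associated to a matching curve $\gamma$: on each edge $e$ of $\mathcal{T}$ it evaluates to a direct sum of copies of $k[t_1^\pm]$ indexed by the intersection points of $\gamma$ with the dual arc of $e$, and the structure maps of the section over the vertices are given by explicit (triangular) matrices built from the ``local model'' of the curve passing through each triangle. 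After isotoping $\gamma$ and $\gamma'$ so that they meet minimally and so that the relevant crossing (resp.\ directed boundary intersection) sits inside one triangle of $\mathcal{T}$ (resp.\ next to one boundary edge), the morphism $\alpha$ (resp.\ $\beta$) from \Cref{thm:hom1} is represented by a section of the Hom-object which is supported on a single edge and is the identity $k[t_1^\pm]\to k[t_1^\pm]$ on the matched summands.

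The key step is then to compute the cofiber of $\alpha$ in the section category. Since fiber and cofiber sequences in $\mathcal{H}(\mathcal{T},\mathcal{F}_{\mathcal{T}}^{\on{clst}})$ are computed pointwise on the edges (limits of stable $\infty$-categories commute with finite (co)limits, and each $\mathcal{F}_{\mathcal{T}}^{\on{clst}}(e)\simeq\mathcal{D}(k[t_1^\pm])$ is stable), it suffices to check that on each edge $e$ the pointwise cofiber of $\alpha_e$ agrees with $\bigoplus_i\on{ev}_e(M_{\gamma_i})$, compatibly with the structure maps over the vertices. Away from the triangle containing the crossing this is immediate: on such edges $\alpha_e$ is (block-diagonally) either an isomorphism between matched summands of $\gamma$ and $\gamma'$ — whose cofiber is zero and matches the fact that there $\gamma_1\cup\gamma_2$ looks like $\gamma\sqcup\gamma'$ resolved — or zero on the remaining summands, whose cofiber reproduces the remaining strands. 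Inside the distinguished triangle, one is reduced to a genuinely local computation: the cofiber of the identity-type morphism between the two local models of $\gamma$ and $\gamma'$ is the direct sum of the two local models of the smoothings $\gamma_1,\gamma_2$. This is the combinatorial heart of the smoothing/skein picture and can be checked by hand from the explicit matrices describing $\on{ev}_e(M_\gamma)$, $\on{ev}_e(M_{\gamma'})$ and the triangular structure maps — it is essentially the statement that smoothing a crossing corresponds, on the level of the $A_2$-type local models, to taking a mapping cone, exactly as in the gentle-algebra computations of \cite{OPS18}. The same scheme handles case (2): near a directed boundary intersection the local model of $\gamma$ and $\gamma'$ and the boundary strand fit into a short sequence whose cofiber is the local model of the smoothed composite $\gamma_1$; all other edges are untouched.

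Finally, I would check that the boundary maps assembled from these pointwise cofibers indeed satisfy the coCartesian (Beck--Chevalley) compatibility over each vertex, so that the resulting object is a genuine global section and is identified with $M_{\gamma_1}\oplus M_{\gamma_2}$ (resp.\ $M_{\gamma_1}$): this follows because the structure maps of $M_\gamma$, $M_{\gamma'}$ and of the cone are all given by the same ``concatenation of local models'' recipe, so naturality of taking cofibers in $\on{Fun}(\Delta^1,\mathcal{D}(k[t_1^\pm]))$ does the bookkeeping. I expect the main obstacle to be the local computation inside the distinguished triangle: one must choose the representatives of $\gamma,\gamma'$, hence the local models and the explicit matrices for the structure maps, carefully enough that the mapping cone is manifestly a direct sum of the smoothing local models, and one must verify this is compatible with the two genuinely different smoothings $(\gamma_1,\gamma_2)$ and $(\gamma_3,\gamma_4)$ obtained from $\alpha$ and $\beta$ respectively — the asymmetry here coming from the triangularity of the structure maps, i.e.\ the choice of orientation along which one traces $\gamma'$ after reaching the crossing. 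Once the local picture is pinned down, everything else is formal from the section/limit description of $\mathcal{C}_{\bf S}$.
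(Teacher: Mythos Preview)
Your proposal is correct and takes essentially the same approach as the paper: the paper's proof is a single sentence stating that the result follows from a direct computation using the description of $M_\gamma$ and $M_{\gamma'}$ as coCartesian sections of the Grothendieck construction of $\mathcal{F}_{\mathcal{T}}^{\on{clst}}$, which is precisely the pointwise-cofiber-plus-local-check strategy you have outlined in detail. Your write-up is in fact considerably more explicit than what the paper provides.
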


\begin{proof}
The Proposition follows from a direct computation, using the descriptions of $M_{\gamma}$ and $M_{\gamma'}$ as coCartesian sections of the Grothendieck construction of $\mathcal{F}_{\mathcal{T}}^{\on{clst}}$.
\end{proof}

\begin{remark}\label{rem:ptlmycone}
Part (1) of \Cref{prop:cone} matches the $q=1$ Kauffman Skein relations, or for arcs equivalently the Ptolemy cluster exchange relations, see \Cref{clrel,def:skein}.
\end{remark}

\subsection{Review of the gluing construction of global sections}\label{sec5.3}

Before we state and prove the geometrization Theorem in \Cref{sec5.4}, we recall the construction of the global sections from matching curves in \cite{Chr21b}. 

\begin{definition}
Let $v$ be a vertex of $\mathcal{T}$. A segment at $v$ is an embedded curve $\delta\colon [0,1]\rightarrow {\bf S}$ lying in a contractible neighborhood of $v$ whose endpoints lie on two distinct edges of ${\mathcal{T}}$ incident to $v$ (but not on $v$). 

We consider segments at a vertex $v$ of $\mathcal{T}$ as equivalence classes under homotopies $\Delta\colon [0,1]^2\rightarrow {\bf S}$, which satisfy that $\Delta(t)$ is a segment at $v$ for all $t\in [0,1]$. A segment in ${\bf S}$ is a segment at any vertex $v$ of $\mathcal{T}$.
\end{definition}

It is straightforward to see that every matching curve in ${\bf S}$ arises in a unique way by composing finitely many segments in ${\bf S}$, such that whenever two segments are composed, they lie at distinct vertices of ${\mathcal{T}}$. 

\begin{figure}[ht]
\begin{center}
\begin{tikzpicture}
\node (0) at (0,0){};
\fill (0) circle (0.1);
\draw[color=ao][very thick] 
(-2.25,-0.75)--(2.25,-0.75)
(0,1.5) --(-2.25,-0.75)
(0,1.5) --(2.25,-0.75);
\draw[very thick]
(0,0)--(0,-0.75)
(0,0)--(0.8,0.71)
(0,0)--(-0.8,0.71);
\node (2) at (-2.25,-0.75){};
\node (3) at (2.25,-0.75){};
\node (4) at (0,1.5){};
\fill[color=orange] (2) circle (0.1);
\fill[color=orange] (3) circle (0.1);
\fill[color=orange] (4) circle (0.1);

\draw[color=blue][very thick] plot [smooth] coordinates {(0.8,0.71) (0.2,0.35) (-0.2,0.35) (-0.8,0.71)};
\end{tikzpicture}
\end{center}
\caption{One of the three possible segments (in blue) at a vertex of $\mathcal{T}$.}
\end{figure}
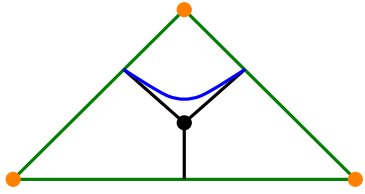

To each segment $\delta$ at a vertex $v$ of $\mathcal{T}$, there is an associated local section $M_{\delta}\in \mathcal{L}(\mathcal{T},\mathcal{F}_{\mathcal{T}}^{\on{clst}})$ in the sense of \Cref{def:glsec}. Such a local section $M$ can be identified with the data of 
\begin{itemize}
\item for each object $x\in \on{Exit}(\mathcal{T})$ an object $M(x)$ in the stable $\infty$-category $\mathcal{F}^{\on{clst}}_{\mathcal{T}}(x)$ and 
\item for each morphism $x\rightarrow y$ in $\on{Exit}(\mathcal{T})$ a morphism $\mathcal{F}^{\on{clst}}_{\mathcal{T}}(x\rightarrow y)(M(x))\rightarrow M(y)$ in $\mathcal{F}^{\on{clst}}_{\mathcal{T}}(y)$. If all these morphisms are equivalences (hence describe coCartesian morphisms), the local section is a global section. 
\end{itemize}
The local section $M_{\delta}$ associated to the segment $\delta$ is a left Kan extension relative the Grothendieck construction $p\colon \Gamma(\mathcal{F}_{\mathcal{T}}^{\on{clst}})\rightarrow \on{Exit}(\mathcal{T})$ along the inclusion $\{v\}\hookrightarrow \on{Exit}(\mathcal{T})$ of $M_{\delta}(v)\in \mathcal{F}_{\mathcal{T}}^{\on{clst}}(v)$. In particular, this local section is fully determined by its value $M_{\delta}(v)$ at $v\in \on{Exit}(\mathcal{T})$. Spelling out the Kan extension, one finds that for each edge $e$ incident to $v$, $M_\delta(e)$ is equivalent to the image of $M_{\delta}(v)$ under the functor $\mathcal{F}_{\mathcal{T}}^{\on{clst}}(v\rightarrow e)$. Further, if $x\in \on{Exit}(\mathcal{T})$ is neither $v$ not an edge incident to $v$, then $M_{\delta}(x)$ vanishes.

Let $v$ be a vertex of $\mathcal{T}$. There are three indecomposable objects in $\mathcal{F}_{\mathcal{T}}^{\on{clst}}(v)\simeq \on{Fun}(\Delta^1,\mathcal{D}(k[t_1^\pm]))$, given as follows:
\begin{align*}
\left(0\rightarrow k[t_1^\pm]\right)\,,~\left(k[t_1^\pm]\xrightarrow{\simeq} k[t_1^\pm]\right)\,,~\left(k[t_1^\pm]\rightarrow 0\right)\,.
\end{align*}
These three objects describe the values of the three local sections associated to the three possible segments at $v$. Let $e_1,e_2,e_3$ be the three edges incident to $v$. Each of the above objects satisfies that applying $\mathcal{F}_{\mathcal{T}}^{\on{clst}}(v\rightarrow e_i)$ yields $k[t_1^\pm]$ in two cases and $0$ in one case. We choose the value $M_{\delta}(v)$ so that the set of edges of $\mathcal{T}$ where $M_{\delta}$ evaluates nontrivially as $k[t_1^\pm]$ consists exactly of the two edges at which $\delta$ begins and ends.  

To produce a global section associated to a matching curve $\gamma$, we glue the local sections associated to the segments of $\gamma$. In the simplest case of gluing sections from two segments, this goes as follows. Let $e$ be an edge of $\mathcal{T}$ with incident vertices $v,v'$ and let $\delta$ and $\delta'$ be two segments at $v$, respectively $v'$, which end at $e$. We have $M_{\delta}(e)\simeq M_{\delta'}(e)\simeq k[t_1^\pm]$. Define $Z_e\in \mathcal{L}(\mathcal{T},\mathcal{F}_{\mathcal{T}}^{\on{clst}})$ via $Z_e(x)=0$ if $x\neq e$ and $Z_e(e)=k[t_1^\pm]$. There are apparent pointwise inclusions of $Z_e$ into $M_{\delta}$ and $M_{\delta'}$, which restrict at $e$ to the identity on $k[t_1^\pm]$. The gluing of $M_{\delta}$ and $M_{\delta'}$ is now defined as the pushout in $\mathcal{L}(\mathcal{T},\mathcal{F}_{\mathcal{T}}^{\on{clst}})$ of the following diagram.
\begin{equation}\label{eq:zmpu}
\begin{tikzcd}
           & Z_e \arrow[ld] \arrow[rd] &             \\
M_{\delta} &                           & M_{\delta'}
\end{tikzcd}
\end{equation}
More generally, given any curve $\eta$ composed of segments, we can glue a section $M_{\eta}\in \mathcal{L}(\mathcal{T},\mathcal{F}_{\mathcal{T}}^{\on{clst}})$ by repeatedly taking pushouts as above. If $\eta$ is an open matching curve, the section $M_{\eta}$ is even a global sections, i.e.~lies in $\mathcal{H}(\mathcal{T},\mathcal{F}_{\mathcal{T}}^{\on{clst}})\subset \mathcal{L}(\mathcal{T},\mathcal{F}_{\mathcal{T}}^{\on{clst}})$.

Consider finally a closed matching curve $\gamma$ of rank $a$ with monodromy datum $\lambda\in k^{\times}$. Let $\mathscr{J}_{\lambda}\colon k[t_1^\pm]^{\oplus a}\simeq k[t_1^\pm]^{\oplus a}$ be the $a\times a$-Jordan block with eigenvalue $\lambda$. We cut $\gamma$ at any edge of $\mathcal{T}$ into an open curve $\eta$ which is composed of segments. The global section $M_{\gamma}\in \mathcal{H}(\mathcal{T},\mathcal{F}_{\mathcal{T}}^{\on{clst}})\subset \mathcal{L}(\mathcal{T},\mathcal{F}_{\mathcal{T}}^{\on{clst}})$ is obtained as the coequalizer of 
\begin{equation}\label{eq:ii}
\begin{tikzcd}
Z_e^{\oplus a} \arrow[r, "\iota_1^{\oplus a}", shift left] \arrow[r, "\iota_2^{\oplus a}\circ \mathscr{J}_{\lambda}^{-1}"', shift right] & M_{\eta}^{\oplus a}
\end{tikzcd}
\end{equation}
in $\mathcal{L}(\mathcal{T},\mathcal{F}_{\mathcal{T}}^{\on{clst}})$. Here, $\iota_1,\iota_2$ are two pointwise inclusions of $Z_e$ into $M_{\eta}$ at $e$ arising from the two ends of $\eta$ at $e$. In the way we state this definition, making different choices of Kan extensions in the construction of $M_{\eta}$ changes the section $M_{\gamma}$. One can show that the difference between the $M_{\gamma}$'s associated to two possible choices is the same as multiplying the monodromy datum $\lambda$ of $\gamma$ by a non-zero factor. One can solve this by fixing choices of these Kan extensions. Alternatively, the proof of the geometrization \Cref{thm:geom} also shows how to extract a unique monodromy datum from a global section of the form $M_{\gamma}$. 

We note that we have defined matching curves as certain curves considered up to reversal of orientation. If $\gamma$ is an open matching curve, it is easy to see that the global section $M_{\gamma}$ does not depend on the orientation of $\gamma$. If $\gamma$ is a closed matching curve, changing the orientation of $\gamma$ in the construction of $M_{\gamma}$ interchanges the two maps in \eqref{eq:ii}. Hence, to obtain the same global section, we need to replace the monodromy datum by its inverse and use that the Jordan normal form of $\mathscr{J}_{\lambda}^{-1}$ is a single Jordan block with eigenvalue $\lambda^{-1}$ .

\subsection{The geometrization Theorem}\label{sec5.4}

The main result of this section is to prove the following Theorem, stating that all compact objects in $\mathcal{C}_{\bf S}$ are geometric, meaning that they arise as direct sums of object associated with matching curves.

\begin{theorem}[The geometrization Theorem]\label{thm:geom}
Let $X\in \mathcal{C}_{\bf S}$ be a compact object. Then there exists a unique and finite set $J$ of matching curves in ${\bf S}$ satisfying that there is an equivalence in $\mathcal{C}_{\bf S}$
\[ X\simeq \bigoplus_{\gamma\in J} M_{\gamma}\,.\]
\end{theorem}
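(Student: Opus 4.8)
The strategy is a local-to-global argument exploiting the description of $\mathcal{C}_{\bf S}=\mathcal{H}(\mathcal{T},\mathcal{F}_{\mathcal{T}}^{\on{clst}})$ as (coCartesian) sections of the Grothendieck construction of the locally constant perverse schober $\mathcal{F}_{\mathcal{T}}^{\on{clst}}$, whose generic stalk is $\mathcal{D}(k[t_1^\pm])$ with $\on{ho}\mathcal{D}(k[t_1^\pm])\simeq \on{Vect}_k$ (\Cref{lem:vectk}). First I would reduce the problem to a purely combinatorial/linear-algebraic statement: a compact object $X\in\mathcal{C}_{\bf S}$ is a coCartesian section $s$, so the datum of $X$ is, over each edge $e$ of $\mathcal{T}$, a finite-dimensional $1$-periodic complex $s(e)\simeq k[t_1^\pm]^{\oplus I_e}$ (finite-dimensionality coming from compactness together with \Cref{thm:hom1,thm:hom2} applied to the compact generator $\bigoplus_e \on{ev}^L_e(k[t_1^\pm])$), together with, at each trivalent vertex $v$ with incident edges $e_1,e_2,e_3$, an object of $\mathcal{F}_{\mathcal{T}}^{\on{clst}}(v)\simeq \on{Fun}(\Delta^1,\mathcal{D}(k[t_1^\pm]))$ whose three images under the functors $\mathcal{F}_{\mathcal{T}}^{\on{clst}}(v\to e_i)$ recover the $s(e_i)$ up to the prescribed coherences. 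Since $\on{ho}\mathcal{D}(k[t_1^\pm])\simeq\on{Vect}_k$ is semisimple, the vertex data at $v$ amount (after choosing bases) to a short exact sequence of vector spaces, equivalently a choice of splitting; the three edge-functors are (up to the cotwist permutations recorded in \Cref{cor:sphbdry}) the projections/inclusions of that splitting.

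\textbf{Key steps.} (1) Show every compact $X$ has finite-dimensional total homology, so all $s(e)$ are finite direct sums of $k[t_1^\pm]$; this is where compactness is genuinely used, via the generator of \Cref{lem:cptgen} together with the $\on{Hom}$-computations of \Cref{thm:hom1,thm:hom2}. (2) Using \Cref{prop:jkpw}-style reasoning replaced here by the concrete model, observe that over the $1$-category $\on{Exit}(\mathcal{T})$ the coCartesian-section condition, after passing to $\on{ho}\mathcal{D}(k[t_1^\pm])\simeq\on{Vect}_k$, makes a compact object into a \emph{representation of the quiver $\mathcal{T}$-with-relations} whose indecomposables I claim are exactly the $M_\gamma$. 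Concretely: build a matching-curve section $M_\gamma$ (via the gluing construction recalled in \Cref{sec5.3}) for each $\gamma$ occurring "inside" $X$ by a decomposition-along-segments argument — at each vertex the $2$-dimensional (lax-limit) local data splits into a sum of the three elemental summands $(0\to k[t_1^\pm])$, $(k[t_1^\pm]\xrightarrow{\simeq}k[t_1^\pm])$, $(k[t_1^\pm]\to 0)$, plus possibly a free loop contribution, and these elemental pieces glue along edges precisely into segments of curves. (3) The decomposition along vertices can be performed compatibly over all of $\mathcal{T}$ by an inductive/Zorn argument: pick a vertex, split the local data, propagate the splitting to neighbouring edges using that the edge-functors are exact and send the elemental summands to $k[t_1^\pm]$ or $0$ as dictated; iterate. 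The open pieces assemble to $M_\gamma$ for open matching curves $\gamma$; the "cyclic" leftover data is a $k[t_1^\pm]$-module with an automorphism (the monodromy around a closed loop), which by Jordan normal form over the algebraically closed field $k$ decomposes into Jordan blocks, each giving a closed matching curve $M_\gamma$ of rank $a$ with monodromy datum $\lambda$ as in \eqref{eq:ii}. (4) Uniqueness: since each $M_\gamma$ is indecomposable with local endomorphism ring (\Cref{prop:indec}) and $\mathcal{C}_{\bf S}^{\on{c}}$ is a Krull--Schmidt category (finite-dimensional $\on{Hom}$s over $k$, idempotent complete), the decomposition into indecomposables is unique up to isomorphism and reordering; it then remains to see that distinct matching curves give non-isomorphic objects, which follows from \Cref{thm:hom1,thm:hom2} (e.g.\ the rank and the monodromy datum can be read off the dimension of $\on{Mor}_{\mathcal{C}_{\bf S}}(M_\gamma,M_\gamma)$ and an eigenvalue computation, as remarked after \eqref{eq:ii}).

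\textbf{Main obstacle.} The delicate point is step (3): performing the vertex-by-vertex splitting \emph{coherently} at the $\infty$-categorical level rather than merely in the homotopy category. Splitting the local data at one vertex is easy because $\on{Fun}(\Delta^1,\mathcal{D}(k[t_1^\pm]))$ is semisimple, but one must choose the splittings so that they are compatible with the already-chosen coCartesian transition equivalences along every edge, and then assemble the resulting family of elemental local sections into an honest equivalence of sections $X\simeq\bigoplus_{\gamma}M_\gamma$ in $\mathcal{L}(\mathcal{T},\mathcal{F}_{\mathcal{T}}^{\on{clst}})$. I expect to handle this by the general $\infty$-categorical machinery for describing objects in limits of diagrams of stable $\infty$-categories in terms of sections of the Grothendieck construction, constructing the equivalence as a map of sections that is a pointwise equivalence (hence an equivalence of global sections), and using the gluing/pushout presentation of the $M_\gamma$ recalled in \Cref{sec5.3} to identify the summands. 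The closed-curve case requires the extra care of extracting a \emph{well-defined} monodromy datum from the cyclic data, which is exactly the subtlety flagged at the end of \Cref{sec5.3}; I would resolve it by the coequalizer description \eqref{eq:ii} together with Jordan normal form, noting that changing auxiliary Kan-extension choices only rescales $\lambda$, and that the rank and $\lambda$ (up to inversion) are invariants of the isomorphism class by \Cref{thm:hom2}.
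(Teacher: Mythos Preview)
Your outline captures the right architecture, and your treatment of uniqueness is essentially the paper's argument. The genuine gap is in step (3), and it is not the gap you think it is. You diagnose the obstacle as an $\infty$-categorical coherence problem and propose to dissolve it with general limit/section machinery, but the difficulty is already present at the level of the homotopy category $\on{ho}\mathcal{D}(k[t_1^\pm])\simeq\on{Vect}_k$ and is purely linear-algebraic: given a splitting of the value $\on{ev}_e(X_v)$ at one edge $e$ of a vertex $v$, there is in general \emph{no} splitting of $X_v\in\on{Fun}(\Delta^1,\mathcal{D}(k[t_1^\pm]))$ inducing it. For instance, if $X_v=(k[t_1^\pm]\xrightarrow{(1,1)}k[t_1^\pm]^{\oplus 2})$ and the prescribed edge splitting is the standard one $k[t_1^\pm]\oplus k[t_1^\pm]$, no decomposition of $X_v$ is block-diagonal with respect to it. So your ``split locally, propagate, iterate'' procedure breaks at the very first step: the splitting you carry to an edge from one vertex need not be refinable at the adjacent vertex.

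What the paper does instead is to abandon the idea of splitting everything at once and instead \emph{trace a single curve}, peeling off one summand $M_\gamma$ from $X$ at a time and then inducting on the total dimension. The key technical device is a pair of ``clockwise/counterclockwise'' splittings (\Cref{constr:geom1}) of the local data at a vertex relative to a prescribed three-term decomposition at one edge: these do \emph{not} reproduce the edge decomposition on the nose, but only up to a \emph{lower-triangular} automorphism with invertible diagonal. The companion \Cref{lem:geom1} then shows that such lower-triangular automorphisms at the next edge lift to automorphisms of the already-chosen vertex splittings, so one can retroactively correct all previous choices. Iterating along a path of segments, one either exits at an external edge (open curve) or returns to a previously visited edge; in the latter case the accumulated monodromy is a single automorphism of a finite-dimensional space, and Jordan normal form over the algebraically closed $k$ produces the closed-curve summands with their rank and monodromy data. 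This ``lower-triangular up to correction'' mechanism is the missing idea in your proposal; without it the compatibility you need simply fails, and no amount of abstract $\infty$-categorical bookkeeping will manufacture it.
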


\begin{corollary}\label{thm:KS}
The full subcategory $\mathcal{C}_{\bf S}^c\subset \mathcal{C}_{\bf S}$ of compact objects is Krull-Schmidt, meaning that every objects splits into a direct sum of objects with local endomorphism rings. 
\end{corollary}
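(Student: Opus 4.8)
The plan is to derive this immediately from the geometrization Theorem \ref{thm:geom}. First I would recall that an additive category is Krull--Schmidt precisely when every object decomposes into a finite direct sum of objects with local endomorphism rings; by a standard result (see e.g.\ Krause's lecture notes on Krull--Schmidt categories), for an idempotent-complete additive category it suffices to exhibit, for each object, \emph{some} finite decomposition into indecomposables whose endomorphism rings are local, since uniqueness of such decompositions is then automatic. So the task reduces to producing such a decomposition for an arbitrary compact object $X\in\mathcal{C}_{\bf S}^{\on c}$.

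Now apply Theorem \ref{thm:geom}: there is a finite set $J$ of matching curves with $X\simeq\bigoplus_{\gamma\in J}M_\gamma$. By Proposition \ref{prop:indec}, each $M_\gamma$ has local discrete endomorphism ring $\on{H}_0\on{Mor}_{\mathcal{C}_{\bf S}}(M_\gamma,M_\gamma)$, hence is indecomposable with local endomorphism ring in the homotopy $1$-category $\on{ho}\mathcal{C}_{\bf S}^{\on c}$. This is exactly a decomposition of the required form, so $\mathcal{C}_{\bf S}^{\on c}$ (equivalently $\on{ho}\mathcal{C}_{\bf S}^{\on c}$) is Krull--Schmidt. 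One should also note that $\mathcal{C}_{\bf S}^{\on c}$ is idempotent complete, being the subcategory of compact objects of a stable presentable $\infty$-category, so the ambient hypotheses of the Krull--Schmidt criterion are met.

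There is essentially no obstacle here: all the content has been placed in Theorem \ref{thm:geom} and Proposition \ref{prop:indec}, and the corollary is a formal consequence. The only point requiring a sentence of care is matching up the notion of ``local endomorphism ring'' in the statement (which refers to the $1$-categorical $\on{End}$ in $\on{ho}\mathcal{C}_{\bf S}^{\on c}$) with the locality of the discrete endomorphism ring supplied by Proposition \ref{prop:indec}; these agree by definition of $\on{ho}$. Thus the proof is a two-line citation.

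\begin{proof}
By Theorem \ref{thm:geom}, every compact object $X\in\mathcal{C}_{\bf S}$ decomposes as a finite direct sum $X\simeq\bigoplus_{\gamma\in J}M_\gamma$ of objects associated with matching curves. By Proposition \ref{prop:indec}, each $M_\gamma$ has local endomorphism ring. Since $\mathcal{C}_{\bf S}^{\on c}$ is an idempotent-complete additive category, the existence of such decompositions means precisely that $\mathcal{C}_{\bf S}^{\on c}$ is Krull--Schmidt.
\end{proof}
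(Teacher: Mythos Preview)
Your proof is correct and matches the paper's approach, which likewise cites the decomposition from Theorem~\ref{thm:geom} together with Proposition~\ref{prop:indec}. The only nuance is that the paper phrases its citation as ``as shown in the proof of Theorem~\ref{thm:geom}'' because the \emph{uniqueness} half of that theorem in turn invokes this Krull--Schmidt corollary; since you use only the existence of the decomposition there is no actual circularity, but you may wish to word the reference accordingly.
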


\begin{proof}[{Proof of \Cref{thm:KS}.}]
As shown in the proof of \Cref{thm:geom}, every compact object $X$ of $\mathcal{C}_{\bf S}$ splits as the direct sum $X\simeq \bigoplus_{\gamma \in J}M_\gamma$, with $J$ a set of matching curves. The endomorphism rings of these objects are local by \Cref{prop:indec}.
\end{proof}

The proof of the geometrization Theorem makes use of the gluing description of global sections arising from matching curves and the description of global sections in terms of coCartesian sections of the Grothendieck construction. Given a compact object $X$ of $\mathcal{C}_{\bf S}$, considered as a global section of $\mathcal{F}_{\mathcal{T}}^{\on{clst}}$, we can evaluate it at any edge $e$ of the ribbon graph $\mathcal{T}$. If the value is non-zero we can choose a direct sum decomposition of this value with a non-zero direct summand. Let $v$ be a vertex incident to $e$. As we show in \Cref{constr:geom1}, we can almost find a lift along the functor $\mathcal{F}_{\mathcal{T}}^{\on{clst}}(v\rightarrow e)$ of the direct sum decomposition at $e$ to a direct sum decomposition of the value of $X$ to $v$. This then give rise to direct sum decompositions at further edges incident to $v$. Proceeding this way, we choose repeated direct sum decompositions of the values of $X$ at all vertices and edges of $\mathcal{T}$. We then observe that we can further tweak the chosen direct sum decompositions so that they are all compatible and thus glue to a global section, which is a non-zero direct summands of $X$. The summand of $X$ so constructed is obtained from gluing local sections associated to segments and hence geometric, i.e.~arises from a matching curve. Repeating this argument, we find a splitting of $X$ into geometric direct summands.  The uniqueness of the set $C$ in the geometrization Theorem is proven separately.

\begin{construction}\label{constr:geom1}
Let ${\bf S}$ be the $3$-gon and $e$ an edge of a \idtr{} $\mathcal{T}$ of ${\bf S}$. Let $X\in \mathcal{C}_{\bf S}^{\on{c}}$ and let $\on{ev}_e(X)\simeq B\oplus A\oplus B'$ with $A\neq 0$. We construct two direct sum decompositions $X\simeq Z\oplus Y \oplus Z'$ and $X\simeq \tilde{Z}\oplus Y \oplus \tilde{Z}'$, called the counterclockwise and clockwise splittings. These satisfy that the arising equivalences 
\[ \on{ev}_e(Z)\oplus \on{ev}_e(Y)\oplus \on{ev}_e(Z')\simeq B\oplus A\oplus B'\]
\[ \on{ev}_e(\tilde{Z})\oplus \on{ev}_e(Y)\oplus \on{ev}_e(\tilde{Z}')\simeq B\oplus A\oplus B'\]
are lower triangular with invertible diagonals.

The generalized cluster category $\mathcal{C}_{\bf S}$ of ${\bf S}$ is equivalent to the functor category $\on{Fun}(\Delta^1,\mathcal{D}(k[t_1^\pm]))$, whose objects are diagrams $x\rightarrow y$ with $x,y\in \mathcal{D}(k[t_1^\pm])$. In the following, we set $\mathcal{C}_{\bf S}=\on{Fun}(\Delta^1,\mathcal{D}(k[t_1^\pm]))$. The functor $\on{ev}_{e}\colon \mathcal{C}_{\bf S}\rightarrow \mathcal{D}(k[t_1^\pm])$ can be chosen to evaluate a functor $\Delta^1\rightarrow \mathcal{D}(k[t_1^\pm])$ at its value at $1\in \Delta^1$ (recall that $\Delta^1$ has the vertices $0,1$). There are three indomposable objects in $\mathcal{C}_{\bf S}$, corresponding to the three boundary arcs in ${\bf S}$ (there are no further matching curves in ${\bf S}$). Explicitly, these objects can be described as follows.
\begin{align*}
M_1&= \left(0\rightarrow k[t_1^\pm]\right)\\
M_2&= \left(k[t_1^\pm]\xrightarrow{\simeq} k[t_1^\pm]\right)\\
M_3&= \left(k[t_1^\pm]\rightarrow 0\right)
\end{align*}
The homotopy category of $\mathcal{D}(k[t_1^\pm])$ is equivalent to the abelian $1$-category of $k$-vector spaces, see \Cref{lem:vectk}. We thus treat objects in $\mathcal{D}(k[t_1^\pm])$ as vector spaces in the following. 

All compact objects in $\mathcal{D}(k[t_1^\pm])$ are a finite direct sums of copies of $k[t_1^\pm]$. Using that every morphism in $\mathcal{D}(k[t_1^\pm])$ splits, we may assume that the object $X\in \mathcal{C}_{\bf S}$ is equivalent to an object of the form 
\[
k[t_1^\pm]^{\oplus j}\oplus k[t_1^\pm]^{\oplus l}\xrightarrow{\mathscr{M}}k[t_1^\pm]^{\oplus j}\oplus k[t_1^\pm]^{\oplus i}\,,
\]
with $\mathscr{M}=\begin{pmatrix} \on{id}_{k[t_1^\pm]^{\oplus j}} & 0 \\ 0& 0\end{pmatrix}$ and $i,j,l\geq 0$. We thus have $\on{ev}_{e}(X)=k[t_1^\pm]^{\oplus j}\oplus k[t_1^\pm]^{\oplus i}$ and a splitting 
\[ X\simeq \left(k[t_1^\pm]^{\oplus l}\rightarrow 0\right)\oplus \left( k[t_1^\pm]^{\oplus j}\hookrightarrow k[t_1^\pm]^{\oplus j}\oplus k[t_1^\pm]^{\oplus i}\right)\,.\]
We denote the first summand by $O$ and the second by $U$.  Note that $\on{ev}_e(O)=0$.

Let $C=(B\oplus A)\cap k[t_1^\pm]^{\oplus j}$. We define $V$ as the diagram $C\rightarrow B\oplus A$ in $\mathcal{D}(k[t_1^\pm])$. It is easy to see that the apparent morphism $V\rightarrow U$ in $\mathcal{C}_{\bf S}$ admits a retraction. We can thus choose a splitting $U\simeq V\oplus Q'$. By construction, we have $\on{ev}_e(V)=B\oplus A$. Next, we define $D=B\cap k[t_1^\pm]^{\oplus j}$. Then we again obtain a direct summand $Q=\left(D\rightarrow B\right)\hookrightarrow V$. We choose a direct sum complement $V\simeq Q\oplus Y$. We have $\on{ev}_e(Q)\oplus \on{ev}_e(Y)\oplus \on{ev}_e(Q')\simeq B\oplus A \oplus B'$ and this equivalence is lower triangular with invertible diagonals. 

We set $Z=Q$, $Z'=Q'\oplus O$ and $\tilde{Z}=Q\oplus O$, $\tilde{Z}'=Q'$. This gives the desired splittings $X\simeq Z\oplus Y\oplus Z'$ and $X\simeq \tilde{Z}\oplus Y\oplus \tilde{Z}'$. 
\end{construction}

\begin{lemma}\label{lem:geom1}
Consider the setup of \Cref{constr:geom1}  and the counterclockwise and clockwise splittings 
\[ X\simeq Z \oplus Y \oplus Z' \quad \text{ and }\quad X\simeq \tilde{Z}\oplus Y\oplus \tilde{Z}'\,.\]
Denote by $e_1$ the edge of $\mathcal{T}$ following the edge $e$ in the counterclockwise direction and by $e_2$ the edge of $\mathcal{T}$ following the edge $e$ in the clockwise direction.
\begin{enumerate}[(1)]
\item Suppose that $Y\simeq M_1$. Then any autoequivalence $\phi$ of $\on{ev}_{e_1}(Z)\oplus \on{ev}_{e_1}(Y)\oplus \on{ev}_{e_1}(Z')$ given by a lower triangular matrix lifts to an autoequivalence of $Z\oplus Y \oplus Z'$ given by a lower triangular matrix.
\item Suppose that $Y\simeq M_2$. Any autoequivalence $\phi$ of $\on{ev}_{e_2}(\tilde{Z})\oplus \on{ev}_{e_2}(Y)\oplus \on{ev}_{e_2}(\tilde{Z}')$ given by a lower triangular matrix lifts to an autoequivalence of $\tilde{Z}\oplus Y \oplus \tilde{Z}'$ given by a lower triangular matrix.
\end{enumerate}
\end{lemma}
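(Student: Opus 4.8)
The claim is a lifting statement about the very explicit setup of \Cref{constr:geom1}, where $\mathcal{C}_{\bf S}=\on{Fun}(\Delta^1,\mathcal{D}(k[t_1^\pm]))$ and, via \Cref{lem:vectk}, everything reduces to linear algebra over $k$ with the caveat that morphisms in $\mathcal{D}(k[t_1^\pm])$ split. I would prove the two parts by the same strategy, so let me describe part (1); part (2) is obtained by the same argument after interchanging the roles of the two vertices of $\Delta^1$ (equivalently, passing to the clockwise splitting and the edge $e_2$).

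\textbf{Step 1: unwind what $Z$, $Y$, $Z'$ look like as diagrams.} From \Cref{constr:geom1}, $Z=Q=(D\to B)$ with $D=B\cap k[t_1^\pm]^{\oplus j}$, $Y$ is the chosen complement $V\simeq Q\oplus Y$, and $Z'=Q'\oplus O$ where $O=(k[t_1^\pm]^{\oplus l}\to 0)$ and $Q'$ is a complement of $V$ in $U$. The evaluation $\on{ev}_e$ takes the value at $1\in\Delta^1$; the edges $e_1$ (counterclockwise) corresponds, under the identification of the local model, to evaluating at $0\in\Delta^1$ — i.e.\ $\on{ev}_{e_1}$ is the ``source'' functor $(x\to y)\mapsto x$. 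So I first record: $\on{ev}_{e_1}(Z)=D$, $\on{ev}_{e_1}(Z')=k[t_1^\pm]^{\oplus l}\oplus (\text{source of }Q')$, and — using the hypothesis $Y\simeq M_1=(0\to k[t_1^\pm])$ — crucially $\on{ev}_{e_1}(Y)=0$.

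\textbf{Step 2: reduce the lifting problem using $\on{ev}_{e_1}(Y)=0$.} Because the middle summand has vanishing source, a lower-triangular automorphism $\phi$ of $\on{ev}_{e_1}(Z)\oplus 0\oplus\on{ev}_{e_1}(Z')$ is really just a lower-triangular automorphism of $\on{ev}_{e_1}(Z)\oplus\on{ev}_{e_1}(Z')$, i.e.\ it is determined by automorphisms $\phi_{ZZ}$ of $D$ and $\phi_{Z'Z'}$ of $\on{ev}_{e_1}(Z')$ together with an arbitrary off-diagonal map $\psi\colon D\to\on{ev}_{e_1}(Z')$. I then build the lift of $\phi$ to $Z\oplus Y\oplus Z'$ componentwise. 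The diagonal automorphism $\phi_{ZZ}$ of $D$ extends to an automorphism of the diagram $Z=(D\to B)$ because any automorphism of the source of a two-term diagram in which the map is split-injective (here $D\hookrightarrow B$, split since all morphisms in $\mathcal{D}(k[t_1^\pm])$ split) extends; similarly $\phi_{Z'Z'}$ lifts since $Z'=Q'\oplus O$ and $O$ has zero target so an automorphism of its source is an automorphism of $O$ outright, while $Q'\subset U$ again has split structure. For the off-diagonal part, I need a morphism $Z\to Z'$ in $\mathcal{C}_{\bf S}$ whose source-component is the prescribed $\psi$; since the target of $Z$ is $B$ and the source of $O\subset Z'$ is a free module, one can send the target $B$ to $0$ and lift $\psi$ along $D\hookrightarrow D$ into the $O$-summand — concretely, using that $k[t_1^\pm]^{\oplus l}$ is projective and $D\to B$ is split, a diagram map realizing $\psi$ exists. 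Finally I must check the lower-triangular summand map $Y\to Z'$ is not forced to be anything (it can be taken $0$), and the map $Z\to Y$ likewise, so these do not obstruct.

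\textbf{Step 3: assemble and check invertibility.} Combining, I get a block-lower-triangular endomorphism of $Z\oplus Y\oplus Z'$ with invertible diagonal blocks, hence an automorphism, which maps under $\on{ev}_{e_1}$ to $\phi$ by construction. Part (2) follows verbatim with $M_1$ replaced by $M_2=(k[t_1^\pm]\xrightarrow{\simeq}k[t_1^\pm])$, $\tilde Z,\tilde Z'$ in place of $Z,Z'$, and the edge $e_2$ — here the relevant vanishing is $\on{ev}_{e_2}(\tilde Z')$ or a symmetric fact (the $O$-summand now sits in $\tilde Z$, whose target is still controlled), reducing the triangular lift to the analogous componentwise construction.

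\textbf{Main obstacle.} The real content — and where I expect to have to be careful rather than merely routine — is \emph{Step 2}, bookkeeping which of the three summands has a vanishing source (resp.\ target) at the relevant edge and why that collapse is exactly what makes an \emph{arbitrary} lower-triangular $\phi$ liftable. If $Y$ were $M_3$ instead of $M_1$ or $M_2$, the source $\on{ev}_{e_1}(Y)$ would be nonzero and the off-diagonal entries involving $Y$ could fail to lift; the hypotheses $Y\simeq M_1$ in (1) and $Y\simeq M_2$ in (2) are precisely tailored so that at the edge in question ($e_1$, resp.\ $e_2$) the middle summand contributes a split/invertible or a zero block, killing the potential obstruction. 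Verifying this cleanly, and matching the counterclockwise/clockwise conventions to source/target of $\Delta^1$, is the step that requires the most attention; everything else is the standard fact that maps between two-term complexes of projectives over $k[t_1^\pm]$ lift along split mono/epis.
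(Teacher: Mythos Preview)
Your argument rests on a misidentification of the functor $\on{ev}_{e_1}$. You claim that under the local model $\mathcal{C}_{\bf S}=\on{Fun}(\Delta^1,\mathcal{D}(k[t_1^\pm]))$ the evaluation at the counterclockwise-next edge is the \emph{source} functor $\on{ev}_0$, and hence $\on{ev}_{e_1}(M_1)=0$. This is not correct. In the perverse schober formalism of Section~3.2 only $\varrho_1=\on{ev}_e$ is a literal evaluation (at $1\in\Delta^1$); the remaining structure functors are defined as $\varrho_{i+1}=\on{ladj}(\on{ladj}(\varrho_i))$. Working this out for $\on{Fun}(\Delta^1,\mathcal{N})$ one finds $\varrho_2\simeq\on{cof}$ and $\varrho_3\simeq\on{ev}_0[1]$. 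In particular $\on{ev}_{e_1}(M_1)=\on{cof}(0\to k[t_1^\pm])\simeq k[t_1^\pm]\neq 0$, while it is $M_2$ that is killed: $\on{ev}_{e_1}(M_2)=\on{cof}(k[t_1^\pm]\xrightarrow{\simeq}k[t_1^\pm])\simeq 0$. (This matches the computation in the paper's proof, where $\on{ev}_{e_1}(M_2)\simeq 0$ and $\on{ev}_{e_1}(M_1)\simeq\on{ev}_{e_1}(M_3)\simeq k[t_1^\pm]$.) Your Step~2 therefore collapses: the middle summand does \emph{not} vanish at $e_1$, and the reduction to a $2\times 2$ problem is unavailable. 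The analogous claim for part~(2) fails for the same reason: $\on{ev}_{e_2}(M_2)\simeq k[t_1^\pm]\neq 0$.

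The paper's argument is organized quite differently and does not rely on $Y$ becoming invisible at $e_1$. It instead splits $Z$ and $Z'$ completely into copies of the three indecomposables, records the key structural fact that $Z$ contains no $M_3$ summands, and then checks that for the pairs $(x,y)$ with $x,y\in\{1,3\}$ that actually occur in the lower-triangular direction the map $\on{Mor}_{\mathcal{C}_{\bf S}}(M_x,M_y)\to\on{Mor}_{\mathcal{D}(k[t_1^\pm])}(k[t_1^\pm],k[t_1^\pm])$ induced by $\on{ev}_{e_1}$ is an equivalence --- namely for $(x,y)\in\{(1,1),(3,1),(3,3)\}$ --- while the $M_2$ summands evaporate and can be handled by the identity. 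The hypothesis $Y\simeq M_1$ (resp.\ $Y\simeq M_2$) enters not by making $Y$ vanish but by controlling which of the $(x,y)$ pairs appear; to salvage your approach you would need to redo the bookkeeping in this spirit rather than around the vanishing of a block.
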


\begin{proof}
We only prove part (1), part (2) is analogous. By construction, we can find splittings $Z\simeq \bigoplus_{i_1}M_1\oplus \bigoplus_{j_1}M_2$ and $Z'\simeq \bigoplus_{i_2}M_1\oplus \bigoplus_{j_2}M_2\oplus \bigoplus_l M_3$. We have  $\on{ev}_{e_1}(M_2)\simeq 0$ and $\on{ev}_{e_1}(M_3)\simeq \on{ev}_{e_1}(M_1)\simeq k[t_1^\pm]$. We further find $\on{Mor}_{\mathcal{C}_{\bf S}}(M_x,M_y)\xrightarrow{\on{ev}_{e_1}}\on{Mor}_{\mathcal{D}(k[t_1^\pm])}(k[t_1^\pm],k[t_1^\pm])$ to be an equivalence for $x=y=1$, for $x=3$ and $y=1$ or for $x=y=3$. It is thus clear that we can find a unique lift of $\phi$, which restricts to the identity on $\bigoplus_{j_1}M_2 \oplus \bigoplus_{j_2}M_2$. 
\end{proof}

By \Cref{lem:geom1}, the clockwise and counterclockwise splitting have the advantage that we can lift certain autoequivalences of their values at an edge to autoequivalences of the splitting. We use these autoequivalences to tweak the choices of local splittings in the proof of the geometrization \Cref{thm:geom}, by replacing some splittings with their image under such an autoequivalence. After the necessary tweaks, we are able to glue the arising two-term splittings $Y\oplus (Z\oplus Z')\simeq X$ or $Y\oplus (\tilde{Z}\oplus \tilde{Z}')\simeq X$ to a splitting of global sections.

\begin{proof}[Proof of \Cref{thm:geom}, part 1: existence of the decomposition.]
Given an edge $e$ of $\trivalent$ and $X\in \mathcal{C}_{\bf S}$, we denote by $\on{ev}_e(X)\in \mathcal{F}_{\trivalent}^{\on{clst}}(e)\simeq \mathcal{D}(k[t_1^\pm])$ the value of the coCartesian section $X$ at $e$. Similarly, given a vertex $v$ of $\trivalent$, we denote by $\on{res}_v(X)$ the restriction of the coCartesian section $X$ to $v$ and its three incident edges. The object $\on{res}_v(X)$ can be considered as an object in $\mathcal{C}_{{\bf S}_v}$, with ${\bf S}_v$ a $3$-gon. Note also that $\mathcal{C}_{{\bf S}_v}\simeq \mathcal{F}_{\trivalent}^{\on{clst}}(v)$, with the equivalence given by evaluation at $v$. We show that if $X\neq 0$, then there is a splitting $X\simeq M_{\gamma}\oplus T$ for a matching datum $(\gamma,k[t_1^\pm])$. By a descending induction on the total dimension over $k$ of $\bigoplus_{e\in \trivalent_1}\on{ev}_e(X)$, the existence of the desired decomposition then follows.

Let $0\neq X\in \mathcal{C}_{\bf S}$ be compact. Let $e_0$ be an external edge of $\trivalent$ with $\on{ev}_{e_0}(X)\neq 0$. If such an external edge does not exist, choose $e_0$ instead to be an internal edge with $\on{ev}_{e_0}(X)\neq 0$. Choose any direct sum decomposition $B_0\oplus A_0\oplus B_0'\simeq \on{ev}_{e_0}(X)$ with $A_0\simeq k[t_1^\pm]$. Let $v_1$ be a trivalent vertex incident to $e_0$. 

With this starting data, we iteratively make choices of edges $e_i$ with incident vertices $v_i$ and $v_{i+1}$ and find splittings $B_i\oplus A_i\oplus B_i'\simeq \on{ev}_{e_i}(X)$ and $T_i\oplus S_i\oplus T_i'\simeq \on{res}_{v_i}(X)$ as follows. 

Suppose the data for $i$ has been chosen. Let $v_{i+1}\neq v_i$ be the other vertex incident to $e_i$. The summand $S_i\coloneqq Y$ appearing in both the clockwise and counterclockwise splittings of $\on{res}_{v_{i+1}}(X)$ is of the form $Y\simeq M_{\delta}$, with $\delta$ a segment starting at the edge $e_i$ and ending at another edge, called $e_{i+1}$. If $e_{i+1}$ follows $e_i$ in the clockwise direction, we use the clockwise splitting from \Cref{constr:geom1} arising from $B_i\oplus A_i\oplus B_i'$ to obtain a splitting $T_{i+1}\oplus S_{i+1}\oplus T_{i+1}'$ of $\on{res}_{v_{i+1}}(X)$. If $e_{i+1}$ instead follows $e_i$ in the counterclockwise direction, we instead use the counterclockwise splitting. By \Cref{constr:geom1}, we have an equivalence $\phi\colon \on{ev}_{e_i}(T_{i+1})\oplus \on{ev}_{e_i}(S_{i+1})\oplus \on{ev}_{e_i}(T_{i+1}')\simeq \on{ev}_{e_i}(X)\simeq B_i\oplus A_i \oplus B_i'$ which is given by a lower triangular matrix with invertible diagonal. By \Cref{lem:geom1}, we can find compatible autoequivalences of $T_j\oplus S_j\oplus T_j'$  for all $1\leq j\leq i-1$ and of $B_j\oplus A_j\oplus B_j'$ for all $0\leq j\leq i-1$, such that the composite at $B_{i} \oplus A_{i}\oplus B_{i}'$ with $\phi$ is a diagonal matrix. We redefine all $S_j,T_j,T_j',A_j,B_j,B_j'$ by their images under these autoequivalences. We then set $A_{i+1}=\on{ev}_{e_{i+1}}(S_{i+1})$ and $B_{i+1}=\on{ev}_{e_{i+1}}(T_{i+1}),\,B_{i+1}'=\on{ev}_{e_{i+1}}(T_{i+1}')$.

We proceed with making these choices, until we come to a stop in one the following cases below. If $e_0$ is external, we always come to a stop in the following case, as otherwise one can find a contradiction to $X$ being a coCartesian section.

{\bf Case 1)} We have started at an external edge $e_0$ and arrive at an external edge $e_N$ with $N\geq 1$. 

In this case, the $S_i$'s, with $1\leq i\leq N$, glue to a global section $S\subset X$, satisfying $\on{res}_{v}(S)=\bigoplus_{v_j=v}S_j$. Similarly, there is a global section $T\subset X$ satisfying $\on{res}_v(T)=\bigcap_{v_j=v} T_j\oplus T_j'$ if there is at least one $1\leq j\leq N$ with $v_j=v$ and $\on{res}_v(T)=\on{res}_v(X)$ if there are no $j$ with $v_j=v$. The arising map $S\oplus T\rightarrow X$ restricts pointwise to an equivalence, and is hence also an equivalence of global sections. At each vertex $v_i$, we have by construction $S_i\simeq M_{\delta_i}$ for a segment $\delta_i$ at $v_i$. We compose these segments to an open pure matching curve $\gamma$. We find that $S\simeq M_{\gamma}$ is geometric. This gives the desired splitting $X\simeq M_{\gamma}\oplus T$. 

Suppose now that we did not stop in case 1). We may then assume that $\on{ev}_e(X)\simeq 0$ for all external edges $e$.

{\bf Case 2)} There is a non-empty set $J\subset \{0,\dots,N-1\}$, such that $e_N=e_{j}$ for all $j\in J$ and $A_N\subset \langle A_{j}\rangle_{j\in J}$ lies in the submodule of $\on{ev}_{e_N}(X)$ generated by the $A_{j}$'s. Note that in this case $J$ contains the element $0$, as otherwise we again get a contradiction to $X$ being coCartesian. We obtain a direct summand $S\subset X$ with complement $T$, satisfying $\on{res}_{v}(S)=\bigoplus_{v_j=v,\,j<N}S_j$ and $\on{res}_v(T)=\bigcap_{v_j=v,\,j<N}T_j\oplus T_j'$ or $\on{res}_v(T)=\on{res}_v(X)$ if there are no $j<N$, such that $v_j=v$. 

Again, at each vertex $v_i$ we have by construction $S_i\simeq M_{\delta_i}$ for a segment $\delta_i$ at $v_i$. We compose these segments to a closed curve $\gamma$. If $\gamma$ is given by the composite of $a$ identical closed curves $\gamma'$, we have that the number $N$ of segments of $\gamma$ is a multiple of $a$ and $e_i=e_{i+jN/a}$ for $1\leq j\leq a-1$ and $0\leq i\leq N/a$. We relabel $A_i=\bigoplus_{0\leq j\leq a-1} A_{i+jN/a}$ and $S_i=\bigoplus_{0\leq j\leq a-1} S_{i+jN/a}$ for $0\leq i <N/a$. We have chosen $\gamma'$ so that it is not again given by the composite of multiple identical closed curves. To extend $\gamma'$ to a matching datum, we need to specify a rank and a monodromy datum. 

We choose an ordered basis $U_0$ of $A_0=\on{ev}_{e_0}(S_1)\in \mathcal{D}(k[t_1^\pm])$ with cardinality $a$. Consider the basis $U_1'$ of $\on{res}_{v_1}(S_1)$ which is mapped under $S_1(v_1\rightarrow e_0)$ to the chosen basis $U_0$. The image of $U_1'$ under $S_1(v_1\rightarrow e_1)$ defines an ordered basis $U_1$ of $A_1$. From this, we again find a basis $U_2'$ of $\on{res}_{v_2}(S_2)$. Proceeding in this way, performing these steps $N/a$ times, we obtain a second ordered basis $U_{N/a}$ of $A_{N/a}=A_0$. Consider the Jordan normal form of the linear map, which maps $U_{N/a}$ to $U_0$. For each Jordan $b\times b$-block with eigenvalue $\lambda\neq 0$, we can equip $\gamma$ with rank $b$ and monodromy datum $\lambda$. With these choices of monodromy data, we finally have $S\simeq \bigoplus_{\text{blocks}}M_{\gamma}$, where the sum runs over the Jordan blocks. We thus have the desired splitting $X\simeq \bigoplus_{\text{blocks}}M_{\gamma}\oplus T$, concluding the proof.
\end{proof}

\begin{proof}[Proof of \Cref{thm:geom}, part 2: uniqueness of decomposition.]
Part 1 of the proof of \Cref{thm:geom} and the proof of \Cref{thm:KS} show that $\mathcal{C}_{\bf S}$ is Krull-Schmidt, or equivalently the homotopy $1$-category $\on{ho}\mathcal{C}_{\bf S}$ is Krull-Schmidt. The essential uniqueness of a decomposition into indecomposables in a Krull-Schmidt category is shown in \cite[Theorem 4.2]{Kra15}. It thus suffices to show that for two matching curves $\gamma,\gamma'$, we have $M_{\gamma}\simeq M_{\gamma'}$ if and only if $\gamma\sim\gamma'$. It is now easy to see that $M_{\gamma}\simeq M_{\gamma'}$ implies that $\gamma$ and $\gamma'$ are either both open or both closed. In the former case, we can argue by using \Cref{thm:hom1}, by testing $M_{\gamma},M_{\gamma'}$ against objects arising from different matching curves. One finds that $\gamma\sim \gamma'$.

If $\gamma$ and $\gamma'$ are closed, we distinguish the cases that the matching curves underlying $\gamma$ and $\gamma'$ are identical (up to reversal of orientation) or not. In the first case, one can show that $\on{Mor}_{\mathcal{C}_{\bf S}}(M_{\gamma},M_{\gamma'})\simeq 0$ if the monodromy data of $\gamma,\gamma'$ are distinct; it follows that these objects are also distinct. In the second case, a similar argument as in the case of open matching curves applies to show that the open curves composed of segments $\eta$ and $\eta'$, obtained from cutting $\gamma$ and $\gamma'$ open at an edge of $\mathcal{T}$, are identical. This shows that $\gamma$ and $\gamma'$ are also identical, concluding the case distinction and the proof.
\end{proof}

\section{The categorification of cluster algebras with coefficients}\label{sec:categorification} 

For the entire section, we fix a marked surface ${\bf S}$. We will further assume that the base field $k$ is algebraically closed and satisfies $\on{char}(k)\neq 2$.

We begin in the \Cref{subsec:clusteralgebra,subsec:Skeinalgebra} by recalling the definitions of the cluster algebra and of the commutative Skein algebra associated with ${\bf S}$. In \Cref{subsec:clustertilting}, we show that the cluster tilting objects in the exact $\infty$-/extriangulated generalized cluster category $\mathcal{C}_{\bf S}$ are in bijection with the clusters of this cluster algebra. We also describe the endomorphism algebras of the relative cluster tilting objects in $\mathcal{C}_{\bf S}$ in terms of gentle algebras and discuss two simple examples. In \Cref{subsec:clusterchar}, we describe a cluster character on $\mathcal{C}_{\bf S}$ with values in the commutative Skein algebra.

\subsection{Cluster algebras of marked surfaces}\label{subsec:clusteralgebra}

In the following we recall the definition of the cluster algebra associated to ${\bf S}$ with coefficients in the boundary arcs. For the conventions on cluster algebras, we follow \cite[Chapter 3]{FWZ16}. 

\begin{definition}
Let $\mathscr{F}=\mathbb{Q}(y_1,\dots,y_{m_1+m_2})$ be the field of rational functions in $m_1+m_2$ variables. A (labeled) seed $\big({\bf x},\tilde{M}\big)$ in $\mathscr{F}$ consists of 
\begin{itemize}
\item an $m_1+m_2$-tuple ${\bf x}=(x_1,\dots,x_{m_1+m_2})$ in $\mathscr{F}$ forming a free generating set of $\mathscr{F}$ and 
\item an $(m_1+m_2)\times m_1$-matrix $\tilde{M}$, such that the upper $m_1\times m_1$-matrix is skew-symmetric\footnote{More generally, one can also consider skew-symmetrizable matrices, see \cite[Definition 3.1.1]{FWZ16}.}. 
\end{itemize}
The tuple ${\bf x}$ is called a cluster and the elements $x_1,\dots,x_{m_1+m_2}$ are called cluster variables. The elements $x_{m_1+1},\dots,x_{m_1+m_2}$ are also called the frozen cluster variables. The matrix $\tilde{M}$ is called the extended mutation matrix.
\end{definition}

\begin{definition}
Let $\big({\bf x},\tilde{M}\big)$ be a seed and $l\in \{1,\dots,m_1\}$. The seed mutation at $l$ is given by the seed $\big({\bf x'},\mu_l(\tilde{M})\big)$ with 
\begin{itemize}
\item $\displaystyle \mu_l(\tilde{M})_{i,j}=\begin{cases} -\tilde{M}_{i,j} & \text{if }i=l\text{ or }j=l\\ 
\tilde{M}_{i,j}+\tilde{M}_{i,l}\tilde{M}_{l,j} & \text{if }\tilde{M}_{i,l}>0\text{ and }\tilde{M}_{l,j}>0\\
\tilde{M}_{i,j}-\tilde{M}_{i,l}\tilde{M}_{l,j} & \text{if }\tilde{M}_{i,l}<0\text{ and }\tilde{M}_{l,j}<0\\
\tilde{M}_{i,j}& \text{else.}
\end{cases}$
\item ${\bf x'}=(x_1,\dots,x_{l-1},x_l',x_{l+1},\dots,x_{m_1+m_2})$, where $x_l'$ is determined by the cluster exchange relation
\[ x_l'x_l=\prod_{j\text{~with~}\tilde{M}_{j,l}> 0}x_{j}^{\tilde{M}_{j,l}}+\prod_{j\text{~with~}\tilde{M}_{j,l}< 0}x_{j}^{-\tilde{M}_{j,l}}\,.\]
\end{itemize} 
\end{definition}

\begin{definition}
Let $({\bf x},\tilde{M})$ be a seed.
\begin{itemize}
\item  The associated cluster algebra $\on{CA}\subset \mathscr{F}$ is the $\mathbb{Q}$-subalgebra of $\mathscr{F}$ generated by all cluster variables in all seeds obtained from $({\bf x},\tilde{M})$ via iterated seed mutation. 
\item The associated upper cluster algebra $\on{UCA}\subset \mathscr{F}$ is the $\mathbb{Q}$-subalgebra consisting of those elements which, for every cluster of $\on{CA}$, are Laurent polynomials in the cluster variables of that cluster.
\end{itemize}
\end{definition}

\begin{remark}
The  cluster algebra or upper cluster algebra associated to a seed only depends on the extended mutation matrix, up to isomorphism of $\mathbb{Q}$-algebras. We can thus speak of the cluster algebra or upper cluster algebra associated to an extended mutation matrix.
\end{remark}

We proceed in \Cref{def:matrix} with associating an extended mutation matrix to a choice of ideal triangulation $\I$ of ${\bf S}$ in the sense of \Cref{def:idealtr}. Choosing a different ideal triangulation changes the extended mutation matrix by matrix mutations.

\begin{definition}[{$\!\!$\cite[Definition 4.1]{FST08},\cite{FT12}}]\label{def:matrix}
Let $I$ be an ideal triangulation of ${\bf S}$ with $m_1$ interior arcs, labeled arbitrarily as $1,\dots,m_1$, and $m_2$ boundary arcs, labeled arbitrarily as $m_1+1,\dots,m_1+m_2$. 

The extended signed adjacency matrix $\tilde{M}_{\I}=\sum_{\Delta}M_\Delta$ of $\I$ is the $(m_1+m_2)\times m_1$-matrix given by the sum over all ideal triangles $\Delta$ of $\I$ of the $(m_1+m_2)\times m_1$-matrices defined by 
\[ (M_{\Delta})_{i,j}=\begin{cases} 1 & \text{if }\Delta\text{ has sides }i\text{ and }j\text{ with }i\text{ following }j\text{ in the counterclockwise direction},\\
-1 &  \text{if }\Delta\text{ has sides }i\text{ and }j\text{ with }i\text{ following }j\text{ in the clockwise direction},\\
0 & \text{else.}\end{cases}
\]
The upper $m_1\times m_1$-matrix of $\tilde{M}_{\I}$ is skew-symmetric and called the signed adjacency matrix. 
\end{definition}

\begin{definition}\label{cadef}
\begin{itemize}
\item Let ${\bf S}$ be an oriented marked surface with an ideal triangulation $\I$. We define $\on{CA}_{\bf S}$ to be the cluster algebra associated to the extended mutation matrix given by the extended signed adjacency matrix $\tilde{M}_{\I}$ of $\I$. Similarly, $\on{UCA}_{\bf S}$ is defined as the associated upper cluster algebra.
\item We define the algebra $\on{CA}_{\bf S}^{\on{loc}}$ as the localization of $\on{CA}_{\bf S}$ at the frozen cluster variables $x_{m_1+1},\dots, x_{m_1+m_2}$, where $m_1$ is the number of interior arc in $\I$ and $m_2$ is the number of boundary arcs in $\I$.
\end{itemize}
\end{definition}

The cluster algebra $\on{CA}_{\bf S}$ admits a beautiful description in terms of coordinates on the decorated Teichm\"uller space of ${\bf S}$, called lambda lengths, see \cite{FT12}.

\begin{definition}\label{clrel}
Let $\gamma,\gamma'$ be two arcs in ${\bf S}$. Suppose that $\gamma,\gamma'$ have a crossing as in \Cref{crosssmooth}. Then the Ptolemy relation is defined as $\gamma\cdot \gamma'=\gamma_1\cdot \gamma_2 +\gamma_3\cdot \gamma_4$.
\end{definition}

\begin{theorem}\label{thm:clusters}
The cluster variables of $\on{CA}_{\bf S}$ are canonically in bijection with the arcs in ${\bf S}$. A set of cluster variables of $\on{CA}_{\bf S}$ forms a cluster if and only if the corresponding arcs form an ideal triangulation of ${\bf S}$. The cluster exchange relations are the Ptolemy relations.
\end{theorem}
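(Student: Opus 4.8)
The plan is to identify \Cref{thm:clusters} as essentially a transcription of the Fomin--Shapiro--Thurston correspondence \cite{FST08,FT12} into the language adopted in this paper, and then check that the conventions used here (in particular \Cref{def:matrix} and \Cref{def:arc}, where endpoints of arcs lie on $\partial{\bf S}\backslash M$ rather than on $M$) do not alter anything. First I would recall that by \cite[Theorem 7.11]{FST08} (together with \cite{FT12} for the statement with coefficients in the boundary arcs), for a marked surface ${\bf S}$ without punctures, the exchange graph of the cluster algebra attached to the extended signed adjacency matrix of any ideal triangulation is isomorphic to the flip graph of ideal triangulations of ${\bf S}$; under this isomorphism the cluster variables correspond bijectively to the arcs (interior and boundary), a cluster corresponds to the set of arcs of an ideal triangulation, and the frozen variables correspond to the boundary arcs. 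The remark following \Cref{cadef} (that $\on{CA}_{\bf S}$ depends only on the mutation-equivalence class of $\tilde M_{\I}$) together with the fact, recorded after \Cref{def:matrix}, that different ideal triangulations give mutation-equivalent matrices, shows that $\on{CA}_{\bf S}$ is well defined and that this is the cluster algebra whose combinatorics we are describing.

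Next I would verify the compatibility of conventions. The notion of internal arc used here (\Cref{def:arc}) is matched to the notion in \cite[Def.~5.2]{FT12} by the remark immediately following \Cref{def:arc}: one expands the marked points of $M$ to intervals and contracts the complementary boundary segments to points, which identifies our arcs-up-to-homotopy-rel-$\partial{\bf S}\backslash M$ with the arcs-up-to-homotopy-rel-$M$ of loc.~cit., and similarly identifies our ideal triangulations (\Cref{def:idealtr}, maximal collections of pairwise non-crossing arcs) with theirs. Under this identification the boundary arcs of \Cref{def:arc} correspond to the boundary segments, which in \cite{FST08,FT12} index the coefficients; one checks that the sign conventions in \Cref{def:matrix} (counterclockwise $=+1$) agree with \cite[Def.~4.1]{FST08} so that $\tilde M_{\I}$ is literally their extended signed adjacency matrix. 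Hence the bijection ``cluster variables $\leftrightarrow$ arcs'' and ``clusters $\leftrightarrow$ ideal triangulations'' is exactly the FST correspondence, and mutation of seeds corresponds to flips of ideal triangulations.

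Finally, the identification of the cluster exchange relations with the Ptolemy relations: given an ideal triangulation $\I$ containing an interior arc $\gamma$ and a flip replacing $\gamma$ by $\gamma'$, the arc $\gamma$ is a diagonal of the quadrilateral formed by the two triangles of $\I$ adjacent to $\gamma$, whose four sides are $\gamma_1,\gamma_2,\gamma_3,\gamma_4$ (the smoothings of the crossing of $\gamma$ and $\gamma'$ in the sense of \Cref{crosssmooth}). Reading off the column of $\tilde M_{\I}$ indexed by $\gamma$ from \Cref{def:matrix}, the positive entries are exactly two of the four sides and the negative entries the other two (each appearing with multiplicity one, since ${\bf S}$ has no punctures so no side is glued to itself in this quadrilateral), and all multiplicities are $\pm 1$; thus the cluster exchange relation $x_\gamma x_{\gamma'} = \prod_{j:\tilde M_{j,\gamma}>0} x_j + \prod_{j:\tilde M_{j,\gamma}<0} x_j$ reads $\gamma\cdot\gamma' = \gamma_1\gamma_2 + \gamma_3\gamma_4$, which is the Ptolemy relation of \Cref{clrel}. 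This last point can also simply be quoted from \cite[Section 7]{FST08}. The only genuine obstacle I anticipate is the purely bookkeeping one of making sure that the ``expand $M$ to intervals'' dictionary matches the orientation/sign conventions of \Cref{def:matrix} with those of \cite{FST08} on the nose (including the direction of the clockwise orientation of ${\bf S}$ fixed in \Cref{intdef}); once this is pinned down the theorem is immediate from the cited results, so the proof will be short and essentially a pointer to \cite{FST08,FT12} with a paragraph reconciling conventions.
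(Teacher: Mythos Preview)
Your proposal is correct and takes essentially the same approach as the paper: the paper's proof is a one-line citation to \cite[Theorem 8.6]{FT12} specialized to the unpunctured case. Your additional paragraphs reconciling the endpoint and sign conventions are accurate and more detailed than what the paper records, but the underlying argument is the same.
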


\begin{proof}
This is \cite[Theorem 8.6]{FT12} specialized to the case that ${\bf S}$ has no punctures.
\end{proof}

\subsection{Commutative Skein algebras}\label{subsec:Skeinalgebra}

We proceed with defining the commutative $q=1$ Skein algebra $\on{Sk}^1({\bf S})$ of links in ${\bf S}$. As shown in \cite{Mul16}, this algebra embeds into the upper cluster algebra $\on{UCA}_{\bf S}$. 

\begin{definition}
A link is a homotopy class relative $\partial {\bf S}\backslash M$ of continuous maps $\gamma\colon U\rightarrow {\bf S}\backslash M$, with $U$ a finite disjoint union of $[0,1]$'s and $S^1$'s, satisfying properties 1.~and 2.~from \Cref{def:curve}. We consider links up to reversal of orientation. We refer to the curves with domain $U=[0,1],S^1$ constituting a link as its components. There is an empty link with $U=\emptyset$. We denote by $\mathscr{L}({\bf S})$ the set of all links in ${\bf S}$.
\end{definition}

The $\mathbb{Q}$-vector space $\mathbb{Q}^{\mathcal{L}({\bf S})}$ inherits the structure of a commutative $\mathbb{Q}$-algebra, by defining the product of two links to be the union of the two links and extending this product $\mathbb{Q}$-bilinearly. 

\begin{definition}\label{def:skein}
We define the $q=1$ Skein algebra $\on{Sk}^1({\bf S})$ as the quotient of the $\mathbb{Q}$-algebra $\mathbb{Q}^{\mathcal{L}({\bf S})}$ by the ideal generated by the following elements. 
\begin{enumerate}[1)]
\item The $q=1$ Kauffman Skein relation.
\begin{equation*}
\begin{tikzpicture}[baseline={([yshift=-.5ex]current bounding box.center)}]
\draw[very thick, dashed] (0,0) circle (1);
\draw[very thick] (0.7071,0.7071)--(-0.7071,-0.7071)
(0.7071,-0.7071)--(-0.7071,0.7071);
\end{tikzpicture}
 \quad - \quad
\begin{tikzpicture}[baseline={([yshift=-.5ex]current bounding box.center)}]
\draw[very thick,dashed] (0,0) circle (1);
\draw[very thick, very thick] plot [smooth] coordinates {(0.7071,0.7071) (0.3,0) (0.7071,-0.7071)};
\draw[very thick] plot [smooth] coordinates {(-0.7071,-0.7071) (-0.3,0) (-0.7071,0.7071)};
\end{tikzpicture}\quad -\quad
\begin{tikzpicture}[baseline={([yshift=-.5ex]current bounding box.center)}]
\draw[very thick, dashed] (0,0) circle (1);
\draw[very thick] plot [smooth] coordinates {(-0.7071,0.7071) (0,0.3) (0.7071,0.7071)};
\draw[very thick] plot [smooth] coordinates {(0.7071,-0.7071) (0,-0.3) (-0.7071,-0.7071)};
\end{tikzpicture}
\end{equation*}
\item The value of the unknot.
\[
\begin{tikzpicture}[baseline={([yshift=-.5ex]current bounding box.center)}]
\draw[very thick, dashed] (0,0) circle (1);
\draw[very thick] (0,0) circle (0.5);
\end{tikzpicture} \quad + \quad  2 \quad \begin{tikzpicture}[baseline={([yshift=-.5ex]current bounding box.center)}]
\draw[very thick, dashed] (0,0) circle (1);
\end{tikzpicture}\]
\item Any component with domain $[0,1]$ which is homotopic relative endpoints to a subset of $\partial {\bf S}\backslash M$.
\end{enumerate}
The relations in 1) and 2) are understood to describe local relations inside a small disc (indicated by the dotted circle) in ${\bf S}$, applicable to any subset of components of a link. The depicted curves are identical outside the small disc. 
\end{definition}

\begin{remark}
The $q=1$ Kauffman Skein relation recover the cluster exchange relations given by the Ptolemy relation, see \Cref{clrel}.
\end{remark}

We remark that \Cref{def:skein} is equivalent to the definition of the $q$-Skein algebra given by Muller in \cite{Mul16}, with $q$ set to $1$, as can be seen by using Remarks 3.4 and 3.6 in \cite{Mul16}.

\begin{definition}
We define the localized $q=1$ Skein algebra $\on{Sk}^{1,\on{loc}}({\bf S})$ as the localization of $\on{Sk}^{1}({\bf S})$ at the set of boundary arcs. 
\end{definition}

\begin{theorem}[$\!\!$\cite{Mul16}]\label{skclthm}
There exist injective morphisms of $\mathbb{Q}$-algebras 
\begin{equation}\label{eq:clskinc} \on{CA}_{\bf S}^{\on{loc}}\hookrightarrow \on{Sk}^{1,\on{loc}}({\bf S})\hookrightarrow \on{UCA}_{\bf S}\,.\end{equation}
If ${\bf S}$ additionally has at least two marked points, then the  maps in \eqref{eq:clskinc} are equivalences of $\mathbb{Q}$-algebras. 
\end{theorem}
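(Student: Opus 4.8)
The statement to prove is \Cref{skclthm}, which asserts the existence of the injective algebra maps $\on{CA}_{\bf S}^{\on{loc}}\hookrightarrow \on{Sk}^{1,\on{loc}}({\bf S})\hookrightarrow \on{UCA}_{\bf S}$, with both becoming isomorphisms when ${\bf S}$ has at least two marked points. Since this is explicitly attributed to Muller \cite{Mul16}, the proof is essentially a translation: one identifies the objects in \Cref{def:skein,cadef} with the corresponding objects in loc.\ cit.\ and then invokes Muller's results. So the first step is to set up the dictionary. Muller works with the $q$-Skein algebra $\on{Sk}_q({\bf S})$ of a marked surface; I would specialize $q=1$ and then observe, using Remarks 3.4 and 3.6 of \cite{Mul16} (as already flagged in the remark after \Cref{def:skein}), that his $q=1$ algebra coincides with our $\on{Sk}^1({\bf S})$. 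The relations in \Cref{def:skein} — the Kauffman skein relation, the value of the unknot being $-2$, and the vanishing of boundary-parallel arc components — are exactly Muller's defining relations at $q=1$, so this identification is a matter of matching conventions (orientation, clockwise vs.\ counterclockwise, sign of the loop value).

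\textbf{Key steps.} First, establish $\on{Sk}^{1}({\bf S})\cong \on{Sk}_{q=1}({\bf S})$ as $\mathbb{Q}$-algebras via the cited remarks in \cite{Mul16}, hence $\on{Sk}^{1,\on{loc}}({\bf S})\cong \on{Sk}_{q=1}({\bf S})[\text{bdry arcs}^{-1}]$. Second, identify $\on{CA}_{\bf S}$ (as defined in \Cref{cadef} via the extended signed adjacency matrix of an ideal triangulation) with the cluster algebra Muller associates to ${\bf S}$; here I would use \Cref{thm:clusters} together with \cite[Theorem 8.6]{FT12}, which tells us the cluster variables are the arcs and the exchange relations are the Ptolemy relations, matching Muller's setup. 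After localizing at the boundary arcs one gets $\on{CA}_{\bf S}^{\on{loc}}\cong \on{CA}_{\bf S}^{\on{Mul,loc}}$, and likewise $\on{UCA}_{\bf S}\cong \on{UCA}_{\bf S}^{\on{Mul}}$. Third, transport Muller's main theorem (the chain of injections $\on{CA}^{\on{loc}}\hookrightarrow \on{Sk}^{\on{loc}}\hookrightarrow \on{UCA}$, via the fact that the boundary-localized skein algebra is generated by arcs satisfying the Ptolemy/exchange relations, and that each arc's Laurent-expansion in any triangulation lies in the upper cluster algebra) across the dictionary to obtain \eqref{eq:clskinc}. Fourth, for the final clause, invoke Muller's result that $\on{Sk}^{\on{loc}}=\on{UCA}$ (equivalently that the skein algebra equals the upper cluster algebra after inverting boundary arcs) under the hypothesis $|M|\geq 2$; combined with the surjectivity of $\on{CA}^{\on{loc}}\to \on{Sk}^{\on{loc}}$ in that range, all three algebras coincide.

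\textbf{Main obstacle.} The only genuine subtlety is the conventions-matching in the first two steps: Muller uses a specific normalization of the skein relation and of the loop value, and our \Cref{def:skein} uses $-2$ for the unknot with a particular crossing sign coming from the clockwise orientation of ${\bf S}$ (consistent with \Cref{intdef,def:matrix}); I must check that these agree with the $q\to 1$ limit of Muller's conventions rather than differing by a sign or an automorphism. Similarly, the passage from our convention (endpoints in $\partial{\bf S}\setminus M$) to the standard one (endpoints in $M$), already discussed in the remark after \Cref{def:arc}, must be threaded through consistently so that "boundary arc" means the same thing on both sides. Once these identifications are pinned down, the theorem is immediate from \cite{Mul16}. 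I would therefore write the proof as: (i) cite \cite[Rems.\ 3.4, 3.6]{Mul16} to identify the skein algebras; (ii) cite \Cref{thm:clusters} and \cite[Thm.\ 8.6]{FT12} to identify the cluster and upper cluster algebras; (iii) invoke Muller's theorem \cite{Mul16} for the injections \eqref{eq:clskinc} and for the equivalences when $|M|\geq 2$.

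\begin{proof}
As noted in the remark following \Cref{def:skein}, the relations defining $\on{Sk}^1({\bf S})$ agree with those of Muller's $q$-Skein algebra \cite{Mul16} specialized to $q=1$, by \cite[Remarks 3.4 and 3.6]{Mul16}; hence $\on{Sk}^1({\bf S})$ and $\on{Sk}^{1,\on{loc}}({\bf S})$ coincide with the skein algebra and its localization at the boundary arcs considered in loc.\ cit. By \Cref{thm:clusters} (i.e.\ \cite[Theorem 8.6]{FT12}), the cluster algebra $\on{CA}_{\bf S}$ of \Cref{cadef} is, after localizing at the boundary arcs, the same as the boundary-localized cluster algebra attached by Muller to ${\bf S}$, and similarly for the upper cluster algebra $\on{UCA}_{\bf S}$. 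The existence of the injective $\mathbb{Q}$-algebra maps in \eqref{eq:clskinc}, together with the statement that they are isomorphisms once ${\bf S}$ has at least two marked points, is then precisely the main theorem of \cite{Mul16}.
\end{proof}
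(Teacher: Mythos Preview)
Your proposal is correct in spirit, but note that the paper does not actually supply a proof of this theorem: it is stated with the attribution \cite{Mul16} and no proof environment follows. Your write-up is therefore more detailed than what the paper provides, since you spell out the dictionary (via \cite[Remarks 3.4 and 3.6]{Mul16} and \cite[Theorem 8.6]{FT12}) needed to match the paper's conventions with Muller's before invoking his result; this is entirely appropriate and is exactly what a reader would need to verify the citation.
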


\subsection{Classification of cluster tilting objects}\label{subsec:clustertilting}

We choose a \idtr{} $\mathcal{T}$ of ${\bf S}$. Consider the perverse schober $\mathcal{F}_{{\mathcal{T}}}^{\on{clst}}$ from \Cref{def:gclst} with its $k[t_2^\pm]$-linear smooth and proper $\infty$-category of global sections given by the generalized cluster category $\mathcal{C}_{\bf S}$. We let $G$ be the right adjoint of the adjunction 
\[ F\coloneqq \partial \mathcal{F}^{\on{clst}}_{{\mathcal{T}}}\colon \prod_{e\in {\mathcal{T}}_1^\partial}{\mathcal{F}}^{\on{clst}}_{{\mathcal{T}}}(e)\longleftrightarrow \mathcal{C}_{\bf S}\noloc G\coloneqq \prod_{e\in {\mathcal{T}}_1^\partial}\on{ev}_e \]
from \Cref{def:cupfunctor}. Since $\on{char}(k)\neq 2$, the functor $G$ admits a $2$-Calabi--Yau structure, see \Cref{thm:cyclcat} and the adjunction $F\dashv G$ is spherical by \Cref{cor:sphbdry}. By \Cref{subsec:exactfromCY}, we obtain a Frobenius, $2$-Calabi--Yau extrianguled category $(\on{ho}\mathcal{C}_{\bf S}^{\on{c}},\on{Ext}^{1,\on{CY}}_{\mathcal{C}_{\bf S}},\mathfrak{s})$ arising from of a Frobenius exact $\infty$-structure on $\mathcal{C}_{\bf S}^{\on{c}}$.

\begin{theorem}\label{rigidthm}\label{thm:rigid}
Let $k$ be an algebraically closed field with $\on{char}(k)\neq 2$.
\begin{enumerate}[i)]
\item Let $\gamma$ be a matching curve in ${\bf S}$. Then $M_{\gamma}$ is rigid in $(\on{ho}\mathcal{C}_{\bf S}^{\on{c}},\on{Ext}^{1,\on{CY}}_{\mathcal{C}_{\bf S}},\mathfrak{s})$ if and only if  $\gamma$ is an arc.  
\item Consider a collection $\I$ of distinct arcs in ${\bf S}$. Then $\bigoplus_{\gamma\in \I} M_\gamma$ is a cluster tilting object in $(\on{ho}\mathcal{C}_{\bf S}^{\on{c}},\on{Ext}^{1,\on{CY}}_{\mathcal{C}_{\bf S}},\mathfrak{s})$ if and only if $\I$ is an ideal triangulation of ${\bf S}$.
\end{enumerate}
\end{theorem}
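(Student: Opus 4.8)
The plan is to reduce both statements to the geometric model established in Section 5 (Theorems~\ref{thm:hom1}, \ref{thm:hom2}, \ref{thm:geom} and Proposition~\ref{prop:cone}), together with the identification of Calabi--Yau extensions described after Proposition~\ref{prop:introcy}: namely, for two objects $M_\gamma, M_{\gamma'}$ arising from matching curves, a basis of $\on{Ext}^1_{\mathcal{C}_{\bf S}}(M_\gamma,M_{\gamma'})$ is given by the crossings and the directed boundary intersections, and the subspace $\on{Ext}^{1,\on{CY}}_{\mathcal{C}_{\bf S}}(M_\gamma,M_{\gamma'})$ is spanned precisely by the crossings. This last point should be extracted from \Cref{lem:ext} combined with \Cref{prop:cone}: a degree-$0$ morphism associated to a crossing has as its cone a direct sum of (two) objects $M_{\gamma_i}$, hence lies in $\mathcal{C}_{\bf S}^{\on c}$ and its image under $G = \prod_e \on{ev}_e$ splits (crossings are interior intersections, so the smoothings agree with $\gamma,\gamma'$ near the boundary, and $G$ only sees the behaviour at external edges); whereas a morphism associated to a directed boundary intersection has a single cone $M_{\gamma_1}$ and its image under $G$ does \emph{not} split, because the boundary intersection changes the multiplicities of the $k[t_1^\pm]$-summands at the external edge involved. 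I would state this as a preliminary lemma (or cite the relevant part of \Cref{subsec:clustertilting} if already isolated there).

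\textbf{Part i).} First I would handle the ``only if'' direction: if $\gamma$ is a matching curve that is \emph{not} an arc, then either $\gamma$ is open with a self-crossing, or $\gamma$ is closed. In the open case with a self-crossing, \Cref{thm:hom2}(i) gives $\on{Mor}_{\mathcal{C}_{\bf S}}(M_\gamma,M_\gamma)\simeq k[t_1^\pm]^{\oplus 1+2i^{\on{cr}}(\gamma,\gamma)+i^{\on{bdry}}(\gamma,\gamma)}$ with $i^{\on{cr}}(\gamma,\gamma)\geq 1$, and each self-crossing contributes a degree-$1$ Calabi--Yau extension (by the preliminary lemma applied to a self-crossing), so $\on{Ext}^{1,\on{CY}}(M_\gamma,M_\gamma)\neq 0$. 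In the closed case, \Cref{thm:hom2}(ii) gives a summand $k[t_1^\pm]^{\oplus 2a}$ already for $i^{\on{cr}}=0$; I would argue that among these the degree-$1$ part contains a nonzero Calabi--Yau class — concretely, for a closed curve there is an ``interior'' self-extension coming from the monodromy (the class realizing $M_\gamma$ of rank $a$ as a self-extension of $M_{\gamma}$ of rank $a-1$, or for $a=1$ the canonical $\on{Ext}^1(M_\gamma,M_\gamma)\ne 0$ detected by the fact that $M_\gamma[1]\not\simeq$ a summand after applying $G$). This is the step I expect to require the most care: I need to check that the relevant degree-$1$ endomorphisms of a closed-curve object are Calabi--Yau, i.e.\ die under $G$, which should follow because a closed curve has \emph{no} boundary intersections with itself unless its two ends... but a closed curve has no ends, so $i^{\on{bdry}}(\gamma,\gamma)=0$ and \emph{every} self-extension is a crossing-type (interior) extension, hence Calabi--Yau; thus $\on{Ext}^{1,\on{CY}}(M_\gamma,M_\gamma)\neq 0$ always for closed $\gamma$. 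For the ``if'' direction: if $\gamma$ is an arc, then $i^{\on{cr}}(\gamma,\gamma)=0$, so by \Cref{thm:hom2}(i) $\on{Mor}_{\mathcal{C}_{\bf S}}(M_\gamma,M_\gamma)\simeq k[t_1^\pm]^{\oplus 1+i^{\on{bdry}}(\gamma,\gamma)}$, and the preliminary lemma says all the extension classes here are \emph{boundary} classes, hence not Calabi--Yau; therefore $\on{Ext}^{1,\on{CY}}(M_\gamma,M_\gamma)=0$, i.e.\ $M_\gamma$ is rigid.

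\textbf{Part ii).} For the ``if'' direction, suppose $\I$ is an ideal triangulation. Set $T=\bigoplus_{\gamma\in\I}M_\gamma$. Rigidity: for $\gamma,\gamma'\in\I$ distinct we have $i^{\on{cr}}(\gamma,\gamma')=0$ (compatibility, \Cref{def:idealtr}), so \Cref{thm:hom1} gives $\on{Mor}(M_\gamma,M_{\gamma'})\simeq k[t_1^\pm]^{\oplus i^{\on{bdry}}(\gamma,\gamma')}$, all boundary classes, hence not Calabi--Yau; combined with part i) this gives $\on{Ext}^{1,\on{CY}}(T,T)=0$. For the cluster-tilting property I would use \Cref{lem:2termresolution}: since the extriangulated category is $2$-Calabi--Yau and Frobenius with finite-dimensional Homs (finite-dimensionality over $k$ of $\on{Ext}$'s follows from \Cref{thm:hom1,thm:hom2} together with $\on{ho}\mathcal{D}(k[t_1^\pm])\simeq\on{Vect}_k$, \Cref{lem:vectk}), it suffices to show that $\on{Ext}^{1,\on{CY}}(X,T)\simeq 0$ implies $X\in\on{Add}(T)$. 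By \Cref{thm:geom} I may assume $X=M_\eta$ for a single matching curve $\eta$. Then $\on{Ext}^{1,\on{CY}}(M_\eta,M_\gamma)$ counts crossings of $\eta$ with $\gamma$ (by the preliminary lemma and \Cref{thm:hom1,thm:hom2}), so the hypothesis says $\eta$ crosses no arc of the triangulation $\I$ and also (applying the criterion with the roles arranged, or using the $2$-CY duality) has no self-crossings; a curve in ${\bf S}$ disjoint from all arcs of an ideal triangulation must lie inside a single triangle, and the only such curves with no self-crossings are the boundary arcs of that triangle, i.e.\ elements of $\I$. Hence $M_\eta\in\on{Add}(T)$. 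For the ``only if'' direction: if $T=\bigoplus_{\gamma\in\I}M_\gamma$ is cluster tilting then each $M_\gamma$ is rigid, so by part i) each $\gamma$ is an arc; rigidity of $T$ forces $i^{\on{cr}}(\gamma,\gamma')=0$ for $\gamma\neq\gamma'$, i.e.\ the arcs are pairwise compatible; and maximality of the collection follows from cluster-tilting — if $\I$ were not maximal, there would be a further arc $\delta$ compatible with all of $\I$ with $\delta\notin\I$, and then $M_\delta$ would be rigid with $\on{Ext}^{1,\on{CY}}(M_\delta,T)=0$ but $M_\delta\notin\on{Add}(T)$, contradicting the characterization of cluster tilting objects in \Cref{lem:2termresolution}. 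Hence $\I$ is an ideal triangulation. Combining with \Cref{thm:clusters} this also yields the bijection with clusters asserted in \Cref{thm:introct}. The main obstacle, as noted, is pinning down precisely which extension classes are Calabi--Yau for \emph{self}-extensions of closed curves and for arcs with boundary self-intersections — everything else is a bookkeeping exercise on top of the geometric model.
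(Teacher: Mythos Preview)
Your proposal is essentially correct and follows the same strategy as the paper, but a few points deserve comment.

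First, your ``preliminary lemma'' is exactly \Cref{geomcyprop} in the paper, and the paper proves it more directly than you propose: rather than going through cones via \Cref{prop:cone} and \Cref{lem:ext}, one simply inspects the construction of the morphisms in \cite{Chr21b} and checks which of them restrict to zero on external edges (i.e.\ die under $G=\prod_e\on{ev}_e$). Crossings are interior, so the associated morphisms vanish under $G$; boundary intersections live at an external edge, so the associated morphisms do not; for closed $\gamma$ one has $G(M_\gamma)\simeq 0$, whence $\on{Mor}^{\on{CY}}=\on{Mor}$ trivially. Your cone-based route would work, but your stated reason for the boundary case (``changes the multiplicities of the $k[t_1^\pm]$-summands'') is not the relevant criterion---what matters is whether $G(\alpha)=0$, not what the cone looks like.

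Second, in your arc case of part i) you write that ``all the extension classes here are boundary classes''. This overlooks the identity summand $k[t_1^\pm]\subset \on{Mor}(M_\gamma,M_\gamma)$, which also contributes in every degree. The paper notes (in \Cref{geomcyprop}) that for open $\gamma$ this identity summand does not die under $G$ either, so it is not Calabi--Yau; your conclusion is correct but the bookkeeping is incomplete.

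Third, your part ii) is the same argument as the paper's, only organized slightly differently: the paper first shows that basic maximal rigid objects correspond bijectively to ideal triangulations (using part i) and \Cref{thm:geom}), and then verifies the cluster tilting condition \eqref{eq:cyn0} for such objects, whereas you invoke \Cref{lem:2termresolution} for both implications. Both routes are fine.
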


We prove \Cref{thm:rigid} further below.

\begin{corollary}\label{cor:bmrcluster}
There are canonical bijections between the sets of the following objects.
\begin{itemize}
\item Clusters of the cluster algebra with coefficients $\on{CA}_{\bf S}$ of ${\bf S}$.
\item Ideal triangulations of ${\bf S}$.
\item Cluster tilting objects in $(\on{ho}\mathcal{C}_{\bf S}^{\on{c}},\on{Ext}^{1,\on{CY}}_{\mathcal{C}_{\bf S}},\mathfrak{s})$ up to equivalence.
\end{itemize}
\end{corollary}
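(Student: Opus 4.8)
\textbf{Proof proposal for \Cref{cor:bmrcluster}.}

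The plan is to derive the corollary directly from \Cref{thm:rigid} together with \Cref{thm:clusters} and the geometrization \Cref{thm:geom}. The bijection between ideal triangulations of ${\bf S}$ and clusters of $\on{CA}_{\bf S}$ is exactly the content of \Cref{thm:clusters}: there the cluster variables are identified with the arcs in ${\bf S}$, and a collection of cluster variables forms a cluster precisely when the corresponding arcs form an ideal triangulation. So the only new work is to identify ideal triangulations with cluster tilting objects in the extriangulated category $(\on{ho}\mathcal{C}_{\bf S}^{\on{c}},\on{Ext}^{1,\on{CY}}_{\mathcal{C}_{\bf S}},\mathfrak{s})$, up to equivalence.

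First I would set up the map from ideal triangulations to cluster tilting objects. Given an ideal triangulation $\I$ of ${\bf S}$, send it to $T_{\I}\coloneqq \bigoplus_{\gamma\in \I} M_\gamma$. By \Cref{thm:rigid}.ii this is a cluster tilting object in $(\on{ho}\mathcal{C}_{\bf S}^{\on{c}},\on{Ext}^{1,\on{CY}}_{\mathcal{C}_{\bf S}},\mathfrak{s})$. For well-definedness up to equivalence there is nothing to check, since $M_\gamma$ depends only on the homotopy class of $\gamma$. For injectivity, if $T_{\I}\simeq T_{\I'}$ in $\on{ho}\mathcal{C}_{\bf S}^{\on{c}}$, then by the uniqueness part of \Cref{thm:geom} (Krull--Schmidt, plus the fact that $M_\gamma\simeq M_{\gamma'}$ iff $\gamma\sim\gamma'$, established there) the multisets $\{M_\gamma\}_{\gamma\in\I}$ and $\{M_{\gamma'}\}_{\gamma'\in\I'}$ agree, hence $\I=\I'$ as collections of arcs. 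For surjectivity, let $T$ be any cluster tilting object in $(\on{ho}\mathcal{C}_{\bf S}^{\on{c}},\on{Ext}^{1,\on{CY}}_{\mathcal{C}_{\bf S}},\mathfrak{s})$. By \Cref{thm:geom}, $T\simeq\bigoplus_{\gamma\in J} M_\gamma$ for a finite set $J$ of matching curves, which we may take to consist of pairwise non-isomorphic indecomposables since $T$ is basic. Rigidity of $T$ forces each $M_\gamma$ to be rigid, so by \Cref{thm:rigid}.i each $\gamma\in J$ is an arc. Moreover $\on{Ext}^{1,\on{CY}}_{\mathcal{C}_{\bf S}}(M_\gamma,M_{\gamma'})\simeq 0$ for all $\gamma,\gamma'\in J$ means, by the geometric description of Calabi--Yau extensions recalled after \Cref{prop:introcy} (crossings contribute Calabi--Yau extensions), that the arcs in $J$ are pairwise without crossings, i.e.\ pairwise compatible. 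Finally, the cluster tilting property — every object admits a two-term $\on{Add}(T)$-resolution, equivalently (via \Cref{lem:2termresolution}, using $2$-Calabi--Yauness) $\on{Ext}^{1,\on{CY}}_{\mathcal{C}_{\bf S}}(X,T)\simeq 0 \Rightarrow X\in\on{Add}(T)$ — forces $J$ to be maximal among such collections: if some arc $\delta$ were compatible with all of $J$ but not in $J$, then $M_\delta$ would have no Calabi--Yau self-extensions with $T$, forcing $M_\delta\in\on{Add}(T)$, a contradiction. Hence $J$ is a maximal collection of pairwise compatible arcs, i.e.\ an ideal triangulation by \Cref{def:idealtr}, and $T\simeq T_J$.

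The composite bijection (ideal triangulations $\leftrightarrow$ clusters from \Cref{thm:clusters}, ideal triangulations $\leftrightarrow$ cluster tilting objects from the above) is then the asserted canonical bijection. I expect the main obstacle to be the surjectivity argument, specifically pinning down that an abstract basic rigid object with the resolution property really does exhaust a maximal compatible collection of arcs; this requires combining \Cref{thm:geom}, \Cref{thm:rigid}, the geometric interpretation of $\on{Ext}^{1,\on{CY}}$ in terms of crossings, and \Cref{lem:2termresolution} in the $2$-Calabi--Yau case, and care is needed to ensure that ``compatible with all arcs in $J$'' combined with ``not in $J$'' genuinely contradicts maximality rather than merely the rigidity already used. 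The remaining steps are essentially bookkeeping on top of results already established in the excerpt.
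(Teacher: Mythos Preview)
Your proposal is correct and follows essentially the same route as the paper: combine \Cref{thm:geom} and \Cref{thm:rigid} for the bijection between ideal triangulations and cluster tilting objects, and invoke \Cref{thm:clusters} for the bijection with clusters. The only remark is that your surjectivity argument (steps 3--4, using the geometric description of $\on{Ext}^{1,\on{CY}}$ and \Cref{lem:2termresolution} to deduce maximality) reproves part of the ``only if'' direction of \Cref{thm:rigid}.ii; once you have established via \Cref{thm:geom} and \Cref{thm:rigid}.i that $T\simeq\bigoplus_{\gamma\in J}M_\gamma$ with $J$ a collection of distinct arcs, you may cite \Cref{thm:rigid}.ii directly to conclude that $J$ is an ideal triangulation.
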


\begin{proof}
Combining the geometrization \Cref{thm:geom} and \Cref{thm:rigid}, we obtain that there is a bijection between the sets of equivalence classes of cluster tilting objects and ideal triangulations of ${\bf S}$. The bijection between clusters and ideal triangulation is \Cref{thm:clusters}.
\end{proof}

\begin{lemma}\label{geomcyprop}
\begin{enumerate}[(1)]
\item Let $\gamma,\gamma'$ be two distinct matching curves in ${\bf S}$. Then 
\[ \on{Mor}^{\on{CY}}_{\mathcal{C}_{\bf S}}(M_{\gamma},M_{\gamma'})\subset \on{Mor}_{\mathcal{C}_{\bf S}}(M_{\gamma},M_{\gamma'})\] 
consists of those direct summands identified in \Cref{thm:hom1} which correspond to crossings of $\gamma$ and $\gamma'$. 
\item Let $\gamma$ be an open matching curve in ${\bf S}$. Then 
\[ \on{Mor}^{\on{CY}}_{\mathcal{C}_{\bf S}}(M_{\gamma},M_{\gamma})\subset \on{Mor}_{\mathcal{C}_{\bf S}}(M_{\gamma},M_{\gamma})\] 
consists of those direct summands identified in \Cref{thm:hom2} which corresponding to self-crossings.
\item Let $\gamma$ be a closed matching curve in ${\bf S}$. Then 
\[  \on{Mor}^{\on{CY}}_{\mathcal{C}_{\bf S}}(M_{\gamma},M_{\gamma})=\on{Mor}_{\mathcal{C}_{\bf S}}(M_{\gamma},M_{\gamma})\,.\] 
\end{enumerate}
\end{lemma}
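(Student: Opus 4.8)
The plan is to reduce everything to the concrete geometric computations already available. Recall that $\on{Mor}^{\on{CY}}_{\mathcal{C}_{\bf S}}(X,Y)$ is, by \Cref{def:cyfun}, the maximal direct summand of $\on{Mor}_{\mathcal{C}_{\bf S}}(X,Y)$ on which the functor $G=\prod_{e\in\mathcal{T}_1^\partial}\on{ev}_e$ acts by zero. So for each of the three cases I would take the explicit basis of $\on{Mor}_{\mathcal{C}_{\bf S}}(M_\gamma,M_{\gamma'})$ (resp.\ $\on{Mor}_{\mathcal{C}_{\bf S}}(M_\gamma,M_\gamma)$) furnished by \Cref{thm:hom1} and \Cref{thm:hom2}, which is indexed by crossings, directed boundary intersections, and (for the self-Hom) the identity, and determine on which of these summands $G$ vanishes. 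Since $G$ evaluates a global section at the external edges only, the key point is to understand how the degree-$0$ generators of the summands behave under $\on{ev}_e$ for $e$ an external edge.

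The core mechanism is the following. A summand of $\on{Mor}_{\mathcal{C}_{\bf S}}(M_\gamma,M_{\gamma'})$ associated with a crossing is represented (via \Cref{prop:cone}) by a morphism $\alpha\colon M_\gamma\to M_{\gamma'}$ whose fiber is $M_{\gamma_1}\oplus M_{\gamma_2}$, with $\gamma_1,\gamma_2$ a smoothing of an \emph{interior} crossing; such a morphism is supported at interior edges of $\mathcal{T}$ and restricts to zero at every external edge, hence lands in $\on{Mor}^{\on{CY}}$. On the other hand, a summand associated with a directed boundary intersection from $\gamma$ to $\gamma'$ is supported at the external edge meeting the relevant boundary component: more precisely, using the gluing description of $M_\gamma$ from \Cref{sec5.3}, the generator of this summand restricts at that external edge $e$ to a nonzero morphism $\on{ev}_e(M_\gamma)\to\on{ev}_e(M_{\gamma'})$ in $\mathcal{D}(k[t_1^\pm])$ (indeed to an equivalence onto the relevant copy of $k[t_1^\pm]$). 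So $G$ is nonzero on this summand, and it is excluded from $\on{Mor}^{\on{CY}}$. The same dichotomy handles the self-Hom in case (2): self-crossings give CY summands, directed self-boundary intersections do not, and the identity summand $k[t_1^\pm]\subset\on{Mor}_{\mathcal{C}_{\bf S}}(M_\gamma,M_\gamma)$ is obviously not CY since $G(\on{id}_{M_\gamma})=\on{id}_{G(M_\gamma)}$ and $G(M_\gamma)\neq 0$ when $\gamma$ is open (it has at least one end on the boundary). For case (3), a closed matching curve $\gamma$ has empty intersection with $\partial{\bf S}$, so every edge of $\mathcal{T}$ at which $M_\gamma$ is supported is interior; equivalently $\on{ev}_e(M_\gamma)\simeq 0$ for every external edge $e$, hence $G(M_\gamma)\simeq 0$ and therefore $\on{Mor}^{\on{CY}}_{\mathcal{C}_{\bf S}}(M_\gamma,M_\gamma)=\on{Mor}_{\mathcal{C}_{\bf S}}(M_\gamma,M_\gamma)$ trivially, and there are no boundary intersections to worry about.

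Concretely the steps are: (i) reduce to checking $G$ on the explicit degree-$0$ generators of the summands of \Cref{thm:hom1} and \Cref{thm:hom2}; (ii) using \Cref{prop:cone} together with the gluing construction of \Cref{sec5.3}, identify the support of each generator among the edges of $\mathcal{T}$ --- generators from crossings are supported on interior edges only, generators from directed boundary intersections have nonzero restriction to the external edge along the boundary component where the intersection occurs; (iii) conclude that $G$ kills exactly the crossing summands (plus, in the self-Hom case, note $G$ is nonzero on the identity summand when $\gamma$ is open), giving (1) and (2); (iv) for (3), observe a closed matching curve meets no boundary edge so $G(M_\gamma)\simeq 0$.

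The main obstacle is step (ii): verifying that the generator of the summand associated with a directed boundary intersection genuinely has nonzero image under some $\on{ev}_e$ with $e$ external. This requires unwinding the coCartesian-section description of $M_\gamma$ and $M_{\gamma'}$ near the boundary component, and pinning down the morphism constructed in \Cref{prop:cone}(2) from the smoothed composite $\gamma_1$ --- precisely the local computation invoked (but not spelled out) in the proof of \Cref{prop:cone}. Once one has that, say from the explicit local models at the external vertices of $\mathcal{T}$, everything else is bookkeeping. I would organize the write-up so that the boundary-intersection computation is isolated as the one nontrivial lemma, with the crossing case and case (3) following immediately from support considerations.
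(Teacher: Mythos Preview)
Your proposal is correct and follows essentially the same approach as the paper: both arguments reduce to checking, for each type of summand identified in \Cref{thm:hom1} and \Cref{thm:hom2}, whether the associated morphism restricts to zero at every external edge of $\mathcal{T}$, and both conclude that crossings do, boundary intersections and the identity summand (for open $\gamma$) do not, and that closed curves satisfy $G(M_\gamma)\simeq 0$ trivially. The paper's proof is terser, simply pointing to the construction of the summands in \cite{Chr21b} rather than invoking \Cref{prop:cone} as an intermediary, but the content is the same.
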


\begin{proof}
Inspecting the construction of the direct summands of $\on{Mor}_{{\mathcal{C}}_{\bf S}}(M_{\gamma},M_{\gamma'})$ corresponding to the different types of intersections in \cite{Chr21b}, one finds the following.
\begin{itemize}
\item Boundary intersections give rise to morphisms which evaluate non-trivially at an external edge of $\mathcal{T}$, i.e.~do not get mapped by $G$ to zero. 
\item Crossings give rise to morphisms which restrict to zero on all external edges of $\mathcal{T}$ and thus define direct summands lying in $\on{Mor}^{\on{CY}}_{{\mathcal{C}}_{\bf S}}(M_{\gamma},M_{\gamma'})$. 
\item If $\gamma$ is open, then the direct summand $k[t_1^\pm]\subset \on{Mor}_{{\mathcal{C}}_{\bf S}}(M_{\gamma},M_{\gamma})$ does not evaluate to zero under $G$.
\item If $\gamma$ is closed, then $G(M_{\gamma})\simeq 0$ and hence $\on{Mor}^{\on{CY}}_{\mathcal{C}_{\bf S}}(M_{\gamma},M_{\gamma})=\on{Mor}_{\mathcal{C}_{\bf S}}(M_{\gamma},M_{\gamma})$.
\end{itemize}
This shows the Lemma.
\end{proof}

\begin{proof}[Proof of \Cref{thm:rigid}.]
We begin with part i). If $\gamma$ is a closed matching curve, then $M_\gamma$ is not rigid because there is always a direct summand $k[t_1^\pm]\subset \on{Mor}^{\on{CY}}_{\mathcal{C}_{\bf S}}(M_\gamma,M_\gamma)= \on{Mor}_{\mathcal{C}_{\bf S}}(M_\gamma,M_\gamma)$. We thus assume that $\gamma$ is open. \Cref{thm:hom1} and \Cref{geomcyprop} imply that $M_{\gamma}$ is rigid if and only if $\gamma$ has no self crossings, meaning that $\gamma$ is an arc, showing part i).

Using \Cref{thm:hom1} and \Cref{geomcyprop}, we find that given two arcs $\gamma,\gamma'$, we have an isomorphism $\on{Ext}^{1,\on{CY}}_{\mathcal{C}_{\bf S}}(M_{\gamma},M_{\gamma'})\simeq 0$ if and only if $\gamma$ and $\gamma'$ are compatible in the sense of \Cref{def:idealtr}. By part i) and the maximality of an ideal triangulation, we find that any basic, maximal rigid object is of the form $\bigoplus_{\gamma \in \I} M_{\gamma}$ for an ideal triangulation $\I$ and conversely any ideal triangulation $I$ gives rise to such a basic, maximal rigid object. Since any cluster tilting object is basic and maximal rigid, it remains to verify that
\begin{equation}\label{eq:cyn0}
\on{Ext}^{1,\on{CY}}_{\mathcal{C}_{\bf S}}(\bigoplus_{\gamma \in \I}M_{\gamma},M_{\gamma'})\not\simeq 0
\end{equation} 
for $\I$ an ideal triangulation and $\gamma'\notin \I$ a matching curve. 

Let thus $\I$ be an ideal triangulation. Then $\I$ decomposes ${\bf S}$ into  triangles with edges the arcs in $\I$. If a matching curve $\gamma'$ crosses an edge of one of these triangles, we find a direct summand $k[t_{1}^\pm]\subset \on{Mor}_{\mathcal{C}_{\bf S}}^{\on{CY}}(M_\gamma,M_{\gamma'})$, showing \eqref{eq:cyn0}. If $\gamma'$ does not cross any arcs in $\I$, then $\gamma'$ is contained in an ideal triangle and hence already in $\I$. We thus obtain that \eqref{eq:cyn0} holds for all $\gamma' \notin \I$, showing that $\bigoplus_{\gamma \in \I} M_{\gamma}$ is cluster tilting.
\end{proof}

Recall that given a \idtr{} $\mathcal{T}$ of ${\bf S}$, we denote the associated relative Ginzburg algebra by $\mathscr{G}_\mathcal{T}$.

\begin{proposition}\label{prop:gtl}
Consider an ideal triangulation $\I$ of ${\bf S}$ with dual \idtr{} $\mathcal{T}$ and let $X=\bigoplus_{\gamma\in \I}M_{\gamma}$ be the associated cluster tilting object.
\begin{enumerate}[(1)]
\item The Jacobian algebra $\mathscr{J}_{\mathcal{T}}\coloneqq \on{H}_0(\mathscr{G}_\mathcal{T})$ is a gentle algebra.
\item There exists an isomorphism of dg-algebras (with vanishing differentials)
\[ \on{H}_*\on{Mor}_{\mathcal{C}_{\bf S}}(X,X)\simeq \on{H}_0(\mathscr{G}_\mathcal{T})\otimes_k k[t_1^\pm]\,.\]
\end{enumerate}
In particular, the discrete endomorphism algebra $\on{Ext}^0_{\mathcal{C}_{\bf S}}(X,X)$ of $X$ is a gentle algebra.
\end{proposition}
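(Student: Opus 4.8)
The plan is to prove the two parts in sequence, deducing the final ``in particular'' statement from part (2) together with part (1). The starting point is the standard fact (due to Labardini-Fragoso \cite{Lab09}, see also \cite{ABCP10}) that the Jacobian algebra $\mathscr{J}_{\mathcal{T}}=\on{H}_0(\mathscr{G}_\mathcal{T})$ of the quiver with potential associated with an ideal triangulation without punctures is a finite-dimensional gentle algebra; this is purely a statement about the quiver with potential and is independent of the rest of this paper, so for part (1) I would simply cite it. The only point requiring care is that $\mathscr{G}_\mathcal{T}$ here is the \emph{relative} Ginzburg algebra, with the boundary arcs appearing as frozen vertices; but $\on{H}_0$ of the relative Ginzburg algebra is still the (ordinary, non-completed) Jacobian algebra of the ice quiver with potential obtained from $\I$, and deleting the frozen structure does not affect gentleness.

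For part (2), the idea is to compute $\on{H}_*\on{Mor}_{\mathcal{C}_{\bf S}}(X,X)$ using the geometric model. First I would recall that $\mathcal{T}$ is the dual trivalent ribbon graph of $\I$, so that each arc $\gamma\in \I$ is a matching curve and, by \Cref{prop:smprp} and its proof (invoking \cite[Prop.~5.19]{Chr21b}), the objects $M_\gamma$ for $\gamma\in\I$ are precisely the $\on{ev}^L_e(k[t_1^\pm])$, whose sum is a compact generator of $\mathcal{C}_{\bf S}$; in particular $X=\bigoplus_{\gamma\in\I}M_\gamma$ is (up to the identification $\mathcal{H}(\mathcal{T},\mathcal{F}_{\mathcal{T}})\simeq \mathcal{D}(\mathscr{G}_\mathcal{T})$ and passage to the quotient) the image of the relative Ginzburg algebra. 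Then I would apply \Cref{thm:hom1} and \Cref{thm:hom2}: for distinct arcs $\gamma\neq\gamma'$ in $\I$ there are no crossings, so $\on{Mor}_{\mathcal{C}_{\bf S}}(M_\gamma,M_{\gamma'})\simeq k[t_1^\pm]^{\oplus i^{\on{bdry}}(\gamma,\gamma')}$ is concentrated in degrees divisible by $2$ (since $k[t_1^\pm]$ is, as a $k[t_2^\pm]$-module, free with generators in even degrees — more precisely $\on{H}_*(k[t_1^\pm])$ has one basis element in each integer degree, but the relevant grading over $k$ collapses), and likewise $\on{Mor}_{\mathcal{C}_{\bf S}}(M_\gamma,M_\gamma)\simeq k[t_1^\pm]^{\oplus 1+i^{\on{bdry}}(\gamma,\gamma)}$ because arcs in a triangulation have no self-crossings. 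Taking total homology over $k$ and comparing ranks, $\on{H}_*\on{Mor}_{\mathcal{C}_{\bf S}}(X,X)$ is, as a graded $k$-vector space, $\big(\bigoplus_{\gamma,\gamma'}\on{H}_*\on{Mor}(M_\gamma,M_{\gamma'})\big)$, which matches $\on{H}_0(\mathscr{G}_\mathcal{T})\otimes_k k[t_1^\pm]$ in dimension — one knows the dimension of the Jacobian algebra of a triangulation equals $1 + \sum i^{\on{bdry}}$ summed appropriately, i.e.\ the number of ``arcs of triangles'' counted with the right multiplicities. To upgrade this dimension count to an isomorphism of graded algebras, I would argue that $\on{H}_*\on{Mor}_{\mathcal{C}_{\bf S}}(X,X)$ has vanishing differential (it is already the homology) and that the multiplication is the evident one: the degree-$0$ part $\on{Ext}^0_{\mathcal{C}_{\bf S}}(X,X)$ is the endomorphism algebra of the image of $\mathscr{G}_\mathcal{T}$ in the cluster quotient, which by Amiot's theory (Keller--Reiten, \cite{Ami09}) is $\on{H}_0(\mathscr{G}_\mathcal{T})=\mathscr{J}_{\mathcal{T}}$ when the latter is finite-dimensional, and the full graded algebra is obtained from this by $\otimes_k k[t_1^\pm]$ because $\mathcal{C}_{\bf S}\simeq \mathcal{D}(\on{gtl})\otimes_k\mathcal{D}(k[t_1^\pm])$ by \Cref{prop:derivedgtl} and the cluster tilting object corresponds under this equivalence to (the relevant tilting object) $\otimes k[t_1^\pm]$.

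Concretely the cleanest route for part (2) is: (i) invoke \Cref{prop:derivedgtl} to write $\mathcal{C}_{\bf S}\simeq \mathcal{D}(\on{gtl})\otimes_k \mathcal{D}(k[t_1^\pm])$, hence $\on{Mor}_{\mathcal{C}_{\bf S}}(Y\otimes k[t_1^\pm], Y'\otimes k[t_1^\pm])\simeq \on{Mor}_{\mathcal{D}(\on{gtl})}(Y,Y')\otimes_k k[t_1^\pm]$ for objects coming from $\mathcal{D}(\on{gtl})$; (ii) identify $X$ with $T\otimes k[t_1^\pm]$ where $T\in\mathcal{D}(\on{gtl})$ is a tilting object whose endomorphism algebra is $\mathscr{J}_\mathcal{T}$ — this identification is the content of the cluster-tilting theory, using that the endomorphism algebra of a cluster tilting object arising from a triangulation is the Jacobian algebra (Amiot \cite{Ami09}, Keller--Reiten), and \Cref{thm:hom1}+\Cref{thm:hom2} to see the higher $\on{Mor}$'s vanish so that $T$ is genuinely tilting (not merely silting) in $\mathcal{D}(\on{gtl})$; (iii) conclude $\on{H}_*\on{Mor}_{\mathcal{C}_{\bf S}}(X,X)\simeq \on{End}_{\mathcal{D}(\on{gtl})}(T)\otimes_k k[t_1^\pm]\simeq \mathscr{J}_\mathcal{T}\otimes_k k[t_1^\pm]$, with vanishing differential since the right-hand side is formal. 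The ``in particular'' statement is then immediate: $\on{Ext}^0_{\mathcal{C}_{\bf S}}(X,X)=\on{H}_0\on{Mor}_{\mathcal{C}_{\bf S}}(X,X)\simeq \mathscr{J}_\mathcal{T}\otimes_k \on{H}_0(k[t_1^\pm])\simeq \mathscr{J}_\mathcal{T}$, which is gentle by part (1).

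The main obstacle I anticipate is step (ii): cleanly identifying $X=\bigoplus_{\gamma\in\I}M_\gamma$ with $T\otimes k[t_1^\pm]$ for an honest tilting object $T\in\mathcal{D}(\on{gtl})$ whose endomorphism algebra is the Jacobian algebra, rather than just exhibiting an abstract dimension match. This requires knowing that under a \emph{different} choice of ideal triangulation $\I$ (not necessarily the auxiliary one used to define $\mathcal{T}$ and $\on{gtl}$) the corresponding $M_\gamma$'s still assemble into the relative-Ginzburg compact generator of the cluster category, i.e.\ using the triangulation-independence from \cite[Section 6.4]{Chr22} and \Cref{thm:clstcat}, together with the fact that mutation of cluster tilting objects is compatible with flips of triangulations (\Cref{cor:bmrcluster}); then the endomorphism-algebra computation reduces to the classical statement of Keller--Reiten/Amiot for the generalized cluster category, which identifies $\on{End}$ of the canonical cluster tilting object with $\on{H}_0$ of the (relative) Ginzburg algebra. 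Once this identification is in place, everything else is bookkeeping with \Cref{thm:hom1}, \Cref{thm:hom2}, \Cref{prop:derivedgtl} and \Cref{lem:vectk}.
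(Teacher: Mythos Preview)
Your first approach—computing $\on{Mor}_{\mathcal{C}_{\bf S}}(X,X)$ directly via \Cref{thm:hom1} and \Cref{thm:hom2} and matching the algebra structure with $\mathscr{J}_\mathcal{T}\otimes_k k[t_1^\pm]$—is precisely the paper's argument: it cites \cite[Section~7]{Chr21b} for part~(1) and says part~(2) follows from these theorems ``as in the proof of \cite[Prop.~7.3]{Chr21b}''. (Your parenthetical about ``degrees divisible by $2$'' is garbled, since $\on{H}_*(k[t_1^\pm])$ is one-dimensional in \emph{every} integer degree, but this plays no role in the argument.)

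Your ``cleanest route'' via \Cref{prop:derivedgtl}, however, has a genuine obstruction beyond the one you flag. You ask for a tilting object $T\in\mathcal{D}(\on{gtl})$ with $\on{End}(T)\simeq\mathscr{J}_\mathcal{T}$ and vanishing higher self-Exts; but such a $T$ would force a derived equivalence $\mathcal{D}(\on{gtl})\simeq\mathcal{D}(\mathscr{J}_\mathcal{T})$, and the paper explicitly remarks (both in \Cref{introsubsec:otherclustercats} and immediately after this proposition) that these gentle algebras are \emph{not} derived equivalent—their associated marked surfaces in the sense of \cite{OPS18} differ. Concretely, in \Cref{ex:4gon} one has $\on{gtl}\simeq kA_3$ with three simples while $\mathscr{J}_\mathcal{T}$ has five. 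The two module categories only become equivalent \emph{after} tensoring with $\mathcal{D}(k[t_1^\pm])$, so $X$ does not descend to such a tilting object over $\on{gtl}$. Drop this detour; the direct computation via the intersection theorems already suffices.
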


\begin{proof}
Part (1) is observed in \cite[Section 7]{Chr21b}. Part (2) follows from \Cref{thm:hom1,thm:hom2} as in the proof of \cite[Prop.~7.3]{Chr21b}.
\end{proof}

\begin{remark}
The marked surface associated to the Jacobian gentle algebra $\on{H}_0(\mathscr{G}_\mathcal{T})$ (in the sense of \cite[Def 1.10]{OPS18}) is given by the complement in ${\bf S}$ of the set of vertices of the trivalent spanning graph $\mathcal{T}$. This gentle algebra is finite dimensional but not smooth.
\end{remark}

\begin{example}\label{ex:4gon}
Consider the $4$-gon ${\bf S}$, depicted as follows, with an ideal triangulation (in blue).
\begin{center}
\begin{tikzpicture}
\draw[color=ao, very thick] (3,0)--(3,3)--(0,3)--(0,0)--(3,0);
\draw[color=blue][very thick] (0,1.2)arc[start angle=90, end angle=0,radius=1.2];
\draw[color=blue][very thick] (0,1.8)arc[start angle=-90, end angle=0,radius=1.2];
\draw[color=blue][very thick] (1.8,0)arc[start angle=180, end angle=90,radius=1.2];
\draw[color=blue][very thick] (1.8,3)arc[start angle=180, end angle=270,radius=1.2];
\draw[color=blue][very thick] (1.5,0)--(1.5,3);
\fill[color=orange] (0,0) circle (0.1);
\fill[color=orange] (0,3) circle (0.1);
\fill[color=orange] (3,0) circle (0.1);
\fill[color=orange] (3,3) circle (0.1);
\end{tikzpicture}
\end{center}
The generalized cluster category $\mathcal{C}_{\bf S}$ is equivalent to the $1$-periodic derived $\infty$-category of the $A_3$-quiver, which can be defined as $\mathcal{D}(kA_{3}\otimes_k k[t_1^\pm])$. The arising Jacobian gentle algebra is given by quotient of the path algebra of the quiver 
\[
\begin{tikzcd}
1 \arrow[dd, "a"] &                                    & 2' \arrow[ld, "b'"] \\
                  & 3 \arrow[lu, "c"] \arrow[rd, "c'"] &                     \\
2 \arrow[ru, "b"] &                                    & 1' \arrow[uu, "a'"]
\end{tikzcd}
\]
by the ideal $(ba,cb,ac,b'a',c'b',a'c')$.
\end{example}

\begin{example}
Consider the annulus ${\bf S}$ with a marked point on each boundary component and an ideal triangulation depicted as follows.
\begin{center}
\begin{tikzpicture}
 \draw[color=blue][very thick] plot [smooth] coordinates {(-1.982,0.261) (-0.5,1.2) (0.7,1) (0.9,0.4) (0.676,0.181)};
 \draw[color=blue][very thick] plot [smooth] coordinates {(-1.982,-0.261) (-0.5,-1.2) (0.7,-1) (0.9,-0.4) (0.676,-0.181)};
\draw[color=blue][very thick] (1.931,-0.518) arc	[start angle=275,	end angle=87, x radius=0.52cm, y radius =0.52cm];
\draw[color=blue][very thick] (-0.495,-0.495) arc	[start angle=280,	end angle=80, x radius=0.5cm, y radius =0.5cm];

\draw[color=ao, very thick] (0,0) circle (2);
\draw[color=ao, very thick] (0,0) circle (0.7);
\fill[color=orange] (2,0) circle (0.1);
\fill[color=orange] (-0.7,0) circle (0.1);
\end{tikzpicture}
\end{center}
The Jacobian gentle algebra is given by the quotient of the path algebra of the quiver 
\[
\begin{tikzcd}
                                                            & 3 \arrow[ld, "b"]   &                                    \\
1 \arrow[rr, "a", shift left] \arrow[rr, "a'"', shift right] &                     & 2 \arrow[lu, "b"] \arrow[ld, "b'"] \\
                                                            & 3' \arrow[lu, "c'"] &                                   
\end{tikzcd}
\]
by the ideal $(ba,cb,ac,b'a',c'b',a'c')$. This is a relative version of the Kronecker quiver. While the mapping class group of the $4$-gon in \Cref{ex:4gon} is trivial, the mapping class group of the annulus ${\bf S}$ is given by $\mathbb{Z}$. The generator $1\in \mathbb{Z}$ corresponds to a diffeomorphism rotating one boundary circle by one full rotation and fixing the other boundary circle. Under the action of an element $\alpha$ of the mapping class group, an object $M_\gamma\in \mathcal{C}_{\bf S}$ is mapped to $M_{\alpha\circ \gamma}$.
\end{example}

\subsection{A cluster character on \texorpdfstring{$\mathcal{C}_{\bf S}$}{the generalized cluster category}}\label{subsec:clusterchar}

We begin with the notion of a cluster character on an extriangulated category, generalizing the notion of a cluster character of \cite{Pal08}.

\begin{definition}\label{clchardef}
Let $(C,\mathbb{E},\mathfrak{s})$ be a $k$-linear extriangulated category. We denote by $\on{obj}(C)$ the set of equivalence classes of objects in $C$. A cluster character $\chi$ on $C$ with values in a commutative ring $R$ is a map 
\[ \chi\colon \on{obj}(C)\rightarrow R\] 
such that for all $X,Y\in \mathcal{C}$ the following holds.
\begin{itemize}
\item $\chi(X)=\chi(Y)$ if $X\simeq Y$.
\item $\chi(X\oplus Y)=\chi(X)\chi(Y)\,$.
\item $\chi(X)\chi(Y)=\chi(B)+\chi(B')$ if $\on{dim}_k\mathbb{E}(X,Y)=\on{dim}_k\mathbb{E}(Y,X)=1$ and $B,B'\in C$ are the middle terms of the corresponding non-split extensions of $X$ by $Y$ and $Y$ by $X$. 
\end{itemize}
The last property is called the cluster multiplication formula.
\end{definition}

Given a matching curve $\gamma$ in ${\bf S}$, we consider the underlying curve as a link in ${\bf S}$ with a single component, denoted $l(\gamma)$. 

\begin{theorem}\label{thm:char}
Let $k$ be an algebraically closed field with $\on{char}(k)\neq 2$. Consider the map 
\[\chi\colon \on{obj}(\on{ho}\mathcal{C}_{\bf S}^{\on{c}})\rightarrow \on{Sk}^{1}_{\bf S}\] determined by 
\begin{itemize}
\item $\chi(A)=\chi(B)$ if $A\simeq B$
\item $\chi(A\oplus B)=\chi(A)\chi(B)$
\item $\chi(M_{\gamma})=l(\gamma)$ for any matching curve $\gamma$ in ${\bf S}$.
\end{itemize}
Then $\chi$ is a cluster character on $(\on{ho}\mathcal{C}_{\bf S}^{\on{c}},\on{Ext}^{1,\on{CY}}_{\mathcal{C}_{\bf S}},\mathfrak{s})$.
\end{theorem}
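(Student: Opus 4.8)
The plan is to verify the three defining properties of a cluster character from \Cref{clchardef} for the map $\chi$. The first two properties hold essentially by construction: $\chi$ is well-defined on equivalence classes and multiplicative on direct sums by fiat, and by the geometrization \Cref{thm:geom} every compact object $X \in \mathcal{C}_{\bf S}^{\on{c}}$ decomposes as $\bigoplus_{\gamma \in J} M_\gamma$ for a unique finite multiset $J$ of matching curves, so $\chi(X) = \prod_{\gamma \in J} l(\gamma)$ is well-defined. The substance of the proof is therefore the cluster multiplication formula, which I would establish by combining the classification of $\mathrm{Ext}^{1,\on{CY}}$ from \Cref{geomcyprop} with the cone computations of \Cref{prop:cone} and the Kauffman Skein relations of \Cref{def:skein}.

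Concretely, suppose $X, Y$ are two indecomposable compact objects with $\dim_k \mathbb{E}(X,Y) = \dim_k \mathbb{E}(Y,X) = 1$, where $\mathbb{E} = \on{Ext}^{1,\on{CY}}_{\mathcal{C}_{\bf S}}$. By \Cref{thm:geom} we may write $X \simeq M_\gamma$ and $Y \simeq M_{\gamma'}$ for matching curves $\gamma, \gamma'$. First I would reduce to the case where $\gamma, \gamma'$ are both open: by \Cref{geomcyprop}(3), if either curve is closed then $\mathbb{E}(M_\gamma, M_\gamma) = \on{Mor}_{\mathcal{C}_{\bf S}}(M_\gamma, M_\gamma)$ is never $1$-dimensional in the relevant degree (its dimension is even, by \Cref{thm:hom2}(ii)), and a short argument using \Cref{thm:hom1} rules out the mixed and distinct-closed cases as well, or shows the hypothesis forces a single crossing. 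So assume $\gamma, \gamma'$ are distinct open matching curves. By \Cref{thm:hom1} and \Cref{geomcyprop}(1), $\dim_k \mathbb{E}(M_\gamma, M_{\gamma'}) = \dim_k \mathbb{E}(M_{\gamma'}, M_\gamma) = 1$ forces $\gamma$ and $\gamma'$ to have exactly one crossing (boundary intersections do not contribute to $\mathbb{E}$). Then \Cref{prop:cone}(1) gives fiber and cofiber sequences $M_{\gamma_1} \oplus M_{\gamma_2} \to M_\gamma \to M_{\gamma'}$ and $M_{\gamma_3} \oplus M_{\gamma_4} \to M_{\gamma'} \to M_\gamma$, where $\gamma_1, \gamma_2$ and $\gamma_3, \gamma_4$ are the two smoothings of the crossing, and the non-split $\on{CY}$-extensions are realized by these sequences (by \Cref{lem:ext}, since the crossing-morphisms restrict to zero on all external edges, hence are Calabi--Yau). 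The middle terms of the non-split extensions of $M_\gamma$ by $M_{\gamma'}$ and of $M_{\gamma'}$ by $M_\gamma$ are thus $B = M_{\gamma_1} \oplus M_{\gamma_2}$ and $B' = M_{\gamma_3} \oplus M_{\gamma_4}$. Applying $\chi$ and using multiplicativity, $\chi(B) + \chi(B') = l(\gamma_1)l(\gamma_2) + l(\gamma_3)l(\gamma_4)$, which equals $l(\gamma) \cdot l(\gamma') = \chi(M_\gamma)\chi(M_{\gamma'})$ precisely by the $q=1$ Kauffman Skein relation applied at the crossing (see \Cref{rem:ptlmycone}, \Cref{clrel}). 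For the case $\gamma = \gamma'$ (a self-extension of an open curve), the same argument applies via \Cref{thm:hom2}(i), \Cref{geomcyprop}(2) and the self-crossing version of \Cref{prop:cone}: a single self-crossing yields $\dim \mathbb{E}(M_\gamma, M_\gamma) = 1$ and the smoothings resolve the self-crossing, with the Skein relation again giving the identity.

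The final point to address is the last sentence of the theorem statement in the body (\Cref{thm:char} as displayed), namely that composing $\chi$ with the inclusion $\on{Sk}^1_{\bf S} \hookrightarrow \on{UCA}_{\bf S}$ of \Cref{skclthm} yields a cluster character to the upper cluster algebra; this is immediate since a ring homomorphism postcomposed with a cluster character is again a cluster character (the three defining properties are preserved under an injective algebra map). It also requires checking compatibility with \Cref{thm:rigid}, i.e.\ that on a cluster tilting object $\bigoplus_{\gamma \in \I} M_\gamma$ corresponding to an ideal triangulation $\I$, the values $\chi(M_\gamma) = l(\gamma)$ are the cluster variables of that cluster under the identification of \Cref{thm:clusters} and \Cref{skclthm}; this follows because the boundary arcs map to the frozen variables and the cluster exchange (Ptolemy) relations are exactly the Skein relations realized by the cone sequences above.

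The main obstacle I anticipate is bookkeeping rather than conceptual: one must carefully match the two smoothings $\{\gamma_1,\gamma_2\}$, $\{\gamma_3,\gamma_4\}$ produced by \Cref{prop:cone} with the two terms on the right-hand side of the Kauffman Skein relation in \Cref{def:skein}, checking that no spurious contractible-disc components or boundary-homotopic arc components arise (which would be killed by relations 2) and 3) in \Cref{def:skein}), and that the orientations and the "$q=1$" normalization are consistent. A secondary subtlety is ensuring the reduction in the first step genuinely exhausts all pairs $(X,Y)$ of indecomposables satisfying the dimension hypothesis — in particular confirming that a $1$-dimensional $\mathbb{E}$ always comes from a single crossing or self-crossing and never from a more degenerate configuration — which is where \Cref{thm:hom1}, \Cref{thm:hom2} and \Cref{geomcyprop} must be invoked with care.
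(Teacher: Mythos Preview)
Your overall strategy matches the paper's: use the geometrization \Cref{thm:geom} for well-definedness, reduce the cluster multiplication formula to indecomposables $M_\gamma$ and $M_{\gamma'}$, and in the distinct case deduce a single crossing from \Cref{thm:hom1} and \Cref{geomcyprop}, then invoke \Cref{prop:cone} together with the Skein relation. That part is essentially the paper's argument.

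The gap is in your treatment of the case $\gamma=\gamma'$. You assert that for an open curve $\gamma$ a single self-crossing yields $\dim_k\on{Ext}^{1,\on{CY}}_{\mathcal{C}_{\bf S}}(M_\gamma,M_\gamma)=1$, and then propose to verify the multiplication formula via smoothings. This is incorrect: by \Cref{thm:hom2}(i) one has $\on{Mor}_{\mathcal{C}_{\bf S}}(M_\gamma,M_\gamma)\simeq k[t_1^\pm]^{\oplus 1+2i^{\on{cr}}(\gamma,\gamma)+i^{\on{bdry}}(\gamma,\gamma)}$, and by \Cref{geomcyprop}(2) the Calabi--Yau part consists precisely of the $2i^{\on{cr}}(\gamma,\gamma)$ summands coming from self-crossings. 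Hence $\dim_k\on{Ext}^{1,\on{CY}}_{\mathcal{C}_{\bf S}}(M_\gamma,M_\gamma)=2i^{\on{cr}}(\gamma,\gamma)$ is always even, so for open $\gamma$ the hypothesis $\dim_k\mathbb{E}(M_\gamma,M_\gamma)=1$ is never satisfied and there is nothing to verify. The paper's proof proceeds exactly this way: it observes that if $\gamma=\gamma'$ then $\gamma$ must be closed (the open case being vacuous by the above parity), and then gives a separate argument, using the coequalizer description \eqref{eq:ii} of $M_\gamma$, that for closed curves with the same underlying curve and monodromy datum the Calabi--Yau extension group is again even-dimensional. You should replace your self-crossing verification by this parity argument.

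A secondary point: your proposed reduction to the case where both curves are open is neither what the paper does nor quite sound as stated. A closed curve of rank $1$ and an open curve can have a single crossing, giving $\dim_k\mathbb{E}=1$; this configuration is not excluded by your appeal to \Cref{thm:hom2}(ii), which only concerns self-extensions. The paper does not attempt such a reduction; it simply deduces a single crossing in the distinct case and applies \Cref{prop:cone} (whose statement covers arbitrary matching curves).
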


\begin{remark}
Composing $\chi$ with $\on{Sk}^{1}_{\bf S}\hookrightarrow \on{Sk}^{1,\on{loc}}_{\bf S}\hookrightarrow \on{UCA}_{\bf S}$ defines a cluster character to the upper cluster algebra of ${\bf S}$. If $\mathbb{S}$ has at least two marked points, the upper cluster algebra is equivalent to the cluster algebra with coefficients $\on{CA}_{\bf S}^{\on{loc}}$ localized at the boundary arcs, see \Cref{skclthm}.
\end{remark}

\begin{proof}[Proof of \Cref{thm:char}]
The geometrization \Cref{thm:geom} shows that $\chi$ is well-defined. It is also clear that $\chi$ satisfies all parts of \Cref{clchardef}, except for the cluster multiplication formula.
We thus consider two matching curves $\gamma,\gamma'$ in ${\bf S}$, which satisfy $\on{dim}_k\on{Ext}^{1,\on{CY}}_{\mathcal{C}_{\bf S}}(M_{\gamma},M_{\gamma'})=1$ and let $B,B'\in \mathcal{C}_{\bf S}^{\on{c}}$ be the corresponding extensions. Suppose that $\gamma$ and $\gamma'$ are distinct. By \Cref{thm:hom1}, $\on{dim}_k\on{Ext}^{1,\on{CY}}_{\mathcal{C}_{\bf S}}(M_{\gamma},M_{\gamma'})=1$ implies that $\gamma,\gamma'$ have a single crossing. The cluster multiplication formula $\chi(M_{\gamma})\chi(M_{\gamma'})=\chi(B)+\chi(B')$ thus follows from \Cref{prop:cone}, see also \Cref{rem:ptlmycone}.

If $\gamma=\gamma'$, we find that $\gamma$ is closed, and that $\gamma$ and $\gamma'$ have the same monodromy datum. Denote by $a$ and $a'$ the ranks of $\gamma$ and $\gamma'$. We show that $\on{dim}_k\on{Ext}^{1,\on{CY}}_{\mathcal{C}_{\bf S}}(M_{\gamma},M_{\gamma'})$ is always an even number and hence not equal to $1$. Self-crossings of $\gamma$ give rise to two crossings of $\gamma,\gamma'$ and thus contribute the number $2aa'$ to $\on{dim}_k\on{Ext}^{1,\on{CY}}_{\mathcal{C}_{\bf S}}(M_{\gamma},M_{\gamma'})$. The global section $M_{\gamma}$ arises as a coequalizer, see \eqref{eq:ii}. It follows that $\on{Mor}_{\mathcal{C}_{\bf S}}(M_{\gamma},M_{\gamma'})$ is equivalent to the fiber of some morphism
\begin{equation}\label{eq:2kt1} k[t_1^\pm]^{\oplus aa'}\longrightarrow k[t_1^\pm]^{\oplus aa'+2aa'i^{\on{cr}}(\gamma,\gamma')}\,.\end{equation}
Inspecting the construction of the morphism \eqref{eq:2kt1}, one finds that it lies in the image of the functor $\zeta^*\colon \mathcal{D}(k[t_1^\pm])\rightarrow \mathcal{D}(k[t_2^\pm])$. Using that every morphism in $\mathcal{D}(k[t_1^\pm])$ splits into a direct sum of an equivalence and a zero morphism, it follows that $\on{Mor}_{\mathcal{C}_{\bf S}}^{\on{CY}}(M_{\gamma},M_{\gamma'})=\on{Mor}_{\mathcal{C}_{\bf S}}(M_{\gamma},M_{\gamma'})$ is free of even rank in $\mathcal{D}(k[t_1^\pm])$, which implies that $\on{dim}_k\on{Ext}^{1,\on{CY}}_{\mathcal{C}_{\bf S}}(M_{\gamma},M_{\gamma})$ is indeed an even number. This concludes the case distinction and the proof. 
\end{proof}

\begin{remark}
The proof of \Cref{thm:char} shows that there is some flexibility in the formula for the cluster character. In fact, it appears that one can assign arbitrary values to the closed matching curves of rank $a\geq 2$. 
\end{remark}

\section{Further directions} 

In \Cref{subsec:Higgs}, we recall the construction of the Higgs category \cite{Wu21} and show that the $1$-periodic topological Fukaya category $\C_{\bf S}^{\on{c}}$ is equivalent as an exact $\infty$-category to the Higgs category. From this, it follows that the (homotopy category of the) stable category of $\C_{\bf S}^{\on{c}}$ recovers the standard triangulated $2$-Calabi--Yau cluster category of the marked surface ${\bf S}$. In \Cref{subsec:stabcat}, we observe that the mapping class group action on $\C_{\bf S}$ induces a mapping class group action on this triangulated $2$-Calabi--Yau cluster category. Finally, in \Cref{subsec:nclustercats}, we describe the generalized $n$-cluster categories arising from relative higher Ginzburg algebras associated with $n$-angulated marked surfaces.

\subsection{Relation with the Higgs category}\label{subsec:Higgs}

In \cite{Wu21}, Yilin Wu describes a generalization of Amiot's construction of the generalized cluster category, that is different to the approach of this paper and considers a more general setup. Given the input of suitable relative left $3$-Calabi--Yau dg-algebra, the result is a $2$-Calabi--Yau Frobenius extriangulated category equipped with cluster tilting objects. In the following, we recall this construction in the special case where the relative $3$-Calabi--Yau dg-algebra is the relative Ginzburg algebras associated with an ice quiver with potential and show the compatibility with the approach of this paper if the ice quiver with potential comes from a triangulated marked surface.

Let $(Q,F)$ be a finite ice quiver, meaning that $Q$ is a finite quiver and $F\subset Q$ a (not necessarily full) sub-quiver. Let $W$ be a potential for $Q$. Associated with this is the relative Ginzburg algebra $\mathscr{G}(Q,F,W)$ and the Ginzburg dg-functor \cite{Wu21}
\[ {\bf Gi}\colon \Pi_2(kF)\to \mathscr{G}(Q,F,W)\,,\]
where $\Pi_2(kF)$ denotes the $2$-Calabi--Yau completion \cite{Kel11} of the path algebra of $F$. The Ginzburg functor ${\bf Gi}$ is equivalent to the deformed relative Calabi--Yau completion of $kF\to kQ$ and thus admits a left $3$-Calabi--Yau structure \cite{Yeu16,BCS20}. We will assume that the relative Jacobian algebra $H_0(\mathscr{G}(Q,F,W))$ is finite dimensional and refer to \cite{KW23} for a discussion of Higgs categories in the Jacobi-infinite case.  

Pulling back along ${\bf Gi}$ yields the functor 
\[ {\bf Gi}^*\colon \D(\mathscr{G}(Q,F,W))\to \D(\Pi_2(kF))\,.\]
Its left adjoint is denoted ${\bf Gi}_!$ and given by tensoring with the bimodule $\bigoplus_{x\in F_0}P_x$ given by the sum of the indecomposable direct summands of $\mathscr{G}(Q,F,W)$ associated with the frozen vertices. 

We let $\D^{\on{fin}}_F(\mathscr{G}(Q,F,W))\subset \D^{\on{fin}}(\mathscr{G}(Q,F,W))$ be the kernel, i.e.~the fiber, of the functor ${\bf Gi}^*$. The relative cluster category is defined as the Verdier quotient
\[ \C^{\on{rel}}(Q,F,W)\coloneqq \D^{\on{perf}}(\mathscr{G}(Q,F,W))/\D^{\on{fin}}_F(\mathscr{G}(Q,F,W))\,.\]
To define the Higgs category, we must first recall the definition of the relative fundamental domain $\mathcal{F}^{\on{rel}}\subset \D^{\on{perf}}(\mathscr{G}(Q,F,W))$.  Consider the set $\mathcal{P}$ of objects in $\D^{\on{perf}}(\mathscr{G}(Q,F,W))$ given by the indecomposable direct summands of $\mathscr{G}(Q,F,W)$ associated with the frozen vertices of $Q$. Note that $\on{Add}(\mathcal{P})$ is the essential image ${\bf Gi}_!(\on{Add}(kF))$ of $\on{Add}(kF)$ under the functor ${\bf Gi}_!$.

\begin{definition}\label{def:fundamentaldom}
The relative fundamental domain $\mathcal{F}^{\on{rel}}\subset \D^{\on{perf}}(\mathscr{G}(Q,F,W))$ is the full subcategory spanned by objects $X$ satisfying that 
\begin{enumerate}[1)]
\item $X$ fits into a fiber and cofiber sequence $M_1\to M_0\to X$ with $M_0,M_1$ lying in the additive closure $\on{Add}(\mathscr{G}(Q,F,W))$ and
\item $\on{Ext}^i(P,X)\simeq \on{Ext}^i(X,P)\simeq 0$ for all $P\in \mathcal{P}$ and $i>0$.
\end{enumerate}
\end{definition}

\begin{lemma}\label{lem:fundamentaldomain}
Suppose that the functor ${\bf Gi}^*$ admits a colimit preserving right adjoint ${\bf Gi}_*$ satisfying that $\on{Add}(\mathcal{P})={\bf Gi}_*(\on{Add}(\Pi_2(kF)))$. Then condition 2) in \Cref{def:fundamentaldom} is equivalent to $X$ satisfying ${\bf Gi}^*(X)\in \on{Add}(\Pi_2(kF))$. 
\end{lemma}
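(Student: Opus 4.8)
\textbf{Proof plan for \Cref{lem:fundamentaldomain}.}

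The plan is to unwind the two conditions using the adjunctions and a Calabi--Yau duality argument. First I would observe that condition 2) in \Cref{def:fundamentaldom} is equivalent, by the adjunction $\mathbf{Gi}^* \dashv \mathbf{Gi}_*$ and the hypothesis $\on{Add}(\mathcal{P}) = \mathbf{Gi}_*(\on{Add}(\Pi_2(kF)))$, to the vanishing $\on{Ext}^i_{\D(\Pi_2(kF))}(kF, \mathbf{Gi}^*(X)) \simeq 0$ for $i > 0$ (for the half of condition 2) involving $\on{Ext}^i(P,X)$), since $\on{Ext}^i_{\mathscr{G}}(P,X) \simeq \on{Ext}^i_{\Pi_2(kF)}(kF', \mathbf{Gi}^*(X))$ when $P = \mathbf{Gi}_*(kF')$ for an indecomposable projective $kF'$ of $\Pi_2(kF)$. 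Because $kF$ is a compact generator of $\D(\Pi_2(kF))$ built from these projectives, the condition $\on{Ext}^i_{\Pi_2(kF)}(kF, \mathbf{Gi}^*(X)) \simeq 0$ for all $i>0$ says exactly that $\mathbf{Gi}^*(X)$ lies in the \emph{coconnective} part, i.e.\ has homology concentrated in degrees $\le 0$ with respect to the standard $t$-structure on $\D(\Pi_2(kF))$; combined with the other half of condition 2) it will pin down $\mathbf{Gi}^*(X)$ to lie in $\on{Add}(\Pi_2(kF))$.

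Next I would use the left $3$-Calabi--Yau structure on the Ginzburg functor $\mathbf{Gi}$. This structure relates $\mathbf{Gi}^*$ and $\mathbf{Gi}_!$ via a Serre-functor type identity: concretely, $\mathbf{Gi}_! \simeq \mathbf{Gi}_* \circ \Sigma$ for an appropriate shift/twist $\Sigma$ coming from the relative Calabi--Yau structure (this is the analogue of the sphericalness statements \Cref{cor:sphbdry} and \Cref{rem:shift} in the present setting), so that the essential image $\on{Add}(\mathcal{P}) = \mathbf{Gi}_!(\on{Add}(kF)) = \mathbf{Gi}_*(\on{Add}(\Pi_2(kF)))$ is consistent and the $\on{Ext}$-vanishing in \emph{positive} degrees against $\mathcal{P}$ becomes, after dualizing, an $\on{Ext}$-vanishing in \emph{negative} degrees, which handles the $\on{Ext}^i(X,P)$ half of condition 2). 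Pairing the two halves, the coconnectivity and the connectivity force $\mathbf{Gi}^*(X)$ to have homology concentrated in degree $0$; finiteness of the relative Jacobian algebra plus condition 1) (which guarantees $X$ is perfect, hence $\mathbf{Gi}^*(X)$ is perfect, hence its degree-$0$ homology is a finitely generated projective $\Pi_2(kF)$-module, necessarily in $\on{Add}(kF) = \on{Add}(\Pi_2(kF))$ since $\Pi_2(kF)$ is nonnegatively graded with $H_0 = kF$ hereditary). This gives $\mathbf{Gi}^*(X) \in \on{Add}(\Pi_2(kF))$.

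Conversely, I would check that if $\mathbf{Gi}^*(X) \in \on{Add}(\Pi_2(kF))$ then condition 2) holds: $\on{Ext}^i_{\mathscr{G}}(P, X) \simeq \on{Ext}^i_{\Pi_2(kF)}(\mathbf{Gi}^*(P'), \mathbf{Gi}^*(X))$ where $P'$ is the corresponding projective — wait, more directly, $\on{Ext}^i_{\mathscr{G}}(P,X) = \on{Ext}^i_{\mathscr{G}}(\mathbf{Gi}_*(kF'), X) \simeq \on{Ext}^i_{\Pi_2(kF)}(kF', \mathbf{Gi}^*(X))$, and the right-hand side vanishes for $i>0$ since $\mathbf{Gi}^*(X)$ is projective and $kF'$ is projective — and dually for $\on{Ext}^i(X,P)$ using the Calabi--Yau duality to convert it into a positive-degree $\on{Ext}$ of the form just treated. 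This direction is essentially formal once the adjunction identifications are in place.

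The main obstacle I anticipate is making precise the interaction between the left $3$-Calabi--Yau structure on $\mathbf{Gi}$ and the $t$-structures / projectivity statements — in particular, verifying that $\on{Add}(\mathcal{P})$ really is simultaneously $\mathbf{Gi}_!(\on{Add}(kF))$ and $\mathbf{Gi}_*(\on{Add}(\Pi_2(kF)))$ under the stated hypothesis, and extracting from the Calabi--Yau data the precise relation between $\on{Ext}^{>0}(X,P)$ and $\on{Ext}^{>0}(P,X)$ so that the two halves of condition 2) combine to force concentration in a single homological degree. This is the point where one must be careful about shifts (degree $3$ versus the twist) and about the fact that $\Pi_2(kF)$ is a $2$-Calabi--Yau completion rather than $kF$ itself, so its projectives are not concentrated in degree $0$ as modules over themselves but the comparison with $kF$-projectives still goes through because $kF$ is hereditary. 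Everything else is a routine application of adjunction and the monadic/Barr--Beck-type bookkeeping already used elsewhere in the paper.
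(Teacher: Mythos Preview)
Your overall shape is right (translate condition 2) via adjunction into an $\on{Ext}$-vanishing statement inside $\D^{\on{perf}}(\Pi_2(kF))$, then characterize $\on{Add}(\Pi_2(kF))$ intrinsically), but there is a concrete error and an unnecessary detour.

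The error: you apply the adjunction in the wrong direction. If $P=\mathbf{Gi}_*(kF')$ and $\mathbf{Gi}^*\dashv \mathbf{Gi}_*$, the adjunction gives $\on{Ext}^i(X,P)\simeq \on{Ext}^i(\mathbf{Gi}^*(X),kF')$, which handles the $\on{Ext}^i(X,P)$ half of condition 2), \emph{not} the $\on{Ext}^i(P,X)$ half as you claim. For $\on{Ext}^i(P,X)$ one must use the \emph{left} adjoint: writing $P=\mathbf{Gi}_!(Q)$ gives $\on{Ext}^i(P,X)\simeq \on{Ext}^i(Q,\mathbf{Gi}^*(X))$. The paper does exactly this, using that $\on{Add}(\mathcal{P})=\mathbf{Gi}_!(\on{Add}(\Pi_2(kF)))$ was already noted just before \Cref{def:fundamentaldom} and combining it with the hypothesis $\on{Add}(\mathcal{P})=\mathbf{Gi}_*(\on{Add}(\Pi_2(kF)))$, so that each $P\in\on{Add}(\mathcal{P})$ can be written simultaneously as $\mathbf{Gi}_!(Q)$ and $\mathbf{Gi}_*(Q')$ with $Q,Q'\in\on{Add}(\Pi_2(kF))$.

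The detour: once both adjunctions are in play, condition 2) becomes precisely
\[
\on{Ext}^i_{\D^{\on{perf}}(\Pi_2(kF))}\big(\Pi_2(kF),\mathbf{Gi}^*(X)\big)\simeq \on{Ext}^i_{\D^{\on{perf}}(\Pi_2(kF))}\big(\mathbf{Gi}^*(X),\Pi_2(kF)\big)\simeq 0\quad\text{for all }i>0.
\]
The paper then observes that $\Pi_2(kF)$ is concentrated in non-negative homological degrees, hence is a \emph{silting} object in $\D^{\on{perf}}(\Pi_2(kF))$, and invokes the standard silting characterization (citing \cite{MSSS13,IY18,KN13}): a perfect object $Y$ lies in $\on{Add}(S)$ for a silting object $S$ if and only if $\on{Ext}^{>0}(S,Y)\simeq \on{Ext}^{>0}(Y,S)\simeq 0$. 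This single sentence replaces your entire Calabi--Yau duality maneuver and the $t$-structure/projectivity argument; there are no shifts to track, and no need to analyze $\on{H}_0(\Pi_2(kF))$ (which, incidentally, is the preprojective algebra of $F$, not $kF$, so your hereditary-$\on{H}_0$ argument would not go through in general).
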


Note that the condition that ${\bf Gi}_*$ preserves colimits is equivalent to ${\bf Gi}^*$ preserving compact objects, i.e.~perfect modules.

\begin{proof}[Proof of \Cref{lem:fundamentaldomain}]
 Let $X\in \D^{\on{perf}}(\mathscr{G}(Q,F,W))$. Let $P\simeq {\bf Gi}_!(Q)\simeq {\bf Gi}_*(Q')\in \on{Add}(\mathcal{P})$ with $Q,Q'\in \on{Add}(\Pi_2(kF))$. We have 
\[ 
\on{Ext}^i_{\D^{\on{perf}}(\mathscr{G}(Q,F,W))}(P,X)\simeq \on{Ext}^i_{\D^{\on{perf}}(\Pi_2(kF))}(Q,{\bf Gi}^*(X))
\]
and 
\[ 
\on{Ext}^i_{\D^{\on{perf}}(\mathscr{G}(Q,F,W))}(X,P)\simeq \on{Ext}^i_{\D^{\on{perf}}(\Pi_2(kF))}({\bf Gi}^*(X),Q')\,.
\]
Since the dg-algebra $\Pi_2(kF)$ is concentrated in positive degrees (in the homological grading convention), it defines a silting object in $\D^{\on{perf}}(\Pi_2(kF))$. It follows that $\on{Gi}^*(X)\in \on{Add}(\Pi_2(kF))$ holds if and only if 
\[ \on{Ext}^i_{\D^{\on{perf}}(\Pi_2(kF))}(\on{Gi}^*(X),\Pi_2(kF))\simeq \on{Ext}^i_{\D^{\on{perf}}(\Pi_2(kF))}(\Pi_2(kF),\on{Gi}^*(X))\simeq 0\] 
for all $i>0$, see \cite[Thm.~5.5]{MSSS13} or also \cite[Lem.~2.5, Prop.~2.8]{IY18}, \cite[Thm.~10.2.2)]{KN13}. By the above, this is satisfied if and only if $\on{Ext}^i(P,X)\simeq \on{Ext}^i(X,P)\simeq 0$ for all $P\in \mathcal{P}$ and $i>0$.
\end{proof}

The Higgs category $\mathcal{H}(Q,F,W)\subset \C^{\on{rel}}(Q,F,W)$ is defined as the image of $\mathcal{F}^{\on{rel}}$ under the quotient functor $\D^{\on{perf}}(\mathscr{G}(Q,F,W))\to \C^{\on{rel}}(Q,F,W)$. The arising functor $\phi\colon \mathcal{F}^{\on{rel}}\to \mathcal{H}(Q,F,W)$ is furthermore fully faithful on the level of homotopy categories \cite[Prop.~5.20]{Wu21}. The Higgs category is an extension closed subcategory of $\C^{\on{rel}}(Q,F,W)$ and thus inherits the structure of an exact $\infty$-category. The homotopy category $\on{ho}\mathcal{H}(Q,F,W)$ is therefore extriangulated. Furthermore, the image of $\mathscr{G}(Q,F,W)$ in $\mathcal{H}(Q,F,W)$ is a cluster tilting object.

We next discuss under the assumptions of \Cref{lem:fundamentaldomain} the relation of the Higgs category with the generalized cluster category 
\[  \C(Q,F,W)\coloneqq \D^{\on{perf}}(\mathscr{G}(Q,F,W))/\D^{\on{fin}}(\mathscr{G}(Q,F,W))\,.
\]
We equip $\C(Q,F,W)$ with the exact $\infty$-structure obtained from pulling back as in \Cref{ex:splitpb} the split exact structure on $\D^{\on{perf}}(\Pi_2(kF))/\D^{\on{fin}}(\Pi_2(kF))$ along the functor induced by ${\bf Gi}^*$ (note that ${\bf Gi}^*$ automatically preserves finite modules and we have assumed that it preserves perfect modules):
\[
\widetilde{{\bf Gi}}^*\colon \C(Q,F,W)\longrightarrow \D^{\on{perf}}(\Pi_2(kF))/\D^{\on{fin}}(\Pi_2(kF))\,.
\]

Composing the inclusion $\mathcal{H}(Q,F,W)\subset \C^{\on{rel}}(Q,F,W)$ with the quotient functor $\C^{\on{rel}}(Q,F,W)\rightarrow \C(Q,F,W)$ to the generalized cluster category yields the functor 
\[ \tau \colon \mathcal{H}(Q,F,W) \rightarrow \C(Q,F,W)\,.\]

\begin{proposition}\label{prop:tauextr}
Suppose that the functor ${\bf Gi}^*$ admits a colimit preserving right adjoint ${\bf Gi}_*$ satisfying that $\on{Add}(\mathcal{P})={\bf Gi}_*(\on{Add}(\Pi_2(kF)))$. Then the functor $\tau$ is an exact functor between exact $\infty$-categories and thus also gives rise to an extriangulated functor between the extriangulated homotopy categories.
\end{proposition}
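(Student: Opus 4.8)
\textbf{Proof plan for Proposition~\ref{prop:tauextr}.}
The goal is to show that $\tau\colon \mathcal{H}(Q,F,W)\to \C(Q,F,W)$ is an exact functor of exact $\infty$-categories, where the source carries the exact structure inherited from the ambient triangulated category $\C^{\on{rel}}(Q,F,W)$ (as an extension closed subcategory) and the target carries the exact structure pulled back along $\widetilde{{\bf Gi}}^*$ from the split-exact structure on $\D^{\on{perf}}(\Pi_2(kF))/\D^{\on{fin}}(\Pi_2(kF))$. The plan is to unwind both exact structures and check that the defining condition is preserved. First I would recall that an exact sequence in $\mathcal{H}(Q,F,W)$ is, by the extension-closed-subcategory exact structure, just a fiber and cofiber sequence $X\to Y\to Z$ in $\C^{\on{rel}}(Q,F,W)$ all of whose terms lie in $\mathcal{H}(Q,F,W)$; since $\mathcal{F}^{\on{rel}}\to \mathcal{H}(Q,F,W)$ is fully faithful on homotopy categories and the objects of $\mathcal{H}(Q,F,W)$ lift to $\mathcal{F}^{\on{rel}}\subset \D^{\on{perf}}(\mathscr{G}(Q,F,W))$, such a sequence can be lifted to a fiber and cofiber sequence in $\D^{\on{perf}}(\mathscr{G}(Q,F,W))$ with terms in $\mathcal{F}^{\on{rel}}$ (this uses that $\mathcal{F}^{\on{rel}}$ is closed under the relevant extensions, which is part of Wu's analysis of the relative fundamental domain). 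The image under $\tau$ is then obtained by applying the Verdier quotient functor $\D^{\on{perf}}(\mathscr{G}(Q,F,W))\to \C(Q,F,W)$, which is exact as a functor of stable $\infty$-categories, so it sends the fiber and cofiber sequence to a fiber and cofiber sequence.

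It then remains to verify that this image sequence is exact for the pulled-back exact structure on $\C(Q,F,W)$, i.e.~that $\widetilde{{\bf Gi}}^*$ maps it to a split sequence in $\D^{\on{perf}}(\Pi_2(kF))/\D^{\on{fin}}(\Pi_2(kF))$. Here is where the hypotheses of \Cref{lem:fundamentaldomain} enter: under the assumption that ${\bf Gi}_*$ preserves colimits with $\on{Add}(\mathcal{P})={\bf Gi}_*(\on{Add}(\Pi_2(kF)))$, \Cref{lem:fundamentaldomain} identifies membership in $\mathcal{F}^{\on{rel}}$ with the condition ${\bf Gi}^*(X)\in \on{Add}(\Pi_2(kF))$. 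Thus the three terms $X,Y,Z$ of our lifted sequence satisfy ${\bf Gi}^*(X),{\bf Gi}^*(Y),{\bf Gi}^*(Z)\in \on{Add}(\Pi_2(kF))$. Since $\Pi_2(kF)$ is a silting object in $\D^{\on{perf}}(\Pi_2(kF))$ (it is concentrated in positive homological degrees), any fiber and cofiber sequence with all three terms in $\on{Add}(\Pi_2(kF))$ has vanishing connecting map $\on{Ext}^1$ and is therefore split-exact already in $\D^{\on{perf}}(\Pi_2(kF))$ — more precisely, the cofiber map lies in $\on{Ext}^1_{\D^{\on{perf}}(\Pi_2(kF))}({\bf Gi}^*(Z),{\bf Gi}^*(X))$, which vanishes on objects of $\on{Add}(\Pi_2(kF))$ by the silting (non-positivity) property cited in the proof of \Cref{lem:fundamentaldomain} (\cite[Thm.~5.5]{MSSS13}, \cite[Lem.~2.5, Prop.~2.8]{IY18}, \cite[Thm.~10.2.2)]{KN13}). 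A split sequence in $\D^{\on{perf}}(\Pi_2(kF))$ maps to a split sequence in the Verdier quotient $\D^{\on{perf}}(\Pi_2(kF))/\D^{\on{fin}}(\Pi_2(kF))$, so the image of our sequence under $\widetilde{{\bf Gi}}^*$ is split, which is exactly the condition for $\tau$ of our original sequence to be exact. I would also note in passing that $\tau$ preserves zero objects since all functors involved do.

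The last point to address is that $\tau$ is well-defined and preserves the relevant structure on the nose: $\tau$ factors as $\mathcal{H}(Q,F,W)\hookrightarrow \C^{\on{rel}}(Q,F,W)\to \C(Q,F,W)$, and the second map is the Verdier quotient killing $\D^{\on{fin}}_F(\mathscr{G}(Q,F,W))$ further down to $\D^{\on{fin}}(\mathscr{G}(Q,F,W))$; both are honest functors of stable $\infty$-categories, and the composite restricts to the stated functor because $\mathcal{H}(Q,F,W)$ is an additive (indeed extension-closed) subcategory. The extriangulated statement is then automatic: an exact functor of exact $\infty$-categories induces an extriangulated functor between the extriangulated homotopy categories (this is the functoriality of the $\on{ho}$-construction together with the fact that inflations, deflations and conflations are detected in the homotopy category, as recalled in \Cref{sec6.2}), so $\on{ho}\tau\colon \on{ho}\mathcal{H}(Q,F,W)\to \on{ho}\C(Q,F,W)$ is extriangulated. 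The main obstacle I anticipate is the bookkeeping in the first paragraph — ensuring that an exact sequence of $\mathcal{H}(Q,F,W)$ really does lift to a fiber and cofiber sequence of $\D^{\on{perf}}(\mathscr{G}(Q,F,W))$ with all terms in $\mathcal{F}^{\on{rel}}$, rather than merely to one whose terms become isomorphic to objects of $\mathcal{F}^{\on{rel}}$ after passing to the quotient; this requires invoking Wu's description (\cite[Prop.~5.20]{Wu21} and the surrounding results) of how extensions in the Higgs category lift to the relative fundamental domain, and keeping careful track of the two nested Verdier quotients $\D^{\on{fin}}_F\subset \D^{\on{fin}}$.
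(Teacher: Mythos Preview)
Your core idea matches the paper's: reduce to showing that $\widetilde{\bf Gi}^*\circ\tau$ sends every extension in $\mathcal{H}(Q,F,W)$ to zero, then lift the extension to $\D^{\on{perf}}(\mathscr{G}(Q,F,W))$ and use that ${\bf Gi}^*$ of both ends lies in $\on{Add}(\Pi_2(kF))$, which is rigid because $\Pi_2(kF)$ is silting. That is exactly the endgame of the paper's proof.

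The gap you flag in your final paragraph is real, and the paper does \emph{not} fill it the way you hope. You want to lift an exact sequence in $\mathcal{H}(Q,F,W)$ to a fiber and cofiber sequence in $\D^{\on{perf}}(\mathscr{G}(Q,F,W))$ with all three terms in $\mathcal{F}^{\on{rel}}$. This would require the map
\[
\on{Ext}^1_{\D^{\on{perf}}(\mathscr{G}(Q,F,W))}(Y_1,Y_2)\longrightarrow \on{Ext}^1_{\C^{\on{rel}}(Q,F,W)}(X_1,X_2)
\]
to be surjective for $Y_1,Y_2\in\mathcal{F}^{\on{rel}}$ lifting $X_1,X_2$, and Wu's \cite[Prop.~5.20]{Wu21} only gives fully faithfulness on $\on{Ext}^0$. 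The paper avoids this by replacing $Y_1$ with its \emph{relative truncation} $Y_1'=\tau_{\leq -1}^{\on{rel}}(Y_1)$ (see \cite[Def.~4.9]{Wu21}); the cofiber of $Y_1'\to Y_1$ lies in $\D^{\on{fin}}_F(\mathscr{G}(Q,F,W))$, so ${\bf Gi}^*(Y_1')\simeq {\bf Gi}^*(Y_1)\in\on{Add}(\Pi_2(kF))$ still holds even though $Y_1'$ need not lie in $\mathcal{F}^{\on{rel}}$. The lift of the extension class is then obtained via a chain of isomorphisms
\[
\on{Ext}^1_{\mathcal{H}(Q,F,W)}(X_1,X_2)\simeq \on{Ext}^1_{\C(\bar Q,\bar W)}(\bar\pi X_1,\bar\pi X_2)\simeq \on{Ext}^1_{\D(\mathscr{G}(\bar Q,\bar W))}(\pi Y_1',\pi Y_2)\simeq \on{Ext}^1_{\D(\mathscr{G}(Q,F,W))}(Y_1',Y_2),
\]
passing through the non-relative cluster category $\C(\bar Q,\bar W)$ and using \cite[Prop.~5.36, Lem.~5.16, Lem.~5.29]{Wu21} together with step~2 of the proof of \cite[Prop.~2.12]{Ami09}. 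Once the extension $\alpha$ is lifted to $\alpha'\colon Y_1'\to Y_2[1]$, your rigidity argument finishes the proof verbatim. So your plan is sound apart from the lifting mechanism; the fix is not ``$\mathcal{F}^{\on{rel}}$ is extension closed'' but rather the relative truncation detour.
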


\begin{proof}[Proof of \Cref{prop:tauextr}]
Since zero objects, inflations, deflations and exact sequences are detected on the level of the extriangulated homotopy categories, the functor $\tau$ can be lifted to an exact functor between exact $\infty$-categories if and only if $\on{ho}\tau$ can be lifted to an extriangulated functor between the extriangulated homotopy categories. Lifting $\on{ho}\tau\colon \on{ho}\mathcal{H}(Q,F,W)\to \on{ho}\C(Q,F,W)$ to an extriangulated functor amounts to specifying a natural transformation between functors $\on{ho}\mathcal{H}(Q,F,W)^{\on{op}}\times \on{ho}\mathcal{H}(Q,F,W)\to \on{Vect}_k$
\[ \mathbb{E}_{\on{ho}\!\mathcal{H}(Q,F,W)}(\mhyphen,\mhyphen)\longrightarrow  \mathbb{E}_{\on{ho}\!\C(Q,F,W)}(\tau(\mhyphen),\tau(\mhyphen))\,,\]
such that $\tau$ applied to the realization $\mathfrak{s}(\alpha)$ of an extension $\alpha$ yields the realization $\mathfrak{s}(\tau(\alpha))$.

The fiber and cofiber sequence preserving functor $\tau$ induces a natural transformation $\on{Ext}^1_{\mathcal{H}(Q,F,W)}(\mhyphen,\mhyphen)\to \on{Ext}^1_{\C(Q,F,W)}(\tau(\mhyphen),\tau(\mhyphen))$. Since $\mathbb{E}_{\on{ho}\!\mathcal{H}(Q,F,W)}=\on{Ext}^1_{\on{ho}\!\mathcal{H}(Q,F,W)}$ and 
\[ \mathbb{E}_{\on{ho}\!\C(Q,F,W)}=\on{ker}(\widetilde{\bf Gi}^*|_{\on{Ext}^1})\subset \on{Ext}^1_{\on{ho}\!\C(Q,F,W)}\]
it suffices to show that $\widetilde{{\bf Gi}}^* \circ \tau$ vanishes on extensions to obtain the desired natural transformation.

Denote by $(\bar{Q},\bar{W})$ the (non-ice) quiver with potential obtained from $(Q,F,W)$ by removing $F$ from $Q$ and all cycles from $W$ which contain frozen arrows. Let $\mathscr{G}(\bar{Q},\bar{W})$ be the corresponding non-relative Ginzburg algebra and $\mathcal{C}(\bar{Q},\bar{W})=\mathcal{D}^{\on{perf}}(\mathscr{G}(\bar{Q},\bar{W}))/\D^{\on{fin}}(\mathscr{G}(\bar{Q},\bar{W}))$ the cluster category. There is a cofiber sequence of compactly generated stable $\infty$-categories, see \cite[Prop.~7.8]{Wu21},
\[ \D^{\on{perf}}(\Pi_2(F))\longrightarrow \D^{\on{perf}}(\mathscr{G}(Q,F,W))\xlongrightarrow{\pi} \D^{\on{perf}}(\mathscr{G}(\bar{Q},\bar{W}))\,.\]
The functor $\pi$ induces the functor $\bar{\pi}\colon \C^{\on{rel}}(Q,F,W)\to \C(\bar{Q},\bar{W})$.

Let $X_1,X_2\in \mathcal{H}(Q,F,W)$ and $Y_1,Y_2\in \mathcal{F}^{\on{rel}}$ denote the images of $X_1,X_2$ under the equivalence $\on{ho}\mathcal{F}^{\on{rel}}\simeq \on{ho}\mathcal{H}(Q,F,W)$. Let further $Y_1'=\tau_{\leq -1}^{\on{rel}}(Y_1)$ be the relative truncation of $Y_1$, see \cite[Def.~4.9]{Wu21}. There is a morphism $Y_1'\to Y_1$, whose cofiber lies in $\D^{\on{fin}}_F(\mathscr{G}(Q,F,W))$, see loc.~cit.. The image of the morphism in $\C^{\on{rel}}(Q,F,W)$ is thus equivalence. We have the following equivalences,
\begin{align*} 
\on{Ext}^1_{\mathcal{H}(Q,F,W)}(X_1,X_2)&\simeq \on{Ext}^1_{\C(\bar{Q},\bar{W})}(\bar{\pi}(X_1),\bar{\pi}(X_2))\\
& \simeq \on{Ext}^1_{\D(\mathscr{G}(\bar{Q},\bar{W}))}(\pi(Y_1'),\pi(Y_1))\\
& \simeq \on{Ext}^1_{\D(\mathscr{G}(Q,F,W)}(Y_1',Y_2)
\end{align*}
where the first equivalence is \cite[Prop.~5.36]{Wu21}, the second equivalence is shown in step 2 in the proof of Proposition 2.12 in \cite{Ami09}, using that $\pi(\tau^{\on{rel}}_{\leq -1}(Y_1))\simeq \tau_{\leq -1}\pi(Y_1)$, and the last equivalence is shown in \cite[Lem.~5.16, Lem.~5.29]{Wu21}.

Any extension $\alpha\colon X_1\to X_2[1]$ in $\mathcal{H}(Q,F,W)$ thus lifts to an extension $\alpha'\colon Y_1'\to Y_2[1]$ in $\mathcal{D}(\mathscr{G}(Q,F,W))$. We note that $\widetilde{\bf Gi}^*\circ \tau(\alpha)$ is equivalent to the image under the quotient functor $\D^{\on{perf}}(\Pi_2(kF))\to \D^{\on{perf}}(\Pi_2(kF))/\D^{\on{fin}}(\Pi_2(kF))$ of ${\bf Gi}^*(\alpha')$. Since ${\bf Gi}^*(Y_1'),{\bf Gi}^*(Y_2)\in \on{Add}(\Pi_2(kF))$ and $\Pi_2(kF)$ is rigid, we find that ${\bf Gi}^*(\alpha')$ must vanish. It follows that $\widetilde{\bf Gi}^*\circ \tau(\alpha)\simeq 0$, as desired. 
\end{proof}

The functor $\tau$ considered in \Cref{prop:tauextr} is not for every choice of ice quiver with potential an equivalence of extriangulated categories and the generalized cluster category $\C(Q,F,W)$ does not necessarily categorify the corresponding cluster algebra with coefficients. This is because $\C(Q,F,W)$ may vanish for an arbitrary choice of ice quiver with potential. This is the case for instance if $\mathscr{G}(Q,F,W)$ is Jacobi-finite and concentrated in degree $0$, see \cite{Wu21} for examples. For a basic example where $\C(Q,F,W)$ vanishes, see \Cref{ex:rela1}.

Finally, we consider the specific choice of ice quiver with potential $(Q,F,W)=(Q_\mathcal{T},F_\mathcal{T},W_\mathcal{T})$ associated with a marked surface ${\bf S}$ with an ideal triangulation $I$ dual to a trivalent spanning graph $\mathcal{T}$.  The vertices of $Q_\mathcal{T}$ are given by the arcs in $I$ and the arrows are obtained by inscribing clockwise $3$-cycles into the faces of the ideal triangulation. The frozen part $F_\mathcal{T}$ consists of the vertices given by boundary arcs. The potential $W_\mathcal{T}$ is the sum of the $3$-cycles inscribed into the faces. The corresponding non-frozen quiver with potential $(\bar{Q},\bar{W})$ was defined in \cite{Lab09}. We note that there is an isomorphisms of dg-algebras $\mathscr{G}(Q_\mathcal{T},F_\mathcal{T},W_\mathcal{T})\simeq \mathscr{G}_\mathcal{T}$ and thus an equivalence $\C(Q,F,W)\simeq \C_{\bf S}^{\on{c}}$ with the subcategory of compact objects of the $(\on{Ind}$-complete) $1$-periodic topological Fukaya $\C_{\bf S}$ of ${\bf S}$.

\begin{theorem}\label{thm:equivHiggs}
The functor 
\[ \tau \colon \mathcal{H}(Q_\mathcal{T},F_\mathcal{T},W_\mathcal{T}) \rightarrow \C_{\bf S}^{\on{c}}\]
is an equivalence of exact $\infty$-categories. In particular, it induces an equivalence between the extriangulated homotopy categories.
\end{theorem}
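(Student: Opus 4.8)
The plan is to verify the hypotheses of \Cref{prop:tauextr} and \Cref{lem:fundamentaldomain} in the surface case, and then to upgrade the exact functor $\tau$ to an equivalence by a generator–plus–morphism-object comparison. First I would check that for the ice quiver with potential $(Q_\mathcal{T},F_\mathcal{T},W_\mathcal{T})$ the pullback functor ${\bf Gi}^*\colon \D(\mathscr{G}_\mathcal{T})\to \D(\Pi_2(kF_\mathcal{T}))$ preserves compact objects, equivalently that its right adjoint ${\bf Gi}_*$ is colimit preserving, and that $\on{Add}(\mathcal{P})={\bf Gi}_*(\on{Add}(\Pi_2(kF_\mathcal{T})))$. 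Here one can use the explicit description of $\mathscr{G}_\mathcal{T}$ via the perverse schober $\mathcal{F}_\mathcal{T}$ from \cite{Chr22}: the functor ${\bf Gi}^*$ is, up to the equivalence $\mathcal{H}(\mathcal{T},\mathcal{F}_\mathcal{T})\simeq \D(\mathscr{G}_\mathcal{T})$, the evaluation to the external edges (followed by identifying $\Pi_2(kF_\mathcal{T})$ with the endomorphism algebra of the corresponding sum of objects), and the relevant finiteness and adjoint-image statements follow from the local structure of $\mathcal{F}_\mathcal{T}$ together with \Cref{fin=mndlem}. Granting this, \Cref{prop:tauextr} already gives that $\tau\colon \on{ho}\mathcal{H}(Q_\mathcal{T},F_\mathcal{T},W_\mathcal{T})\to \on{ho}\C_{\bf S}^{\on{c}}$ is an extriangulated functor, hence $\tau$ is an exact functor of exact $\infty$-categories.

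Next I would show $\tau$ is essentially surjective and fully faithful. For essential surjectivity, note that $\C_{\bf S}^{\on{c}}=\mathcal{C}(Q_\mathcal{T},F_\mathcal{T},W_\mathcal{T})$ is generated as a thick subcategory by the image of $\mathscr{G}_\mathcal{T}$, which is $\tau$ of the canonical cluster tilting object $T_{\mathcal{H}}$ of the Higgs category (the image of $\mathscr{G}(Q,F,W)$ in $\mathcal{H}(Q,F,W)$). Since $\tau$ is exact and its domain is idempotent complete (being an extension-closed subcategory of a triangulated category, and closed under summands by construction of $\mathcal{F}^{\on{rel}}$), the essential image of $\tau$ is a thick subcategory containing a generator, hence all of $\C_{\bf S}^{\on{c}}$. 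Full faithfulness I would reduce to the statement $\on{Mor}_{\mathcal{H}}(T_\mathcal{H},T_\mathcal{H})\xrightarrow{\ \sim\ }\on{Mor}_{\C_{\bf S}^{\on{c}}}(\tau T_\mathcal{H},\tau T_\mathcal{H})$ on the cluster tilting object: both sides are computed by the relative Ginzburg algebra. On the Higgs side this is Wu's computation that $\on{End}_{\mathcal{H}(Q,F,W)}(T_\mathcal{H})$ is the relative Jacobian algebra $H_0(\mathscr{G}_\mathcal{T})$ with no higher self-extensions (rigidity of the cluster tilting object plus the Frobenius $2$-Calabi–Yau duality). On the cluster-category side, \Cref{prop:gtl} gives $\on{H}_*\on{Mor}_{\C_{\bf S}}(X,X)\simeq H_0(\mathscr{G}_\mathcal{T})\otimes_k k[t_1^\pm]$ for $X=\bigoplus_{\gamma\in \I}M_\gamma$ the geometric cluster tilting object, and one must match $X$ with $\tau T_\mathcal{H}$ (both are sent to $\mathscr{G}_\mathcal{T}$ under the respective identifications with the relative Ginzburg algebra, so $\tau T_\mathcal{H}\simeq X$). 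Comparing the degree-$0$ parts, and using that $\tau$ is $k$-linear (not $k[t_2^\pm]$-linear — the Higgs category is a priori only $k$-linear), gives the isomorphism on $T_\mathcal{H}$; then a standard thick-subcategory/dévissage argument propagates full faithfulness from $T_\mathcal{H}$ to all objects, since $\{T_\mathcal{H}\}$ generates and $\tau$ preserves fiber and cofiber sequences.

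The main obstacle I expect is the bookkeeping needed to identify $\tau T_\mathcal{H}$ with the geometric object $X=\bigoplus_{\gamma\in\I}M_\gamma$, and correspondingly to compare the two a priori different exact structures: the Higgs exact structure comes from the ambient relative cluster category $\C^{\on{rel}}$, while the exact structure on $\C_{\bf S}^{\on{c}}$ used throughout is pulled back along $G=\prod_{e\in\mathcal{T}_1^\partial}\on{ev}_e$ from the split-exact structure, and one must check these agree under $\tau$. The cleanest route is to observe that under the identification $\widetilde{{\bf Gi}}^*$ of \Cref{prop:tauextr} the Higgs exact structure is exactly the pullback of the split-exact structure on $\D^{\on{perf}}(\Pi_2(kF))/\D^{\on{fin}}(\Pi_2(kF))$, and that for the surface ice quiver this target and this functor match $\prod_{e\in\mathcal{T}_1^\partial}\mathcal{D}(k[t_1^\pm])$ and $G$ after the equivalences of \Cref{thm:clstcat} and \Cref{fin=mndlem}; once this compatibility is in place, exactness of $\tau$ for the stated structures is automatic and the equivalence follows. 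A secondary point to be careful about is that both exact $\infty$-categories must be shown to be recovered from their extriangulated homotopy categories together with the chosen generator, which is where one invokes idempotent completeness and the fact that an exact $\infty$-category is determined by its extriangulated homotopy category on the relevant generating subcategory.
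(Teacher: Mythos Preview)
Your overall strategy is close to the paper's, but there is a genuine gap in the essential surjectivity and full faithfulness steps: you are implicitly treating the Higgs category as if it were stable. It is not. The Higgs category $\mathcal{H}(Q_\mathcal{T},F_\mathcal{T},W_\mathcal{T})$ is only an extension-closed subcategory of $\C^{\on{rel}}$, so it is an exact $\infty$-category with no suspension and no cones in general. Hence the claim that ``the essential image of $\tau$ is a thick subcategory containing a generator'' does not go through: $\tau$ only sends \emph{exact} sequences of $\mathcal{H}$ to exact sequences of $\C_{\bf S}^{\on{c}}$ (i.e.\ those fiber--cofiber sequences that split after applying $G$), not arbitrary fiber--cofiber sequences, so you cannot build up all of $\C_{\bf S}^{\on{c}}$ from $\tau(T_{\mathcal{H}})$ by taking cones inside the image of $\tau$. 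The same objection applies to your d\'evissage for full faithfulness: there is no thick generation in $\mathcal{H}$, so ``propagate from $T_{\mathcal{H}}$ by cones'' is not available.

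The paper replaces both steps by arguments intrinsic to the extriangulated/exact setting. Essential surjectivity is obtained object-by-object from the geometrization Theorem: every indecomposable compact object of $\C_{\bf S}$ is $M_\gamma$ for a matching curve $\gamma$, and one explicitly lifts $\gamma$ to a pure matching datum $(\gamma,k[t_1])$ whose associated object $M_\gamma^{k[t_1]}\in\D(\mathscr{G}_\mathcal{T})$ lies in the relative fundamental domain $\mathcal{F}^{\on{rel}}$. Full faithfulness on the extriangulated homotopy categories is handled by \Cref{lem:extriangulatedequivalence}, whose proof uses two-term $\on{Add}(T)$-resolutions (available by the cluster tilting property, \Cref{lem:2termresolution}) together with the five lemma --- this is the correct substitute for d\'evissage in a non-triangulated setting. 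Finally, to upgrade from an equivalence of homotopy $1$-categories to an equivalence of exact $\infty$-categories, one must still check that $\tau$ induces isomorphisms on the \emph{negative} extension groups $\on{Ext}^{-i}$; the paper does this by a direct geometric comparison of $\on{Ext}^{-i}_{\D(\mathscr{G}_\mathcal{T})}(M_\gamma^{k[t_1]},M_{\gamma'}^{k[t_1]})$, $\on{Ext}^{-i}_{\C^{\on{rel}}}$ and $\on{Ext}^{-i}_{\C_{\bf S}}(M_\gamma,M_{\gamma'})$, invoking the Hom computations from \cite{Chr21b} and a lemma of Chen. Your remark that ``an exact $\infty$-category is determined by its extriangulated homotopy category on the relevant generating subcategory'' is not a theorem you can cite; this step genuinely requires controlling the higher mapping spaces.
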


\begin{proof}
The conditions of \Cref{prop:tauextr} are satisfied by \Cref{rem:rightadjointGi} below. We first show that $\on{ho}\tau$ is an equivalence of extriangulated homotopy categories using \Cref{lem:extriangulatedequivalence} below.

Under the quotient functors $\D(\mathscr{G}_\mathcal{T})\to \C^{\on{rel}}(Q_\mathcal{T},F_\mathcal{T},W_\mathcal{T}),\C_{\bf S}$, the relative Ginzburg algebra is mapped to cluster tilting objects $T\in \mathcal{H}(Q_\mathcal{T},F_\mathcal{T},W_\mathcal{T})$ and $T'\simeq \bigoplus_{\gamma \in I}M_\gamma\in \C_{\bf S}$, respectively. The Homs assemble into an apparent commutative diagram, where $\gamma$ is induced by the functor $\tau$.
\[
\begin{tikzcd}
{\on{Ext}^0_{\mathcal{D}(\mathscr{G}_\mathcal{T})}(\mathscr{G}_\mathcal{T},\mathscr{G}_\mathcal{T})} \arrow[rr, "\beta"] \arrow[rd, "\alpha"] &                                     & {\on{Ext}^0_{\mathcal{H}(Q_\mathcal{T},F_\mathcal{T},W_\mathcal{T})}(T,T)} \arrow[ld, "\gamma"] \\
                                                                                                                                              & {\on{Ext}^0_{\C_{\bf S}}(T',T')} &                                                       
\end{tikzcd}
\]
The morphism $\beta$ is an equivalence by the fully faithfulness of the functor $\phi\colon \mathcal{F}^{\on{rel}}\to \mathcal{H}(Q,F,W)$. Using the geometric description of the endomorphisms of $\mathscr{G}_\mathcal{T}$ in \cite[Thm.~6.4,6.5]{Chr21b} and of $T'$ in \Cref{thm:hom1,thm:hom2}, it is straightforward to check that $\alpha$ is also an equivalence. This implies that the map $\gamma$ is also an equivalence. 

Next, we show that $\tau$ is essentially surjective. Consider an indecomposable object $X\in \C_{\bf S}^{\on{c}}$. By \Cref{thm:geom}, we have an equivalence $X\simeq M_{\gamma}$ for a matching curve $\gamma$ in ${\bf S}$. The matching curve $\gamma$ lifts uniquely to a pure matching datum $(\gamma, k[t_1])$ in ${\bf S}\backslash M$ in the sense of \cite[Def.~5.5]{Chr21b}, see \cite[Lem.~5.8]{Chr21b}. The corresponding object $M_{\gamma}^{k[t_1]}\in \mathcal{D}(\mathscr{G}_\mathcal{T})$ lies in $\mathcal{F}^{\on{rel}}$ and its image in $\mathcal{H}(Q_{\mathcal{T}},F_\mathcal{T},W_{\mathcal{T}})$ is the desired lift of $M_{\gamma}$.

We have shown that $\on{ho}\tau$ is an equivalence on extriangulated homotopy categories. The functor $\tau$ is thus also an exact functor between exact $\infty$-categories and further gives bijections between the subsets of morphisms given by inflations and deflations. To prove that $\tau$ is an equivalence of exact $\infty$-categories, it remains to show that $\tau$ is fully faithful, i.e.~gives rise to equivalences between the mapping spaces, or equivalently, on all negative extensions groups. Consider two objects $X,X'\in \mathcal{F}^{\on{rel}}$ with images $Y,Y'\in \mathcal{H}(Q_\mathcal{T},F_\mathcal{T},W_\mathcal{T})$. By the equivalence of homotopy categories $\on{ho}\mathcal{F}^{\on{rel}}\simeq \on{ho}\C_{\bf S}^{\on{c}}$, there exist equivalences $X\simeq M_{\gamma}^{k[t_1]}$ and $X'\simeq M_{\gamma'}^{k[t_1]}$ for two pure matching data $(\gamma,k[t_1])$, $(\gamma',k[t_1])$ in ${\bf S}\backslash M$. Note that $\tau(Y)\simeq M_{\gamma}$ and $\tau(Y')\simeq M_{\gamma'}$. The negative extension groups organize into the following commutative diagram,
\[
\begin{tikzcd}
{\on{Ext}^{-i}_{\mathcal{D}(\mathscr{G}_\mathcal{T})}(X,X')} \arrow[rd, "\simeq"] \arrow[rr, "\simeq"] &                                                              & {\on{Ext}^{-i}_{\mathcal{C}^{\on{rel}}(Q_\mathcal{T},F_\mathcal{T},W_\mathcal{T})}(Y,Y')} \arrow[ld] \\
                                                                                                       & {\on{Ext}^{-i}_{\mathcal{C}_{\bf S}}(M_{\gamma},M_{\gamma'})} &                                                                                          
\end{tikzcd}
\]
where the left morphism is an isomorphism by the geometric description of the extension groups in Theorems 6.4 and 6.5 in \cite{Chr21b}, using that $\on{H}_i(k[t_1])\simeq H_i(k[t_1^\pm])\simeq k$ for all $i>0$. The upper morphism is also an isomorphism, as shown in \cite[Lem.~6.70]{ChenThesis}. This shows the desired fully faithfulness.
\end{proof}

\begin{remark}\label{rem:rightadjointGi}
The conditions of \Cref{prop:tauextr} are satisfied for the relative Ginzburg algebra $\mathscr{G}_\mathcal{T}$, which can be seen as follows. The functor ${\bf Gi}_!$ is described geometrically in \cite[Prop~5.19]{Chr21b}. A minor variation of that argument (where one glues the left adjoints instead of the right adjoints) yields a similar description of the functor ${\bf Gi}_*$. In the notation of loc.~cit., we find for every edge $e$ of the trivalent spanning graph $\mathcal{T}$ that $M^{k[t_1]}_{c_e'} \simeq \on{ev}^!(k[t_1])$, with $\on{ev}^!$ the right adjoint of $\on{ev}_e$ (instead of the left adjoint), and $c_e'$ the pure matching curve consisting of segments which each wrap exactly one step clockwise (instead of counterclockwise) around a vertex of $\mathcal{T}$. If $e$ is an external edge, then $c_e'$ is a boundary arc, and all boundary arcs arise this way. This relation between the left and right adjoints of ${\bf Gi}^*$ can also be explained by noting that ${\bf Gi}^*$ is a spherical functor whose cotwist functor permutes the components of $\D^{\on{perf}}(\Pi_2(kF))$.
\end{remark}

\begin{lemma}\label{lem:extriangulatedequivalence}
Let $\tau\colon (C,\mathbb{E},\mathfrak{s})\to (C',\mathbb{E}',\mathfrak{s}')$ be an extriangulated functor between extriangulated categories, such that $C$ is Frobenius with finite dimensional Homs and extriangulated $2$-Calabi--Yau. Assume further that 
\begin{enumerate}[1)]
\item there are cluster tilting objects $T\in C$ and $T'\in C'$,
\item $\tau(T)=T'$,
\item $\tau\colon \on{Hom}_C(T,T)\to \on{Hom}_{C'}(T',T')$ is an isomorphism and
\item the functor $\tau$ is essentially surjective.
\end{enumerate}
Then $\tau$ is an extriangulated equivalence.
\end{lemma}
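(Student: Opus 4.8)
\textbf{Proof strategy for \Cref{lem:extriangulatedequivalence}.} The plan is to leverage the fact that a cluster tilting object $T$ in a Hom-finite Frobenius $2$-Calabi--Yau extriangulated category $C$ controls the entire category: every object admits a two-term resolution by $\on{Add}(T)$, and the functor $\on{Hom}_C(T,\mhyphen)$ realizes $C$ as (a subcategory of) the module category over $\Lambda\coloneqq\on{End}_C(T)$. Concretely, I would first recall from \cite{KR07} (or \cite{IY08}) that for a cluster tilting object $T$ in such a category, the functor $H\coloneqq \on{Hom}_C(T,\mhyphen)\colon C\to \on{mod}\,\Lambda$ is full, and induces an equivalence $C/\on{Add}(T[1])\simeq \on{mod}\,\Lambda$ when $C$ is additionally stably $2$-Calabi--Yau; more precisely, two objects $X,Y$ with $\on{Ext}^1_C(T,X)=\on{Ext}^1_C(T,Y)=0$ (equivalently, by $2$-Calabi--Yau duality and \Cref{lem:2termresolution}, objects in $\on{Add}(T)$... but here one uses the full image) satisfy $\on{Hom}_C(X,Y)\simeq \on{Hom}_\Lambda(HX,HY)$. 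The key external input is that $C$ and $C'$ are both $2$-Calabi--Yau Frobenius extriangulated with cluster tilting objects, so both are recovered from their respective endomorphism algebras $\Lambda=\on{End}_C(T)$ and $\Lambda'=\on{End}_{C'}(T')$ of the cluster tilting object in a uniform way.

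\textbf{Main steps.} First, I would observe that hypotheses 2) and 3) give an isomorphism of $k$-algebras $\Lambda\xrightarrow{\sim}\Lambda'$ via $\tau$, so that the module categories $\on{mod}\,\Lambda$ and $\on{mod}\,\Lambda'$ are identified compatibly with $\tau$. Second, I would use \Cref{lem:2termresolution}: since $C$ is extriangulated $2$-Calabi--Yau, $T$ cluster tilting means every $X\in C$ fits in an exact sequence $T_1\to T_0\to X$ with $T_i\in\on{Add}(T)$ and also (dually) $X\to T^0\to T^1$ with $T^i\in\on{Add}(T)$; the same holds in $C'$ for $T'$. Applying $\tau$ (which is extriangulated, hence preserves exact sequences and $\on{Add}(T)\to\on{Add}(T')$) shows that $\tau$ sends $\on{Add}(T)$-resolutions to $\on{Add}(T')$-resolutions. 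Third, for faithfulness and fullness on all Hom-spaces: given $X,Y\in C$, choose a resolution $T_1\to T_0\to X$, apply $\on{Hom}_C(\mhyphen,Y)$ to get a presentation of $\on{Hom}_C(X,Y)$ (using that higher $\on{Ext}$ from $T_i$ into $Y$ is controlled via the $2$-Calabi--Yau property and the exact sequence \cite[Thm.~3.5]{GNP21} as in the proof of \Cref{lem:2termresolution}), do the same in $C'$ for $\tau(X),\tau(Y)$, and compare the two presentations using that $\tau$ is already an isomorphism on $\on{Hom}(T_i,T_j)$ and on $\on{Hom}(T_i,\mhyphen)$-type terms. This forces $\tau\colon\on{Hom}_C(X,Y)\to\on{Hom}_{C'}(\tau X,\tau Y)$ to be an isomorphism. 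Fourth, essential surjectivity is hypothesis 4), so $\tau$ is an equivalence of underlying additive categories; finally, an extriangulated functor which is an additive equivalence and which (being extriangulated) is compatible with $\mathbb{E}$ and $\mathfrak{s}$ is automatically an equivalence of extriangulated categories once one checks that the induced natural transformation $\mathbb{E}(\mhyphen,\mhyphen)\to\mathbb{E}'(\tau\mhyphen,\tau\mhyphen)$ is an isomorphism --- and this last point again reduces, via the resolution $T_1\to T_0\to X$ and the long exact sequence relating $\on{Hom}_C(\mhyphen,Y)$ to $\mathbb{E}(\mhyphen,Y)$, to the already-established isomorphisms on Hom-spaces.

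\textbf{Expected main obstacle.} The delicate point is step three: extracting the isomorphism on \emph{all} Hom-spaces (not just on $\on{End}(T)$) purely from the hypotheses, without circularity. The subtlety is that $\on{Hom}_C(T_i,Y)$ is not directly among the data we control --- only $\on{Hom}_C(T,T)$ is --- so one must bootstrap: first establish the isomorphism on $\on{Hom}_C(T_i,Y)$ for $Y$ with $\mathbb{E}(T,Y)=0$ (i.e.\ $Y\in\on{Add}(T)$ by \Cref{lem:2termresolution}, reducing to hypothesis 3)), then for general $Y$ via the resolution $Y\to T^0\to T^1$ and the five lemma applied to the long exact $\on{Hom}$-sequences, and only then run the argument for general $X$. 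Keeping the $2$-Calabi--Yau duality $\mathbb{E}(X,Y)\simeq\mathbb{E}(Y,X)^*$ synchronized between $C$ and $C'$ throughout --- so that the natural transformation on $\mathbb{E}$ is compatible with duality --- is what makes the bookkeeping work, and this compatibility is guaranteed because $\tau$ is assumed extriangulated and both categories are $2$-Calabi--Yau. I would also need to invoke that the two-term-resolution characterization of cluster tilting objects (\Cref{lem:2termresolution}) applies to both $C$ and $C'$, which requires $C'$ to be Frobenius with finite-dimensional Homs as well; since $C'$ has a cluster tilting object and arises (in our application) as $\C_{\bf S}^{\on{c}}$, this holds, but in the abstract lemma statement it is implicitly part of ``extriangulated $2$-Calabi--Yau with a cluster tilting object'' and I would make that hypothesis explicit or derive it.
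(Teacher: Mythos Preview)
Your proposal is correct and follows essentially the same route as the paper's proof: a two-stage five-lemma bootstrap using the two-term $\on{Add}(T)$-resolutions and coresolutions supplied by \Cref{lem:2termresolution}, together with the long exact Hom/$\mathbb{E}$-sequences of \cite[Thm.~3.5]{GNP21}, first establishing the isomorphisms on $\on{Hom}$ and $\mathbb{E}$ when one argument lies in $\on{Add}(T)$, and then for arbitrary pairs. One minor simplification over your worry in the final paragraph: you do \emph{not} need $C'$ to be Frobenius or $2$-Calabi--Yau, because the required exact sequences in $C'$ are obtained simply by applying the extriangulated functor $\tau$ to the resolutions already built in $C$; the long exact sequences on the $C'$-side then hold just because $C'$ is extriangulated.
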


\begin{remark}
A version of \Cref{lem:extriangulatedequivalence} for triangulated categories (without the essential surjectivity assumption) was proven in section 4.5 of \cite{KR08} 
\end{remark}

\begin{proof}[Proof of \Cref{lem:extriangulatedequivalence}.]
Let $X\in C$ and $Y\in \on{Add}(T)$. By \Cref{lem:2termresolution}, there exist exact sequences $T_1\to T_0 \to X$ and $X\to S_0 \to S_1$ in $C$ with $T_0,T_1,S_0,S_1\in \on{Add}(T)$. We get two morphisms between sequences of vector spaces which are exact by \cite[Thm.~3.5]{GNP21}:
\[
\begin{tikzcd}[column sep=tiny]
{\on{Hom}(Y,T_1)} \arrow[d, "\simeq"] \arrow[r] & {\on{Hom}(Y,T_0)} \arrow[d, "\simeq"] \arrow[r] & {\on{Hom}(Y,X)} \arrow[d] \arrow[r]   & {\mathbb{E}(Y,T_1)} \arrow[d, "\simeq"] \arrow[r]    & {\mathbb{E}(Y,T_0)} \arrow[d, "\simeq"]    \\
{\on{Hom}(\tau(Y),\tau(T_1))} \arrow[r]         & {\on{Hom}(\tau(Y),\tau(T_0))} \arrow[r]         & {\on{Hom}(\tau(Y),\tau(X))} \arrow[r] & {\mathbb{E}(\tau(Y),\tau(T_1))} \arrow[r] & {\mathbb{E}(\tau(Y),\tau(T_0))}
\end{tikzcd}
\]
\[
\begin{tikzcd}[column sep=tiny]
{\on{Hom}(S_1,Y)} \arrow[d, "\simeq"] \arrow[r] & {\on{Hom}(S_0,Y)} \arrow[d, "\simeq"] \arrow[r] & {\on{Hom}(X,Y)} \arrow[d] \arrow[r]   & {\mathbb{E}(S_1,Y)} \arrow[d, "\simeq"] \arrow[r]    & {\mathbb{E}(S_0,Y)} \arrow[d, "\simeq"]    \\
{\on{Hom}(\tau(S_1),\tau(Y))} \arrow[r]         & {\on{Hom}(\tau(S_0),\tau(Y))} \arrow[r]         & {\on{Hom}(\tau(X),\tau(Y))} \arrow[r] & {\mathbb{E}(\tau(S_1),\tau(Y))} \arrow[r] & {\mathbb{E}(\tau(S_0),\tau(Y))}
\end{tikzcd}
\]
All vertical arrows, except the middle arrows, are equivalences by assumption 3) and the fact that $T$ is rigid. By the five lemma, the middle vertical arrows above are also equivalences.

Similar arguments apply to the following two diagrams:
\[
\begin{tikzcd}[column sep=tiny]
{\on{Hom}(T_0,Y)} \arrow[d, "\simeq"] \arrow[r] & { \on{Hom}(T_1,Y)} \arrow[d, "\simeq"] \arrow[r] & {\mathbb{E}(X,Y)} \arrow[d] \arrow[r]   & {\mathbb{E}(T_0,Y)} \arrow[d, "\simeq"] \arrow[r] & {\mathbb{E}(T_1,Y)} \arrow[d, "\simeq"] \\
{\on{Hom}(\tau(T_0),\tau(Y))} \arrow[r]         & { \on{Hom}(\tau(T_1),\tau(Y))} \arrow[r]         & {\mathbb{E}(\tau(X),\tau(Y))} \arrow[r] & {\mathbb{E}(\tau(T_0),\tau(Y))} \arrow[r]         & {\mathbb{E}(\tau(T_1),\tau(Y))}        
\end{tikzcd}
\]
\[
\begin{tikzcd}[column sep=tiny]
{\on{Hom}(Y,S_0)} \arrow[d, "\simeq"] \arrow[r] & { \on{Hom}(Y,S_1)} \arrow[d, "\simeq"] \arrow[r] & {\mathbb{E}(Y,X)} \arrow[d] \arrow[r]   & {\mathbb{E}(Y,S_0)} \arrow[d, "\simeq"] \arrow[r] & {\mathbb{E}(Y,S_1)} \arrow[d, "\simeq"] \\
{\on{Hom}(\tau(Y),\tau(S_0))} \arrow[r]         & { \on{Hom}(\tau(Y),\tau(S_1))} \arrow[r]         & {\mathbb{E}(\tau(Y),\tau(X))} \arrow[r] & {\mathbb{E}(\tau(T),\tau(S_0))} \arrow[r]         & {\mathbb{E}(\tau(T),\tau(S_1))}        
\end{tikzcd}
\]

We use the above constructed equivalences to show that $\tau$ is fully faithful on Homs and exact extensions. Let $X,X'\in C$. By \Cref{lem:2termresolution}, there exist exact sequences $T_1\to T_0 \to X$ and $T_1'\to T_0\to X'$ in $C$ with $T_0,T_1,T_0',T_1'\in \on{Add}(T)$. We form the following diagrams with horizontal exact sequences:
\[
\begin{tikzcd}[column sep=tiny]
{\on{Hom}(X',T_1)} \arrow[d, "\simeq"] \arrow[r] & {\on{Hom}(X',T_0)} \arrow[d, "\simeq"] \arrow[r] & {\on{Hom}(X',X)} \arrow[d] \arrow[r]   & {\mathbb{E}(X',T_1)} \arrow[d, "\simeq"] \arrow[r]    & {\mathbb{E}(X',T_0)} \arrow[d, "\simeq"]    \\
{\on{Hom}(\tau(X'),\tau(T_1))} \arrow[r]         & {\on{Hom}(\tau(X'),\tau(T_0))} \arrow[r]         & {\on{Hom}(\tau(X'),\tau(X))} \arrow[r] & {\mathbb{E}(\tau(X'),\tau(T_1))} \arrow[r] & {\mathbb{E}(\tau(X'),\tau(T_0))}
\end{tikzcd}
\]

\[
\begin{tikzcd}[column sep=tiny]
{\on{Hom}(T_0',X)} \arrow[d, "\simeq"] \arrow[r] & { \on{Hom}(T_1',X)} \arrow[d, "\simeq"] \arrow[r] & {\mathbb{E}(X',X)} \arrow[d] \arrow[r]   & {\mathbb{E}(T_0',X)} \arrow[d, "\simeq"] \arrow[r] & {\mathbb{E}(T_1',X)} \arrow[d, "\simeq"] \\
{\on{Hom}(\tau(T_0'),\tau(X))} \arrow[r]         & { \on{Hom}(\tau(T_1'),\tau(X))} \arrow[r]         & {\mathbb{E}(\tau(X'),\tau(X))} \arrow[r] & {\mathbb{E}(\tau(T_0'),\tau(X))} \arrow[r]         & {\mathbb{E}(\tau(T_1'),\tau(X))}        
\end{tikzcd}
\]
The above shows that the non-central vertical morphisms are equivalences, and the five lemma yields that the central morphisms are also isomorphisms, showing the desired fully faithfulness of $\tau$.
\end{proof}

\begin{example}\label{ex:rela1}
Consider the $A_2$-quiver with a frozen vertex:
\[ 
\begin{tikzcd}
1 \arrow[r, "a"] & \begin{color}{cyan}2\end{color}
\end{tikzcd}
\]
The corresponding relative Ginzburg algebra $\mathscr{G}$ is the path algebra of the graded quiver
\[
 \tilde{Q}\quad =\quad \begin{tikzcd}
1 \arrow[r, "a", bend left] \arrow["l"', loop, distance=2em, in=215, out=145] & \begin{color}{cyan}2\end{color} \arrow[l, "a^*", bend left]
\end{tikzcd}
\]
with $|a|=0,\,|a^*|=1,\,|l|=2$ and differential determined on the generators via $d(l)=a^*a$ and $d(a)=d(a^*)=0$. We show in the following that the homology of $\mathscr{G}$ is generated by $e_1,e_2,a,b$, with $e_i$ the lazy path at $i$, and thus finite dimensional. It follows that $\mathcal{D}^{\on{perf}}(\mathscr{G})=\mathcal{D}^{\on{fin}}(\mathscr{G})$ and that the associated generalized cluster category $\mathcal{D}^{\on{perf}}(\mathscr{G})/\mathcal{D}^{\on{fin}}(\mathscr{G})$ vanishes.
 
Let $X\in \mathscr{G}$ be a cycle, meaning that it satisfies $d(X)=0$. We may split $X=\sum_{i,j=1}^2 X_{i,j}$, with $X_{i,j}$ a cycle consisting of paths beginning at the vertex $i$ and ending at the vertex $j$. We can write $X$ as a $k$-linear sum of the generators, which are the paths of $\tilde{Q}$. We may thus write $X_{1,1}$ as $X_{1,1}=lY^1+a^*aY^2+\lambda e_1$, with $Y^{1},Y^2\in \mathscr{G}$ and $\lambda \in k$. Evaluating the condition $d(X_{1,1})=0$, we find $a^*aY^1-a^*ad(Y^2)=0$, hence $Y_1=d(Y^2)$, and thus get that $X_{1,1}=d(lY^2)+\lambda e_1$ is homologous to $\lambda e_i$. Similarly, we may write $X_{1,2}=aY$ for $Y$ some cycle consisting of paths beginning and ending at $1$. By the previous argument $Y$ is homologous to $e_1$, and $X_{1,2}$ is hence homologous to $a$. Analogous arguments show that $X_{2,1}$ and $X_{2,2}$ are homologous to $a^*$ or $e_2$.
\end{example}

\subsection{The stable category of \texorpdfstring{$\mathcal{C}_{\bf S}$}{the generalized cluster category} and the mapping class group action}\label{subsec:stabcat}

Let ${\bf S}$ be a marked surface and $\mathcal{C}_{\bf S}^{\on{c}}$ the associated (not $\on{Ind}$-complete) generalized cluster category, considered as a Frobenius exact $\infty$-category. The associated stable category $\bar{\mathcal{C}}^{\on{c}}_{\bf S}$ is a stable $\infty$-category whose triangulated homotopy $1$-category $\on{ho}\bar{\mathcal{C}}^{\on{c}}_{\bf S}$ is equivalent to the stable category of the Frobenius extriangulated category $\on{ho}\mathcal{C}^{\on{c}}_{\bf S}$. By \Cref{thm:equivHiggs}, $\on{ho}\mathcal{C}_{\bf S}^{\on{c}}$ is equivalent as an extriangulated category to the  Higgs category. As shown in \cite[Cor.~5.19]{Wu21}, the stable category $\on{ho}\bar{\mathcal{C}}^{\on{c}}_{\bf S}$ is thus equivalent to the usual triangulated $2$-Calabi--Yau cluster category associated with ${\bf S}$, considered for instance in \cite{BZ11}. 

We further note that the mapping class group action on $\mathcal{C}_{\bf S}$ induces an action on the stable $\infty$-category $\bar{\mathcal{C}}^{\on{c}}_{\bf S}$. 

\begin{proposition}\label{prop:MCGonclustercat}
The mapping class group action on $\mathcal{C}^{\on{c}}_{\bf S}$ from \Cref{thm:mcgact} induces an action on $\bar{\mathcal{C}}^{\on{c}}_{\bf S}$ by automorphisms in $\on{ho}\on{St}$, and thus also an action on $\on{ho}\bar{\mathcal{C}}_{\bf S}$ by equivalences of triangulated categories.
\end{proposition}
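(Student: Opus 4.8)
\textbf{Proof proposal for Proposition \ref{prop:MCGonclustercat}.}

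The plan is to show that the mapping class group action from \Cref{thm:mcgact} is compatible with the Frobenius exact $\infty$-structure on $\mathcal{C}_{\bf S}^{\on{c}}$, and then invoke functoriality of the localization $\mathcal{C}\mapsto \bar{\mathcal{C}}$ from \Cref{prop:jkpw}. First I would recall that by \Cref{thm:mcgact} the mapping class group $\on{MCG}({\bf S})$ acts on $\mathcal{C}_{\bf S}$, and hence on $\mathcal{C}_{\bf S}^{\on{c}}$, by automorphisms in $\on{ho}\on{LinCat}_k$; concretely, an element $\psi\in \on{MCG}({\bf S})$ sends an object $M_{\gamma}$ to $M_{\psi\circ\gamma}$, since $\psi$ permutes the matching curves in ${\bf S}$. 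The key point is that each such autoequivalence $\Psi$ of $\mathcal{C}_{\bf S}^{\on{c}}$ preserves the Frobenius exact structure, i.e.\ sends inflations to inflations, deflations to deflations and exact sequences to exact sequences. For this I would use the description of the exact structure via \Cref{geomcyprop}: the Calabi--Yau morphism objects $\on{Mor}^{\on{CY}}_{\mathcal{C}_{\bf S}}(M_{\gamma},M_{\gamma'})$ are exactly the direct summands of the $\on{Mor}$-objects associated with \emph{crossings} of $\gamma$ and $\gamma'$ (as opposed to boundary intersections), and since a diffeomorphism $\psi$ preserves the combinatorics of crossings versus boundary intersections, it must carry $\on{Mor}^{\on{CY}}$ to $\on{Mor}^{\on{CY}}$. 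Equivalently and more intrinsically, one can argue that $\Psi$ intertwines the Calabi--Yau functor $G=\prod_{e}\on{ev}_e$ up to an equivalence permuting the external edges (as in the description of the cotwist functor in \Cref{cor:sphbdry}), so a fiber and cofiber sequence in $\mathcal{C}_{\bf S}^{\on{c}}$ is mapped by $G$ to a split sequence if and only if its image under $\Psi$ is. Hence $\Psi$ is an exact autoequivalence of the Frobenius exact $\infty$-category $(\mathcal{C}_{\bf S}^{\on{c}},\mathcal{C}_\dagger^{\on{c}},\mathcal{C}^{\on{c},\dagger})$.

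Next I would observe that an exact autoequivalence in particular preserves the class of injective-projective objects and therefore preserves the class $W$ of morphisms appearing in \Cref{prop:jkpw} (those fitting into an exact sequence $X\xrightarrow{(f,g)} Y\oplus I\to J$ with $I,J$ injective). Consequently $\Psi$ descends to an autoequivalence $\bar{\Psi}$ of the localization $\bar{\mathcal{C}}_{\bf S}^{\on{c}}=\mathcal{C}_{\bf S}^{\on{c}}[W^{-1}]$, which is a stable $\infty$-category by \Cref{prop:jkpw}. Since localization at a class of morphisms is functorial in the pair $(\mathcal{C},W)$, the assignment $\psi\mapsto \bar{\Psi}$ is compatible with composition, so the $\on{MCG}({\bf S})$-action on $\mathcal{C}_{\bf S}^{\on{c}}$ induces an $\on{MCG}({\bf S})$-action on $\bar{\mathcal{C}}_{\bf S}^{\on{c}}$ by automorphisms in $\on{ho}\on{St}$. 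Passing to homotopy $1$-categories then yields an action on the triangulated category $\on{ho}\bar{\mathcal{C}}_{\bf S}^{\on{c}}$ by triangulated equivalences, as claimed.

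A technically cleaner way to package the functoriality, which I would adopt to avoid manipulating coherence data by hand, is to realize the whole construction of $\mathcal{C}_{\bf S}^{\on{c}}$ together with its exact structure as the image of a single functor out of a groupoid: the $\on{MCG}({\bf S})$-action is a functor $B\on{MCG}({\bf S})\to \on{ho}\on{LinCat}_k$ (or its exact-$\infty$-category refinement), and the passage to the stable category is a functor on the relevant $\infty$-category of Frobenius exact $\infty$-categories; composing gives the desired action. The main obstacle is the first step, namely verifying that the mapping class group autoequivalences genuinely preserve the \emph{pulled-back} exact structure rather than merely the underlying stable $\infty$-category; this is where one must use either the geometric description of Calabi--Yau extensions in \Cref{geomcyprop} together with the fact that diffeomorphisms respect the distinction between crossings and boundary intersections, or the behaviour of the boundary evaluation functor $G$ and its cotwist under relabelling of external edges established in \Cref{cor:sphbdry}. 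Once that compatibility is in hand, the remaining steps are formal consequences of the functoriality of localization and of \Cref{prop:jkpw}.
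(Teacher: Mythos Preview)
Your proposal is correct and follows the same overall strategy as the paper: show that the mapping class group action preserves the exact structure (equivalently, the class $W$ from \Cref{prop:jkpw}), and then use functoriality of the localization. The paper's justification for the first step is, however, more direct than either of your two suggested arguments. Recall that in \Cref{thm:mcgact} the mapping class group consists of isotopy classes of diffeomorphisms that restrict to the \emph{identity} on $\partial{\bf S}$; hence the action does not move the external edges of $\mathcal{T}$ at all, and the evaluation functor $G=\prod_{e\in\mathcal{T}_1^\partial}\on{ev}_e$ is literally intertwined (no permutation of external edges is involved). This immediately gives preservation of the pulled-back exact structure and of $W$, without appealing to \Cref{geomcyprop} or to the cotwist description in \Cref{cor:sphbdry}. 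Your geometric argument via crossings versus boundary intersections is valid but unnecessary, and your ``permuting external edges'' formulation slightly overcomplicates the situation.
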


\begin{proof}
The action of the mapping class group does not affect the values of the global sections at the external edges of the auxiliary trivalent spanning graph $\mathcal{T}$. It follows that the action preserves the set $W$ from \Cref{prop:jkpw} and hence induces an action on the localization. 
\end{proof}

\subsection{Generalized \texorpdfstring{$n$}{n}-cluster categories of marked surfaces}\label{subsec:nclustercats}

The study of $2$-Calabi--Yau cluster categories admits a generalization to triangulated $n$-Calabi--Yau categories with $n\geq 2$, called $n$-cluster categories. Their representation theoretic behavior is very similar to that of the $2$-cluster categories. For example, there is a notion of $n$-cluster tilting objects which admit mutations similar to the $2$-Calabi--Yau case. $n$-Cluster categories for $n\geq 2$ however do not categorify cluster algebras. They can be constructed in similar ways as the $2$-cluster categories, for example as orbit categories or explicit geometric constructions. Amiot's construction of the generalized cluster category can also be used to construct $n$-Calabi--Yau categories with $n$-cluster tilting objects, see \cite{Guo11}. A possible input for this is a higher Ginzburg algebra, meaning an $(n+1)$-Calabi--Yau version of the usual Ginzburg algebra.

We fix a marked surface ${\bf S}$ and choose an $n$-valent spanning graph $\mathcal{T}$ of ${\bf S}$. Its dual describes an $n$-angulation (decomposition into $n$-gons) of the surface. As shown in \cite{Chr21b}, there is an associated relative higher Ginzburg algebra $\mathscr{G}_\mathcal{T}$ and a $\mathcal{T}$-parametrized perverse schober $\mathcal{F}_\mathcal{T}$ with global sections its derived $\infty$-category $\mathcal{D}(\mathscr{G}_\mathcal{T})$. It's generalized cluster category is given by the Verdier quotient 
\[ \C_{\bf S}^n\coloneqq \mathcal{D}(\mathscr{G}_\mathcal{T})/\on{Ind}\mathcal{D}^{\on{fin}}(\mathscr{G}_\mathcal{T})\,,\]
we refer to $\C_{\bf S}^n$ as the generalized $n$-cluster category for clarity. In the following, we survey how the results of this article generalize to $\mathcal{C}_{\bf S}^n$.

Locally at each vertex of $\mathcal{T}$, the spherical adjunction underlying the perverse schober $\mathcal{F}_\mathcal{T}$ is given by
\[ f^*\colon \mathcal{D}(k)\longleftrightarrow \on{Fun}(S^{n},\mathcal{D}(k))\noloc f_*\,,\]
where $f$ denotes the map $S^n\rightarrow \ast$. We now proceed in analogy with \Cref{sec4}. Under the equivalence of $\infty$-categories $\on{Fun}(S^{n},\mathcal{D}(k))\simeq \mathcal{D}(k[t_{n-1}])$, the functor $f^*$ is identified with the pullback functor $\phi^*$ along $\phi\colon k[t_{n-1}]\xrightarrow{t_{n-1}\mapsto 0} k$. By \Cref{lem:laurent}, the quotient $\mathcal{D}(k[t_{n-1}])/\on{Ind}\mathcal{D}(k[t_{n-1}])^{\on{fin}}$ is equivalent to the derived $\infty$-category $\mathcal{D}(k[t_{n-1}^\pm])$ of $(n-1)$-periodic chain complexes.

\begin{theorem}
There exist a vanishing-monadic and nearby-monadic $\mathcal{T}$-parametrized perverse schober $\mathcal{F}_\mathcal{T}^{\on{mnd}}$ and a locally constant $\mathcal{T}$-parametrized perverse schober $\mathcal{F}_{\mathcal{T}}^{\on{clst}}$ with generic stalk $\mathcal{D}(k[t_{n-1}^\pm])$ satisfying the following.
\begin{enumerate}[i)]
\item There is a semiorthogononal decomposition $\{\mathcal{F}_{\mathcal{T}}^{\on{clst}},\mathcal{F}_{\mathcal{T}}^{\on{mnd}}\}$ of $\mathcal{F}_\mathcal{T}$.
\item There exists an equivalence \[ \mathcal{H}(\mathcal{T},\mathcal{F}_{\mathcal{T}}^{\on{mnd}})\simeq \on{Ind}\mathcal{D}^{\on{fin}}(\mathscr{G}_\mathcal{T})\,.\]
\item There exists an equivalence 
\[\mathcal{C}_{\bf S}^{n}\coloneqq \mathcal{H}(\mathcal{T},\mathcal{F}_{\mathcal{T}}^{\on{clst}})\simeq \mathcal{D}(\mathscr{G}_\mathcal{T})/\on{Ind}\mathcal{D}^{\on{fin}}(\mathscr{G}_\mathcal{T})\,. \]
\end{enumerate}
\end{theorem}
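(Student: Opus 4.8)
The strategy is to mirror exactly the argument given for the $n=2$ case in \Cref{sec4.1}, \Cref{sec4.2} and \Cref{sec4.3}, using that all the ingredients were already set up in the necessary generality in \Cref{sec2.3}. The only genuinely new point is to observe that the relevant lemmas about $\on{Fun}(S^n,\mathcal{D}(k))$ and $\mathcal{D}(k[t_{n-1}])$ were proven for all $n\geq 1$ (or $n\geq 2$), not just $n=2$, so essentially the same construction goes through verbatim.

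First I would construct $\mathcal{F}_\mathcal{T}^{\on{mnd}}$. By \Cref{fin=mndlem}, the Eilenberg--Moore $\infty$-category $\on{Fun}(S^n,\mathcal{D}(k))^{\on{mnd}}$ of the monad $f_*f^*$ is identified with $\on{Ind}\on{Fun}(S^n,\mathcal{D}^{\on{perf}}(k))\simeq \on{Ind}\mathcal{D}^{\on{fin}}(k[t_{n-1}])\subset \on{Fun}(S^n,\mathcal{D}(k))$. As in \Cref{sec4.1}, restricting the spherical adjunction $f^*\dashv f_*$ underlying $\mathcal{F}_\mathcal{T}$ at every vertex to the monadic-and-comonadic adjunction $\mathcal{D}(k)\leftrightarrow \on{Ind}\on{Fun}(S^n,\mathcal{D}^{\on{perf}}(k))$ produces a vanishing-monadic and nearby-monadic $\mathcal{T}$-parametrized perverse schober $\mathcal{F}_\mathcal{T}^{\on{mnd}}$ together with an inclusion $\mathcal{F}_\mathcal{T}^{\on{mnd}}\to \mathcal{F}_\mathcal{T}$; here one again uses that all autoequivalences appearing in the construction of $\mathcal{F}_\mathcal{T}$ in \cite{Chr21b} preserve the subcategory $\on{Ind}\on{Fun}(S^n,\mathcal{D}^{\on{perf}}(k))$. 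The right admissibility of this inclusion is proven exactly as in \Cref{lem:incadm}, since that argument only used the description of right adjoints between lax limits from \cite{CDW23} and the factorization of $i_{\on{mnd}}$ through $\pi_{\on{mnd}}$, both of which are available verbatim. One then defines $\mathcal{F}_\mathcal{T}^{\on{clst}}$ as the pointwise cofiber of $\mathcal{F}_\mathcal{T}^{\on{mnd}}\hookrightarrow\mathcal{F}_\mathcal{T}$ in $\on{Fun}(\on{Exit}(\mathcal{T}),\on{St})$; by \Cref{lem:laurent} the local spherical adjunction becomes the trivial one $0\leftrightarrow \mathcal{D}(k[t_{n-1}^\pm])$, so $\mathcal{F}_\mathcal{T}^{\on{clst}}$ is a locally constant perverse schober with generic stalk $\mathcal{D}(k[t_{n-1}^\pm])$, proving part i) together with \Cref{prop:sodG} (whose proof carries over unchanged).

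For parts ii) and iii) I would follow \Cref{sec4.2}. Part ii) is the statement $\mathcal{H}(\mathcal{T},\mathcal{F}_\mathcal{T}^{\on{mnd}})\simeq \on{Ind}\mathcal{D}^{\on{fin}}(\mathscr{G}_\mathcal{T})$, which is proven as in \Cref{prop:indfin}: the image on global sections of the inclusion $\mathcal{F}_\mathcal{T}^{\on{mnd}}\to\mathcal{F}_\mathcal{T}$ consists exactly of those global sections whose value at every edge lies in $\on{Ind}\on{Fun}(S^n,\mathcal{D}^{\on{perf}}(k))$, so $\mathcal{H}(\mathcal{T},\mathcal{F}_\mathcal{T}^{\on{mnd}})\simeq\mathcal{H}(\mathcal{T},\mathcal{F}_\mathcal{T})^{\on{Ind}\text{-}\on{fin}}$; and then one identifies this with $\on{Ind}\mathcal{D}(\mathscr{G}_\mathcal{T})^{\on{fin}}$ by the compact-generator argument, using the analogues of \Cref{lem:curvgen} and \Cref{lem:cptgen} for the higher Ginzburg algebra, which are established in \cite{Chr21b} for all $n$ (here one again uses crucially that ${\bf S}$ has a marked point on each boundary component, so that the relevant matching curves are finite). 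Finally, part iii) follows by combining i), \Cref{rem:sod1}.2 (the semiorthogonal decomposition of $\mathcal{F}_\mathcal{T}$ descends to a fiber/cofiber sequence $\mathcal{H}(\mathcal{T},\mathcal{F}_\mathcal{T}^{\on{mnd}})\to\mathcal{H}(\mathcal{T},\mathcal{F}_\mathcal{T})\to\mathcal{H}(\mathcal{T},\mathcal{F}_\mathcal{T}^{\on{clst}})$ on global sections) and ii), together with $\mathcal{H}(\mathcal{T},\mathcal{F}_\mathcal{T})\simeq\mathcal{D}(\mathscr{G}_\mathcal{T})$ from \cite{Chr22}; this exhibits $\mathcal{H}(\mathcal{T},\mathcal{F}_\mathcal{T}^{\on{clst}})$ as the Verdier quotient $\mathcal{D}(\mathscr{G}_\mathcal{T})/\on{Ind}\mathcal{D}^{\on{fin}}(\mathscr{G}_\mathcal{T})$, which is the generalized $n$-cluster category $\mathcal{C}_{\bf S}^n$ by definition.

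The main obstacle, such as it is, is bookkeeping rather than anything conceptual: one must check that every step in \Cref{sec4} that was phrased for $S^2$ and $k[t_1]$ genuinely only used formal properties (conservativity of $f^*$, the monadic/comonadic dichotomy for spherical functors, the Verdier localization $\mathcal{D}(k[t_{n-1}])\to\mathcal{D}(k[t_{n-1}^\pm])$, the compact-generation statements from \cite{Chr21b}) that hold for all $n$. The one place requiring slight care is the higher-Ginzburg analogue of \Cref{lem:cptgen}: one needs that $\bigoplus_{e\in\mathcal{T}_1}\on{ev}_e^L(k[t_{n-1}])$ is identified with $\mathscr{G}_\mathcal{T}$ and is a compact generator, which is the content of the relevant proposition in \cite{Chr21b}; granting that, the rest is a transcription of the $n=2$ arguments with $t_1$ replaced by $t_{n-1}$ throughout.
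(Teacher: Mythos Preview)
Your proposal is correct and follows exactly the paper's own approach: the paper's proof reads in its entirety ``Analogous to the proofs of \Cref{prop:sodG} and \Cref{thm:clstcat}'', and your write-up is precisely a careful unpacking of what that analogy entails. The only minor bibliographic slip is that the compact-generator statement (\Cref{lem:cptgen}) is cited from \cite{Chr22} rather than \cite{Chr21b}, but this does not affect the argument.
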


\begin{proof}
Analogous to the proofs of \Cref{prop:sodG} and \Cref{thm:clstcat}.
\end{proof}

We thus find the generalized $n$-cluster category $\mathcal{C}_{\bf S}^n$ to be equivalent the topological Fukaya category of ${\bf S}$ with coefficients in the derived $\infty$-category of $(n-1)$-periodic chain complexes. In particular, the well-known $2$-periodic topological Fukaya category of a surface may thus be seen as the generalized $3$-cluster category of the surface.

The $\infty$-category $\mathcal{C}_{\bf S}^{n}$ is smooth and proper as an $(n-1)$-periodic, or $2(n-1)$-periodic if $n$ is even, stable $\infty$-category. The functor 
\[\prod_{e\in \mathcal{T}_1^\partial}\on{ev}_e\colon \mathcal{C}_{\bf S}^n=\mathcal{H}(\mathcal{T},\mathcal{F}_{\mathcal{T}}^{\on{clst}})\longrightarrow \prod_{e\in \mathcal{T}_1^\partial}\mathcal{F}_{\mathcal{T}}^{\on{clst}}(e)\simeq \prod_{e\in \mathcal{T}_1^\partial}\mathcal{D}(k[t_{n-1}^\pm])\]
further admits an $n$-Calabi--Yau structure, see \cite[Thm.~6.1]{Chr23}.

A geometric model for $\mathcal{C}_{\bf S}^n$, describing (a priori a subset of) objects in $\mathcal{C}_{\bf S}^n$ in terms of matching data with local value $k[t_{n-1}^\pm][i]$, $0\leq i\leq n-2$, is given in \cite{Chr21b}. The main difference to the geometric model for $\mathcal{C}_{\bf S}^n$ is thus that we may associate objects to matching curves equipped with a grading datum in $\mathbb{Z}/(n-1)\mathbb{Z}$. Note that \cite{Chr21b} only describes some of the morphism objects between the associated global sections (because the matching curves are not necessarily pure). One can prove analogs of \Cref{thm:hom1,thm:hom2} using a variation of the approach in \cite{Chr21b} applied directly to $\C_{\bf S}^n$. Furthermore, the proof of the geometrization \Cref{thm:geom} applies with minor modifications also to $\mathcal{C}_{\bf S}^n$, showing that all compact objects in $\mathcal{C}_{\bf S}^{n}$ arise from collections of graded matching curves in ${\bf S}\backslash M$.
 
As in \Cref{sec6.2}, we can use the functor $\prod_{e\in \mathcal{T}_1^\partial}\on{ev}_e$ to define an exact $\infty$-structure on $\mathcal{C}_{\bf S}^{n,\on{c}}$ and hence also an extriangulated structure on the homotopy category $\on{ho}\mathcal{C}^{n,\on{c}}_{\bf S}$. This exact $\infty$-structure is again Frobenius.

\bibliography{biblio} 
\bibliographystyle{alpha}

\textsc{Université Paris Cité and Sorbonne Université, CNRS, IMJ-PRG, F-75013 Paris, France.}

\textit{Email address:} \texttt{merlin.christ@imj-prg.fr}

\end{document}